\documentclass{article}[utf8, 12pt]%
\usepackage{amsmath}
\usepackage{amsfonts}
\usepackage{amssymb}
\usepackage{mathrsfs}
\usepackage{graphicx}
\usepackage{hyperref}
\usepackage{stmaryrd}
\usepackage{authblk}
\usepackage{enumerate}
\usepackage{bbm}
\usepackage{color}%
\usepackage{esint}
\usepackage{mathtools}
\usepackage{etoolbox}
\apptocmd{\thebibliography}{\renewcommand{\sc}{}}{}{}
\usepackage[title,titletoc]{appendix}
\usepackage{xpatch,amsthm}
\makeatletter
\xpatchcmd{\@thm}{\fontseries\mddefault\upshape}{}{}{} 
\makeatother
\providecommand{\U}[1]{\protect\rule{.1in}{.1in}}
\newtheorem{theorem}{Theorem}

\newtheorem{claim}[theorem]{Claim}

\newtheorem{corollary}[theorem]{Corollary}

\newtheorem{definition}[theorem]{Definition}

\newtheorem{lemma}[theorem]{Lemma}

\newtheorem*{question*}{Question}
\newtheorem{proposition}[theorem]{Proposition}
\newtheorem{remark}[theorem]{Remark}

\begin{document}
	\author[a]{Rohit Mahato\thanks{rohitmahato@iisc.ac.in}}
	\author[b]{Swarnendu Sil\thanks{swarnendusil@iisc.ac.in}\thanks{Supported by the ANRF-SERB MATRICS Project grant MTR/2023/000885}}
	
	\affil[a,b]{Department of Mathematics \\ Indian Institute of Science\\ Bangalore, India \\ }
	
	\title{Weighted estimates for Hodge-Maxwell systems}

	\maketitle 
	\begin{abstract}
		We establish up to the boundary regularity estimates in weighted $L^{p}$ spaces with Muckenhoupt weights $A_{p}$ for weak solutions to the Hodge systems 
		\begin{align*}
			d^{\ast}\left(Ad\omega\right) + B^{\intercal}dd^{\ast}\left(B\omega\right) = \lambda B\omega + f \quad \text{ in } \Omega
		\end{align*}
		with either $\nu \wedge \omega $ and $\nu \wedge d^{\ast}\left(B\omega\right)$	or $\nu \lrcorner B\omega$ and $\nu \lrcorner Ad\omega$ prescribed on $\partial\Omega.$ As a consequence, we prove the solvability of Hodge-Maxwell systems and derive Hodge decomposition theorems in weighted Lebesgue spaces. Our proof avoids potential theory, does not rely on representation formulas and instead uses decay estimates in the spirit of `Campanato method'  to establish weighted $L^{p}$ estimates. 
	\end{abstract}
	\textbf{Keywords: } Boundary regularity, Hodge Laplacian, Maxwell system, Weighted estimates, Muckenhoupt weights, Campanato method, Maximal inequalities, tangential and normal boundary conditions, Gaffney-Friedrichs inequality, div-curl systems.\smallskip  
	 
	\noindent\textbf{MSC 2020: } 35B65, 35J56, 35J57, 35Q61, 35B45
	\tableofcontents 
	\section{Introduction}
	\subsection{Hodge systems} 
	Let $n \geq 2, 1 \leq k \leq n-1$ and $N \geq 1$ be integers. Let $\Omega \subset \mathbb{R}^{n}$ be an open, bounded subset with at least $C^{2,1}$ boundary. We are interested in the general \emph{Hodge systems}, given by the operator 
	\begin{align*}
		\mathfrak{L}u := d^{\ast}\left(A\left(x\right)du\right) + \left[B(x)\right]^{\intercal}dd^{\ast}\left(B(x)u\right), 
	\end{align*}
	where $u$ is an $\mathbb{R}^{N}$-valued differential $k$-form in $\Omega,$ $d$ is the exterior derivative, $d^{\ast}$ is the codifferential, $A, B$ are matrix fields on $\Omega$ of appropriate dimension, with suitable regularity and ellipticity properties and the superscript $\vphantom{\left[B(x)\right]}^{\intercal}$ denotes the matrix transpose.  We are concerned with the boundary value problems 
	\begin{align*}
		\left\lbrace \begin{aligned}
			\mathfrak{L}u &= f &&\text{ in } \Omega, \\
			\mathfrak{b}_{\text{t}}u &= \mathfrak{b}_{\text{t}}u_{0} &&\text{ on } \partial\Omega,
		\end{aligned}\right. \quad \text{ and } \quad \left\lbrace \begin{aligned}
			\mathfrak{L}u &= f &&\text{ in } \Omega, \\
			\mathfrak{b}_{\text{n}}u &= \mathfrak{b}_{\text{n}}u_{0} &&\text{ on } \partial\Omega. 
		\end{aligned}\right.
	\end{align*}
	Here $\mathfrak{b}_{\text{t}}u := \left( \nu \wedge u, \nu \wedge d^{\ast}\left(B(x)u\right)\right)$  and $	\mathfrak{b}_{\text{n}}u := \left( \nu \lrcorner \left(B(x)u\right), \nu \lrcorner \left(A\left(x\right)du\right)\right)$ specifies the tangential and the normal Hodge boundary condition, respectively.

	\paragraph*{Hodge Laplacian} When $A, B$ are constant coefficient identity matrices, these problems reduce to the tangential and normal boundary value problems for the Hodge Laplacian, 
	\begin{align*}
		\left\lbrace \begin{aligned}
			\left( d^{\ast}d + d d^{\ast}\right)u &= f &&\text{ in } \Omega, \\
			\nu \wedge u &= \nu \wedge u_{0} &&\text{ on } \partial\Omega,\\
			\nu \wedge d^{\ast}u &= \nu \wedge d^{\ast}u_{0} &&\text{ on } \partial\Omega,
		\end{aligned}\right.  \text{ and }  \left\lbrace \begin{aligned}
			\left( d^{\ast}d + d d^{\ast}\right)u &= f &&\text{ in } \Omega, \\
			\nu \lrcorner u &= \nu \lrcorner u_{0} &&\text{ on } \partial\Omega,\\
			\nu \lrcorner du &= \nu \lrcorner du_{0} &&\text{ on } \partial\Omega. 
		\end{aligned}\right.
	\end{align*}
	The two problems above, which are Hodge dual of each other, have been extensively studied throughout the literature. Regularity estimates for these problems imply the important Helmholtz-Hodge-Kodaira-Morrey decomposition theorems, which are crucial tools to study a large class of problems in mathematical physics, namely time-harmonic Maxwell's equations in electromagnetic theory, div-curl or Cauchy-Riemann systems, Stokes and the Navier-Stokes operators in fluid mechanics etc.  The history of these problems goes back to Hodge \cite{Hodge1934} and Weyl \cite{Weyl1940}. The important Gaffney inequality was presented in
	\cite{GaffneyHarmonicoperator} (see also  \cite{Gaffney1954}, \cite{Gaffney1954a}, \cite{GaffneyHarmonicintegrals} and Friedrichs \cite{FriedrichsGaffney}). The variational
	method was applied to general compact Riemannian manifolds without boundary in Morrey and Eells \cite{Morrey1955} and to such manifolds with the boundary in \cite{MorreyHarmonic2}. The corresponding Schauder and $L^{p}$ estimates were derived in \cite[Chapter 7]{Morrey1966}. The applications of Morrey's up to the boundary $L^{p}$ estimates to other problems in analysis and mathematical physics are too vast to sketch here. The entire book of Schwarz \cite{SchwarzHodge} is devoted to the subject (see also \cite[Part II]{CsatoDacKneuss} and \cite{csatothesis},\cite{silthesis}). We note that Morrey's proof is based on potential theory. These boundary value problems for the Hodge Laplacian have also been investigated in the context of possibly non-smooth domains, again based on potential theory and layer potentials (see \cite{Mitrea2016}, \cite{Mitrea_Mitrea_Taylor_LayerPotentials_HodgeLplacian}, \cite{Mitrea_Gaffneyineq} and the references therein). Recently, Morrey's proof has been extended in \cite{Balci_Sil_Surnachev_HodgeVariableexponent} to establish estimates in variable Lebesgue spaces for the Hodge Laplacian.

	\paragraph*{General Hodge systems} Compared to this rich history, the general Hodge systems received very little attention. One might surmise that the reason for this is that these general cases are not as important and/or perhaps a somewhat routine generalization of the Hodge Laplacian. However, nothing could be further from the truth. Indeed, consider the time-harmonic Maxwell equation in a bounded domain in $\mathbb{R}^{3}$,   
	\begin{align*}
		\left\lbrace \begin{aligned}
			\operatorname{curl} H &= i\omega \varepsilon E + J_{e} &&\text{ in } \Omega, \\
			\operatorname{curl} E &= -i\omega \mu H + J_{m} &&\text{ in } \Omega, \\
			\nu \times E &= \nu \times E_{0} &&\text{ on } \partial\Omega. 
		\end{aligned}\right. \end{align*}
	Formally, this is equivalent to the following second order system in $E$, obtained by eliminating $H$ from the above equations 
	\begin{align*}
		\left\lbrace \begin{aligned}
			\operatorname{curl} \left( \mu^{-1}	\operatorname{curl} E \right)  &= \omega^{2} \varepsilon E -i\omega J_{e} + 	\operatorname{curl}\left(\mu^{-1}J_{m}\right) &&\text{ in } \Omega, \\
			\operatorname{div} \left( \varepsilon E \right) &= \frac{i}{\omega}\operatorname{div} J_{e} &&\text{ in } \Omega, \\
			\nu \times E &= \nu \times E_{0} &&\text{ on } \partial\Omega. 
		\end{aligned}\right. \end{align*}
	This is an instance of the general Hodge-Maxwell systems and is intimately related to the operator $$ \mathfrak{L}E := \operatorname{curl} \left( \mu^{-1}	\operatorname{curl} E \right) + \varepsilon^{\intercal} \nabla \operatorname{div} \left( \varepsilon E \right),$$   which is precisely our general Hodge operator when $n=3,$ $N=1$ and $k=1$ with $A= \mu^{-1}$ and $B= \varepsilon.$ The estimate for these systems also can not be obtained cheaply from the case of the Hodge Laplacian. Morrey's proof is based on potential theory and used the fact that $d^{\ast}d + dd^{\ast},$ i.e. the Hodge Laplacian is just the componentwise scalar Laplacian and after flattening the boundary, the boundary conditions $\nu \wedge u=0$ and $\nu \wedge d^{\ast}u=0$ implies that $u_{I}=0$  if $n \notin I$ and $\partial u_{I}/\partial \nu = 0$ if $n \in I.$ Thus, modulo certain error terms (due to localizing and flattening) which can be controlled, the whole system decouples into $\tbinom{n}{k}$ scalar Poisson problem, out of which $\tbinom{n-1}{k}$ has Dirichlet boundary conditions and the other  $\tbinom{n-1}{k-1}$ has Neumann boundary conditions. Other available proofs all use potential theory and verify either the Lopatinski\u{\i}-Shapiro condition (cf. Schwarz \cite{SchwarzHodge} for the verification, see also \cite{Lopatinskii}, \cite{Shapiro} and \cite{Visik_LScondition} for the condition)  or the Agmon-Douglis-Nirenberg condition (cf. Csato \cite{csatothesis} for the verification and \cite{ADN1}, \cite{ADN2}, \cite{Agranovich_ADNconditions} for the conditions) and these verifications also rely on the fact that the principal symbols of the operator and the boundary operators are rather simple and become prohibitively tedious if $A, B$ does not take values in scalar matrices.

	The first breakthrough to overcome this hurdle is Sil \cite{Sil_linearregularity}, where Schauder and $L^{p}$ estimates for the general Hodge systems is deduced using the Campanato method, completely avoiding potential theory. Estimates in Campanato spaces were derived directly, which imply the $BMO$-estimate. This coupled with the easy $L^{2}$ estimate, Stampacchia interpolation and duality imply the $L^{p}$ estimates. Due to the reliance on interpolation, this technique could not establish Morrey estimates. This was achieved in Sengupta-Sil \cite{SenguptaSil_MorreyLorentz_Hodge}, adapting an idea of Lieberman \cite{Lieberman_morrey_from_Lp}. However, the question of reaching weighted $L^{p}$ estimates remained open. 
	\subsection{Main techniques}	
	\paragraph*{Pointwise maximal inequalities and weighted estimates} In the present work, building on the ideas of \cite{Sil_linearregularity}, \cite{SenguptaSil_MorreyLorentz_Hodge}, we establish an appropriate \emph{pointwise maximal inequality} (see Lemma \ref{maximal ineq lemma}) for the Hessian of the solution near a boundary point, after suitable localization and flattening of the boundary, where we estimate the localized truncated sharp maximal function of the Hessian by localized  maximal functions of the error terms and the right hand side. 
	
	The strategy is to use these pointwise maximal estimates to derive weighted estimates via a Fefferman-Stein type inequality for weighted spaces, established by Phuc \cite[Corollary 2.7]{Phuc_Nguyen_WeightedFeffermanStein}. The basic scheme of establishing pointwise maximal inequalities to prove regularity goes back a long way and first appeared in the context of quasilinear equations, as no representation formula is available there. Iwaniec \cite{Iwaniec_Lpprojection} and later Dibenedetto-Manfredi \cite{DiBenedettoManfredi} used this idea to derive $L^{p}$ estimates for the gradient for quasilinear equations, see also Kinnunen-Zhou \cite{Kinnunen_Zhou_localestiNonlinear}, \cite{Kinnunen_Zhou_boundaryestiNonlinear}. This idea was used in the context of weighted estimates, again for quasilinear equations by Phuc \cite{Phuc_Nguyen_WeightedFeffermanStein} and recently in the context of boundary estimates for linear elliptic equations with discontinuous coefficients by Adimurthi-Mengesha-Phuc \cite{Karthik_Phuc2}. However, the work \cite{Karthik_Phuc2} deals only with gradient estimates for linear elliptic equations with the usual Dirichlet boundary conditions and uses the representation formula. In contrast, our method is completely free from potential theory and does not need any representation formula and work for Hessian estimates for \emph{systems} with the Hodge boundary conditions.

	\paragraph*{Decay estimates to pointwise maximal estimates} For this plan to work, the main technical task is twofold---first, one needs to sharpen the Campanato-style decay estimates, which are typically in the natural energy scales $L^{2}$-$L^{2}$, to obtain estimates in $L^{1}$-$L^{q}$ scale for any $1 < q < \infty.$  This is achieved in Lemma \ref{poinwise decay lemma}, using decay estimates established in \cite{Sil_linearregularity} , coupled with Poincar\'{e}-Sobolev type inequalities for the Hessian tailored to the Hodge boundary conditions derived in \cite{SenguptaSil_MorreyLorentz_Hodge} and the well-known self-improving properties of reverse H\"{o}lder inequalities. The second task is to have good control over the parameters so that the coefficient of the first term on the right in estimate \eqref{pointwise maximal ineq estimate} can be made arbitrarily small and use appropriate localizations such that this smallness can be used to absorb the dangerous term on the right in the left hand side  to derive the estimate \eqref{near the boundary weighted estimate}. This is quite delicate and is achieved by a double localization, one in the domain $\Omega$ and the other in the image of the flattening diffeomorphism in the half space, coupled with introducing and fine-tuning a number of parameters (namely $h$, $d$, $\bar{R}$, see Lemma \ref{poinwise decay lemma} and Lemma \ref{maximal ineq lemma}). The details can be found in Section \ref{boundary estimate section}, which is the technical core of this paper.

	In this work, we present only the apriori estimates in weighted Lebesgue spaces as the main fruit of our analysis. But we believe that the technique of \emph{using decay estimates to establish pointwise maximal function inequalities} is of independent interest, as these reduce the study of regularity of solutions to a classical topic, namely the \emph{mapping properties of maximal functions} and can be used even when a representation formula for the solution is either unavailable or difficult to obtain. Thus our method can also be used to prove regularity results \emph{in any function spaces where the boundedness of the relevant maximal functions are known}. We believe, in principle,  \emph{our extension of the Campanato approach is fully capable of substituting a representation formula} and can work for any type of linear boundary value problems for elliptic systems, as long as suitable decay estimates are available for the frozen constant coefficient system in the case of flat boundary.

	\subsection{Discussion of results}
	\paragraph*{Results and extensions}
	Our main result in the present article is a global apriori estimate for Hodge systems in weighted Lebesgue spaces (see Theorem \ref{mainthm} for a precise statement).	By standard approximation procedures, this immediately imply the regularity results in weighted Lebesgue spaces for 
	\begin{itemize}
		\item Hodge systems (Theorem \ref{Hodge system}),
		\item the Hodge-Maxwell systems (Theorem \ref{Hodge-Maxwell system}), 
		\item `div-curl' type systems (Theorem \ref{div-curl}),
	\end{itemize} 
	along with corresponding Gaffney-type inequalities (Theorem \ref{Gaffney ineq}) and Hodge decomposition theorems (Theorem \ref{Hodge decomposition}). To keep our presentation streamlined, we present the results for an open bounded $C^{r+2,1}$  domain $\Omega \subset \mathbb{R}^{n}$ with $r \ge 0$. But as our method uses only local estimates, all our results mentioned above remain valid, with essentially the same proof, if the domain is changed to a compact orientable $n$-dimensional Riemannian manifold $M$ with boundary $\partial M$ of class $C^{r+2,1},$ $r \ge 0.$ Also, we present only Hessian estimates for Hodge and Hodge-Maxwell systems, as they require more work. However, gradient estimates for these systems with the right hand side in divergence form  can be derived using our method with less work.

	\paragraph*{Regularity of the coefficient matrix fields}  In the present work, we restrict ourselves to regular enough coefficient fields and do not treat discontinuous coefficients. However, the assumed regularity on the coefficients are not expected to be sharp and the weighted $L^{p}$ estimates should be valid for a class of discontinuous coefficient fields as well. This is investigated in an upcoming work \cite{Mahato_Sil_discontinuousWeightedLp_InPrep}.

	\paragraph*{Extrapolation and weights} To conclude, we note that weighted estimates, apart from their independent interest, also serve as the starting point for deriving a variety of other  estimates in Orlicz and Orlicz-Musielak type spaces via Rubio de Francia type extrapolation theorems. To keep our presentation to a reasonable length, such applications would be presented in a different work \cite{Mahato_Sil_OrliczMusielak_InPrep}.

	\subsection{Organization} The rest of the article is organized as follows. Section \ref{notations} summarizes the our notations. Section \ref{prelim} collects the basic tools that we would use, including the relevant function spaces, several Poincar\'{e}-Sobolev type inequalities, properties of maximal functions, weights and weighted spaces, Fefferman-Stein type inequalities, properties of reverse H\"{o}lder inequalities  etc. Section \ref{main estimates} establishes our main estimates. Section \ref{main theorems} state and prove our main results as a consequence of these estimates.

	\section{Notations}\label{notations}
	We now fix the notations, for further details we refer to
	\cite{CsatoDacKneuss}, \cite{silthesis}, \cite{Sil_linearregularity} and \cite{SenguptaSil_MorreyLorentz_Hodge}. Let $n \geq 2,$ $N \geq 1$ and $0 \leq k \leq n$ be integers. 
	\paragraph*{Multilinear algebra} 
	\begin{itemize}
		\item We write $\Lambda^{k}\mathbb{R}^{n}$ to denote the vector space of all alternating $k-$linear maps
		$f:\underbrace{\mathbb{R}^{n}\times\cdots\times\mathbb{R}^{n}}_{k-\text{times}%
		}\rightarrow\mathbb{R}.$ For $k=0,$ we set $\Lambda^{0} \mathbb{R}%
		^{n}  =\mathbb{R}.$ Note that $\Lambda^{k}  \mathbb{R}%
		^{n}  =\{0\}$ for $k>n$ and, for $k\leq n,$ $\operatorname{dim}\left(
		\Lambda^{k}  \mathbb{R}^{n}  \right)  ={\binom{{n}}{{k}}}.$  Let $\left\{  e_{1},\cdots,e_{n}\right\}  $ stand for the standard basis of $\mathbb{R}%
		^{n}.$ Then the dual basis, denoted by $\left\{  e^{1},\cdots,e^{n}\right\} $ is a basis for $\Lambda^{1}\mathbb{R}^{n}$. Consequently,   
		$\left\{  e^{i_{1}}\wedge\cdots\wedge e^{i_{k}}:1\leq i_{1}<\cdots<i_{k}\leq
		n\right\}$	is a basis of $\Lambda^{k}\mathbb{R}^{n}.$ 
		
		\item $\wedge$ and $\left\langle \ ;\ \right\rangle $ denote the exterior product and the scalar product respectively, defined the usual way. The symbol $\ast$ denotes the Hodge star operator $\ast:\Lambda^{k} \rightarrow \Lambda^{n-k}$,  defined by 
		\begin{align*}
			\left\langle \ast \xi , \zeta \right\rangle e^{1}\wedge\ldots \wedge e^{n} = \xi \wedge \zeta \qquad \text{ for } \xi \in \Lambda^{k}, \zeta \in \Lambda^{n-k}.  
		\end{align*}
		The symbol $\lrcorner$  stands the  interior product, defined as 
		\begin{align*}
			\xi \lrcorner \zeta := \left(-1\right)^{n(k-l)} \ast \left( \xi \wedge \ast \zeta \right) \qquad \text{ for } \xi \in \Lambda^{l}, \zeta \in \Lambda^{k} \text{ with } 0 \le l \le k \le n.
		\end{align*}
		\item For any two finite dimensional vector spaces $X, Y,$ we use the notation $\operatorname{Hom}\left(X, Y \right)$ to denote the vector space of all linear maps from $X$ to $Y.$ When $X=Y,$ we shall just write $\operatorname{Hom}\left(X\right)$ instead.
		
		\item As we would be dealing with vector-valued forms a lot, we introduce some shorthand notation to avoid clutter. The integers $n \geq 2,$ $N \geq 1$ would remain fixed but arbitrary for the rest.  The only relevant point is the degree of the form.  To this end, for any integer  $0 \leq k \leq n,$ we denote 
		\begin{align*}
			\varLambda^{k}:= \Lambda^{k}\mathbb{R}^{n}\otimes \mathbb{R}^{N}.
		\end{align*}
		For $\alpha \in \varLambda^{k},$ it is easier to think of $\alpha$ as an $N$-tuple of $k$-forms, i.e. $$\alpha = \left( \alpha_{1}, \ldots, \alpha_{N}\right),$$ where $\alpha_{1}, \ldots, \alpha_{N}$ are usual (scalar-valued) exterior forms.

		\item The symbols $\wedge,$ $\lrcorner\,,$ $\left\langle \ ;\ \right\rangle $ and,
		respectively, $\ast$ continue to denote the exterior product, the interior product, the
		scalar product and, respectively, the Hodge star operator, extended componentwise in the obvious fashion to vector-valued forms. 
		More precisely, for $\alpha = \left( \alpha_{1}, \ldots, \alpha_{N}\right) \in \varLambda^{k_{1}}$ and $\beta = \left( \beta_{1}, \ldots, \beta_{N}\right) \in \varLambda^{k_{2}},$ we have 
		\begin{align*}
			\alpha \wedge \beta &= \left( \alpha_{1}\wedge \beta_{1}, \ldots, \alpha_{N}\wedge \beta_{N}\right) \in \varLambda^{k_{1}+k_{2}}, \\
			\alpha \lrcorner \beta &= \left( \alpha_{1}\lrcorner \beta_{1}, \ldots, \alpha_{N}\lrcorner \beta_{N}\right) \in \varLambda^{k_{2}-k_{1}},\\
			\ast \alpha &= \left( \ast \alpha_{1}, \ldots, \ast \alpha_{N}\right) \in \varLambda^{n-k_{1}}, \\ 
			\left\langle \alpha , \beta \right\rangle &= \sum\limits_{i=1}^{N} 	\left\langle\alpha_{i}, \beta_{i}	\right\rangle_{\Lambda^{k}}, \qquad \text{ if } k_{1}=k_{2},  
		\end{align*}
		where $\left\langle\cdot, \cdot	\right\rangle_{\Lambda^{k}}$ denote the scalar product for usual (scalar-valued) exterior forms.

		\item When $\xi \in \mathbb{R}^{n}$ is a vector and $\alpha \in \varLambda^{k}$, with a slight abuse of notation, we would write 
		\begin{align*}
			\xi \wedge \alpha  = \left(\tilde{\xi} \wedge \alpha_{1}, \ldots, \tilde{\xi} \wedge \alpha_{N}\right) \quad \text{ and } \quad \xi \lrcorner \alpha  = \left(\tilde{\xi} \lrcorner \alpha_{1}, \ldots, \tilde{\xi} \lrcorner \alpha_{N}\right),
		\end{align*}
		where $\tilde{\xi} = \sum_{i=1}^{n} \xi_{i} e^{i}$ is the exterior $1$-form with the same components as $\xi = (\xi_{1}, \ldots, \xi_{n}). $ 
	\end{itemize}
	
	\paragraph*{Differential forms}
	For us, $\Omega \subset \mathbb{R}^{n}$ will always be assumed to be an open, bounded subset with at least $C^{2,1}$ boundary. More regularity of the boundary will be explicitly specified as necessary. $\nu$ will always denote the outward pointing unit normal field to $\partial\Omega,$ which would be identified with abuse of notation, with the $1$-form with same components $\nu = \sum_{i=1}^{n} \nu_{i}(x)dx_{i}.$ 
	\begin{definition}
		An $\mathbb{R}^{N}$-valued differential $k$-form $\omega$ on $\Omega$ is a measurable function $\omega:\Omega\rightarrow\varLambda^{k}.$
	\end{definition} The usual Lebesgue, Sobolev and H\"{o}lder spaces are defined componentwise in the usual way and are denoted by their usual symbols. Two important differential operators on differential forms are
	\begin{definition}
		A $\mathbb{R}^{N}$-valued 
		$(k+1)$-form $\varphi\in L^{1}_{\text{loc}}(\Omega;\varLambda^{k+1})$
		is called the exterior derivative of $\omega\in
		L^{1}_{\text{loc}}\left(\Omega;\varLambda^{k}\right),$ denoted by $d\omega$,  if
		\begin{align*}
			\int_{\Omega} \eta\wedge\varphi=(-1)^{k}\int_{\Omega} d\eta\wedge\omega, \qquad \text{ for all } \eta\in C^{\infty}_{c}\left(\Omega;\varLambda^{n-k-1}\right),
		\end{align*}
		where $d\eta$ is the classical exterior derivative, defined in the usual manner. 
		The Hodge codifferential of $\omega\in L^{1}_{\text{loc}}\left(\Omega;\varLambda^{k}\right)$ is
		an $\mathbb{R}^{N}$-valued $(k-1)$-form, denoted $d^{\ast}\omega\in L^{1}_{\text{loc}}\left(\Omega;\varLambda^{k-1}\right),$
		defined as
		$$
		d^{\ast}\omega:=(-1)^{n(k+1)} \ast d \ast \omega. $$
	\end{definition}
	Note that our definitions imply that 
	\begin{align*}
		\int_{\Omega} \left\langle  \eta, d\omega \right\rangle =  -\int_{\Omega} \left\langle d^{\ast} \eta, \omega \right\rangle, \qquad \text{ for all } \eta\in C^{\infty}_{c}\left(\Omega;\varLambda^{k+1}\right). 
	\end{align*}
	In general, we have the integration by parts formula 
	\begin{align*}
		\int_{\Omega} \left\langle   d\alpha, \beta \right\rangle + 	\int_{\Omega} \left\langle   \alpha, d^{\ast}\beta \right\rangle = \int_{\partial\Omega}\left\langle   \nu \wedge \alpha, \beta \right\rangle = \int_{\partial\Omega}\left\langle    \alpha, \nu \lrcorner \beta \right\rangle
	\end{align*}
	for every $\alpha \in C^{1}\left(\overline{\Omega}; \varLambda^{k}\right)$, $\beta \in C^{1}\left(\overline{\Omega}; \varLambda^{k+1}\right)$. See \cite{CsatoDacKneuss} for the properties of  these operators. The operator $d^{\ast}d + d d^{\ast}$ is the Hodge Laplacian and it is easy to see that 
	\begin{align*}
		\left(d^{\ast}d + d d^{\ast}\right)\alpha = \Delta \alpha = \operatorname{div}\left( \nabla \alpha\right) \quad \text{ for all } \alpha \in C^{2}\left(\overline{\Omega}; \varLambda^{k}\right),
	\end{align*}
	where the Laplacian on the right is understood to act componentwise. 
	
	\paragraph*{Notations for sets}
	\begin{itemize}
		\item For any $z \in \mathbb{R}^{n}$ and any $r>0,$ the open ball with center $z$ and radius $r$ is denoted by $B_{r}\left( z\right) := \left\lbrace x \in \mathbb{R}^{n}: \left\lvert x - z \right\rvert < r\right\rbrace$. We would just write $B_{r}$ when the center of the ball is the origin, i.e. when $z=0 \in \mathbb{R}^{n}.$
		\item The open upper half space is denoted by 
		\begin{align*}
			\mathbb{R}^{n}_{+} := \left\lbrace x = \left(x', x_{n}\right) \in \mathbb{R}^{n}: x_{n} >0\right\rbrace, 
		\end{align*} The boundary of the open upper half space is denoted as 
		\begin{align*}
			\partial\mathbb{R}^{n}_{+} := \left\lbrace x = \left(x', 0\right) \in \mathbb{R}^{n}: x' \in \mathbb{R}^{n-1}\right\rbrace. 
		\end{align*} 
		
		\item For any $z \in \partial\mathbb{R}^{n}_{+}$ and any $r>0,$ the open upper half ball with center $z$ and radius $r$ are denoted by $$B^{+}_{r}\left( z\right) := \left\lbrace x = \left(x', x_{n}\right) \in \mathbb{R}^{n}: \left\lvert x - z \right\rvert < r , x_{n} >0 \right\rbrace. $$  
		We would just write $B^{+}_{r}$ when the center of the balls is the origin, i.e. when $z=0 \in \mathbb{R}^{n}.$ For us, $\Gamma_{r}\left( z\right)$  and $\Sigma_{r}\left( z\right)$ would denote the flat part and the curved part, respectively, of the boundary of the half ball $B_{r}^{+}\left( z\right).$ More precisely, 
		\begin{align*}
			\Gamma_{r}\left( z\right):= \partial B_{r}^{+}\left( z\right) \cap 	\partial\mathbb{R}^{n}_{+}\qquad  \text{ and } \qquad \Sigma_{r} := \partial B_{r}^{+}\left( z\right)\setminus \Gamma_{r}\left( z\right).  
		\end{align*}
		By a mild abuse of notation, we would also use $B_{r}^{+}\left(z\right)$ to denote 
		\begin{align*}
			B_{r}^{+}\left(z\right) := B_{r}\left(z\right) \cap \mathbb{R}^{n}_{+} \quad \text{ even when } z \notin \partial\mathbb{R}^{n}_{+}.
		\end{align*} 
		\item  For any  open subset $\Omega\subset\mathbb{R}^n,$ and for any $z \in \mathbb{R}^{n}$ and any $r>0,$ we denote 
		\begin{align*}
			\Omega_{\left(r, z\right)}:= B_{r}\left( z\right) \cap \Omega \quad \text{ and } \quad 	\Omega^{+}_{\left(r, z\right)} := \Omega \cap B_{r}^{+}\left(z\right) (\text{when } z \in \partial\mathbb{R}^{n}_{+}). 
		\end{align*}
		We suppress writing the center and just write $\Omega_{\left(r\right)}$ and $\Omega^{+}_{\left(r\right)}$ when $z=0 \in \mathbb{R}^{n}.$
		
		\item Let $\mathcal{U}\subset\mathbb{R}_+^n$ be a smooth open set which is star-shaped about the origin such that 
		\begin{align*}
			B^{+}_{1/2} \subset B^{+}_{3/4} \subset \mathcal{U} \subset B^{+}_{7/8} \subset B^{+}_{1}.  
		\end{align*}
		Note that this implies $\mathcal{U}$ is contractible and $	\Gamma_{3/4} \subset \partial\mathcal{U}.$	For any $z \in \partial \mathbb{R}^{n}_{+},$ we set 
		\begin{align*}
			\mathcal{U}_{R}\left(z\right) :=  \left\lbrace z + R x : x \in \mathcal{U} \right\rbrace = \left\lbrace x \in \mathbb{R}_{+}^{n}: \frac{1}{R}\left( x - z\right) \in \mathcal{U} \right\rbrace. 
		\end{align*}
		We also write $\mathcal{U}_{R} := \mathcal{U}_{R}\left(0\right).$ The notation $\mathcal{C}_{R}(z)$ denotes the curved part of $\partial \mathcal{U}_{R}\left(z\right).$ 
	\end{itemize}
	\paragraph*{Measurable sets and integral means}
	For any Lebesgue measurable subset $A \subset \mathbb{R}^{n}, $ we denote its $n$-dimensional Lebesgue measure  by $\left\lvert A \right\rvert.$ If $\left\lvert A \right\rvert < \infty,$  we use the notation $\left( \cdot \right)_{A}$ to denote the integral average over the set $A$, i.e.  
	$$ \left( f \right)_{A} := \frac{1}{\left\lvert A \right\rvert } \int_{A} f\left(x\right)\ \mathrm{d}x := \fint_{A} f\left(x\right)\ \mathrm{d}x \qquad \text{ for any } f \in L^{1}\left( A \right).$$ This notation is also extended componentwise to vector-valued functions. We record an important result. 
	\begin{lemma}[minimality of mean]\label{minimality of mean}
		Let $A \subset \mathbb{R}^{n}$ be measurable with $\left\lvert A \right\rvert < \infty.$ For any $1 \le q < \infty$ and any $f \in L^{q}\left(A; \mathbb{R}^{m}\right),$ we have 
		\begin{align*}
			\int_{A} \left\lvert f - \left(f\right)_{A}\right\rvert^{q} \leq c_{m,q} 	\int_{A} \left\lvert f - \xi\right\rvert^{q} \qquad \text{ for any } \xi \in \mathbb{R}^{m},
		\end{align*}
		for some constant $c_{m,q} \ge 1,$ depending only on $m \in \mathbb{N}$ and $q$.
	\end{lemma}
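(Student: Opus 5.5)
The plan is to compare $f$ with its mean using the triangle inequality, and then bound the deviation of the mean by Jensen's inequality. Fix $\xi \in \mathbb{R}^{m}$. Since $\left\lvert A \right\rvert < \infty$ and $q \ge 1$, Hölder's inequality gives $f \in L^{1}(A;\mathbb{R}^{m})$, so $(f)_{A}$ is well defined.

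Write $f - (f)_{A} = (f - \xi) + (\xi - (f)_{A})$. The elementary inequality $|a+b|^{q} \le 2^{q-1}\left(|a|^{q} + |b|^{q}\right)$, which follows from convexity of $t \mapsto t^{q}$, then yields
\begin{align*}
\int_{A} \left\lvert f - (f)_{A}\right\rvert^{q} \le 2^{q-1}\int_{A}\left\lvert f-\xi\right\rvert^{q} + 2^{q-1}\left\lvert A \right\rvert \left\lvert (f)_{A} - \xi\right\rvert^{q}.
\end{align*}

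For the second term, note that $(f)_{A} - \xi = \fint_{A} (f - \xi)$, because $\xi$ is constant. Jensen's inequality for the convex function $v \mapsto |v|^{q}$ on $\mathbb{R}^{m}$, applied with the probability measure $\mathrm{d}x/\left\lvert A \right\rvert$ on $A$, gives
\begin{align*}
\left\lvert (f)_{A} - \xi\right\rvert^{q} \le \fint_{A}\left\lvert f - \xi\right\rvert^{q}.
\end{align*}
Equivalently, one can combine $|\int g| \le \int |g|$ for vector-valued $g$ with Hölder's inequality. Multiplying by $\left\lvert A \right\rvert$ turns this into $\left\lvert A \right\rvert \left\lvert (f)_{A} - \xi\right\rvert^{q} \le \int_{A}\left\lvert f-\xi\right\rvert^{q}$.

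Substituting this into the first estimate gives the claim with $c_{m,q} = 2^{q} \ge 1$. This constant in fact does not depend on $m$, which is allowed by the statement. The only point needing any care is the validity of Jensen's inequality for vector-valued functions with the Euclidean norm. This holds because $|\cdot|^{q}$ is convex on $\mathbb{R}^{m}$; alternatively, it follows from Minkowski's integral inequality followed by scalar Hölder. So I do not expect a genuine obstacle; the proof is routine.
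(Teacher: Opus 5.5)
Your proof is correct. Splitting $f-(f)_{A}=(f-\xi)+(\xi-(f)_{A})$, using $|a+b|^{q}\le 2^{q-1}(|a|^{q}+|b|^{q})$, and applying Jensen's inequality to the convex function $|\cdot|^{q}$ on $\mathbb{R}^{m}$ gives the claim with $c_{m,q}=2^{q}$, tacitly assuming $|A|>0$ so that $(f)_{A}$ is defined. The paper records this lemma as a standard fact without giving a proof, and your argument is the standard one it implicitly relies on.
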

	
	\paragraph*{Ellipticity notions}
	\begin{definition}\label{legendre-hadamard condition}
		A linear map $A: \mathbb{R}^{n}\otimes \mathbb{R}^{d} \rightarrow \mathbb{R}^{n}\otimes \mathbb{R}^{d}$ is called \textbf{Legendre-Hadamard} elliptic if there exists a constant $ \gamma >0$ such that 
		$$ \langle A (a\otimes b) \  ;\  a\otimes b \rangle  \geq \gamma \left\vert  a \right\vert^{2}\left\lvert b \right\vert^{2}, 
		\qquad  \text{ for every } a \in \mathbb{R}^{n}, b \in \mathbb{R}^{d} . $$
	\end{definition}
	\begin{definition}\label{legendre condition}
		A bounded measurable map $A \in L^{\infty}\left( \Omega; \operatorname{Hom}(\varLambda^{k}) \right)$ is called \textbf{uniformly Legendre elliptic} if there exists a constant $ 0 < \gamma \le 1$ such that  we have 
		$$ \gamma \left\vert  \xi \right\vert^{2} \le \langle A (x)  \xi \  ;\  \xi \rangle  \le \frac{1}{\gamma} \left\vert  \xi \right\vert^{2}
		\qquad  \text{ for a.e. }  x \in \Omega, \text{ for every } \xi \in \varLambda^{k}.$$ 
		
	\end{definition}
	\section{Preliminaries}\label{prelim} 
	Let $n \geq 2, 1 \leq k \leq n-1$ and $N \geq 1$ be integers. This assumption will be in force throughout the rest of the article and will be used without further comment. 	
	\subsection{Algebraic lemmas} 
	Let $\bar{A} \in \operatorname{Hom}\left( \varLambda^{k+1}\right)$ and $\bar{B} \in \operatorname{Hom}\left( \varLambda^{k}\right)$ be given maps. Define the linear maps $M_{T}, M_{N} \in \operatorname{Hom}\left( \varLambda^{k}\otimes \mathbb{R}^{n} \right)$ by the pointwise algebraic identities: For any $a_{1}, a_{2} \in \mathbb{R}^{n}$ and any $b_{1}, b_{2} \in \varLambda^{k}$,  
	\begin{align*}
		\left\langle M_{T} \left(a_{1}\otimes b_{1}\right), a_{2} \otimes b_{2}\right\rangle &= \left\langle \bar{A} \left(a_{1}\wedge b_{1}\right), a_{2} \wedge b_{2}\right\rangle +  \left\langle  a_{1}\lrcorner \bar{B}b_{1}, a_{2} \lrcorner \bar{B}b_{2}\right\rangle, \\
		\left\langle M_{N} \left(a_{1}\otimes b_{1}\right), a_{2} \otimes b_{2}\right\rangle &= \left\langle \bar{A} \left(a_{1}\wedge \bar{B}^{-1}b_{1}\right), a_{2} \wedge \bar{B}^{-1}b_{2}\right\rangle +  \left\langle  a_{1}\lrcorner b_{1}, a_{2} \lrcorner b_{2}\right\rangle. 
	\end{align*}
	Then we have 
	\begin{align*}
		\left\langle M_{T} \left(a\otimes b\right), a \otimes b\right\rangle &= \left\langle \bar{A} \left(a\wedge b\right), a \wedge b\right\rangle +  \left\lvert   a\lrcorner \bar{B}b\right\rvert^{2}, \\
		\left\langle M_{N} \left(a\otimes b\right), a \otimes b\right\rangle &= \left\langle \bar{A} \left(a\wedge \bar{B}^{-1}b\right), a \wedge \bar{B}^{-1}b\right\rangle +   \left\lvert   a\lrcorner b\right\rvert^{2}.
	\end{align*}
	\begin{lemma}\label{ellipticity lemma}
		Let $\bar{B}:\varLambda^{k} \rightarrow \varLambda^{k}$  and $\bar{A}:\varLambda^{k+1} \rightarrow \varLambda^{k+1}$ satisfy 
		$$ \gamma_{\bar{B}} \left\lvert b \right\rvert^{2} \le \left\langle Bb, b \right\rangle \le \frac{1}{\gamma_{\bar{B}}} \left\lvert b \right\rvert^{2} \qquad \text{ and }\qquad   \left\langle A\left( a \wedge b\right), a \wedge b \right\rangle \ge \gamma_{\bar{A}} \left\lvert a \wedge b \right\rvert^{2}$$  for all $a \in \mathbb{R}^{n}, b \in \varLambda^{k}$ for some constants $\gamma_{\bar{A}}, \gamma_{\bar{B}} \in (0, 1]$. Then $M_{T}, M_{N}$ are both Legendre-Hadamard elliptic and there exists a constant $c>0,$ depending only on the numbers $n, k, N, \gamma_{\bar{A}}, \gamma_{\bar{B}},$ such that 
		\begin{align*}
			\left\langle M_{T}\left( \xi \otimes \eta \right), \xi \otimes \eta \right\rangle \ge c \left\lvert \xi \right\rvert^{2}\left\lvert \eta \right\rvert^{2}\qquad \text{ for all } \xi \in \mathbb{R}^{n}, \eta \in \varLambda^{k}, \\
			\left\langle M_{N}\left( \xi \otimes \eta \right), \xi \otimes \eta \right\rangle \ge c \left\lvert \xi \right\rvert^{2}\left\lvert \eta \right\rvert^{2}\qquad \text{ for all } \xi \in \mathbb{R}^{n}, \eta \in \varLambda^{k}.  
		\end{align*}
	\end{lemma}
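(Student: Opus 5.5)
The plan is to reduce both inequalities to one algebraic identity for simple tensors:
\begin{align*}
\left\lvert a \wedge b \right\rvert^{2} + \left\lvert a \lrcorner b \right\rvert^{2} = \left\lvert a \right\rvert^{2}\left\lvert b \right\rvert^{2} \qquad \text{ for all } a \in \mathbb{R}^{n}, b \in \varLambda^{k}.
\end{align*}
This is the classical identity $a\lrcorner(a\wedge b) + a\wedge(a\lrcorner b) = |a|^{2} b$ (the anticommutator of $a\wedge$ and $a\lrcorner$ is multiplication by $|a|^{2}$). Pair it with $b$ and use that $a\wedge$ and $a\lrcorner$ are mutually adjoint. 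For vector-valued forms it holds componentwise and sums over the $N$ components.

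\textbf{Case $M_{T}$.} The hypothesis on $\bar{A}$ gives
\begin{align*}
\left\langle M_{T}(a\otimes b), a\otimes b\right\rangle \ge \gamma_{\bar{A}} |a\wedge b|^{2} + |a\lrcorner \bar{B}b|^{2}.
\end{align*}
The difficulty is that the two terms involve $b$ and $\bar{B}b$ respectively, so the identity cannot be applied directly. I would argue by compactness and homogeneity.
\begin{enumerate}[(i)]
\item Normalize to $|a|=|b|=1$.
\item Show the right-hand side never vanishes. If it did, then $a\wedge b=0$, so $b = a\wedge \beta$ for some $(k-1)$-form $\beta$ (one may take $\beta = a\lrcorner b$). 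Also $a\lrcorner \bar{B}b = 0$. Then
\begin{align*}
\langle \bar{B}b, b\rangle = \langle \bar{B}b, a\wedge\beta\rangle = \langle a\lrcorner \bar{B}b, \beta\rangle = 0,
\end{align*}
which contradicts $\langle \bar{B}b,b\rangle \ge \gamma_{\bar{B}}|b|^{2} > 0$.
\item Conclude by continuity on the compact set $S^{n-1}\times S(\varLambda^{k})$ that there is a positive minimum $c$.
\end{enumerate}
To make $c$ depend only on $n,k,N,\gamma_{\bar{A}},\gamma_{\bar{B}}$, I would make this quantitative rather than rely on compactness for a fixed $\bar{B}$. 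Split $b = b_{1}+b_{2}$ with $b_{1} = |a|^{-2}a\wedge(a\lrcorner b)$ and $b_{2} = |a|^{-2}a\lrcorner(a\wedge b)$, so that $|a||b_{2}| = |a\wedge b|$. If $|a\wedge b|\ge \epsilon |a||b|$ we are done. Otherwise $b_{2}$ is small. Then $\gamma_{\bar{B}}|b_{1}|^{2}\le \langle \bar{B}b_{1},b_{1}\rangle = |a|^{-2}\langle a\lrcorner\bar{B}b_{1}, a\lrcorner b\rangle$, which gives $|a\lrcorner \bar{B}b_{1}| \ge \gamma_{\bar{B}}|a||b_{1}|$. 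Since $|a\lrcorner \bar{B}b_{2}|\le \gamma_{\bar{B}}^{-1}|a||b_{2}|$, choosing $\epsilon$ small in terms of $\gamma_{\bar{B}}$ yields $|a\lrcorner\bar{B}b|\gtrsim |a||b|$. Alternatively, compactness can be run over the compact set of admissible $\bar{B}$ as well.

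\textbf{Case $M_{N}$.} Here
\begin{align*}
\left\langle M_{N}(a\otimes b), a\otimes b\right\rangle \ge \gamma_{\bar{A}}|a\wedge \bar{B}^{-1}b|^{2} + |a\lrcorner b|^{2}.
\end{align*}
Set $\tilde{b} = \bar{B}^{-1}b$, which satisfies $|\tilde b|\sim|b|$. The roles of $\wedge$ and $\lrcorner$ are now swapped relative to the first case. One can run the symmetric argument: split $b$ into its $a\wedge$-part and $a\lrcorner$-part and use the coercivity of $\bar{B}^{-1}$, which holds with constant $\gamma_{\bar{B}}^{3}$ or similar. Alternatively, apply Hodge duality $\ast$, which interchanges $a\wedge$ and $a\lrcorner$ up to sign, to reduce to the $M_{T}$ case.

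\textbf{Main obstacle.} I expect the only delicate point to be the quantitative case split, specifically keeping the constant uniform in $\bar{B}$ when the two terms involve different forms. The positivity itself is immediate from the contradiction argument in step (ii).
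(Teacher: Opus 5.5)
Your argument is correct and its core is the paper's proof. The paper also normalizes $|\xi|=|\eta|=1$ and argues by compactness. In the degenerate limit $\xi\wedge\eta=0$, $\xi\lrcorner\bar B\eta=0$, it uses $|a|^2b=a\lrcorner(a\wedge b)+a\wedge(a\lrcorner b)$ to get $\langle\bar B\eta,\eta\rangle=\langle\xi\lrcorner\bar B\eta,\xi\lrcorner\eta\rangle=0$, which is exactly your step (ii). For $M_N$ the paper does not use Hodge duality. It runs the mirror computation $\langle\xi\wedge\bar B^{-1}\eta,\xi\wedge\eta\rangle=\langle\bar B^{-1}\eta,\eta\rangle$ under $\xi\lrcorner\eta=0$. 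Your reduction via $\ast\bar B^{-1}\ast^{-1}$ on $\varLambda^{n-k}$ works equally well.

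Your quantitative splitting $b=b_1+b_2$ is where you go beyond the paper. The paper's sequence argument is run for a fixed $\bar B$, so strictly it only yields a constant depending on $\bar B$ itself. Your splitting does give an explicit constant, but it has a hidden assumption. The bound $|a\lrcorner\bar Bb_2|\le\gamma_{\bar B}^{-1}|a||b_2|$ needs $|\bar B|_{\mathrm{op}}\le\gamma_{\bar B}^{-1}$. So do the $\gamma_{\bar B}^{3}$ coercivity of $\bar B^{-1}$ and the compactness of the set of admissible $\bar B$. The two-sided bound on $\langle\bar Bb,b\rangle$ gives this norm bound only when $\bar B$ is symmetric.

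Without symmetry, the claimed dependence on $n,k,N,\gamma_{\bar A},\gamma_{\bar B}$ alone is actually false. Take $n=2$, $k=1$, $N=1$, $\bar A=\mathrm{id}$ and $\bar Bb=(b_1-\Lambda b_2,\Lambda b_1+b_2)$. Then $\langle\bar Bb,b\rangle=|b|^2$, so $\gamma_{\bar B}=1$. Yet $a=e_1$ and $b=(\Lambda,1)/\sqrt{1+\Lambda^2}$ give $a\lrcorner\bar Bb=0$, hence $\langle M_T(a\otimes b),a\otimes b\rangle=|a\wedge b|^2=(1+\Lambda^2)^{-1}\to0$.

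The uniform statement should therefore let $c$ also depend on an operator-norm bound for $\bar B$, such as the paper's $L$. This is harmless downstream, since the paper's later constants depend on $L$ anyway. With $\gamma_{\bar B}^{-1}$ replaced by such a bound and $\epsilon$ chosen accordingly, your quantitative argument goes through unchanged.
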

	\begin{proof}
		By definition of $M_{T}$, we have 
		\begin{align*}
			\left\langle M_{T}\left( \xi \otimes \eta \right), \xi \otimes \eta \right\rangle = \left\langle \bar{A} \left(\xi\wedge \eta\right), \xi \wedge \eta\right\rangle +  \left\lvert   \xi\lrcorner \bar{B}\eta\right\rvert^{2}\ge \gamma_{\bar{A}}\left\lvert   \xi \wedge \eta\right\rvert^{2} +  \left\lvert   \xi \lrcorner \bar{B}\eta\right\rvert^{2}. 
		\end{align*}
		Thus it remains to prove that there exists a constant $c>0$ such that 
		$$ \gamma_{\bar{A}}\left\lvert   \xi \wedge \eta\right\rvert^{2} +  \left\lvert   \xi \lrcorner \bar{B}\eta\right\rvert^{2}
		\ge c \left\lvert \xi \right\rvert^{2}\left\lvert \eta \right\rvert^{2}. $$
		If the claim is false, then there exist sequences $\left\lbrace \xi_{s} \right\rbrace_{s \in \mathbb{N}}$, and $\left\lbrace \eta_{s} \right\rbrace_{s \in \mathbb{N}}$ such that 
		$$ \left\lvert \xi_{s} \right\rvert = 1 =  \left\lvert \eta_{s} \right\rvert \qquad \text{ for all } s \in \mathbb{N},$$
		and 
		$$ \gamma_{\bar{A}}\left\lvert   \xi_{s} \wedge \eta_{s}\right\rvert^{2} +  \left\lvert   \xi_{s} \lrcorner \bar{B}\eta_{s}\right\rvert^{2}  < \frac{1}{s} \qquad \text{ for all } s \in \mathbb{N}.$$ This implies, passing to a subsequence if necessary, that there exist $\xi, \eta$ such that $$ \xi_{s} \rightarrow \xi, \qquad \eta_{s} \rightarrow \eta \qquad \text{ with } \left\lvert \xi \right\rvert = 1 =  \left\lvert \eta \right\rvert,$$ such that 
		$$  \xi \wedge \eta = 0 \qquad \text{ and } \qquad  \xi \lrcorner \bar{B}\eta=0. $$
		But this implies 
		\begin{align*}
			0 = \left\langle  \xi \lrcorner \bar{B}\eta,  \xi \lrcorner \eta\right\rangle =  \left\langle \bar{B}\eta, \xi \wedge \left( \xi \lrcorner \eta\right) \right\rangle. 
		\end{align*}
		Observe that we have the identity 
		\begin{align}\label{1form wedge interior identity}
			|a|^{2}b = a\lrcorner \left( a \wedge b \right) + a \wedge \left( a \lrcorner b\right) \qquad \text{ for all } a \in \mathbb{R}^{n}, b \in \varLambda^{k}.
		\end{align}
		For $N=1$, this follows from \cite[Equation (2.7), Page 39]{CsatoDacKneuss}, which extends componentwise to the vector-valued case. Using this, we get 
		\begin{align*}
			0 =  \left\langle \bar{B}\eta, \xi \wedge \left( \xi \lrcorner \eta \right)\right\rangle = \left\langle \bar{B}\eta, \left\lvert \xi \right\rvert^{2} \eta\right\rangle = \left\lvert \xi \right\rvert^{2}\left\langle \bar{B}\eta, \eta\right\rangle \ge \gamma_{\bar{B}}\left\lvert \eta\right\rvert^{2} = \gamma_{\bar{B}} >0, 
		\end{align*}
		where we used the fact that $\left\lvert \xi \right\rvert = 1 = \left\lvert v \right\rvert.$ This contradiction proves the claim for $M_{T}.$ For $M_{N}$, we get \begin{align*}
			\left\langle M_{N}\left( \xi \otimes \eta \right), \xi \otimes \eta \right\rangle &= \left\langle \bar{A} \left(\xi\wedge \bar{B}^{-1}\eta\right), \xi \wedge \bar{B}^{-1}\eta\right\rangle +  \left\lvert   \xi\lrcorner \eta\right\rvert^{2}\\&\ge \gamma_{\bar{A}}\left\lvert   \xi \wedge \bar{B}^{-1}\eta\right\rvert^{2} +  \left\lvert   \xi \lrcorner \eta\right\rvert^{2}. 
		\end{align*}
		We claim that there exists a constant $c>0$ such that 
		$$ \gamma_{\bar{A}}\left\lvert   \xi \wedge \bar{B}^{-1}\eta\right\rvert^{2} +  \left\lvert   \xi \lrcorner \eta\right\rvert^{2}
		\ge c \left\lvert \xi \right\rvert^{2}\left\lvert \eta \right\rvert^{2}. $$
		If the claim is false, then there exist sequences $\left\lbrace \xi_{s} \right\rbrace_{s \in \mathbb{N}}$, and $\left\lbrace \eta_{s} \right\rbrace_{s \in \mathbb{N}}$ such that 
		$$ \left\lvert \xi_{s} \right\rvert = 1 =  \left\lvert \eta_{s} \right\rvert \qquad \text{ for all } s \in \mathbb{N},$$
		and 
		$$ \gamma_{\bar{A}}\left\lvert   \xi_{s} \wedge \bar{B}^{-1}\eta_{s}\right\rvert^{2} +  \left\lvert   \xi_{s} \lrcorner \eta_{s}\right\rvert^{2}  < \frac{1}{s} \qquad \text{ for all } s \in \mathbb{N}.$$ This implies, passing to a subsequence if necessary, that there exist $\xi, \eta$ such that $$ \xi_{s} \rightarrow \xi, \qquad \eta_{s} \rightarrow \eta \qquad \text{ with } \left\lvert \xi \right\rvert = 1 =  \left\lvert \eta \right\rvert,$$ such that 
		$$  \xi \wedge \bar{B}^{-1}\eta = 0 \qquad \text{ and } \qquad  \xi \lrcorner \eta=0. $$
		But once again  using the identity \eqref{1form wedge interior identity}, this implies   
		\begin{align*}
			0 = \left\langle  \xi \wedge \bar{B}^{-1}\eta,  \xi \wedge \eta\right\rangle =  \left\langle \bar{B}^{-1}\eta, \xi \lrcorner \left( \xi \wedge \eta\right) \right\rangle = \left\langle \bar{B}^{-1}\eta, \eta \right\rangle \le \gamma_{\bar{B}} >0. 
		\end{align*}
		This contradiction completes the proof. 
	\end{proof}

	\subsection{Function spaces}
	We define the spaces $W^{d,2}\left(  \Omega;\varLambda^{k}\right)$ and  $W^{d^{\ast},2}\left(  \Omega;\varLambda^{k}\right)  $ as 
	\begin{align*}
		W^{d,2}\left(  \Omega;\varLambda^{k}\right) &= \left\lbrace \omega \in L^{2}\left(  \Omega;\varLambda^{k}\right): du \in L^{2}\left(  \Omega;\varLambda^{k+1}\right) \right\rbrace, \\
		W^{d^{\ast},2}\left(  \Omega;\varLambda^{k}\right) &= \left\lbrace \omega \in L^{2}\left(  \Omega;\varLambda^{k}\right): d^{\ast}u \in L^{2}\left(  \Omega;\varLambda^{k+1}\right)  \right\rbrace. 
	\end{align*}
	Note that $W^{d,2}\left(  \Omega;\varLambda^{k}\right)$ is just $L^{2}\left(  \Omega;\varLambda^{k}\right)$ when $k=n$ and $W^{d^{\ast},2}\left(  \Omega;\varLambda^{k}\right)$ is just $L^{2}\left(  \Omega;\varLambda^{k}\right)$ when $k=0.$
	We would also need the following subspaces 
	\begin{align*}
		W^{d,2}_{T}\left(  \Omega;\varLambda^{k}\right) &= \left\lbrace \omega \in W^{d, 2}\left(  \Omega;\varLambda^{k}\right):  \nu\wedge\omega=0\text{ on
		}\partial\Omega \right\rbrace, \\
		W^{d^{\ast},2}_{N}\left(  \Omega;\varLambda^{k}\right) &= \left\lbrace \omega \in W^{d^{\ast}, 2}\left(  \Omega;\varLambda^{k}\right):  \nu\lrcorner\omega=0\text{ on
		}\partial\Omega \right\rbrace. 
	\end{align*}
	The boundary conditions are to be interpreted in the trace sense. 
	Similarly, we define 
	\begin{align*}
		W_{T}^{1,2}\left(  \Omega;\varLambda^{k}\right)  &=\left\{  \omega\in
		W^{1,2}\left(  \Omega;\varLambda^{k}\right)  :\nu\wedge\omega=0\text{ on
		}\partial\Omega\right\}, \\ W_{N}^{1,2}\left(  \Omega;\varLambda^{k}\right)  &=\left\{  \omega\in
		W^{1,2}\left(  \Omega;\varLambda^{k}\right)  :\nu\lrcorner\omega=0\text{ on
		}\partial\Omega\right\}. 
	\end{align*}
	We also need the following subspaces 
	\begin{align*}
		W_{d^{\ast}, T}^{1,2}(\Omega; \varLambda^{k}) &= \left\lbrace \omega \in W_{T}^{1,2}(\Omega; \varLambda^{k}) : d^{\ast}\omega = 0 \text{ in }
		\Omega \right\rbrace, \\
		W_{d, N}^{1,2}(\Omega; \varLambda^{k}) &= \left\lbrace \omega \in W_{N}^{1,2}(\Omega; \varLambda^{k}) : d\omega = 0 \text{ in }
		\Omega \right\rbrace, \\
		\mathcal{H}^{k}_{T}\left(  \Omega;\varLambda^{k}\right)  &=\left\{  \omega\in
		W_{T}^{1,2}\left(  \Omega;\varLambda^{k}\right)  :d\omega=0\text{ and }%
		d^{\ast}\omega=0\text{ in }\Omega\right\}, \\
		\mathcal{H}^{k}_{N}\left(  \Omega;\varLambda^{k}\right)  &=\left\{  \omega\in
		W_{N}^{1,2}\left(  \Omega;\varLambda^{k}\right)  :d\omega=0\text{ and }%
		d^{\ast}\omega=0\text{ in }\Omega\right\}. 
	\end{align*}
	For any $p>1,$ we define  
	\begin{align*}
		W^{1,p}&\left(  \Omega;\varLambda^{k}\right)\cap \mathcal{H}^{\perp}_{T} \\&:= \left\lbrace u \in W^{1,p}\left(  \Omega;\varLambda^{k}\right): \int_{\Omega} \left\langle u, h \right\rangle = 0 \quad \text{ for all } h \in \mathcal{H}^{k}_{T}\left(  \Omega;\varLambda^{k}\right) \right\rbrace. 
	\end{align*}
	Similarly, we set 
	\begin{align*}
		W^{1,p}&\left(  \Omega;\varLambda^{k}\right)\cap \mathcal{H}^{\perp}_{N} \\&:= \left\lbrace u \in W^{1,p}\left(  \Omega;\varLambda^{k}\right): \int_{\Omega} \left\langle u, h \right\rangle = 0 \quad \text{ for all } h \in \mathcal{H}^{k}_{N}\left(  \Omega;\varLambda^{k}\right) \right\rbrace. 
	\end{align*}
	We also need the following subspaces. 
	\begin{align*}
		W&_{T, \text{flat}}^{1,2}(B_{R}^{+}\left(x\right) ; \varLambda^{k}) \\&= \left\lbrace \psi \in W^{1,2}(B_{R}^{+}\left(x\right) ; \varLambda^{k}):
		e_n \wedge \psi = 0 \text{ on }  \Gamma_{R}\left(x\right), \psi = 0 \text{ near } \Sigma_{R}\left(x\right) \right\rbrace ,  
	\end{align*}
	\begin{align*}
		W&_{T, \text{flat}}^{1,2}(\mathcal{U}_{R}\left(x\right) ; \varLambda^{k}) \\&= \left\lbrace \psi \in W^{1,2}(\mathcal{U}_{R}\left(x\right) ; \varLambda^{k}):
		e_n \wedge \psi = 0 \text{ on }  \Gamma_{R}\left(x\right), \psi = 0 \text{ near } \mathcal{C}_{R}\left(x\right) \right\rbrace.  
	\end{align*}
	Similarly, we define 
	\begin{align*}
		W&_{N, \text{flat}}^{1,2}(B_{R}^{+}\left(x\right) ; \varLambda^{k}) \\&= \left\lbrace \psi \in W^{1,2}(B_{R}^{+}\left(x\right) ; \varLambda^{k}):
		e_n \lrcorner \psi = 0 \text{ on }  \Gamma_{R}\left(x\right), \psi = 0 \text{ near } \Sigma_{R}\left(x\right) \right\rbrace,   
	\end{align*}
	\begin{align*}
		W&_{N, \text{flat}}^{1,2}(\mathcal{U}_{R}\left(x\right) ; \varLambda^{k}) \\&= \left\lbrace \psi \in W^{1,2}(\mathcal{U}_{R}\left(x\right) ; \varLambda^{k}):
		e_n \lrcorner \psi = 0 \text{ on }  \Gamma_{R}\left(x\right), \psi = 0 \text{ near } \mathcal{C}_{R}\left(x\right) \right\rbrace.   
	\end{align*}
	\subsection{\texorpdfstring{$A_{p}$}{Ap} weights and Weighted Sobolev spaces}	
	\begin{definition}
		A locally integrable function $w \in L^{1}_{\text{loc}}\left(\mathbb{R}^{n}\right)$ is called a \emph{weight} if $w\left(x\right) >0$ for a.e. $x \in \mathbb{R}^{n}.$ We shall denote the corresponding measure $w\left(x\right)\mathrm{d}x$ also by $w.$
	\end{definition}
	For any given weight $w$ and any $1< p < \infty,$ we define the weighted Lebesgue space as
	\begin{align*}
		L^p_w(\Omega)&=\left\{f:\Omega \rightarrow \mathbb{R} \text{ is measurable}:\|f\|^p_{L^p_w(\Omega)}=\int_{\Omega}|f(x)|^pw(x)dx<\infty\right\}. 
	\end{align*}
	The weighted Sobolev space $W^{1,p}_{w}$ is defined as  
	\begin{align*}
		W^{1,p}_w(\Omega)&=\left\{f\in L^p_w(\Omega):\nabla f\in L^p_w(\Omega)\right\} 
	\end{align*} equipped with the norm 
	\begin{align*}
		\|f\|^p_{W^{1,p}_w(\Omega)}&=\|f\|^p_{L^p_w(\Omega)}+\|\nabla f\|^p_{L^p_w(\Omega)}.
	\end{align*} Similarly, the space $W^{2,p}_{w}$ is defined as  
	\begin{align*}
		W^{2,p}_w(\Omega)&=\left\{f\in W^{1,p}_w(\Omega):\nabla^{2} f\in L^p_w(\Omega)\right\}
	\end{align*} equipped with the norm 
	\begin{align*}
		\|f\|^p_{W^{2,p}_w(\Omega)}&=\|f\|^p_{L^p_w(\Omega)}+\|\nabla f\|^p_{L^p_w(\Omega)} + +\|\nabla^{2} f\|^p_{L^p_w(\Omega)}. 
	\end{align*}
	Higher order Sobolev spaces, i.e. $W^{r,p}_{w}$ for $r \geq 2$, are defined similarly. 
	\begin{definition}
		Let $1\leq p < \infty.$ A weight $w$ is said to be an $A_{p}$ weight if there exists a positive constant $A$ such that for every ball $B \subset \mathbb{R}^{n}$, we have 
		\begin{align*}
			\left\lbrace \begin{aligned}
				&\left( \fint_{B} w\  \mathrm{d}x\right)\left( \fint_{B} w^{-\frac{1}{(p-1)}}\ \mathrm{d}x\right)^{p-1} \leq A  &&\text{ if } p >1, \\
				&\left( \fint_{B} w\  \mathrm{d}x\right)\left( \sup\limits_{x \in B} w \right) \leq A &&\text{ if } p =1. 
			\end{aligned}\right. 
		\end{align*}
		The smallest constant $A$ will be denoted by $\left[w\right]_{A_{p}}.$
	\end{definition}
	We recall the well-known reverse H\"{o}lder property of $A_{p}$ weights for $p >1.$  
	\begin{proposition}
		Let $M_{0}>0$ and let $1< p < \infty.$ Then there exist constants $C =C \left(n, p , M_{0}\right)>0$ and $\varepsilon = \varepsilon \left(n, p , M_{0}\right) >0$ such that for every $w \in A_{p}$ with $\left[w\right]_{A_{p}} \leq M_{0},$ we have 
		\begin{align*}
			\left( \fint_{B} w^{1+\varepsilon} \ \mathrm{d}x\right)^{\frac{1}{1+\varepsilon}} \leq C  \left( \fint_{B} w \ \mathrm{d}x \right) \qquad \text{ for every ball } B \subset \mathbb{R}^{n}. 
		\end{align*}
	\end{proposition}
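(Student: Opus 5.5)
The plan is the classical Gehring-type argument for $A_{p}$ weights: first obtain a reverse H\"{o}lder inequality for $w$ with a small exponent gain from a Calder\'{o}n--Zygmund decomposition, keeping track of constants so that everything depends only on $n$, $p$ and the bound $M_{0}$, and never on $w$ itself.

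\textbf{Step 1: $A_{\infty}$-type quantitative estimate.} Fix a ball $B$ (or, more conveniently, a cube $Q$; balls and cubes are comparable with dimensional constants, so proving the result for cubes suffices). From the $A_{p}$ condition and H\"{o}lder's inequality, for any measurable $E \subset Q$ we get
\begin{align*}
	\left(\frac{|E|}{|Q|}\right)^{p} \le [w]_{A_{p}} \frac{w(E)}{w(Q)} \le M_{0}\frac{w(E)}{w(Q)}.
\end{align*}
Applying this to $E = \{x \in Q : w(x) \le \beta\, (w)_{Q}\}$ with a small $\beta = \beta(n,p,M_{0})$, one sees that $w(E) \le \beta |Q|(w)_{Q}$, hence $|E| \le (M_{0}\beta)^{1/p}|Q| \le \tfrac{1}{2}|Q|$. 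So $w$ is larger than a fixed fraction of its average on at least half of every cube, with constants depending only on $n,p,M_{0}$.

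\textbf{Step 2: Calder\'{o}n--Zygmund decomposition and the good-$\lambda$ bound.} For $\lambda > (w)_{Q}$, perform a Calder\'{o}n--Zygmund decomposition of $w$ on $Q$ at height $\lambda$ to obtain maximal dyadic subcubes $Q_{j}$ with $\lambda < (w)_{Q_{j}} \le 2^{n}\lambda$ and $w \le \lambda$ a.e. outside $\bigcup_{j} Q_{j}$. Applying Step 1 on each $Q_{j}$ gives
\begin{align*}
	w\left(\{w>\lambda\}\cap Q\right) \le \sum_{j} w(Q_{j}) \le 2^{n}\lambda \sum_{j}|Q_{j}| \le C\lambda\, \left|\{w > \beta\lambda\}\cap Q\right|.
\end{align*}
Multiplying by $\lambda^{\varepsilon-1}$, integrating over $\lambda\in((w)_{Q},\infty)$ and using Fubini yields
\begin{align*}
	\int_{Q} w^{1+\varepsilon} \le C (w)_{Q}^{\varepsilon}\, w(Q) + C\,\varepsilon\, \beta^{-1-\varepsilon}\int_{Q} w^{1+\varepsilon}.
\end{align*}

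\textbf{Step 3: absorption.} Choose $\varepsilon = \varepsilon(n,p,M_{0})$ so small that $C\varepsilon\beta^{-1-\varepsilon} \le \tfrac12$ and absorb the last term into the left-hand side. This is the delicate step: absorption is only legitimate if $\int_{Q} w^{1+\varepsilon}$ is known to be finite a priori. I would handle this by running the argument with the truncation $\min(w,m)$ in the distribution-function integrals, or equivalently restricting $\lambda$ to be at most $m$, which keeps all quantities finite while preserving the estimates. Letting $m\to\infty$ by monotone convergence then gives $\fint_{Q} w^{1+\varepsilon} \le C (w)_{Q}^{1+\varepsilon}$. Passing from cubes back to balls changes only dimensional constants, which completes the plan.
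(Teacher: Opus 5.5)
Your argument is correct, but it takes a different route from the one the paper relies on. The paper gives no proof and simply cites \cite[Theorem 9.2.2]{Grafakos_modernFourier}.

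\emph{The cited argument.} It starts from the same inequality $(|E|/|Q|)^{p}\le [w]_{A_{p}}\,w(E)/w(Q)$. It uses it in the complementary form: $|S|\le \tfrac12 |Q|$ implies $w(S)\le \beta\, w(Q)$, with $\beta = 1-2^{-p}[w]_{A_{p}}^{-1}<1$. It then iterates the Calder\'{o}n--Zygmund decomposition of $w$ on $Q$ at the geometric heights $\alpha_{k}=2^{(n+1)k}(w)_{Q}$. The nested unions $U_{k+1}\subset U_{k}$ of stopping cubes satisfy $|U_{k+1}\cap Q^{k}_{j}|\le \tfrac12|Q^{k}_{j}|$ on every cube of generation $k$. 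Hence $w(U_{k+1})\le\beta\, w(U_{k})$, so $w(U_{k})\le\beta^{k}w(Q)$. Since $w\le\alpha_{k+1}$ a.e. on $U_{k}\setminus U_{k+1}$, one gets $\int_{Q}w^{1+\varepsilon}\le \alpha_{0}^{\varepsilon}w(Q)+\sum_{k\ge 0}\alpha_{k+1}^{\varepsilon}\beta^{k}w(Q)$. This geometric series converges once $2^{(n+1)\varepsilon}\beta<1$. Nothing is ever absorbed, so no a priori finiteness of $\int_{Q}w^{1+\varepsilon}$ is needed.

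\emph{Your route.} You use ``$w>\beta (w)_{Q}$ on half of every cube'', a good-$\lambda$ inequality, the layer-cake formula and absorption. This is the general Gehring template, the same mechanism as Theorem~\ref{gehring} used later in the paper, and it adapts more easily to other settings. The price is that the truncation in Step 3 is genuinely required.

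\emph{The truncation.} Of your two options, cutting the $\lambda$-integral at $m>(w)_{Q}$ is the clean one.
\begin{itemize}
\item Left side: since $w\min(w,m)^{\varepsilon}\ge \min(w,m)^{1+\varepsilon}$, it controls $\frac{1}{\varepsilon}\int_{Q}\min(w,m)^{1+\varepsilon}$ up to $\frac{2}{\varepsilon}(w)_{Q}^{\varepsilon}w(Q)$.
\item Right side: it is at most $\frac{C}{1+\varepsilon}\int_{Q}\min(w/\beta,m)^{1+\varepsilon}\le \frac{C}{(1+\varepsilon)\beta^{1+\varepsilon}}\int_{Q}\min(w,m)^{1+\varepsilon}$.
\item Absorption: this last quantity is bounded by $m^{1+\varepsilon}|Q|<\infty$, so it can be absorbed before letting $m\to\infty$.
\end{itemize}
Replacing $w$ by $\min(w,m)$ throughout is not literally equivalent. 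It would require checking that $\min(w,m)\in A_{p}$ with a constant controlled by $n,p,M_{0}$. That is true, but it is an extra step.

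One small correction: going from cubes back to balls is not purely dimensional. If $Q\supset B$ is the circumscribed cube, you need $w(Q)\lesssim w(B)$ to turn $(\fint_{Q}w)^{1+\varepsilon}$ into $(\fint_{B}w)^{1+\varepsilon}$. This doubling property follows from your Step 1 inequality with $E=B$, namely $w(Q)\le (|Q|/|B|)^{p}[w]_{A_{p}}w(B)$. So the constant also picks up a dependence on $M_{0}$ there, which is harmless for the statement.
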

	See \cite[Theorem 9.2.2]{Grafakos_modernFourier} for a proof. As  a consequence, we have the following results. The proofs are standard and is skipped.     
	\begin{proposition}\label{higher power Ap}
		Let $M_{0}>0$ and let $1< p < \infty.$ Then there exist  $p_{0} = p_{0} \left(n, p , M_{0}\right) \in (1, q)$ such that for every $w \in A_{p}$ with $\left[w\right]_{A_{p}} \leq M_{0},$ we have $w \in A_{p_{0}}.$ 
	\end{proposition}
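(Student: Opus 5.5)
The plan is to apply the reverse H\"{o}lder property not to $w$ itself but to the dual weight $\sigma := w^{-\frac{1}{p-1}}$. We read the statement as $p_{0} \in (1,p)$; the ``$q$'' appears to be a typo. We will also show a uniform bound $\left[w\right]_{A_{p_{0}}} \le C(n,p,M_{0})$, which is what is needed later.

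\textbf{Step 1: $\sigma$ is a controlled $A_{p'}$ weight.} Let $p' = p/(p-1)$. Since $-\frac{1}{p'-1} = -(p-1)$, we have $\sigma^{-\frac{1}{p'-1}} = w$. Hence, for every ball $B$,
\begin{align*}
\left( \fint_{B} \sigma \right)\left( \fint_{B} w \right)^{\frac{1}{p-1}} = \left[ \left( \fint_{B} w \right)\left( \fint_{B} \sigma \right)^{p-1} \right]^{\frac{1}{p-1}} \le M_{0}^{\frac{1}{p-1}} .
\end{align*}
So $\sigma \in A_{p'}$ with $\left[\sigma\right]_{A_{p'}} \le M_{0}^{1/(p-1)}$. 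In particular $\sigma$ is locally integrable, which is implicit in $w \in A_{p}$.

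\textbf{Step 2: reverse H\"{o}lder for $\sigma$.} We apply the preceding proposition with exponent $p'$ and bound $M_{0}^{1/(p-1)}$. This gives $\varepsilon, C > 0$, depending only on $n, p, M_{0}$, such that
\begin{align*}
\left( \fint_{B} \sigma^{1+\varepsilon} \right)^{\frac{1}{1+\varepsilon}} \le C \fint_{B} \sigma \qquad \text{ for every ball } B .
\end{align*}

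\textbf{Step 3: choice of $p_{0}$ and conclusion.} Define $p_{0}$ by $p_{0} - 1 = \frac{p-1}{1+\varepsilon}$. Then $1 < p_{0} < p$, and $p_{0}$ depends only on $n, p, M_{0}$. Moreover $w^{-\frac{1}{p_{0}-1}} = \sigma^{1+\varepsilon}$. Therefore
\begin{align*}
\left( \fint_{B} w \right)\left( \fint_{B} w^{-\frac{1}{p_{0}-1}} \right)^{p_{0}-1} = \left( \fint_{B} w \right)\left( \fint_{B} \sigma^{1+\varepsilon} \right)^{\frac{p-1}{1+\varepsilon}} \le C^{p-1}\left( \fint_{B} w \right)\left( \fint_{B} \sigma \right)^{p-1} \le C^{p-1} M_{0} .
\end{align*}
This shows $w \in A_{p_{0}}$ with $\left[w\right]_{A_{p_{0}}} \le C^{p-1}M_{0}$.

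\textbf{Main obstacle.} The only delicate point is uniformity: the constants $\varepsilon$ and $C$ must depend on $w$ only through $M_{0}$. This holds because the reverse H\"{o}lder proposition is stated with constants depending only on $n$, the exponent, and the bound on the $A_{p}$ constant. Step 1 supplies exactly such a bound for $\sigma$, namely $M_{0}^{1/(p-1)}$ for the exponent $p'$.
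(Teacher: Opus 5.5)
Your proof is correct and is the standard argument the paper has in mind when it lists this as a consequence of the reverse H\"{o}lder property and skips the proof: you apply reverse H\"{o}lder to the dual weight $\sigma = w^{-1/(p-1)} \in A_{p'}$, which has $[\sigma]_{A_{p'}} = [w]_{A_p}^{1/(p-1)}$, and then set $p_0 - 1 = (p-1)/(1+\varepsilon)$. Your reading of $(1,q)$ as $(1,p)$ is right. The explicit uniform bound $[w]_{A_{p_0}} \le C^{p-1}M_0$ is exactly what the paper needs later when it applies the maximal function estimate with exponent $p/\bar{p}$ to the reflected weight $\tilde{w}$, there with $M_0$ replaced by $2^p M_0$.
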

	\begin{proposition}
		Let $M_{0}>0$ and let $1< p < \infty.$ Then there exist  $p_{0} = p_{0} \left(n, p , M_{0}\right) \in (1, q)$ such that for any $w \in A_{p}$ with $\left[w\right]_{A_{p}} \leq M_{0}$ and for any open, bounded subset $\Omega \subset \mathbb{R}^{n},$ we have the continuous embedding 
		\begin{align*}
			L^{p}_{w}\left( \Omega \right) \hookrightarrow L^{p_{0}}\left( \Omega \right).   
		\end{align*}
	\end{proposition}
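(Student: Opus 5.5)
The embedding will follow from Hölder's inequality once we have an exponent at which $w^{-1}$ is integrable, and that exponent comes from Proposition \ref{higher power Ap}. (The stated range $(1,q)$ is read as $(1,p)$; this is what the argument needs.)

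First, I would apply Proposition \ref{higher power Ap} with $p$ and $M_{0}$. This gives $p_{1} = p_{1}(n,p,M_{0}) \in (1,p)$ such that $w \in A_{p_{1}}$, with $[w]_{A_{p_{1}}}$ controlled in terms of $n,p,M_{0}$. I then set $p_{0} := p/p_{1} \in (1,p)$, which depends only on $n,p,M_{0}$.

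Next, I would show that $w^{-1/(p_{1}-1)}$ is integrable over $\Omega$. Since $\Omega$ is bounded, pick a ball $B \supset \Omega$. The $A_{p_{1}}$ condition on $B$ gives
\begin{align*}
\fint_{B} w^{-\frac{1}{p_{1}-1}}\,\mathrm{d}x \le \left( [w]_{A_{p_{1}}} \Big/ \fint_{B} w\,\mathrm{d}x \right)^{\frac{1}{p_{1}-1}} < \infty .
\end{align*}
This is finite because $w>0$ a.e., so $\int_{B} w>0$.

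Then I would apply Hölder's inequality with exponents $p_{1}$ and $p_{1}' = p_{1}/(p_{1}-1)$:
\begin{align*}
\int_{\Omega} |f|^{p_{0}} = \int_{\Omega} |f|^{p/p_{1}} w^{1/p_{1}} w^{-1/p_{1}} \le \left(\int_{\Omega} |f|^{p} w\right)^{\frac{1}{p_{1}}} \left(\int_{\Omega} w^{-\frac{1}{p_{1}-1}}\right)^{\frac{p_{1}-1}{p_{1}}}.
\end{align*}
Raising both sides to the power $1/p_{0}$ yields $\|f\|_{L^{p_{0}}(\Omega)} \le C \|f\|_{L^{p}_{w}(\Omega)}$, with $C$ depending on $w$ and $\Omega$. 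This is the desired continuous embedding.

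The only delicate point is the first step. Using the $A_{p}$ condition directly would only give $w^{-1/(p-1)} \in L^{1}_{\text{loc}}$, and Hölder then yields $L^{p}_{w}\hookrightarrow L^{1}$, so no exponent above $1$. The self-improvement of $A_{p}$ to $A_{p_{1}}$ with $p_{1}<p$, which is where the reverse Hölder property enters, is what buys the gain $p_{0} = p/p_{1} > 1$. Proposition \ref{higher power Ap} supplies exactly this, so I would simply invoke it.
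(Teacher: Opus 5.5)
Your argument is correct and is the standard proof that the paper omits as routine. The self-improvement $w\in A_{p_1}$ with $p_1<p$ from Proposition \ref{higher power Ap} (itself a consequence of the reverse H\"{o}lder property) gives $w^{-1/(p_1-1)}\in L^{1}(\Omega)$, and H\"{o}lder's inequality then yields the embedding with $p_0=p/p_1\in(1,p)$; the constant depends on $w$ and $\Omega$, as the subsequent remark allows, and your reading of $(1,q)$ as $(1,p)$ matches how $p_0$ is used in the proof of Lemma \ref{weighted boundary estimate lemma}.
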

	\begin{remark}
		Note that $p_{0}$ depends only on $n, p$ and $M_{0}$ and not on $w$ or $\Omega.$ However, the constant in the continuity estimate depends on $w$ and $\Omega.$
	\end{remark}
	\subsection{Maximal functions}	
	\begin{definition}
		Suppose $f\in L^1_{\text{loc}}(\mathbb{R}^n),$ then its Hardy-Littlewood maximal function, denoted by $Mf,$ defined point-wise as \begin{align*}
			Mf(x)=\sup_{r>0}\fint_{B(x,r)}|f(y)|dy
		\end{align*}
		and the Fefferman-Stein sharp maximal function, denoted by $M^\sharp f,$ defined point-wise as \begin{align*}
			M^\sharp f(x)=\sup_{x\in B} \fint_{B}\left|f(y)-(f)_B\right|dy
		\end{align*}
		and the truncated sharp maximal function, denoted by $M^\sharp_\rho f,$ defined pointwise as
		\begin{align*}
			M^\sharp_\rho f(x)=\sup\limits_{\substack{x\in B(y,r), \\ r\le \rho}} \fint_{B(y,r)}\left|f(z)-(f)_{B(y,r)}\right|dz.
		\end{align*}
	\end{definition}
	\noindent The next two lemmas assert the weighted inequalities for maximal functions. The first goes back to Muckenhoupt \cite{Muckenhoupt_Apweights} (see also \cite{Stein_HarmonicAnalysis}) and is the main reason for  the introduction of $A_{p}$ weights. 
	\begin{lemma}\label{Ap maximal fn boundedness}
		Let $w\in A_p,$ for $1<p<\infty.$ Then there exists a constant $C=C\left(n,p,[w]_{A_p}\right)>0$ such that \begin{align}\label{Mf}
			\left \lVert Mf\right\rVert_{L^p_w(\mathbb R^n)}\le C\left\lVert f\right\rVert_{L^p_w(\mathbb R^n)}
		\end{align}
		for all $f\in L^p_w(\mathbb R^n).$ Conversely, if \eqref{Mf} holds for all $f\in L^p_w(\mathbb R^n),$ then $w\in A_p.$
	\end{lemma}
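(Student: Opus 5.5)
The plan is to prove the forward direction by the standard route: first a weak-type $(p,p)$ bound with respect to $w$, then the reverse Hölder self-improvement $w\in A_{p_0}$ for some $p_0<p$ (Proposition \ref{higher power Ap}), and finally Marcinkiewicz interpolation with the trivial $L^\infty$ bound. The converse will come from testing \eqref{Mf} on a well-chosen function.

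\textbf{Weak-type bound.} Fix a ball $B$ and $f\ge 0$. By Hölder's inequality and the $A_p$ condition,
\begin{align*}
\left(\fint_B f\right)^p \le \left(\fint_B f^p w\right)\left(\fint_B w^{-\frac{1}{p-1}}\right)^{p-1} \le [w]_{A_p}\,\frac{1}{w(B)}\int_B f^p w .
\end{align*}
Now take $\lambda>0$ and a compact $K\subset\{Mf>\lambda\}$. Each point of $K$ lies in a ball $B$ with $\fint_B |f|>\lambda$. The Vitali covering lemma gives disjoint such balls $B_j$ with $K\subset\bigcup 5B_j$. Applying the inequality above on each $B_j$ yields $w(B_j)\le [w]_{A_p}\lambda^{-p}\int_{B_j}|f|^p w$. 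The $A_p$ condition also makes $w$ doubling, with $w(5B)\le 5^{np}[w]_{A_p} w(B)$, which follows from the same inequality applied to $f=\chi_B$ on $5B$. Summing over $j$ gives
\begin{align*}
w(\{Mf>\lambda\})\le C\lambda^{-p}\|f\|^p_{L^p_w}.
\end{align*}

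\textbf{Strong type.} By Proposition \ref{higher power Ap}, $w\in A_{p_0}$ for some $1<p_0<p$, with $[w]_{A_{p_0}}$ controlled by $n,p,[w]_{A_p}$. This control is the one delicate point: the constant must depend only on $n,p,[w]_{A_p}$. I would verify it through the quantitative reverse Hölder inequality applied to the dual weight $\sigma=w^{-1/(p-1)}\in A_{p'}$, which gives $\sigma^{1+\varepsilon}$ averages comparable to powers of $\sigma$ averages and hence $w\in A_{p_0}$ with $p_0-1=(p-1)/(1+\varepsilon)$. The weak $(p_0,p_0)$ bound then holds with respect to the measure $w$, and $M$ is trivially bounded on $L^\infty_w=L^\infty$ since $w>0$ a.e. Marcinkiewicz interpolation for the sublinear operator $M$ on the measure space $(\mathbb{R}^n,w\,dx)$ gives \eqref{Mf}. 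I expect this self-improvement step, rather than the interpolation, to be the main obstacle.

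\textbf{Converse.} Assume \eqref{Mf}. For a ball $B$ and $f=\chi_B g$ with $g\ge0$, we have $Mf\ge (\fint_B g)\chi_B$, so \eqref{Mf} gives
\begin{align*}
\left(\fint_B g\right)^p w(B)\le C^p\int_B g^p w .
\end{align*}
First choose $g=\chi_E$ for $E\subset B$; this shows $w>0$ a.e. and $w\in L^1_{\mathrm{loc}}$. Next choose $g=(w+\delta)^{-1/(p-1)}\chi_B$, which makes the right side finite, and let $\delta\to0$. By monotone convergence this yields
\begin{align*}
\left(\fint_B w\right)\left(\fint_B w^{-\frac{1}{p-1}}\right)^{p-1}\le C^p,
\end{align*}
that is, $w\in A_p$.
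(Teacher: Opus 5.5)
Your proof is correct. It is the classical Muckenhoupt/Coifman--Fefferman argument: the weak type bound via Vitali and doubling, $A_p\subset A_{p_0}$ via reverse H\"older for $\sigma=w^{-1/(p-1)}\in A_{p'}$, Marcinkiewicz interpolation, and testing on $(w+\delta)^{-1/(p-1)}\chi_B$ for the converse. The paper does not prove this lemma; it relies on exactly this argument by citing \cite{Muckenhoupt_Apweights} and \cite{Stein_HarmonicAnalysis}.

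Two small details need tightening. First, the paper's $M$ is centered, so you only have $Mf\ge 2^{-n}\bigl(\fint_B g\bigr)\chi_B$; this merely changes the constant. Second, in the converse you should state $g^{p}w\le g$ on $B$, so that you can divide by the finite quantity $\fint_B g$ before letting $\delta\to 0$.
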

	\noindent Our use of $M^\sharp_\rho$ lies in the following key estimates, established in \cite[Corollary 2.7]{Phuc_Nguyen_WeightedFeffermanStein}, which is a localized, weighted version of the well-known Fefferman-Stein inequality. 
	\begin{lemma}\label{Feff-Stein}
		Let $M_0>0,$ $1<p<\infty$ and $w\in A_p$ such that $[w]_{A_p}\le M_0.$ Then there exists  constants $k_0=k_0\left(n,p,M_0\right)>\sqrt{n}$ and $C_{1}=C_{1}\left(n,p,M_0\right)>0$ such that for $f\in L^p_w(\mathbb R^n)$ with $\operatorname{supp}(f)\subset B_\rho(x_0)$ for some $\rho>0,$ we have 
		\begin{align*}
			\int_{B_\rho(x_0)}|f(x)|^pw(x)dx&\le C_{1}\int_{B_{k_0\rho}(x_0)}\left(M^\sharp_{k_0\rho}f(x)\right)^pw(x)dx.
		\end{align*}
	\end{lemma}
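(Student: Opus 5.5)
This is a localized, weighted Fefferman--Stein inequality: for $f$ supported in $B_\rho(x_0)$ and $w\in A_p$, we want
\begin{align*}
\int_{B_\rho(x_0)}|f|^p w \le C_1 \int_{B_{k_0\rho}(x_0)} \bigl(M^\sharp_{k_0\rho} f\bigr)^p w.
\end{align*}
The plan is to run the classical good-$\lambda$ argument, restricted to scales below $k_0\rho$ and localized to the ball, and to use the support condition to control the large scales that the truncation discards.

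\textbf{Step 1: reduction.} By translation and dilation (these preserve $[w]_{A_p}$), take $x_0=0$ and $\rho=1$. We may assume $f\in L^1(B_1)$: this follows from $L^p_w(B_1)\hookrightarrow L^1(B_1)$, which holds because $w^{-1/(p-1)}$ is locally integrable. We may also assume the right-hand side is finite.

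\textbf{Step 2: dyadic maximal function and a start level.} Work with a dyadic grid of cubes of side $\le 2$ covering a neighbourhood of $B_1$, and let $M_d$ be the associated local dyadic maximal function. By Lebesgue differentiation $|f|\le M_d f$ a.e., and $M_d f\le (\text{const})\,Mf$ pointwise. For $\lambda>\lambda_0 := C\fint_{B_2}|f| = C|B_2|^{-1}\|f\|_{L^1}$, the Calderón--Zygmund decomposition of $\{M_d f>\lambda\}$ uses only cubes of side $\le 2$. Since $f$ vanishes outside $B_1$, a maximal cube meeting the complement of a fixed enlarged ball must already have average $\le\lambda_0$; hence for $\lambda>\lambda_0$ all such cubes lie inside $B_{k_0}$ once $k_0\ge 4\sqrt n$.

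\textbf{Step 3: good-$\lambda$ inequality.} For each maximal cube $Q$ at level $\lambda$, the parent cube has average $\le\lambda$. The standard argument, using only oscillations on balls of radius $\lesssim \ell(Q)\le 2<k_0$ (so that the truncated $M^\sharp_{k_0}$ is enough), gives
\begin{align*}
|\{x\in Q: M_d f>2\lambda,\ M^\sharp_{k_0} f\le \gamma\lambda\}| \le C\gamma|Q|.
\end{align*}
The $A_\infty$ property of $w$, a consequence of the reverse Hölder proposition, upgrades this to weighted measure:
\begin{align*}
w(\{\dots\}\cap Q)\le C\gamma^{\delta}w(Q),
\end{align*}
with $C,\delta$ depending only on $n,p,M_0$. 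Summing over the maximal cubes, all of which lie in $B_{k_0}$, gives for $\lambda>\lambda_0$
\begin{align*}
w(\{M_d f>2\lambda\}) \le C\gamma^\delta\, w(\{M_d f>\lambda\}) + w\bigl(\{M^\sharp_{k_0}f>\gamma\lambda\}\cap B_{k_0}\bigr).
\end{align*}

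\textbf{Step 4: integrate in $\lambda$.} Multiply by $p\lambda^{p-1}$ and integrate over $\lambda>\lambda_0$. Choose $\gamma$ small so that $2^pC\gamma^\delta<1/2$, and absorb the first term into the left. This absorption requires finiteness of $\int_{B_{k_0}}(M_d f)^p w$, which holds by Lemma~\ref{Ap maximal fn boundedness} since $M_d f\lesssim Mf$ and $f\in L^p_w$. The result is
\begin{align*}
\int_{B_1}|f|^pw \le C\int_{B_{k_0}}(M^\sharp_{k_0}f)^pw + C\lambda_0^p\, w(B_{k_0}).
\end{align*}

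\textbf{Step 5: the low-level term (main obstacle).} It remains to bound $\lambda_0^p\,w(B_{k_0})$, which the truncation does not see directly. The idea is to use the support condition. For $x\in B_{k_0}\setminus B_{2}$, take the ball $B=B_{k_0}(x)$; its radius is $k_0$, so it is admissible in $M^\sharp_{k_0}$, and it contains $B_1$. Because it also contains a region of comparable measure where $f=0$, we get
\begin{align*}
\fint_B|f-(f)_B| \ge c\,\fint_B|f| \ge c'\lambda_0.
\end{align*}
Hence $M^\sharp_{k_0}f\ge c'\lambda_0$ on the annulus $B_{k_0}\setminus B_2$. The doubling property of $w$ gives $w(B_{k_0})\le C\,w(B_{k_0}\setminus B_2)$, so
\begin{align*}
\lambda_0^p\, w(B_{k_0})\le C\int_{B_{k_0}}(M^\sharp_{k_0}f)^pw,
\end{align*}
which completes the estimate. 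This step, together with fixing $k_0$ large enough (depending on $n,p,M_0$) that both the cube containment in Step 2 and this annulus argument work, is where I expect the delicacy to lie.
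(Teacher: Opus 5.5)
The paper does not prove this lemma; it quotes it verbatim from Phuc \cite[Corollary 2.7]{Phuc_Nguyen_WeightedFeffermanStein}. Your proposal is a correct, self-contained proof along a different route from simply invoking that citation. You run a localized good-$\lambda$ argument with the dyadic maximal function truncated at side $2\rho$. The $A_\infty$ property, from the reverse H\"older proposition, passes from Lebesgue measure to $w$-measure. Lemma \ref{Ap maximal fn boundedness} is used only to justify the absorption, and the support condition controls the low levels $\lambda\le\lambda_0$. What this buys over the citation is transparency about the constants: your argument works for any $k_0\ge 4\sqrt n$, depending only on $n$, with only $C_1$ depending on $p$ and $M_0$. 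The citation buys brevity.

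Two small points need tidying. In Step 2, the maximal cubes lie in $B_{k_0}$ simply because a dyadic cube of side at most $2$ on which $|f|$ has positive average must meet $B_1$, and hence lies in $B_{1+2\sqrt n}$. The same observation gives $\{M_d f>0\}\subset B_{1+2\sqrt n}$, and that is what bounds the $\lambda\le\lambda_0$ part of the layer-cake integral by $\lambda_0^p\,w(B_{k_0})$.

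In Step 5, $B_{k_0}(x)$ does not contain $B_1$ when $k_0-1\le|x|<k_0$. Since $M^\sharp_{k_0}$ is non-centred, you can instead take $B=B_{k_0}(0)$ for every $x\in B_{k_0}$. Because $f=0$ off $B_1$ and $|B\setminus B_1|\ge|B_1|$,
\[
\int_B|f-(f)_B| \ge \Bigl(\int_{B_1}|f|-|B_1|\,|(f)_B|\Bigr)+|B\setminus B_1|\,|(f)_B| \ge \|f\|_{L^1}.
\]
Hence $M^\sharp_{k_0}f\ge\|f\|_{L^1}/|B_{k_0}|\gtrsim\lambda_0$ on all of $B_{k_0}$, and the annulus and the doubling of $w$ are then unnecessary.
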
	
	
	\noindent	To use the above lemma to prove our boundary estimate, we need to extend our function from upper half plane to the whole of $\mathbb R^n.$
	\begin{definition}\label{extension by reflection}
		Suppose $\psi:\mathbb R^n_+\to\mathbb R$ be a locally integrable function. We define its extension $\Psi:\mathbb R^n\to \mathbb R$ as 
		\begin{align*}
			\Psi(x_1,x_2,\dots,x_{n-1},x_n)=
			\left\lbrace\begin{aligned}
				&\psi(x_1,x_2,\dots,x_{n-1},x_n) &&\text{ if } x_n\ge0,\\
				&\psi(x_1,x_2,\dots,x_{n-1},-x_n) &&\text{ if } x_n<0.
			\end{aligned}\right.
		\end{align*}
	\end{definition}
	\noindent	We see that $\Psi$ is symmetric with respect to $\partial\mathbb R^n_+.$ 
	\begin{lemma}\label{extension estimate lemma}
		For any $x\in\mathbb R^n_+$ and for all $R>0,$ we have 
		\begin{align*}
			\fint_{B(x,R)}|\Psi-\left( \Psi \right)_{B(x,R)}|
			&\le c\fint_{B(x,R)\cap\mathbb R^n_+}|\psi-(\psi)_{B(x,R)\cap\mathbb R^n_+}|,
		\end{align*}
		for some $c>0$ depending only on $N, k, n$. 
	\end{lemma}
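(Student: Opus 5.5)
By Lemma \ref{minimality of mean} we may replace $(\Psi)_{B(x,R)}$ on the left by any constant vector, at the cost of a factor $c_{m,1}$ with $m=N\binom{n}{k}$ (we apply the scalar definition of $\Psi$ componentwise). We choose the constant $\xi := (\psi)_{B(x,R)\cap\mathbb{R}^n_+}$. Write $B=B(x,R)$, $B^{+}=B\cap\mathbb{R}^n_+$ and $B^{-}=B\cap\{x_n<0\}$, and let $\sigma(y',y_n)=(y',-y_n)$ be the reflection. This reduces the claim to bounding
$$\frac{1}{|B|}\int_{B^{+}}|\psi-\xi| + \frac{1}{|B|}\int_{B^{-}}|\psi\circ\sigma-\xi|.$$

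The first term is handled by the inclusion $B^{+}\subset B$: it is at most $\frac{|B^{+}|}{|B|}\fint_{B^{+}}|\psi-\xi|\le \fint_{B^{+}}|\psi-\xi|$.

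For the second term we change variables by $\sigma$, which preserves Lebesgue measure. Since the center $x$ lies in $\mathbb{R}^n_+$, we have $x_n\ge 0$, so every $y\in B^{-}$ satisfies $|\sigma(y)-x|\le|y-x|$. Hence $\sigma(B^{-})\subset B^{+}$, and
$$\int_{B^{-}}|\psi\circ\sigma-\xi|=\int_{\sigma(B^{-})}|\psi-\xi|\le\int_{B^{+}}|\psi-\xi|.$$
This is the only point where the assumption $x\in\mathbb{R}^n_+$ is used.

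It remains to compare the normalizations $|B|$ and $|B^{+}|$. Because $x_n\ge0$, the set $B^{+}$ contains at least half of $B$ (it contains the reflection of $B^{-}$), so $|B^{+}|\ge \tfrac12|B|$. Combining the two bounds, the displayed quantity is at most $\frac{2}{|B|}\int_{B^{+}}|\psi-\xi|\le 2\fint_{B^{+}}|\psi-\xi|$. With the factor from Lemma \ref{minimality of mean}, this gives the claim with $c=2c_{m,1}$, which depends only on $n,k,N$.

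The argument is elementary, and the only place needing care is the geometric inclusion $\sigma(B^{-})\subset B^{+}$ together with the bound $|B^{+}|\ge\frac12|B|$. Both fail if the center lies below the hyperplane, which is why the statement restricts to $x\in\mathbb{R}^n_+$; a center on $\partial\mathbb{R}^n_+$ is also covered by the same reasoning.
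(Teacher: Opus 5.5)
Your proof is correct and follows the paper's argument. You reflect $B^-$ into $B^+$ to get $\int_{B}|\Psi-\lambda|\le 2\int_{B^+}|\psi-\lambda|$, then conclude by minimality of the mean with $\lambda=(\psi)_{B^+}$, and you usefully make explicit the inclusion $\sigma(B^-)\subset B^+$ for $x_n\ge 0$, which the paper uses tacitly. One small point: the bound $|B^+|\ge\frac12|B|$ is never used, since passing from $\frac{1}{|B|}\int_{B^+}$ to $\fint_{B^+}$ only needs the trivial inequality $|B^+|\le|B|$.
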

	\begin{proof}
		First observe that there is nothing to prove if $B(x,R)\subset\mathbb R^n_+.$ So assume that $B(x,R)\cap\overline{\mathbb R^n_{-}}\neq \emptyset.$ Let $\lambda$ be a real number and $T:\mathbb R^n_-\to\mathbb R^n_+$ defined as \begin{align*}
			T(x_1,x_2,\dots,x_{n-1},x_n)
			&=(x_1,x_2,\dots,x_{n-1},-x_n).
		\end{align*}
		Then using the fact that $T \in \mathbb{O}\left(n\right)$ and the change of variables \begin{align*}
			\int_{B(x,R)}|\Psi-\lambda|
			&=\int_{B(x,R)\cap\mathbb R^n_+}|\psi-\lambda|+\int_{B(x,R)\cap\mathbb R^n_-}|\Psi-\lambda|\\
			&=\int_{B(x,R)\cap\mathbb R^n_+}|\psi-\lambda|+\int_{T\left(B(x,R)\cap\mathbb R^n_-\right)}|\psi-\lambda|\left\lvert \operatorname{det} DT^{-1} \right\rvert\\
			&\le 2\int_{B(x,R)\cap\mathbb R^n_+}|\psi-\lambda|.
		\end{align*}
		Therefore, by minimality of mean, we obtain  \begin{align*}
			\int_{B(x,R)}|\Psi-\left( \Psi \right)_{B(x,R)}|
			&\le c\int_{B(x,R)}|\Psi-\lambda| \le 2c\int_{B(x,R)\cap\mathbb R^n_+}|\psi-\lambda|.
		\end{align*}
		Hence by choosing $\lambda=(\psi)_{B(x,R)\cap\mathbb R^n_+},$ we complete the proof of the lemma.
	\end{proof}
	A similar straight-forward computation, which is skipped, establishes the following result. 
	\begin{lemma}\label{extension of weight lemma}
		Suppose $1<p<\infty$ and $w\in A_p.$ Let us define $\tilde{w}$ as 
		\begin{align*}
			\tilde{w}(x_1,x_2,\dots,x_{n-1},x_n)=
			\left\lbrace\begin{aligned}
				&w(x_1,x_2,\dots,x_{n-1},x_n) &&\text{ if } x_n\ge0,\\
				&w(x_1,x_2,\dots,x_{n-1},-x_n) &&\text{ if } x_n<0.
			\end{aligned}\right.
		\end{align*}
		Then $\tilde{w}\in A_p$ with the estimate
		\begin{align*}
			[\tilde{w}]_{A_p}&\le 2^p[w]_{A_p}.
		\end{align*}
	\end{lemma}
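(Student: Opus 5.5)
We have to show that the even reflection $\tilde w$ of an $A_p$ weight $w$ is again an $A_p$ weight, with $[\tilde w]_{A_p}\le 2^p[w]_{A_p}$. The approach is to compare averages over an arbitrary ball $B=B_r(x)$ with averages over the ball reflected into the upper half space. Fix $B$ and let $T$ denote the reflection $x_n\mapsto -x_n$, as in the proof of Lemma \ref{extension estimate lemma}. If $B\subset\overline{\mathbb{R}^n_+}$ or $B\subset\overline{\mathbb{R}^n_-}$, then $\tilde w$ on $B$ is just $w$ or $w\circ T$ on a ball of the same radius. Since $T$ is an isometry, the $A_p$ quantity of $\tilde w$ on $B$ equals that of $w$ on $B$ or on $T(B)$, and is therefore at most $[w]_{A_p}$.

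In the remaining case $B$ meets both half spaces, and by symmetry we may assume the center satisfies $x_n\ge 0$. Set $B'=B_r(x)$, which already contains the part of $B$ lying in the upper half space. The reflection of the lower part satisfies $T(B\cap\mathbb{R}^n_-)\subset B$, because reflecting moves points closer to a center with $x_n\ge0$. Changing variables with $|\det DT|=1$, exactly as in Lemma \ref{extension estimate lemma}, we get
\begin{align*}
\int_B \tilde w \le 2\int_{B\cap\mathbb{R}^n_+} w \le 2\int_{B} w, \qquad \int_B \tilde w^{-\frac{1}{p-1}} \le 2\int_{B} w^{-\frac{1}{p-1}},
\end{align*}
where on the right $w$ is the original weight on all of $\mathbb{R}^n$. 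Dividing by $|B|$ and multiplying gives
\begin{align*}
\Big(\fint_B\tilde w\Big)\Big(\fint_B \tilde w^{-\frac{1}{p-1}}\Big)^{p-1}\le 2\cdot 2^{p-1}\Big(\fint_B w\Big)\Big(\fint_B w^{-\frac{1}{p-1}}\Big)^{p-1}\le 2^p[w]_{A_p}.
\end{align*}

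There is one subtlety, and it is the only real obstacle. The lemma defines $\tilde w$ using only the values of $w$ on the upper half space, so the bound must use just $w|_{\mathbb{R}^n_+}$. The right-hand side above does this: in each inequality, before the final step, only integrals over $B\cap\mathbb{R}^n_+$ of $w$ appear, and the last enlargement to all of $B$ is legitimate because $w\ge0$ on $\mathbb{R}^n$. The factor-of-two bounds need both pieces, $B\cap\mathbb{R}^n_+$ and $T(B\cap\mathbb{R}^n_-)$, to lie inside the single ball $B$, and this is exactly where the choice $x_n\ge0$ is used. Since $B$ was arbitrary, taking the supremum over all balls yields $[\tilde w]_{A_p}\le 2^p[w]_{A_p}$, which gives both the membership $\tilde w\in A_p$ and the stated estimate.
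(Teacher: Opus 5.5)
Your proof is correct and is essentially the computation the paper has in mind. The paper skips it as a ``similar straight-forward computation'' to Lemma \ref{extension estimate lemma}: reflect the lower part of the ball into the upper part via $T$ with $|\det DT|=1$, which costs a factor $2$ in each of $\fint_B \tilde w$ and $\fint_B \tilde w^{-1/(p-1)}$, hence $2\cdot 2^{p-1}=2^p$. Your reduction to balls centred in $\{x_n\ge 0\}$ via $\tilde w\circ T=\tilde w$ is the extra step needed to pass from that lemma's balls centred in $\mathbb{R}^n_+$ to arbitrary balls, and it also makes your separate first case unnecessary.
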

	\subsection{Reverse H\"{o}lder inequalities}
	We start with the well-known  self-improving property of the reverse H\"{o}lder inequalities with increasing support. See \cite[Theorem 6.38]{giaquinta-martinazzi-regularity} for a proof.   
	\begin{theorem}\label{gehring}
		Suppose that $1<r<2,$ $f\in L^2_{loc}(\Omega)$ be a non-negative function such that for some $b>0,$ $R_0>0$
		\begin{align*}
			\left( \fint_{B_R(x)}f^2\right)^\frac{1}{2}
			&\le b\left(\fint_{B_{2R}(x)}f^r\right)^\frac{1}{r},
		\end{align*}
		for all $x\in \Omega,$ $0<R<\min\left(R_0,\frac{\operatorname{dist}(x,\partial\Omega)}{2}\right).$ Then $f\in L^s_{loc}(\Omega),$ for some $s>2$ and there exists a constant $C=C(n,r,s,b)>0$ such that 
		\begin{align*}
			\left(\fint_{B_R(x)}f^s\right)^\frac{1}{s}
			&\le C\left(\fint_{B_{2R}(x)}f^2\right)^\frac{1}{2}.
		\end{align*}
	\end{theorem}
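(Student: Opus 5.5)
The plan is the classical route via Gehring's lemma, reduced to a Calder\'on--Zygmund type good-$\lambda$ (level set) estimate. First, normalize the hypothesis. Set $g := f^{r}$ and $q := 2/r > 1$, so that the assumption reads
\begin{align*}
\left(\fint_{B_R(x)} g^{q}\right)^{\frac{1}{q}} \le b^{r} \fint_{B_{2R}(x)} g
\end{align*}
for all admissible balls. By a covering argument and rescaling, it suffices to work on a fixed ball $B_{R}(x_{0})$ with $B_{4R}(x_0)\Subset\Omega$ and $4R<R_{0}$. One can further normalize so that $\fint_{B_{2R}} g^{q} = 1$; this is the quantity $\left(\fint f^{2}\right)$ raised to the power $q$.

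Second, the key step is a level-set (reverse H\"older to decay of distribution function) estimate. For $t$ larger than a threshold $t_{0}\sim\left(\fint_{B_{2R}} g^{q}\right)^{1/q}$, consider the set $E_{t}=\{y\in B_{R}: g(y)>t\}$. Apply a Calder\'on--Zygmund / Vitali stopping-time decomposition with the restricted maximal function, using the radii $\rho\le R$ on which the averages of $g^{q}$ cross the level $t^{q}$. This produces disjoint balls $B_{i}$ with $\fint_{B_i} g^{q}\approx t^{q}$, covering $E_t$ up to a null set after enlargement by $5$. On each ball the hypothesis gives
\begin{align*}
t \lesssim \fint_{2B_i} g \le \frac{\varepsilon t}{|2B_i|}\,\bigl|2B_{i}\bigr| + \frac{1}{|2B_i|}\int_{2B_i\cap\{g>\varepsilon t\}} g .
\end{align*}
Choosing $\varepsilon$ small to absorb the first term, and summing over $i$, yields the key inequality
\begin{align*}
\int_{B_{R}\cap\{g>t\}} g^{q} \le C\, t^{q-1}\int_{B_{2R}\cap\{g>\varepsilon t\}} g \qquad (t \ge t_{0}),
\end{align*}
with $C=C(n,q,b)$.

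Third, integrate this inequality against $t^{\delta-1}\,dt$ for small $\delta>0$, using Fubini. The left side becomes $\sim\frac{1}{\delta}\int g^{q+\delta}$, and the right side becomes $\sim\frac{C}{q-1+\delta}\,\varepsilon^{-q-\delta+1}\int g^{q+\delta}$ over the larger ball. Since this step requires a truncation $\min(g,m)$ to keep the quantities finite, the absorption is performed with the truncated function, passing to the limit afterwards.

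Main obstacle. The right-hand integral lives on $B_{2R}$ while the left-hand one lives on $B_{R}$, so direct absorption fails. I would handle this with the standard trick of working with the weighted function $(\operatorname{dist}$-to-boundary-of-$B_{2R})^{n}$-normalized maximal quantity, or equivalently a hole-filling / iteration lemma over nested radii $R\le\rho<\sigma\le 2R$. Under this scheme the absorbed coefficient is $C\delta$, which is $<1/2$ for $\delta$ small. This gives
\begin{align*}
\fint_{B_R} g^{q+\delta}\le C\left(\fint_{B_{2R}} g^{q}\right)^{\frac{q+\delta}{q}}.
\end{align*}
Translating back via $s = r(q+\delta) = 2+r\delta>2$ gives $f\in L^{s}_{\mathrm{loc}}$ together with the stated estimate, with constants depending only on $n,r,b$ (and $s$).
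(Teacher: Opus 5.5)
Your outline is the standard proof of Gehring's lemma: stopping-time covering, a level-set inequality, integration against $t^{\delta-1}$, and absorption via an iteration over nested radii $R\le\rho<\sigma\le 2R$. The paper gives no argument of its own here and only cites Giaquinta--Martinazzi, Theorem 6.38, which uses this standard argument. Your route is the right one, but the summation step fails as written.

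\textbf{The summation step.} You take pairwise disjoint Vitali balls $B_i$ and apply the hypothesis on each $B_i$, which puts $\int_{2B_i\cap\{g>\varepsilon t\}}g$ on the right. You then sum as if these integrals added up to at most $\int_{B_{2R}\cap\{g>\varepsilon t\}}g$. Disjointness of the $B_i$ gives no control on the overlap of the $2B_i$. For example, the balls of radius $8^{-j}$ centred at $\frac32 8^{-j}e_1$, $j\ge 1$, are pairwise disjoint. Yet points within $\frac12 8^{-j}$ of the origin lie in every $2B_k$ with $k\le j$, so the overlap of the doubles is unbounded.

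The repair is as follows.
\begin{itemize}
\item Run Vitali on the doubled stopping-time balls $B_{2r_y}(y)$. The selected $2B_i$ are then pairwise disjoint and $E_t\subset\bigcup_i 10B_i$ up to a null set.
\item Take $t_0$ large enough that every stopping radius satisfies $10r_y\le r_{\max}$. Then $\fint_{10B_i}g^q\le t^q$.
\item With $\varepsilon=1/(2b^r)$ you get $\int_{E_t}g^q\le 5^n t^q\sum_i|2B_i|\le C\,t^{q-1}\int_{B_\sigma\cap\{g>\varepsilon t\}}g$, and this is now legitimate.
\end{itemize}
In the nested-radii version (centres in $B_\rho$, $r_{\max}\sim\sigma-\rho$) this forces $t_0^q\sim(\sigma/(\sigma-\rho))^n\fint_{B_\sigma}g^q$. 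That is exactly where the $(\sigma-\rho)^{-n\delta/q}$ term comes from, which the iteration lemma then absorbs.

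\textbf{The truncation.} This step also needs to be stated correctly. The function $\min(g,m)$ does not satisfy the reverse H\"older hypothesis, so you cannot rerun the argument for it. Instead, integrate the level-set inequality against $t^{\delta-1}$ only over $[t_0,m]$. With $g_m=\min(g,m)$, $\min(g/\varepsilon,m)\le\varepsilon^{-1}g_m$ and $g\,g_m^{q-1}\le g^q$, this yields
\[
\int_{B_\rho}g^q g_m^{\delta}\le 2t_0^{\delta}\int_{B_\sigma}g^q+\frac{C\,\delta\,\varepsilon^{1-q-\delta}}{q-1}\int_{B_\sigma}g^q g_m^{\delta}.
\]
Every quantity here is finite, being bounded by $m^\delta\int_{B_{2R}}g^q$. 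So the iteration lemma applies for $\delta$ small. Monotone convergence as $m\to\infty$ then gives $\fint_{B_R}g^{q+\delta}\le C\bigl(\fint_{B_{2R}}g^q\bigr)^{(q+\delta)/q}$, which is the claim with $s=2+r\delta$.

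\textbf{The preliminary reduction.} The nested scheme only uses the hypothesis on balls $B_r(y)$ with $y\in B_\rho(x)$ and $2r<\sigma-\rho$. These are admissible whenever $R$ is, since $\operatorname{dist}(y,\partial\Omega)>2R-\rho\ge\sigma-\rho$. So your reduction to $B_{4R}(x_0)\Subset\Omega$ with $4R<R_0$ is unnecessary. Taken literally, it would lose part of the range of $R$ claimed in the statement.
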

	\noindent The self-improving property also goes the other way. See \cite[Remark 6.12]{Giusti_DCV}.  
	\begin{theorem}\label{s-2to2-1}
		Let $s>2.$ Suppose there exists $C>0$ such that 
		\begin{align*}
			\left(\fint_{B^+_\rho(x)}|f|^s\right)^\frac{1}{s}&\le \left(\fint_{B^+_{2\rho}(x)}|f|^2\right)^\frac{1}{2},
		\end{align*}
		for any ball $B_\rho(x)\subset B_{2\rho}(x)\subset B_R(x).$ Then there exists $C>0$ such that 
		\begin{align*}
			\left(\fint_{B^+_\frac{R}{4}(x)}|f|^2\right)^\frac{1}{2}&\le C \fint_{B^+_\frac{R}{2}(x)}|f|.
		\end{align*}
	\end{theorem}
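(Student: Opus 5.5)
The plan is to derive an $L^{1}$--$L^{2}$ local inequality with a small $L^{2}$ error on a larger ball from Hölder interpolation and the assumed reverse Hölder inequality, and then absorb that error by covering and a standard iteration lemma (the constant in the hypothesis is read as a generic $C$). Throughout we assume $f \in L^{2}(B^{+}_{R}(x))$, so the quantities being absorbed are finite. We use the hypothesis only for balls centered in $\overline{\mathbb{R}^{n}_{+}}$. For such balls $|B^{+}_{\rho}(y)| \simeq \rho^{n}$ with constants depending only on $n$, so averages and integrals are interchangeable at scale $\rho$.

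\emph{Step 1 (interpolation).} Define $\theta \in (0,1)$ by $\tfrac12 = \theta + \tfrac{1-\theta}{s}$. Hölder's inequality gives
$$\Big(\fint_{B^+_\rho(y)}|f|^2\Big)^{\frac12} \le \Big(\fint_{B^+_\rho(y)}|f|\Big)^{\theta}\Big(\fint_{B^+_\rho(y)}|f|^s\Big)^{\frac{1-\theta}{s}}.$$
We bound the last factor by the hypothesis, $\le C(\fint_{B^+_{2\rho}(y)}|f|^2)^{(1-\theta)/2}$. Young's inequality with $\varepsilon$, squaring, and multiplying by $\rho^{n}$ then give
$$\int_{B^+_\rho(y)}|f|^2 \le \varepsilon \int_{B^+_{2\rho}(y)}|f|^2 + C_\varepsilon\, \rho^{-n}\Big(\int_{B^+_{\rho}(y)}|f|\Big)^2 .$$
This holds whenever $y\in\overline{\mathbb{R}^n_+}$ and $B_{2\rho}(y)\subset B_R(x)$.

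\emph{Step 2 (covering).} Fix $R/4\le t<\tau\le R/2$ and set $\rho=(\tau-t)/4$. Cover $B^+_t(x)$ by balls $B_\rho(y_i)$ with $y_i\in \overline{B^+_t(x)}$, chosen from a maximal $\rho$-separated net, so that the doubled balls $B_{2\rho}(y_i)\subset B_\tau(x)$ have overlap bounded by a constant $c(n)$. Summing Step 1 over $i$, and using $\sum_i a_i^2\le(\sum_i a_i)^2$ for $a_i \ge 0$ together with the bounded overlap, yields
$$\varphi(t)\le c(n)\varepsilon\,\varphi(\tau)+ C_\varepsilon (\tau-t)^{-n}\Big(\int_{B^+_{R/2}(x)}|f|\Big)^2, \qquad \varphi(r):=\int_{B^+_r(x)}|f|^2 .$$

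\emph{Step 3 (iteration).} Choose $\varepsilon$ so that $c(n)\varepsilon\le \tfrac12$. The standard iteration lemma (e.g.\ Giaquinta--Martinazzi, Lemma 6.1) applies to the bounded function $\varphi$ on $[R/4,R/2]$ and gives $\varphi(R/4)\le C R^{-n}\big(\int_{B^+_{R/2}(x)}|f|\big)^2$. Dividing by $|B^+_{R/4}(x)|\simeq R^n$ and taking square roots gives the claim.

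The main obstacle is the bookkeeping in Step 2. We must choose the covering balls so that each doubled ball stays inside $B_R(x)$, so that the hypothesis is legitimately applicable to it. We also need the half-ball measures to be uniformly comparable to $\rho^{n}$, which is exactly why the centers are taken in $\overline{\mathbb{R}^n_+}$.
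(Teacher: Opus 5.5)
Your proof is correct and is essentially the standard argument behind the paper's treatment. The paper gives no proof of its own and simply cites Giusti's Remark 6.12, whose argument is Hölder interpolation plus Young's inequality, a covering of $B^+_t(x)$ by small balls whose doubles stay inside $B_\tau(x)\subset B_R(x)$, and absorption through the iteration lemma.

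You read the hypothesis as holding at every center $y\in\overline{\mathbb{R}^n_+}$ with $B_{2\rho}(y)\subset B_R(x)$, rather than only at the fixed center $x$. That reading is exactly what your covering step needs, and it is the right one: with concentric balls alone the conclusion can fail.
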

	\subsection{Poincar\'{e}-Sobolev inequalities for the Hessian}
	\begin{proposition}\label{Poincare_zero_mean_Lorentz}
		Let $R>0$ and $1 < p < \infty$ be real numbers. Then, for any $u \in W^{1,p}\left(B_{R}^{+}\right)$ such that  either $ \fint_{B^{+}_{R}}u = 0$ or $u \equiv 0$ on $\Gamma_{R},$ there exists a constant $C=C(n,p)>0$ such that
		\begin{align*}
			\left\lVert u \right\rVert_{L^{p}\left(B_{R}^{+}\right)} \leq C R	\left\lVert \nabla u \right\rVert_{L^{p}\left(B_{R}^{+}; \mathbb{R}^{n}\right)}.
		\end{align*}
	\end{proposition}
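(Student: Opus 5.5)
We first reduce to the case $R=1$ by scaling. Given $u \in W^{1,p}(B_R^{+})$, set $u_R(y) := u(Ry)$ for $y \in B_1^{+}$. Then $\nabla u_R(y) = R\,\nabla u(Ry)$. A change of variables gives $\|u_R\|_{L^p(B_1^+)} = R^{-n/p}\|u\|_{L^p(B_R^+)}$ and $\|\nabla u_R\|_{L^p(B_1^+)} = R^{1-n/p}\|\nabla u\|_{L^p(B_R^+)}$. Both hypotheses are invariant under this dilation: the zero mean condition is preserved, and $u\equiv 0$ on $\Gamma_R$ becomes $u_R \equiv 0$ on $\Gamma_1$. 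So it suffices to prove $\|v\|_{L^p(B_1^+)} \le C\|\nabla v\|_{L^p(B_1^+)}$ with $C=C(n,p)$ for all $v\in W^{1,p}(B_1^+)$ satisfying either condition. The factor $R$ is then recovered exactly.

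For $R=1$ we argue by contradiction via compactness, treating the two cases together. Suppose there are $v_j \in W^{1,p}(B_1^+)$, each satisfying one of the two conditions, with $\|v_j\|_{L^p}=1$ and $\|\nabla v_j\|_{L^p} < 1/j$. Passing to a subsequence, we may assume all $v_j$ satisfy the same condition. The half ball $B_1^+$ is a bounded Lipschitz domain, so the Rellich--Kondrachov theorem applies. Since $(v_j)$ is bounded in $W^{1,p}(B_1^+)$, a further subsequence converges strongly in $L^p(B_1^+)$ and weakly in $W^{1,p}(B_1^+)$ to some $v$ with $\|v\|_{L^p}=1$. By weak lower semicontinuity, $\nabla v = 0$. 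As $B_1^+$ is connected, $v$ equals a constant $c$ a.e.

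In the zero mean case, strong $L^1$ convergence passes the mean to the limit, so $c = \fint_{B_1^+} v = 0$. In the vanishing trace case, the trace operator $W^{1,p}(B_1^+)\to L^p(\partial B_1^+)$ is continuous and linear, hence weakly continuous. Its restriction to $\Gamma_1$ therefore gives $v|_{\Gamma_1} = \lim v_j|_{\Gamma_1} = 0$ weakly in $L^p(\Gamma_1)$, so again $c=0$. Either way $c=0$, which contradicts $\|v\|_{L^p}=1$.

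The only delicate point is that the constant must depend only on $(n,p)$. This holds because the compactness argument is carried out on the single fixed domain $B_1^+$, and scaling supplies the exact factor $R$. Alternatively, for the trace case one could use the explicit one-dimensional estimate $|u(x',x_n)| \le \int_0^{x_n}|\partial_n u|$ along vertical segments, after extending $u$ by reflection if needed. The compactness route is cleaner and is the one we follow.
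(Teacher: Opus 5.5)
Your proof is correct. The scaling computation recovers exactly the factor $R$, and restricting to a subsequence on which a single hypothesis holds is legitimate. In the limit, the zero-mean condition survives by strong $L^{1}$ convergence on the bounded set $B_1^{+}$. The vanishing trace survives because the trace operator $W^{1,p}(B_1^{+})\to L^{p}(\partial B_1^{+})$ is bounded and linear, hence weakly continuous. Since $B_1^{+}$ is convex, and therefore Lipschitz and connected, Rellich--Kondrachov applies and the limit is a constant, which either hypothesis forces to be $0$.

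The paper gives no proof of this proposition. It is recorded as a standard preliminary and is used only in Lemma~\ref{Hessian Poincare Sobolev}, which needs nothing beyond the existence of a constant depending on $n$ and $p$. So there is no competing argument to compare with, and your scaling-plus-compactness route is the standard way to fill the gap. The only thing it gives up is an explicit constant.

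If you want an explicit constant in the trace case, your alternative works directly, and no reflection is needed. For $(x',x_n)\in B_1^{+}$, the whole segment $\{(x',t): 0<t\le x_n\}$ already lies in $B_1^{+}$. Integrate $|u(x',x_n)|^{p}\le x_n^{p-1}\int_0^{x_n}|\partial_n u(x',t)|^{p}\,dt$ first in $x_n$ and then in $x'$. This gives $\|u\|_{L^{p}(B_1^{+})}\le p^{-1/p}\|\partial_n u\|_{L^{p}(B_1^{+})}$.
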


	\begin{lemma}\label{Hessian Poincare Sobolev}
		Let $R>0$ and $1 <p < \infty$ be real numbers. Then for any $u \in W^{2,p}\left( \mathcal{U}_{R}; \varLambda^{k}\right)$ satisfying
		\begin{align*}
			\text{ either }\quad 	e_{n}\wedge u &=0  \qquad\text{ or } \qquad e_{n}\lrcorner u = 0 &&\text{ on } \Gamma_{3R/4},
		\end{align*}
		and for any $0 < \rho \leq 3R/4,$ there exists $\bar{u}^{\rho} \in W^{2,p}\left( \mathcal{U}_{R}; \varLambda^{k}\right)$ such that
		\begin{align*}
			D^{2}u &=D^{2}\bar{u}^{\rho} &&\text{ in } \mathcal{U}_{R},
		\end{align*} $\bar{u}^{\rho}$ satisfies the same boundary condition as $u$ on $\Gamma_{3R/4}$ and there exists a constant $C = C \left( n, k, N, p \right) >0$ such that
		\begin{align*}
			\frac{1}{\rho^{2}}\left\lVert \bar{u}^{\rho} \right\rVert_{L^{p}\left( B^{+}_{\rho}\right)} + \frac{1}{\rho}\left\lVert \nabla \bar{u}^{\rho} \right\rVert_{L^{p}\left( B^{+}_{\rho}\right)} &\leq C\left\lVert D^{2}\bar{u}^{\rho} \right\rVert_{L^{p}\left( B^{+}_{\rho}\right)}.
		\end{align*}
	\end{lemma}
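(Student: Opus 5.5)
The plan is to subtract from $u$ a polynomial of degree at most one, $\bar u^{\rho} := u - P$, with $P$ chosen so that $\bar u^{\rho}$ still satisfies the boundary condition on $\Gamma_{3R/4}$ and has enough vanishing means (or vanishing traces) to apply Proposition \ref{Poincare_zero_mean_Lorentz} twice. Since $D^{2}P=0$, we automatically get $D^{2}\bar u^{\rho}=D^{2}u$. The work is in choosing $P$ componentwise according to the index structure. Write $u=\sum_{I}u_{I}e^{I}$, with each $u_{I}$ valued in $\mathbb{R}^{N}$. In the tangential case, $e_{n}\wedge u=0$ on $\Gamma_{3R/4}$ means $u_{I}=0$ there whenever $n\notin I$; the components with $n\in I$ are free. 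In the normal case, $u_{I}=0$ on the flat part whenever $n\in I$. So in both cases the components split into a \emph{Dirichlet group} $\mathcal{D}$, which vanishes on the flat boundary, and a \emph{free group} $\mathcal{F}$.

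\textbf{Free group.} For $I\in\mathcal{F}$ set
\begin{align*}
P_{I}(x):=(u_{I})_{B^{+}_{\rho}}+\left\langle(\nabla u_{I})_{B^{+}_{\rho}},\,x-(x)_{B^{+}_{\rho}}\right\rangle .
\end{align*}
Then $\nabla(u_{I}-P_{I})$ has zero mean on $B^{+}_{\rho}$, and $u_{I}-P_{I}$ also has zero mean there. Two applications of Proposition \ref{Poincare_zero_mean_Lorentz} on $B^{+}_{\rho}$ give
\begin{align*}
\rho^{-2}\left\lVert u_{I}-P_{I}\right\rVert_{L^{p}}+\rho^{-1}\left\lVert\nabla(u_{I}-P_{I})\right\rVert_{L^{p}}\le C\left\lVert D^{2}u_{I}\right\rVert_{L^{p}}.
\end{align*}
Modifying these components does not affect the boundary condition.

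\textbf{Dirichlet group.} For $I\in\mathcal{D}$, both $u_{I}$ and its tangential derivatives $\partial_{j}u_{I}$, $j<n$, vanish on $\Gamma_{3R/4}$ (traces of $W^{1,p}$ functions). Take $P_{I}(x):=c_{I}x_{n}$ with $c_{I}:=(\partial_{n}u_{I})_{B^{+}_{\rho}}$; this preserves $u_{I}-P_{I}=0$ on $\Gamma_{\rho}$. Then:
\begin{itemize}
\item $\partial_{j}(u_{I}-P_{I})=\partial_{j}u_{I}$ for $j<n$ vanishes on $\Gamma_{\rho}$, so Proposition \ref{Poincare_zero_mean_Lorentz} (trace-zero case) bounds its $L^{p}$ norm by $C\rho\lVert D^{2}u\rVert$.
\item $\partial_{n}(u_{I}-P_{I})$ has zero mean, so the mean-zero case gives the same bound.
\item Finally, $u_{I}-P_{I}$ vanishes on $\Gamma_{\rho}$, so the trace-zero case gives $\lVert u_{I}-P_{I}\rVert\le C\rho\lVert\nabla(u_{I}-P_{I})\rVert\le C\rho^{2}\lVert D^{2}u\rVert$.
\end{itemize}

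Summing over all components gives the stated estimate with $C=C(n,k,N,p)$. The dependence on $N$ and $k$ enters only through the number of components. We have $\bar u^{\rho}\in W^{2,p}(\mathcal{U}_{R})$ because $P$ is an affine polynomial and $\mathcal{U}_{R}$ is bounded.

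The main point to check is that the affine correction for the Dirichlet components must not destroy the boundary condition. This is why only the normal-direction slope $x_{n}$ is used there: any constant term or tangential slope would not vanish on $\Gamma_{\rho}$. This step also requires that $\partial_{j}u_{I}$ has zero trace on the flat boundary for $j<n$. That follows by approximating $u$ on $B^{+}_{\rho}$ by smooth functions vanishing on $\Gamma_{\rho}$, or directly from the fact that the trace of $\partial_{j}u_{I}$ equals the tangential derivative of the trace of $u_{I}$, which is zero. Everything else is scaling-invariant Poincaré on half balls.
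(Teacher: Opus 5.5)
Your proposal is correct and is essentially the paper's proof. The paper defines $\bar u^{\rho}$ componentwise with exactly your corrections: it subtracts $(\partial_n u_{I})_{B^+_\rho}\,x_n$ from the components that vanish on the flat boundary, and subtracts the mean plus the averaged-gradient affine polynomial from the free components. It then notes that every component of $\bar u^{\rho}$ and of its first derivatives either vanishes on $\Gamma_\rho$ or has zero mean on $B^+_\rho$, and concludes with Proposition \ref{Poincare_zero_mean_Lorentz}. The only cosmetic difference is that the paper first rescales to $R=1$; your argument does not need this, since the half-ball Poincaré constant is already scale-invariant.
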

	\begin{proof}
		The construction is detailed in \cite[Lemma 3.8]{SenguptaSil_MorreyLorentz_Hodge}. We first prove the case $e_{n}\wedge u = 0$ on $\Gamma_{3R/4}.$ By a simple scaling argument, we can assume $R=1$ and $ 0 < \rho \leq 3/4.$ Let us define
		\begin{align*}
			\bar{u}^{\rho}_{I, j}\left( x\right) := \left\lbrace \begin{aligned}
				&u_{I, j}\left( x\right) - \left( \fint_{B^{+}_{\rho}} \frac{\partial 	u_{I, j}}{\partial x_{n}}\right)x_{n} &&\text{ if } n \notin I, \\
				&\begin{multlined}[t]
					u_{I, j}\left( x\right) - \left( \fint_{B^{+}_{\rho}}u_{I, j}\right) - \left\langle  x, \left( \fint_{B^{+}_{\rho}} \nabla u_{I, j}\right)\right\rangle \\+  \left\langle  \left( \fint_{B^{+}_{\rho}} x \right) , \left( \fint_{B^{+}_{\rho}} \nabla u_{I, j}\right)\right\rangle
				\end{multlined} &&\text{ if } n \in I,
			\end{aligned}\right.
		\end{align*}
		for all $1 \leq j \leq N,$ where $\fint_{B^{+}_{\rho}} x$ denotes the constant vector in $\mathbb{R}^{n}$ formed by the components
		$$ \left( \fint_{B^{+}_{\rho}} x\right)_{i} := \fint_{B^{+}_{\rho}} x_{i}\ \mathrm{d}x \qquad \text{ for } 1 \leq i \leq n.$$ From now on, every statement below is assumed to hold for every $1 \leq j \leq N.$ Now note that, since $	e_{n}\wedge u =0$  on $ \Gamma_{\rho},$ we have $u_{I, j} \equiv 0$ on $ \Gamma_{\rho}$ if $ n \notin I$, and, consequently, we also have
		\begin{align*}
			\frac{\partial u_{I, j}}{\partial x_{l}} &\equiv 0 &&\text{ on } \Gamma_{\rho} \quad \text{ if } n \notin I, 1 \leq l \leq n-1.
		\end{align*}
		Now it is easy to check that this implies, by our construction, that every component of $\bar{u}_{\rho}$ and all its first-order derivatives either vanish on $\Gamma_{\rho}$ or have zero integral averages on $B_{\rho}^{+}.$ The desired estimate easily follows from this by using Proposition \ref{Poincare_zero_mean_Lorentz}. Since we also have $D^{2}u =D^{2}\bar{u}^{\rho} $ in $\mathcal{U},$ this completes the proof. For the case $e_{n}\lrcorner u = 0$ on $\Gamma_{3R/4},$ we interchange the cases $n \in I$ and $n \notin I$ in the definition of $\bar{u}^{\rho}_{I, j}$ and argue similarly.
	\end{proof}
	
	\subsection{Spectrum of the Hodge systems}
	Let $\Omega \subset \mathbb{R}^{n}$ be an open, bounded, $C^{2,1}$ subset. Let $A \in C^{0,1}\left(\Omega; \operatorname{Hom}\left(\varLambda^{k+1}\right)\right)$ and $B \in C^{1,1}\left(\Omega; \operatorname{Hom}\left(\varLambda^{k}\right)\right)$. Consider the linear Hodge operator 
	\begin{align*}
		\mathfrak{L}u := d^{\ast}\left(A\left(x\right)du\right) + \left[B(x)\right]^{\intercal}dd^{\ast}\left(B(x)u\right)
	\end{align*}
	We set the tangential boundary operator as  
	\begin{align*}
		\mathfrak{b}_{\text{t}}u := \left( \nu \wedge u, \nu \wedge d^{\ast}\left(B(x)u\right)\right)
	\end{align*}
	and the normal boundary operator as 
	\begin{align*}
		\mathfrak{b}_{\text{n}}u := \left( \nu \lrcorner \left(B(x)u\right), \nu \lrcorner \left(A\left(x\right)du\right)\right)
	\end{align*}
	We now define the notion of weak solutions for these systems. 
	\begin{definition}
		$u \in W^{1,2}\left(\Omega; \varLambda^{k}\right)$ is called a weak solution of  
		\begin{align*}
			\left\lbrace \begin{aligned}
				\mathfrak{L}u &= \lambda B(x)u + f  &&\text{ in } \Omega, \\
				\mathfrak{b}_{\text{t}}u &= \mathfrak{b}_{\text{t}}u_{0} &&\text{ on } \partial \Omega,
			\end{aligned}\right. 
		\end{align*}
		if $u -u_{0} \in W_{T}^{1,2}\left(\Omega; \varLambda^{k}\right)$ and we have 
		\begin{align}\label{weak_formulation_tangential}
			\int_{\Omega} \left\langle A\left(x\right)du, d\phi\right\rangle &+ 	\int_{\Omega} \left\langle d^{\ast}\left( B\left(x\right)u\right), d^{\ast}\left( B\left(x\right)\phi\right)\right\rangle - 	\lambda\int_{\Omega} \left\langle B\left(x\right)u, \phi \right\rangle  \notag \\&= 	\int_{\Omega} \left\langle f, \phi\right\rangle \qquad \text{ for all } \phi \in W^{1,2}_{T}\left(\Omega; \varLambda^{k}\right).  
		\end{align}
		Similarly, we say $u \in W^{1,2}\left(\Omega; \varLambda^{k}\right)$ is a weak solution of  
		\begin{align*}
			\left\lbrace \begin{aligned}
				\mathfrak{L}u &= \lambda B(x)u + f  &&\text{ in } \Omega, \\
				\mathfrak{b}_{\text{n}}u &= \mathfrak{b}_{\text{n}}u_{0} &&\text{ on } \partial \Omega,
			\end{aligned}\right. 
		\end{align*}
		if $v = B(x)u$ satisfies $v - B(x)u_{0} \in W_{N}^{1,2}\left(\Omega; \varLambda^{k}\right)$ and we have 
		\begin{align}\label{weak_formulation_normal}
			\int_{\Omega}  &\langle A (x)d \left( B^{-1}(x) v \right) , d \left( B^{-1}(x) \phi \right) \rangle   + \int_{\Omega}\langle d^{\ast} v  , d^{\ast} \phi \rangle 
			+ \lambda  \int_{\Omega}\langle  v , B^{-1}(x)\phi \rangle \notag \\
			&\qquad =\int_{\Omega} \langle  f, B^{-1}(x)\phi \rangle   \qquad \text{ for all }   \phi \in W_{N}^{1,2}(\Omega; \varLambda^{k}). 
		\end{align}
	\end{definition}
	\begin{definition}
		A real number $\lambda \in \mathbb{R}$ is called a tangential eigenvalue for $\mathfrak{L}$ if there exists a nontrivial weak solution $\alpha \in W^{1,2}\left(\Omega; \varLambda^{k}\right)$ to the following boundary value problem 
		\begin{align*}
			\left\lbrace \begin{aligned}
				\mathfrak{L}\alpha &= \lambda B(x)\alpha &&\text{ in } \Omega, \\
				\mathfrak{b}_{\text{t}}\alpha &= 0 &&\text{ on } \partial\Omega. 
			\end{aligned}\right. 
		\end{align*} 
		Similarly, $\lambda \in \mathbb{R}$ is called a normal eigenvalue for $\mathfrak{L}$ if there exists a nontrivial weak solution $\alpha \in W^{1,2}\left(\Omega; \varLambda^{k}\right)$ to the following boundary value problem 
		\begin{align*}
			\left\lbrace \begin{aligned}
				\mathfrak{L}\alpha &= \lambda B(x)\alpha &&\text{ in } \Omega, \\
				\mathfrak{b}_{\text{n}}\alpha &= 0 &&\text{ on } \partial\Omega. 
			\end{aligned}\right. 
		\end{align*} 
	\end{definition}    
	\begin{remark}
		By standard variational method, $L^{2}$ estimates and Fredholm theory, it is easy to see that if $A$  and  $B$ are both uniformly Legendre elliptic, then the set of tangential eigenvalues and the set of normal eigenvalues are at most countable subsets of $(-\infty, 0]$ and either has no limit points except $-\infty.$ Moreover, both the tangential and the normal eigenforms are as regular as the data allows and each tangential and normal eigenspace is finite dimensional. 
	\end{remark}
	Now we investigate when $\lambda \ge 0$ can be an eigenvalue. 
	\begin{proposition}
		Assume $A$, $B$ are uniformly Legendre-elliptic. A real number $\lambda >0$ can never be a tangential (respectivelly, normal) eigenvalue for $\mathfrak{L}.$ Furthermore,
		\begin{enumerate}[(a)]
			\item $0 \in \mathbb{R}$ is a tangential eigenvalue for $\mathfrak{L}$ if and only if $\mathcal{H}_{T}\left(\Omega; \varLambda^{k}\right) \neq \left\lbrace 0 \right\rbrace.$ Moreover, any $g \in L^{2}(\Omega,\varLambda^{k})$ with $d^{\ast}g=0$ in $\Omega$ is $L^{2}$-orthogonal to the eigenspace for $0$ if and only if $g \in \mathcal{H}^\perp_T(\Omega,\varLambda^{k})$. 
			\item $0 \in \mathbb{R}$ is a normal eigenvalue for $\mathfrak{L}$ if and only if $\mathcal{H}_{N}\left(\Omega; \varLambda^{k}\right) \neq \left\lbrace 0 \right\rbrace.$ Moreover, any $g \in L^{2}(\Omega,\varLambda^{k})$ with $d^{\ast}g=0$ in $\Omega$ is $L^{2}$-orthogonal to the eigenspace for $0$ if and only if $g \in \mathcal{H}^\perp_N(\Omega,\varLambda^{k})$ and $\nu \lrcorner g = 0$ on $\partial\Omega$.
		\end{enumerate} 
	\end{proposition}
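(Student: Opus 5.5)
The plan is to test the weak formulation with the eigenform itself and read off signs and vanishing conditions.

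\emph{Positive $\lambda$, tangential case.} Let $\alpha$ be a tangential eigenform. Since $\alpha \in W^{1,2}_{T}$, it is an admissible test function in \eqref{weak_formulation_tangential} with $f=0$. This gives
\begin{align*}
\int_{\Omega} \langle A\,d\alpha, d\alpha\rangle + \int_{\Omega} \lvert d^{\ast}(B\alpha)\rvert^{2} = \lambda \int_{\Omega} \langle B\alpha, \alpha\rangle .
\end{align*}
Uniform Legendre ellipticity makes the left side nonnegative and gives $\int_{\Omega}\langle B\alpha,\alpha\rangle \ge \gamma \lVert\alpha\rVert_{L^{2}}^{2}$. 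If $\lambda>0$, the left side is at most $-\lambda\gamma\lVert\alpha\rVert^{2}<0$ after moving terms, so $\alpha=0$; hence $\lambda>0$ is not an eigenvalue.

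\emph{Positive $\lambda$, normal case.} Test \eqref{weak_formulation_normal} with $\phi=v=B\alpha$. This gives
\begin{align*}
\int_{\Omega} \langle A\,d\alpha, d\alpha\rangle + \int_{\Omega} \lvert d^{\ast}v\rvert^{2} + \lambda \int_{\Omega}\langle v, \alpha\rangle = 0 .
\end{align*}
Here $\langle v,\alpha\rangle=\langle B\alpha,\alpha\rangle\ge\gamma\lvert\alpha\rvert^{2}$, so the same sign argument shows $\alpha=0$ for $\lambda>0$. (The sign convention for $\lambda$ in \eqref{weak_formulation_normal} is opposite to the tangential one; I will read it as written.)

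\emph{Part (a): characterizing the zero eigenspace.} For $\lambda=0$ the identity above forces $d\alpha=0$ and $d^{\ast}(B\alpha)=0$, with $\nu\wedge\alpha=0$. So the tangential $0$-eigenspace is $E_{T}=\{\alpha\in W^{1,2}_{T}: d\alpha=0,\ d^{\ast}(B\alpha)=0\}$. I then show $\dim E_{T}=\dim\mathcal{H}_{T}$ by a linear isomorphism.
\begin{itemize}
\item Define $\Phi:E_{T}\to\mathcal{H}_{T}$ by $\Phi(\alpha)=$ the $L^{2}$-projection of $\alpha$ onto $\mathcal{H}_{T}$. This uses the Hodge--Morrey decomposition in the tangential setting (known for the Hodge Laplacian from the cited literature): a closed form with $\nu\wedge\alpha=0$ splits as $\alpha=h+d\beta$ with $h\in\mathcal{H}_{T}$ and $\beta\in W^{1,2}_{T}$.
\item \emph{Injectivity.} If $h=0$ then $\alpha=d\beta$, and
\begin{align*}
\int_{\Omega}\langle B\alpha,\alpha\rangle=\int_{\Omega}\langle B\alpha, d\beta\rangle = -\int_{\Omega}\langle d^{\ast}(B\alpha),\beta\rangle = 0 ,
\end{align*}
where the boundary term drops because $\nu\wedge\beta=0$. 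Ellipticity of $B$ then gives $\alpha=0$.
\item \emph{Surjectivity.} Given $h\in\mathcal{H}_{T}$, find $\beta\in W^{1,2}_{T}$ minimizing $\int_{\Omega}\langle B(h+d\beta), h+d\beta\rangle$ over $dW^{1,2}_{T}$; its Euler--Lagrange equation is exactly $d^{\ast}(B(h+d\beta))=0$. This minimization is the step needing the most care, since one must check that the minimizer exists in the closed subspace $dW^{1,2}_{T}$ and that $\alpha=h+d\beta$ actually lies in $W^{1,2}$. I would get the regularity from a Gaffney-type estimate, because $d\alpha=0$, $d^{\ast}(B\alpha)\in L^{2}$ and $\nu\wedge\alpha=0$.
\end{itemize}

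\emph{Orthogonality statement in (a).} Let $g$ satisfy $d^{\ast}g=0$. For $\alpha=h+d\beta$ in the eigenspace,
\begin{align*}
\int_{\Omega}\langle g,\alpha\rangle=\int_{\Omega}\langle g,h\rangle+\int_{\Omega}\langle g,d\beta\rangle=\int_{\Omega}\langle g,h\rangle ,
\end{align*}
since $\int_{\Omega}\langle g,d\beta\rangle=-\int_{\Omega}\langle d^{\ast}g,\beta\rangle=0$ (no boundary term, as $\nu\wedge\beta=0$). Because $\Phi$ is onto $\mathcal{H}_{T}$, $g$ is orthogonal to the eigenspace if and only if $g\perp\mathcal{H}_{T}$.

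\emph{Part (b).} In the normal case the eigenspace is $\{\alpha: d\alpha=0,\ d^{\ast}v=0,\ \nu\lrcorner v=0\}$ with $v=B\alpha$. Apply the same argument to $v$, using the decomposition with respect to $\mathcal{H}_{N}$ and $d^{\ast}$-exact forms $d^{\ast}W^{1,2}_{N}$, with $B^{-1}$ playing the role of $B$. When testing $g$ against $\alpha=B^{-1}v$, writing $v=h+d^{\ast}\gamma$ means the integration by parts no longer has a vanishing boundary term automatically. This is why the extra condition $\nu\lrcorner g=0$ is needed to kill the boundary term. I expect this bookkeeping, together with matching the pairing in (b), to be the main obstacle.
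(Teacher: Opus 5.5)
Your treatment of the zero eigenspace in (a), and of the dimension count in (b) via $v=B\alpha=h+d^{\ast}\gamma$, is essentially the paper's argument. The paper also writes an eigenform as an exact part plus a harmonic field and lets the harmonic field parametrize the eigenspace. The difference is that it produces the exact part $d\theta$ by solving the Maxwell-type problem $d^{\ast}(Bd\theta)=-d^{\ast}(Bh)$, $d^{\ast}\theta=0$, $\nu\wedge\theta=0$ with $\theta\in W^{2,2}$; this directly gives the $W^{1,2}$-regularity you want to extract from a Gaffney-type estimate.

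The genuine gap is the orthogonality statement in (b). Pairing $g$ with $\alpha=B^{-1}v$, $v=h+d^{\ast}\gamma$, leads nowhere. The term $\int_{\Omega}\langle g,B^{-1}d^{\ast}\gamma\rangle$ can only be integrated by parts onto $d((B^{-1})^{\intercal}g)$, about which nothing is known, since the hypothesis concerns $d^{\ast}g$. Also, $\int_{\Omega}\langle g,B^{-1}h\rangle$ is not a pairing of $g$ with $\mathcal{H}_{N}$. The missing idea is to decompose $\alpha$ itself. It is closed and carries no boundary condition, so the normal Hodge--Morrey decomposition gives $\alpha=d\phi+h'$ with $\phi\in W^{1,2}_{N}$ and $h'$ the $L^{2}$-projection of $\alpha$ onto $\mathcal{H}_{N}$. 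The coexact component drops because $d\alpha=0$ and its potential $\beta$ satisfies $\nu\lrcorner\beta=0$. Then
\[
\int_{\Omega}\langle g,\alpha\rangle=\int_{\partial\Omega}\langle \nu\lrcorner g,\phi\rangle-\int_{\Omega}\langle d^{\ast}g,\phi\rangle+\int_{\Omega}\langle g,h'\rangle .
\]
The boundary term survives because $\phi$ only satisfies $\nu\lrcorner\phi=0$, not $\nu\wedge\phi=0$; this is precisely where $\nu\lrcorner g=0$ enters, as in the paper.

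You also need $\alpha\mapsto h'$ to be onto $\mathcal{H}_{N}$. It is injective on the normal eigenspace: $\alpha=d\phi$ forces $\int_{\Omega}\langle B\alpha,\alpha\rangle=-\int_{\Omega}\langle d^{\ast}(B\alpha),\phi\rangle+\int_{\partial\Omega}\langle\nu\lrcorner(B\alpha),\phi\rangle=0$. Your dimension count then gives surjectivity.

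What can actually be proved is the following: for $g$ with $d^{\ast}g=0$ and $\nu\lrcorner g=0$, orthogonality to the eigenspace is equivalent to $g\in\mathcal{H}^{\perp}_{N}$. The necessity of $\nu\lrcorner g=0$ asserted in the statement fails. On a ball with $k=N=1$ one has $\mathcal{H}_{N}=\{0\}$, so the eigenspace is trivial, yet $g=dx_{1}$ has $d^{\ast}g=0$ and $\nu\lrcorner g\neq0$. The paper does not prove that direction either.

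Two further corrections.

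First, in the tangential positive-$\lambda$ step, the identity you derive, $\int_{\Omega}\langle Ad\alpha,d\alpha\rangle+\int_{\Omega}|d^{\ast}(B\alpha)|^{2}=\lambda\int_{\Omega}\langle B\alpha,\alpha\rangle$, gives no contradiction for $\lambda>0$, since both sides are nonnegative. The claim that ``the left side is at most $-\lambda\gamma\lVert\alpha\rVert^{2}$'' does not follow from it. With \eqref{weak_formulation_tangential} read literally the statement is in fact false: for $A=B=\mathrm{Id}$ it is the weak form of $-\Delta\alpha=\lambda\alpha$. You have to adopt the intended sign convention, that of \eqref{maineqn}, \eqref{weak_formulation_normal} and the remark placing all eigenvalues in $(-\infty,0]$. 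Under it the right-hand side is $-\lambda\int_{\Omega}\langle B\alpha,\alpha\rangle$. The last equality in the paper's own display has the same slip.

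Second, in your surjectivity step for (a), the Euler--Lagrange equation of $\beta\mapsto\int_{\Omega}\langle B(h+d\beta),h+d\beta\rangle$ is $d^{\ast}(B(h+d\beta))=0$ only when $B$ is symmetric, which Legendre ellipticity does not require. Instead, solve $\int_{\Omega}\langle B(h+\xi),\eta\rangle=0$ for all $\eta$ by Lax--Milgram on the $L^{2}$-closure of $dW^{1,2}_{T}$; coercivity there is exactly the Legendre condition on $B$.
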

	\begin{proof}
		If $\alpha \in W^{1,2}$ is an tangential eigenform for $\mathfrak{L}$, then 
		\begin{align*}
			\gamma_A\int_{\Omega}\left\lvert d\alpha\right\rvert^{2}+ \int_{\Omega}\left\lvert d^{\ast}\left(B\alpha\right) \right\rvert^{2} \leq -\int_{\Omega}\left\langle \mathfrak{L}\alpha, \alpha\right\rangle = \lambda\int_{\Omega}\left\lvert \alpha\right\rvert^{2} . 
		\end{align*}
		Thus, $\lambda >0$ is impossible and if $\lambda=0,$ then $d\alpha = 0$ and $d^{\ast}\left(B\alpha\right) = 0.$ Then by the tangential Hodge decomposition (see \cite[Theorem 6.9 (i)]{CsatoDacKneuss}), we must have $\alpha = d\theta + h,$ where $h \in \mathcal{H}_{T}\left(\Omega; \varLambda^{k}\right)$ and $\theta \in W^{2,2}\cap \mathcal{H}^\perp_T(\Omega,\varLambda^{k-1})$ is the unique solution of  
		\begin{align*}
			\begin{cases}
				\begin{aligned}
					d^{\ast}\left(Bd\theta\right)&= - d^{\ast}\left(Bh\right) &&\text{ in } \Omega, \\
					d^{\ast}\theta &= 0 &&\text{ in } \Omega, \\
					\nu \wedge \theta &= 0 &&\text{ on } \partial\Omega. 
				\end{aligned}
			\end{cases}
		\end{align*}
		This proves the first statement. For the second, we have 
		\begin{align*}
			\int_{\Omega}\left\langle g, \alpha\right\rangle =  \int_{\Omega}\left\langle g, d\theta + h \right\rangle= -\int_{\Omega}\left\langle d^{\ast}g, \theta \right\rangle + \int_{\Omega}\left\langle g, h \right\rangle = \int_{\Omega}\left\langle g, h \right\rangle. 
		\end{align*}
		This proves (a). Now if $\alpha \in W^{1,2}$ is a normal eigenform for $\mathfrak{L}$, then once again $\lambda>0$ is impossible and for $\lambda=0,$ we must have $d\alpha = 0$, $d^{\ast}\left(B\alpha\right)=0$ in $\Omega$ and $\nu\lrcorner B\alpha=0$ on $\partial\Omega.$  Using the standard Hodge decomposition theorem with normal boundary conditions (see \cite[Theorem 6.9 (ii)]{CsatoDacKneuss}), we can write 
		\begin{align*}
			B\alpha = d\psi + d^{\ast}\theta + h,
		\end{align*}
		where $\psi\in W^{1,2}_{N}\cap\mathcal{H}^\perp_N \left(\Omega; \varLambda^{k-1}\right)$, $, \theta \in W^{1,2}_{N}\cap\mathcal{H}^\perp_N\left(\Omega; \varLambda^{k+1}\right)$ and $h \in \mathcal{H}_{N}\left(\Omega; \varLambda^{k}\right),$ with $d^{\ast}\psi=0,$ $d\theta = 0$ in $\Omega$ and $\nu \lrcorner d\psi=0 $ on $\partial\Omega.$ Thus, $\psi$ is a solution to the equation 
		\begin{align*}
			\begin{cases}
				\begin{aligned}
					( d^{\ast}d+dd^{\ast})\psi&=0&&\qquad\text{ in }\Omega,\\
					\nu\lrcorner \psi&=0&&\qquad\text{ on }\partial\Omega,\\
					\nu\lrcorner d\psi&=0&&\qquad\text{ on }\partial\Omega.
				\end{aligned}
			\end{cases}
		\end{align*}
		By uniqueness, we have $\psi = 0$. Thus, we see that we can write $B\alpha = d^{\ast}\theta + h,$ where $h \in \mathcal{H}_{N}\left(\Omega; \varLambda^{k}\right)$ and $\theta \in W^{2,2}\cap \mathcal{H}^\perp_N(\Omega,\varLambda^{k-1})$ is the unique solution of  
		\begin{align*}
			\begin{cases}
				\begin{aligned}
					d\left(B^{-1}d^{\ast}\theta\right)&= - d\left(B^{-1}h\right) &&\text{ in } \Omega, \\
					d\theta &= 0 &&\text{ in } \Omega, \\
					\nu \lrcorner \theta &= 0 &&\text{ on } \partial\Omega. 
				\end{aligned}
			\end{cases}
		\end{align*}
		This proves the first statement. For the second, observe that since $d\alpha=0,$ we can write $\alpha = d\phi + h,$ where $h \in \mathcal{H}_{N}\left(\Omega; \varLambda^{k}\right)$ and $\phi \in W^{1,2}_{N}\cap\mathcal{H}^\perp_N \left(\Omega; \varLambda^{k-1}\right).$ Thus, we have 
		\begin{align*}
			\int_{\Omega} \left\langle g, \alpha\right\rangle = 	\int_{\Omega} \left\langle g, d\phi + h\right\rangle =	\int_{\partial\Omega} \left\langle \nu \lrcorner g, \phi \right\rangle -	\int_{\Omega} \left\langle d^{\ast}g, \phi\right\rangle +\int_{\Omega} \left\langle g, h\right\rangle.
		\end{align*}
		As $d^{\ast}g=0,$ this completes the proof.
	\end{proof}
	\section{Crucial estimates}\label{main estimates}
	\subsection{\texorpdfstring{$L^{p}$}{Lp} estimates for constant coefficients}
	We begin by recalling the global $L^{p}$-estimate for a constant-coefficient Legendre-Hadamard elliptic system with the usual Dirichlet boundary conditions. This is well-known. See \cite[Section 7.1]{giaquinta-martinazzi-regularity}.    
	\begin{proposition}\label{Global Lp estimates LH system prop}
		Let $U \subset \mathbb{R}^{n}$ be open, bounded and smooth, $m \in \mathbb{N}$ and let $1< p < \infty$ be a real number. Let $M \in \operatorname{Hom}\left(\mathbb{R}^{mn}\right)$ be Legendre-Hadamard elliptic with constant $\gamma_{M}>0.$ Let $u \in W^{1,2}\left(U; \mathbb{R}^{m}\right)$ be a weak solution of 
		\begin{align*}
			\begin{cases}
				\begin{aligned}
					-\operatorname{div} \left(M \nabla u\right) &=f &&\text{ in } U, \\
					u&=0 &&\text{ on } \partial U,
				\end{aligned}
			\end{cases}
		\end{align*}
		with $f \in L^{p}\left(U; \mathbb{R}^{m}\right).$ Then $u \in W^{2, p}\left(U; \mathbb{R}^{m}\right)$ and there exists a constant $C = C(n, m, \gamma_{M}, \left\lVert M \right\rVert_{L^{\infty}}, \Omega )>0$ such that we have the estimate 
		\begin{align*}
			\left\lVert \nabla^2 u \right\rVert_{L^{p}\left(\Omega; \mathbb{R}^{mn^2}\right)} \le C \left\lVert f \right\rVert_{L^{p}\left(\Omega; \mathbb{R}^{m}\right)}. 
		\end{align*}
	\end{proposition}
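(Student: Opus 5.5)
The statement is the classical global $W^{2,p}$ estimate for constant-coefficient Legendre--Hadamard systems with homogeneous Dirichlet data on a smooth bounded domain. The paper cites \cite[Section 7.1]{giaquinta-martinazzi-regularity} for it, so the plan below is the standard route and does not use the Hodge machinery.

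\textbf{Step 1: $L^2$ theory.} By Gårding's inequality for constant Legendre--Hadamard $M$ (proved via Fourier transform on $W^{1,2}_0$), the bilinear form $\int_U \langle M\nabla u,\nabla\phi\rangle$ is coercive on $W^{1,2}_0(U;\mathbb{R}^m)$. Lax--Milgram then gives existence and uniqueness of $u\in W^{1,2}_0$. Nirenberg difference quotients in tangential directions, after flattening $\partial U$, give $u\in W^{2,2}$ with $\|\nabla^2u\|_{L^2}\le C\|f\|_{L^2}$. The normal--normal derivatives are then recovered algebraically from the equation, using invertibility of the matrix $\langle M(e_n\otimes\cdot),e_n\otimes\cdot\rangle$, which is exactly Legendre--Hadamard.

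\textbf{Step 2: Campanato decay.} For homogeneous solutions of the constant-coefficient system on $B_R$ or $B_R^+$ with zero Dirichlet data on the flat part, iterating the $L^2$ estimates gives the decay
\[
\int_{B_\rho^+}|\nabla^2 v-(\nabla^2v)|^2\le C(\rho/R)^{n+2}\int_{B_R^+}|\nabla^2 v-(\nabla^2v)|^2 .
\]
At the boundary, the mean is subtracted only in the tangential sense, as in Lemma \ref{Hessian Poincare Sobolev}. Comparing $u$ with the solution $v$ of the homogeneous problem having the same boundary data, and handling the flattening errors, yields $\nabla^2u\in BMO$ (equivalently the Campanato space $\mathcal{L}^{2,n}$) with $[\nabla^2 u]_{BMO}\le C\|f\|_{L^\infty}$.

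\textbf{Step 3: interpolation and duality.} The linear map $f\mapsto\nabla^2u$ is bounded $L^2\to L^2$ by Step 1 and $L^\infty\to BMO$ by Step 2. Stampacchia interpolation gives boundedness $L^p\to L^p$ for $2\le p<\infty$. For $1<p<2$, test the equation against the solution of the adjoint problem with coefficients $M^\intercal$, which is again Legendre--Hadamard. Duality combined with the $p'>2$ estimate bounds $\nabla^2u$ in $L^p$; the lower-order terms are controlled by Poincaré and uniqueness. Approximating $f\in L^p$ by smooth data then shows $u\in W^{2,p}$.

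\textbf{Main obstacle.} The hardest step is the boundary Campanato decay in Step 2 and the control of the flattening errors. The cleanest approach is to prove the flat-boundary decay first and treat the curved boundary by freezing, accepting constants that depend on $U$. In the present paper this step need not be redone: it suffices to invoke the cited reference.
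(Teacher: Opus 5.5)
The overall route is the standard one behind the citation \cite{giaquinta-martinazzi-regularity} that the paper gives in place of a proof. Your Step 1, Step 2 and the case $2\le p<\infty$ of Step 3 ($L^2$ theory, Campanato/BMO bound, Stampacchia interpolation) are acceptable as a sketch. The genuine gap is the range $1<p<2$: duality does not turn the $W^{2,p'}$ estimate for the adjoint system into a $W^{2,p}$ estimate for $u$.

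\textbf{Why the duality step fails.} Test $-\operatorname{div}(M\nabla u)=f$ against the solution $v\in W^{2,p'}\cap W^{1,p'}_0$ of $-\operatorname{div}(M^{\intercal}\nabla v)=g$, with $g\in L^{p'}$. This gives $\int_U\langle u,g\rangle=\int_U\langle f,v\rangle$, hence only $\|u\|_{L^p}\le C\|f\|_{L^p}$. To reach $\nabla^2u$ you must pair with $G\in C^\infty_c(U;\mathbb{R}^{mn^2})$. Then $\int_U\langle \nabla^2 u,G\rangle=\int_U\langle f,v\rangle$, where $v$ now solves the adjoint system with datum $\operatorname{div}\operatorname{div}G$. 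What you need is $\|v\|_{L^{p'}}\le C\|G\|_{L^{p'}}$, an $L^{p'}$ bound for very weak solutions with $W^{-2,p'}$ data. The $W^{2,p'}$ estimate does not supply it. Abstractly, the Banach adjoint of the solution map $L^{p'}\to W^{2,p'}\cap W^{1,p'}_0$ of the adjoint system is an $L^p$ bound for data in the dual of $W^{2,p'}\cap W^{1,p'}_0$, which is two derivatives too low. Since Stampacchia's theorem only interpolates upward from $L^2$, nothing in your plan covers $p<2$. That range is genuinely used in the paper: the proposition is applied with the exponent $\bar p$ in Lemma \ref{poinwise decay lemma}, and $\bar p$ may lie in $(1,2)$.

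\textbf{The missing idea.} Dualize where the problem is self-dual, namely the gradient estimate for divergence-form data.
\begin{enumerate}
\item For $-\operatorname{div}(M\nabla u)=\operatorname{div}F$ with $u\in W^{1,p}_0$, prove $\|\nabla u\|_{L^p}\le C\|F\|_{L^p}$. Use $L^2$ theory, Campanato/BMO for $\nabla u$ and Stampacchia when $p\ge 2$.
\item For $p<2$, duality now closes: $\int_U\langle \nabla u,G\rangle=\int_U\langle F,\nabla v\rangle$, where $-\operatorname{div}(M^{\intercal}\nabla v)=\operatorname{div}G$ and $v\in W^{1,p'}_0$.
\item Obtain second derivatives by differentiating the equation. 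In the interior, $\partial_k u$ solves the system with the divergence-form datum $\partial_k f$. Near a flattened boundary point, the tangential derivatives $\partial_\alpha u$, $\alpha<n$, vanish on the flat part, so after a cut-off the Dirichlet gradient estimate applies to them; the flattening and freezing errors are absorbed for small radii.
\item Recover $\partial_{nn}u$ algebraically from the equation through the invertible $nn$-block, exactly as in your Step 1.
\item Conclude $u\in W^{2,p}$ by approximation, using uniqueness in $W^{1,p}_0$ for $p<2$, which again comes from the gradient-level duality.
\end{enumerate}
Alternatively, you could apply Stampacchia directly to the adjoint map $G\mapsto v$ above. Its $L^2$ bound is the dual of Step 1, and its $L^\infty\to BMO$ bound would come from a zeroth-order Campanato estimate for very weak solutions. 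Either way, the $W^{2,p'}$ estimate alone is not enough.
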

	Now we turn to some interior decay estimates. 
	\begin{theorem}[Interior Hessian decay estimates]\label{interior hessian decay}
		Let $U \subset \mathbb{R}^{n}$ be open and let $\bar{A} \in \operatorname{Hom}\left(\varLambda^{k+1}\right)$, $\bar{B} \in \operatorname{Hom}\left(\varLambda^{k}\right)$ be both Legendre elliptic with constant $\gamma_{\bar{A}}$ and $\gamma_{\bar{B}}$, respectively. Let $\alpha \in W^{1,2}(U ; \varLambda^{k})$ and assume one of the following holds.\smallskip 
		\begin{enumerate}[(i)] 
			\item For all $ \psi \in W_{0}^{1,2}(U ; \varLambda^{k}),$ we hve 
			\begin{align*}
				\int_{U} \langle \bar{A}d\alpha ; d\psi \rangle + \int_{U} \langle d^{\ast}\left( \bar{B} \alpha \right) ; d^{\ast} \left( \bar{B}\psi\right) \rangle   = 0.  
			\end{align*}  
			\item For all $\psi \in W_{0}^{1,2}(U ; \varLambda^{k}),$ we have 
			\begin{align*}
				\int_{U} \langle \bar{A}d\left( \bar{B}^{-1} \alpha \right) ; d\left( \bar{B}^{-1} \psi \right) \rangle + \int_{U} \langle d^{\ast}\alpha ; d^{\ast} \psi \rangle   = 0. 
			\end{align*}
		\end{enumerate}
		\noindent Then there exists a constant $C = C\left( \gamma_{\bar{A}}, \gamma_{\bar{B}}, \left\lVert \bar{A} \right\rVert_{L^{\infty}}, \left\lVert \bar{B} \right\rVert_{L^{\infty}}, k, n, N \right) >0$ such that for any two concentric balls $B_{\rho} \subset B_{r} \subset \subset U$ with $0 < \rho \le r, $ we have 
		\begin{align}\label{ball hessian mean osc decay}
			\left( \fint_{B_{\rho}} \left\lvert \nabla^2 \alpha - \left( \nabla^2 \alpha \right)_{B_{\rho}}\right\rvert^{2} \right)^{\frac{1}{2}}\le C \left(\frac{\rho}{r}\right)\left( \fint_{B_{r}} \left\lvert \nabla^2 \alpha  - \left( \nabla^2 \alpha \right)_{B_{r}}\right\rvert^{2}\right)^{\frac{1}{2}}. 
		\end{align} 
		Moreover, for any $1 < \bar{p} < 2$, there exists a constant $$C = C\left( \bar{p}, \gamma_{\bar{A}}, \gamma_{\bar{B}}, \left\lVert \bar{A} \right\rVert_{L^{\infty}}, \left\lVert \bar{B} \right\rVert_{L^{\infty}}, k, n, N \right) >0$$ such that for any ball $B_{r} \subset U,$ we have the estimate
		\begin{align}\label{ball Hessian decay}
			\left( \fint_{B_{r/2}} \left\lvert \nabla^2 \alpha \right\rvert^{2} \right)^{\frac{1}{2}}&\le C    \left( \fint_{B_{3r/4}} \left\lvert \nabla^2 \alpha \right\rvert^{\bar{p}} \right)^{\frac{1}{\bar{p}}},
		\end{align}
		where $B_{r/2} \subset B_{3r/4} \subset B_{r}$ are concentric balls. 
	\end{theorem}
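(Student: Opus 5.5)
We first reduce both cases to a constant coefficient Legendre--Hadamard system. In case (i), for test functions $\psi \in C^{\infty}_{c}$ the pointwise algebraic identities $d\psi = \sum_{i} e^{i}\wedge \partial_{i}\psi$ and $d^{\ast}\psi = \sum_{i} e^{i}\lrcorner \partial_{i}\psi$, together with the definition of $M_{T}$, turn the weak formulation into
\begin{align*}
\int_{U} \left\langle M_{T}\nabla\alpha, \nabla\psi\right\rangle = 0 \qquad\text{ for all } \psi \in W^{1,2}_{0}\left(U;\varLambda^{k}\right).
\end{align*}
Thus $\alpha$ is a weak solution of $\operatorname{div}\left(M_{T}\nabla\alpha\right)=0$. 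In case (ii) the same computation with $M_{N}$ applies directly to $\alpha$, because $\bar{B}$ is constant.

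By Lemma \ref{ellipticity lemma}, both $M_{T}$ and $M_{N}$ are Legendre--Hadamard elliptic, with a constant depending only on $n,k,N,\gamma_{\bar A},\gamma_{\bar B}$. Their norms are controlled by $\lVert\bar A\rVert$, $\lVert\bar B\rVert$ and $\lVert\bar B^{-1}\rVert\le 1/\gamma_{\bar B}$. So both statements reduce to standard facts for constant coefficient Legendre--Hadamard systems.

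For \eqref{ball hessian mean osc decay} I use the classical interior regularity theory for such systems (Campanato, see \cite{giaquinta-martinazzi-regularity}).
\begin{itemize}
\item Difference quotients and Caccioppoli inequalities show that $\alpha$ is smooth. Since the coefficients are constant, every derivative $\partial^{\beta}\alpha$ solves the same system.
\item This gives the standard estimate $\sup_{B_{r/2}}\lvert\nabla v\rvert^{2}\le C r^{-n-2}\int_{B_r}\lvert v-(v)_{B_r}\rvert^{2}$, valid for any solution $v$ of the system.
\item Apply it to $v=\nabla^{2}\alpha$, which is also a solution. For $\rho\le r/2$ this yields
\begin{align*}
\int_{B_\rho}\lvert v-(v)_{B_\rho}\rvert^{2}\le C\rho^{n+2}\sup_{B_{r/2}}\lvert\nabla v\rvert^{2}\le C(\rho/r)^{n+2}\int_{B_r}\lvert v-(v)_{B_r}\rvert^{2}.
\end{align*}
Dividing by $\lvert B_\rho\rvert$ gives the averaged form with the factor $\rho/r$.
\item The case $r/2<\rho\le r$ is trivial from the minimality of the mean.
\end{itemize}

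For \eqref{ball Hessian decay}, set $f=\lvert\nabla^{2}\alpha\rvert$. The second derivatives $\partial_{ij}\alpha$ are solutions, so applying the Caccioppoli inequality to $\partial_{ij}\alpha-\xi$ and then the Sobolev--Poincaré inequality with exponent $2_{*}=2n/(n+2)$ gives, on all small balls,
\begin{align*}
\left(\fint_{B_{s}} f^{2}\right)^{\frac12}\le C\left(\fint_{B_{2s}} f^{r_0}\right)^{\frac1{r_0}}, \qquad r_0=\max(2_{*},1+\epsilon)<2
\end{align*}
(for $n=2$ any exponent below $2$ works). Theorem \ref{gehring} then upgrades this to an $L^{s}$--$L^{2}$ reverse inequality with $s>2$. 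Theorem \ref{s-2to2-1}, in its full-ball form, lowers the right-hand exponent to $1$, and hence to any $\bar p\in(1,2)$ by Hölder.

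The main obstacle is the bookkeeping of ball radii. The chain of self-improvements enlarges balls by factors of $2$, which has to be fitted inside $B_{3r/4}\subset B_r$. I would handle this by a covering argument: prove the inequality on small balls centred in $B_{r/2}$ of radius $r/16$, then sum over a finite cover whose cardinality depends only on $n$. Tracking the dependence of the constant on $\bar p$ is the only remaining care.
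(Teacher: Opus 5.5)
Your proposal is correct and follows essentially the same route as the paper. Both rewrite the weak formulation as $\int_U\langle M\nabla\alpha,\nabla\psi\rangle=0$ with $M=M_T$ or $M_N$, and use Lemma \ref{ellipticity lemma} to reduce to a constant-coefficient Legendre--Hadamard system. Both then apply the classical Campanato decay to $\nabla^2\alpha$: the paper uses the $L^2$ gradient decay plus Poincar\'e and Caccioppoli, while you use the equivalent sup bound. Finally, both obtain \eqref{ball Hessian decay} from the standard reverse H\"older inequality for such systems, which you derive rather than cite.

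One slip needs fixing. To get a reverse H\"older inequality for $f=\lvert\nabla^2\alpha\rvert$, apply Caccioppoli to $\partial_i\alpha-\xi$ and Sobolev--Poincar\'e to $\partial_i\alpha$. Applying them to $\partial_{ij}\alpha-\xi$, as you wrote, only controls $\lvert\nabla^3\alpha\rvert$. Alternatively, your sup bound applied to $\partial_{ij}\alpha$ already gives $\sup_{B_s}f\le C(\fint_{B_{2s}}f^2)^{1/2}$, so you can skip Gehring and apply Theorem \ref{s-2to2-1} directly.
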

	\begin{proof}
		By Lemma \ref{ellipticity lemma}, we infer that $\alpha \in W^{1,2}(U ; \varLambda^{k})$ is a weak solution to a Legendre-Hadamard elliptic system. Now the estimates follow from interior regularity estimates for such systems (see \cite[Chapter 4 and 5]{giaquinta-martinazzi-regularity}). We only sketch the proof. If $u$ is a solution to a Legendre-Hadamard elliptic system, we have the well-known decay estimate 
		\begin{align*}
			\int_{B_{\rho}} \left\lvert \nabla u \right\rvert^{2} \le C \left(\frac{\rho}{r}\right)^{n} \int_{B_{r}} \left\lvert \nabla u \right\rvert^{2}. 
		\end{align*}
		By a simple scaling argument, we can assume $r=1.$ Now combining this with Poincar\'e inequality and Caccioppoli inequality, for any $0 < \rho < 1/2,$ we have 
		\begin{align*}
			\int_{B_{\rho}} \left\lvert u - \left( u \right)_{B_{\rho}}\right\rvert^{2} &\le c\rho^{2}\int_{B_{\rho}} \left\lvert \nabla u \right\rvert^{2} \\&\le c\rho^{n+2}\int_{B_{1/2}} \left\lvert \nabla u \right\rvert^{2} \le c\rho^{n+2}\int_{B_{1}} \left\lvert u - \left(u\right)_{B_{1}} \right\rvert^{2}. 
		\end{align*}
		Scaling back, this implies the estimate 
		\begin{align*}
			\left(\fint_{B_{\rho}} \left\lvert u - \left( u \right)_{B_{\rho}}\right\rvert^{2}\right)^{\frac{1}{2}} \le c \left(\frac{\rho}{r}\right)\left( \fint_{B_{r}} \left\lvert u - \left( u \right)_{B_{r}}\right\rvert^{2}\right)^{\frac{1}{2}}. 
		\end{align*}
		Our estimate \eqref{ball hessian mean osc decay} follows from this by observing that for every $1\le i, j \le n,$ $\partial^2 \alpha/\partial x_{i}\partial x_{j}$ is also a solution to the same Legendre-Hadamard elliptic system. Estimate \eqref{ball Hessian decay} follows similarly from the standard reverse H\"{o}lder inequality 
		\begin{align*}
			\left( \fint_{B_{r/2}} \left\lvert \nabla u \right\rvert^{2} \right)^{\frac{1}{2}}&\le C    \left( \fint_{B_{3r/4}} \left\lvert \nabla u \right\rvert^{\bar{p}} \right)^{\frac{1}{\bar{p}}},
		\end{align*}
		for solutions of Legendre-Hadamard elliptic systems. 
	\end{proof}
	
	Now we need a boundary version of the above estimates. Since the boundary conditions are not the usual Dirichlet condition, these can not be deduced from the usual theory and this is where our systems differs from the well-known theory. However, this follows from the analysis in \cite{Sil_linearregularity}.  
	\begin{theorem}[Boundary Hessian decay estimates]\label{boundary hessian decay}
		Let $R>0$ and let $\bar{A} \in \operatorname{Hom}\left(\varLambda^{k+1}\right)$, $\bar{B} \in \operatorname{Hom}\left(\varLambda^{k}\right)$ be both Legendre elliptic with constant $\gamma_{\bar{A}}$ and $\gamma_{\bar{B}}$, respectively. Assume one of the following holds.
		\begin{enumerate}[(i)] 
			\item Let $\alpha \in W^{1,2}(\mathcal{U}_{R} ; \varLambda^{k})$ satisfy $e_{n}\wedge \alpha = 0$ on $\Gamma_{3R/4}$ and 
			\begin{align}\label{bndry hessian decay estimate constant}
				\int_{\mathcal{U}_{R}} \langle \bar{A}d\alpha ; d\psi \rangle + \int_{\mathcal{U}_{R}} \langle d^{\ast}\left( \bar{B} \alpha \right) ; d^{\ast} \left( \bar{B}\psi\right) \rangle   = 0
			\end{align} for all $\psi \in W_{T, flat}^{1,2}\left( \mathcal{U}_{R} ; \varLambda^{k}\right)$. 
			\item Let $\alpha \in W^{1,2}(\mathcal{U}_{R} ; \varLambda^{k})$  satisfy $e_{n}\lrcorner \alpha = 0$ on $\Gamma_{3R/4}$ and  
			\begin{align}\label{bndry hessian decay estimate constant normal}
				\int_{\mathcal{U}_{R}} \langle \bar{A}d\left( \bar{B}^{-1} \alpha \right) ; d\left( \bar{B}^{-1} \psi \right) \rangle + \int_{\mathcal{U}_{R}} \langle d^{\ast}\alpha ; d^{\ast} \psi \rangle   = 0   
			\end{align} for all $\psi \in W_{N, flat}^{1,2}\left( \mathcal{U}_{R} ; \varLambda^{k}\right).$
		\end{enumerate}\noindent 
		Then there exists a constant $C = C\left( \gamma_{\bar{A}}, \gamma_{\bar{B}}, \left\lVert \bar{A} \right\rVert_{L^{\infty}}, \left\lVert \bar{B} \right\rVert_{L^{\infty}}, k, n, N \right) >0$ such that for any $0 < \rho \le r \le  R/2,$ we have 
		\begin{align}\label{half ball hessian mean osc decay}
			\left( \fint_{B^{+}_{\rho}} \left\lvert \nabla^2 \alpha - \left( \nabla^2 \alpha \right)_{B^{+}_{\rho}}\right\rvert^{2} \right)^{\frac{1}{2}}\le C \left(\frac{\rho}{r}\right)\left( \fint_{B^{+}_{r}} \left\lvert \nabla^2 \alpha \right\rvert^{2}\right)^{\frac{1}{2}}. 
		\end{align} 
		Moreover, for any $1 < \bar{p} < 2$ and any $0 < r \le R,$ there exists a constant $C = C\left( \bar{p}, \gamma_{\bar{A}}, \gamma_{\bar{B}}, \left\lVert \bar{A} \right\rVert_{L^{\infty}}, \left\lVert \bar{B} \right\rVert_{L^{\infty}}, k, n, N \right) >0$ such that 
		\begin{align}\label{Hessian decay}
			\left( \fint_{B^{+}_{r/2}} \left\lvert \nabla^2 \alpha \right\rvert^{2} \right)^{\frac{1}{2}}&\le C    \left( \fint_{B^{+}_{3r/4}} \left\lvert \nabla^2 \alpha \right\rvert^{\bar{p}} \right)^{\frac{1}{\bar{p}}}.
		\end{align}
	\end{theorem}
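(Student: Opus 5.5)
We reduce the boundary estimates to tangential difference quotients plus the equation, following the analysis in \cite{Sil_linearregularity}. It suffices to treat case (i): case (ii) is formally identical after interchanging the roles of $n\in I$ and $n\notin I$, i.e.\ replacing $M_T$ by $M_N$ and $\wedge$ by $\lrcorner$. By scaling we take $R=1$.

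\textbf{Step 1 (Legendre--Hadamard form).} Since $d$ and $d^{\ast}$ have constant coefficients, the bilinear form in \eqref{bndry hessian decay estimate constant} can be rewritten as $\int \langle M_T \nabla\alpha, \nabla\psi\rangle$, with $M_T$ as in the algebraic lemmas. By Lemma \ref{ellipticity lemma}, $M_T$ is Legendre--Hadamard elliptic. The test class $W^{1,2}_{T,\text{flat}}$ is invariant under tangential translations $x\mapsto x+he_i$, $1\le i\le n-1$, for $|h|$ small relative to the support. The boundary condition $e_n\wedge\alpha=0$ says that $\alpha_{I}=0$ on $\Gamma$ for $n\notin I$. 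The complementary components carry a natural (conormal) condition that comes out of the weak formulation.

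\textbf{Step 2 (Caccioppoli and higher regularity).} We first prove a boundary Caccioppoli inequality: test with $\eta^2(\alpha-\ell)$, where $\eta$ is a cutoff and $\ell$ is an affine form preserving $e_n\wedge\ell=0$. Differentiability in the tangential directions then follows from difference quotients $\Delta_{h,i}$, $i<n$, since these commute with the equation and the boundary condition. This gives $\partial_i\alpha\in W^{1,2}$ with Caccioppoli bounds, and iterating controls all derivatives with at least one tangential index. The remaining derivatives $\partial_n^2\alpha$ (and higher normal derivatives) are recovered from the equation. The operator is Legendre--Hadamard, so the coefficient block multiplying $\partial_n^2\alpha$ is $\langle M_T(e_n\otimes\cdot),e_n\otimes\cdot\rangle$, which is invertible by Lemma \ref{ellipticity lemma}. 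Hence $\partial_n^2\alpha$ is expressed through mixed and tangential derivatives. Iterating gives $\alpha\in W^{m,2}(B^+_{1/2})$ for all $m$, with the estimate
$$\|\nabla^{m}\alpha\|_{L^2(B^+_{1/2})}\le C\|\nabla^2\alpha\|_{L^2(B^+_{3/4})}\quad (m\ge 3).$$
That the right-hand side can be taken to be the Hessian norm rather than the full $W^{1,2}$ norm uses the construction of Lemma \ref{Hessian Poincare Sobolev}. The form $\bar\alpha^{\rho}$ there differs from $\alpha$ by a polynomial of degree at most one satisfying the same boundary condition, and it still solves the system: affine forms with the flat boundary condition are solutions, and the natural boundary terms vanish for them. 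We therefore apply Caccioppoli to $\bar\alpha^{\rho}$ and use Lemma \ref{Hessian Poincare Sobolev} to bound its lower-order norms by the Hessian.

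\textbf{Step 3 (decay estimate \eqref{half ball hessian mean osc decay}).} For $\rho\le r/4$ we use the Poincar\'e inequality and Sobolev embedding:
$$\fint_{B^+_\rho}|\nabla^2\alpha-(\nabla^2\alpha)_{B^+_\rho}|^2\le C\rho^2\sup_{B^+_{r/2}}|\nabla^3\alpha|^2\le C\frac{\rho^2}{r^2}\fint_{B^+_r}|\nabla^2\alpha|^2,$$
where the second inequality comes from the scaled higher-order estimates of Step 2. For $r/4<\rho\le r$ the estimate is trivial.

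\textbf{Step 4 (reverse H\"older estimate \eqref{Hessian decay}).} The estimates of Step 2, combined with Sobolev embedding at every half-ball centred on $\Gamma$, and with Theorem \ref{interior hessian decay} at interior balls, give a reverse H\"older inequality for $|\nabla^2\alpha|$ with exponent $s>2$ against $L^2$, uniformly over all scales. Theorem \ref{s-2to2-1} then lowers the right-hand exponent to $1$, and hence to any $\bar p>1$ by H\"older. A covering argument adjusts the radii from $(R/4,R/2)$ to $(r/2,3r/4)$.

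\textbf{Main obstacle.} The hardest step is the boundary part of Step 2. One must check that the tangential difference quotients are admissible test functions, and that the Caccioppoli estimate subtracts only affine maps compatible with the Hodge boundary condition, so that lower-order terms are controlled by $\nabla^2\alpha$ alone. This is exactly what the normalisation $\bar\alpha^\rho$ of Lemma \ref{Hessian Poincare Sobolev} is designed to provide, and it is where the analysis of \cite{Sil_linearregularity} enters.
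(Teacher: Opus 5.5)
Your outline follows the paper's route to \eqref{half ball hessian mean osc decay}: higher-order $L^2$ regularity at the flat boundary as in \cite{Sil_linearregularity}, then Poincar\'e and Sobolev embedding, with Lemma \ref{Hessian Poincare Sobolev} used to put $\|\nabla^2\alpha\|_{L^2}$ on the right. For \eqref{Hessian decay} you derive from the same $L^\infty$--$L^2$ bounds, via Theorem \ref{s-2to2-1}, what the paper simply cites from \cite[Theorem 4.3]{SenguptaSil_MorreyLorentz_Hodge}; that is a reasonable alternative.

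\textbf{First problem: the affine correction.} The step meant to put the Hessian on the right is wrong as stated. Affine forms $\ell$ with $e_n\wedge\ell=0$ on $\Gamma$ are \emph{not} in general solutions, so neither $\alpha-\ell$ in your Caccioppoli step nor $\bar\alpha^{\rho}$ solves the system. The paper's sketch makes the same assertion about $\bar\alpha$, but it fails there too.

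Since $d\ell$ and $d^{\ast}(\bar B\ell)$ are constant, integrating by parts in \eqref{bndry hessian decay estimate constant} leaves exactly $\int_{\Gamma}\langle -e_n\wedge d^{\ast}(\bar B\ell),\bar B\psi\rangle$. The normal components of $\psi\in W^{1,2}_{T,\text{flat}}$ are free on $\Gamma$ and $\bar B$ is positive. So $\ell$ is a solution only if also $e_n\wedge d^{\ast}(\bar B\ell)=0$, which is the natural boundary condition.

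The correction in Lemma \ref{Hessian Poincare Sobolev} violates this condition. Take $n=2$, $k=N=1$, $\bar A=\bar B=I$. The form $\alpha=(x_2^2-x_1^2)\,dx_2$ is a solution, yet $\partial_2\bar\alpha^{\rho}_2=-2\fint_{B^+_\rho}x_2\neq 0$ on $\Gamma$. Hence $\bar\alpha^{\rho}$ picks up a boundary term proportional to $(\fint_{B^+_\rho}x_2)\int_{\Gamma}\psi_2$, and the homogeneous regularity estimate cannot be applied to it.

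The repair is to subtract only constants. For $i<n$ let $\kappa_i$ be the constant form with $(\kappa_i)_I=\fint_{B^+_r}\partial_i\alpha_I$ if $n\in I$ and $(\kappa_i)_I=0$ if $n\notin I$.
\begin{itemize}
\item Constant forms with $e_n\wedge\kappa=0$ satisfy $d\kappa=0=d^{\ast}(\bar B\kappa)$. So $\partial_i\alpha-\kappa_i$ is a genuine solution (using your tangential difference quotients) with $e_n\wedge(\partial_i\alpha-\kappa_i)=0$ on $\Gamma$.
\item Each component of $\partial_i\alpha-\kappa_i$ either vanishes on $\Gamma$ or has mean zero. Proposition \ref{Poincare_zero_mean_Lorentz} therefore gives $\|\partial_i\alpha-\kappa_i\|_{L^2(B^+_r)}\le Cr\|\nabla^2\alpha\|_{L^2(B^+_r)}$.
\item The higher-order estimate applied to $\partial_i\alpha-\kappa_i$ then controls all second and third derivatives of $\alpha$ carrying a tangential index.
\item $\partial_n^2\alpha$ and $\partial_n^3\alpha$ follow from the equation through the invertible normal block, as in your Step 2.
\end{itemize}

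\textbf{Second problem: boundary coercivity.} Legendre--Hadamard ellipticity of $M_T$ is not what drives the boundary Caccioppoli inequality. Testing with $\eta^2\alpha$ only bounds $\|d(\eta\alpha)\|_{L^2}^2+\|d^{\ast}(\bar B\eta\alpha)\|_{L^2}^2$ by $C\int|\nabla\eta|^2|\alpha|^2$. To turn the left side into $\|\nabla(\eta\alpha)\|_{L^2}^2$ when some components carry natural conditions, you need the Gaffney-type coercivity $\|d\phi\|_{L^2}^2+\|d^{\ast}(\bar B\phi)\|_{L^2}^2\ge c\|\nabla\phi\|_{L^2}^2$ for $\phi\in W^{1,2}_{T,\text{flat}}$, and its normal analogue.

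Legendre--Hadamard ellipticity yields such coercivity in general only on $W^{1,2}_0$. For instance, for $2\times 2$ systems in the plane, adding $t\det\nabla u$ to the energy keeps Legendre--Hadamard ellipticity and the interior equation. It nevertheless destroys coercivity under natural boundary conditions once $|t|$ is large.

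For $\bar B=I$ the required inequality is the flat Gaffney identity. For general $\bar B$ it is exactly the Hodge-specific input supplied by \cite{Sil_linearregularity}. Lemma \ref{ellipticity lemma} is needed only for the invertibility of the normal block. With these two corrections, your Steps 3 and 4 go through.
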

	\begin{proof}
		Though not explicitly stated, the estimate \ref{half ball hessian mean osc decay} is already contained  in Section 3.2.1, 3.2.2 and 3.2.3 in \cite{Sil_linearregularity} when $\bar{B}$ is the identity matrix and the modifications to handle the general case is detailed in Sections 5.1 and 5.2 in \cite{Sil_linearregularity}. We only sketch the proof here. For details, see \cite{Mahato_Thesis}. 
		
		By scaling, we can assume $r=1.$ By Poincar\'e inequality and $L^{2}$ estimates and Sobolev embedding, we have 
		\begin{align}\label{hessian avg estimate}
			\int_{B^{+}_{\rho}} \left\lvert \nabla^2 \alpha - \left( \nabla^2 \alpha \right)_{B^{+}_{\rho}}\right\rvert^{2} &\le c\rho^{2}  \int_{B^{+}_{\rho}} \left\lvert D^{3}\alpha \right\rvert^{2} \notag\\&\le c\rho^{n+2}\sup\limits_{B_{\rho}^{+}} \left\lvert D^{3}\alpha\right\rvert^2 \notag\\&\le c\rho^{n+2}\sup\limits_{B_{1/2^{m+1}}^{+}} \left\lvert D^{3}\alpha\right\rvert^2 \notag\\&\le c \rho^{n+2}\left\lVert \alpha\right\rVert_{W^{m,2}\left(B^{+}_{1/2^m}\right)}^2 \le c \rho^{n+2}\int_{B_{1}^{+}}\left\lvert \alpha\right\rvert^2, 
		\end{align}
		for $0 < \rho < 1/2^{m+1},$ where $m \ge 3$ is an integer large enough such that $W^{m, 2} \hookrightarrow L^{\infty}.$ But by Lemma \ref{Hessian Poincare Sobolev}, there exists $\bar{\alpha}$ such that $\nabla^{2}\bar{\alpha} = \nabla^{2}\alpha $ in $\mathcal{U}_{R/r}$, $\bar{\alpha}$ satisfies the same boundary conditions in $\Gamma_{3R/4r},$ satisfies the same equation in $\mathcal{U}_{R/r}$ and we have the estimate 
		\begin{align}\label{L2normby hessian}
			\int_{B_{1}^{+}}\left\lvert \bar{\alpha}\right\rvert^2 \le c \int_{B_{1}^{+}}\left\lvert \nabla^{2}\bar{\alpha}\right\rvert^2. 
		\end{align}  Since $\bar{\alpha}$ satisfies the  same equation and the same boundary conditions, the estimate \eqref{hessian avg estimate} also holds for $\bar{\alpha}.$ Combining this with \eqref{L2normby hessian}, we obtain 
		\begin{align*}
			\int_{B^{+}_{\rho}} \left\lvert \nabla^2 \alpha - \left( \nabla^2 \alpha \right)_{B^{+}_{\rho}}\right\rvert^{2} &= \int_{B^{+}_{\rho}} \left\lvert \nabla^2 \bar{\alpha} - \left( \nabla^2 \bar{\alpha} \right)_{B^{+}_{\rho}}\right\rvert^{2} \\&\le c \rho^{n+2}\int_{B_{1}^{+}}\left\lvert \bar{\alpha}\right\rvert^2 
			\\&\le c \rho^{n+2}\int_{B_{1}^{+}}\left\lvert \nabla^{2}\bar{\alpha}\right\rvert^2 = c \rho^{n+2}\int_{B_{1}^{+}}\left\lvert \nabla^{2}\alpha\right\rvert^2. 
		\end{align*}
		This implies the estimate \eqref{half ball hessian mean osc decay} by scaling back. Estimate \ref{Hessian decay} is proved in \cite[Theorem 4.3]{SenguptaSil_MorreyLorentz_Hodge} for $r=R$, but the same proof works for any $0 < r \le R.$  
	\end{proof}
As a consequence, we have the following important estimate. 
	\begin{corollary}\label{rev. holder}
		Assume the hypothesis of Theorem \ref{boundary hessian decay} hold. Then there exists a constant $C = C\left( \gamma_{\bar{A}}, \gamma_{\bar{B}}, k, n, N\right) >0$ such that for any $z \in \partial\mathbb{R}^{n}_{+}$ and any $0 < r \le R$ with $B^{+}_{3r/4} \subset B^{+}_{3R/4}$, we have 
		\begin{align}\label{hessian2-1}
			\left( \fint_{B^+_\frac{r}{2}(z)}|\nabla^2 v|^2\right)^\frac{1}{2}
			&\le C\fint_{B^+_\frac{3r}{4}(z)}|\nabla^2 v| . 
		\end{align}
	\end{corollary}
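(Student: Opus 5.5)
The plan is to combine the $L^{2}$--$L^{\bar p}$ reverse H\"older estimate \eqref{Hessian decay} of Theorem \ref{boundary hessian decay} with the self-improving property of reverse H\"older inequalities, namely Theorem \ref{gehring} followed by Theorem \ref{s-2to2-1}. Here $v$ denotes the solution $\alpha$ of Theorem \ref{boundary hessian decay}. By translation invariance in the tangential directions, the constant coefficient system, the flat boundary condition on $\Gamma$ and the test spaces are all unchanged when we replace the origin by $z \in \partial\mathbb{R}^{n}_{+}$. So it suffices to prove the estimate for half balls centred at points of the flat boundary that stay inside the region where Theorem \ref{boundary hessian decay} applies. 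Fix once and for all some $\bar p \in (1,2)$, say $\bar p = 3/2$, so that the constants depend only on $\gamma_{\bar{A}}, \gamma_{\bar{B}}, k, n, N$.

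\textbf{Step 1 (higher integrability).} For every boundary half ball $B^{+}_{\rho}(y)$ with $y \in \Gamma$ and $B^{+}_{2\rho}(y)$ admissible, \eqref{Hessian decay} with $r = 2\rho$ gives
\begin{align*}
\left(\fint_{B^{+}_{\rho}(y)}|\nabla^{2}v|^{2}\right)^{\frac12} \le C\left(\fint_{B^{+}_{3\rho/2}(y)}|\nabla^{2}v|^{\bar p}\right)^{\frac{1}{\bar p}} \le C'\left(\fint_{B^{+}_{2\rho}(y)}|\nabla^{2}v|^{\bar p}\right)^{\frac{1}{\bar p}}.
\end{align*}
For interior balls $B_{\rho}(x)$ with $B_{2\rho}(x) \subset \mathbb{R}^{n}_{+}$, the same inequality holds by \eqref{ball Hessian decay} of Theorem \ref{interior hessian decay}. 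For balls centred near but not on $\Gamma$, we use the standard covering argument: either $\rho < x_{n}/2$, which is the interior case, or the ball $B_{\rho}^{+}(x)$ sits in a boundary half ball $B^{+}_{3\rho}(x',0)$ of comparable measure. Combining these, $f := |\nabla^{2}v|$, extended evenly across $\Gamma$ as in Definition \ref{extension by reflection}, satisfies the hypothesis of Theorem \ref{gehring} on balls. We therefore obtain some $s > 2$ with $\big(\fint_{B^{+}_{\rho}}f^{s}\big)^{1/s} \le C\big(\fint_{B^{+}_{2\rho}}f^{2}\big)^{1/2}$ on half balls inside $B^{+}_{3r/4}(z)$.

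\textbf{Step 2 (downward self-improvement).} With the $L^{s}$--$L^{2}$ reverse H\"older inequality now available for all half balls in $B^{+}_{3r/4}(z)$, Theorem \ref{s-2to2-1} yields an $L^{2}$--$L^{1}$ estimate. Applied at the scale $R'$ comparable to $r$, it gives $\big(\fint_{B^{+}_{R'/4}}f^{2}\big)^{1/2} \le C\fint_{B^{+}_{R'/2}}f$. Choosing $R' = 3r/2$ gives $L^{2}$ on $B^{+}_{3r/8}$. To reach $B^{+}_{r/2}$, we cover $B^{+}_{r/2}(z)$ by finitely many (dimension-dependent number of) smaller boundary and interior half balls whose doubled versions lie in $B^{+}_{3r/4}(z)$, apply the estimate on each, and sum. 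Each local $L^{1}$ average is bounded by a fixed multiple of $\fint_{B^{+}_{3r/4}(z)}f$, since the measures are comparable.

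The main obstacle is Step 1: verifying the Gehring hypothesis uniformly for half balls not centred on $\Gamma$ near the flat boundary, and doing the bookkeeping of radii so that every enlarged ball stays within $B^{+}_{3R/4}$, where the equation and boundary condition hold. I would handle it by the interior/boundary dichotomy described above, working with the reflected function so that Theorem \ref{gehring} can be quoted on full balls. The boundary version of Gehring is then recovered via Lemma \ref{extension estimate lemma}-type change of variables, which controls integrals over full balls by twice the integrals over their upper halves.
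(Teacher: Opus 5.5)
Your proposal is correct and follows the paper's own route: translate \eqref{Hessian decay} to boundary centres $z \in \partial\mathbb{R}^{n}_{+}$ with a fixed $\bar p \in (1,2)$, then apply Theorem \ref{gehring} followed by Theorem \ref{s-2to2-1}. Your even reflection and interior/boundary case split make explicit the check on all sub-balls that the paper's two-line proof leaves implicit. The one slip is in the radii of Step 2. Theorem \ref{s-2to2-1} at scale $R'$ needs the $L^{s}$--$L^{2}$ inequality on all balls inside $B_{R'}$, so the single application with $R' = 3r/2$ can leave the admissible region and should be dropped. Also, in your covering the small balls need their fourfold (not doubled) enlargements inside $B^{+}_{3r/4}(z)$, which a dimension-dependent covering by balls of radius $r/16$ provides.
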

	\begin{proof}
		Fix $1 < \bar{p} < 2.$ Thanks to the translation invariance of the problem, the estimate \eqref{Hessian decay} holds for any center $z \in \partial\mathbb{R}^{n}_{+}$ and any $0 < r \le R$ with $B^{+}_{3r/4}(z) \subset B^{+}_{3R/4}.$ More precisely, there exists a constant $C = C\left( \bar{p}, \gamma_{\bar{A}}, \gamma_{\bar{B}}, k, n, N \right) >0$ such that 
		\begin{align*}
			\left( \fint_{B^{+}_{r/2}(z)} \left\lvert \nabla^2 \alpha \right\rvert^{2} \right)^{\frac{1}{2}}&\le C    \left( \fint_{B^{+}_{3r/4}(z)} \left\lvert \nabla^2 \alpha \right\rvert^{\bar{p}} \right)^{\frac{1}{\bar{p}}}.
		\end{align*}  Now estimate \eqref{hessian2-1} follows by applying Theorem \ref{gehring} and Theorem \ref{s-2to2-1}. 
	\end{proof}
	\subsection{Flattening the boundary}
	\begin{lemma}[Flattening lemma]\label{flattening lemma}
		Let $\Omega \subset \mathbb{R}^{n}$ be an open, bounded, $C^{2,1}$ subset. Let $A \in C^{0,1}\left( \overline{\Omega}; \operatorname{Hom}\left(\varLambda^{k+1}\right)\right)$ and $B \in C^{1,1}\left( \overline{\Omega}; \operatorname{Hom}\left(\varLambda^{k}\right)\right)$ both be uniformly Legendre elliptic. Let $f \in L^{2}\left( \Omega; \varLambda^{k} \right)$  and $\lambda \in \mathbb{R}$ be given. 
		Suppose one of the following holds. 
		\begin{enumerate}[(a)]
			\item\textbf{tangential case: } $\omega \in W^{1,2}_{T}\left(\Omega; \varLambda^{k}\right)$ satisfy 
			\begin{align}\label{maineqn}
				\int_{\Omega} \left\langle A\left(x\right)d\omega, d\phi\right\rangle &+ 	\int_{\Omega} \left\langle d^{\ast}\left( B\left(x\right)\omega\right), d^{\ast}\left( B\left(x\right)\phi\right)\right\rangle + 	\lambda\int_{\Omega} \left\langle B\left(x\right)\omega, \phi \right\rangle  \notag \\&= 	\int_{\Omega} \left\langle f, \phi\right\rangle \qquad \text{ for all } \phi \in W^{1,2}_{T}\left(\Omega; \varLambda^{k}\right).  
			\end{align}
			\item\textbf{normal case: } $\omega \in W^{1,2}_{N}\left(\Omega; \varLambda^{k}\right)$ satisfy
			\begin{align}\label{maineq normal}
				\int_{\Omega}  &\langle A (x)d \left( B^{-1}(x) \omega \right) , d \left( B^{-1}(x) \phi \right) \rangle   + \int_{\Omega}\langle d^{\ast} \omega  , d^{\ast} \phi \rangle 
				+ \lambda  \int_{\Omega}\langle  \omega , B^{-1}(x)\phi \rangle \notag \\
				&\qquad =\int_{\Omega} \langle  f, B^{-1}(x)\phi \rangle   \qquad \text{ for all }   \phi \in W_{N}^{1,2}(\Omega; \varLambda^{k}). 
			\end{align}
		\end{enumerate}
		
		\noindent Let $x_{0} \in \partial\Omega.$ Then there exists a neighborhood $U$ of $x_{0}$ in $\mathbb{R}^{n}$ and a positive number $0 < R_{0}< 1,$ such that there exists an admissible boundary coordinate system $\Phi \in \operatorname{Diff}^{2,1}\left( \overline{B_{R_{0}}}; \overline{U}\right)$ satisfying 
		\begin{align*}
			\Phi \left( 0\right)= x_{0},\   D\Phi \left( 0\right) \in \mathbb{SO}\left(n\right), \  \Phi \left( B_{R_{0}}^{+}\right) = \Omega \cap U, \  \Phi \left( \Gamma_{R_{0}}\right) = \partial\Omega \cap U,
		\end{align*}
		and for any $\theta \in C_{c}^{\infty}\left( U\right),$	there exist 
		tensor fields 
		\begin{align*}
			\mathrm{P} &\in L^{\infty}\left( B_{R_{0}}^{+}; \operatorname{Hom}\left(\varLambda^{k}\right)\right), \\	\mathrm{Q} &\in L^{\infty}\left( B_{R_{0}}^{+}; \operatorname{Hom}\left( \varLambda^{k};\varLambda^{k}\otimes \mathbb{R}^{n}\right)\right), \\\mathrm{R} &\in L^{\infty}\left( B_{R_{0}}^{+}; \operatorname{Hom}\left(\varLambda^{k}\otimes \mathbb{R}^{n}; \varLambda^{k}\right)\right) , \\
			\mathcal{A} &\in L^{2}\left( B_{R_{0}}^{+}; \varLambda^{k}\right), \\
			\mathcal{B} &\in W^{1,2}\left( B_{R_{0}}^{+}; \varLambda^{k}\otimes\mathbb{R}^{n}\right)\\
			\intertext{ and } 
			\mathrm{S} &\in C^{0,1} \left( \overline{B_{R_{0}}^{+}}; \operatorname{Hom}\left(\varLambda^{k}\otimes \mathbb{R}^{n}\right)\right),
		\end{align*}
		satisfying
		\begin{align}\label{S vanishes at the origin}
			S\left(0\right) =0
		\end{align} 
		and constant coefficient matrices $\bar{A} \in \operatorname{Hom}\left(\varLambda^{k+1}\right)$, $\bar{B} \in \operatorname{Hom}\left(\varLambda^{k}\right)$, both being Legendre elliptic with the same constants as $A$ and $B$, respectively,  such that 
		
		\begin{enumerate}[(A)]
			\item \label{flattenning tangential} If $(a)$ holds, then $u = \Phi^{\ast}\left( \theta \omega\right) \in W^{1,2}_{T, \text{flat}}\left( B_{R_{0}}^{+}, \varLambda^{k}\right)$ satisfies 
			\begin{align}\label{flattened equation}
				\int_{B_{R_{0}}^{+}} &\left\langle \bar{A}du, d\psi\right\rangle + 	\int_{B_{R_{0}}^{+}} \left\langle d^{\ast}\left( \bar{B} u\right), d^{\ast}\left( \bar{B}\psi\right)\right\rangle \notag\\
				&= 	\int_{B_{R_{0}}^{+}} \left\langle \tilde{f} + \mathrm{P}u + \mathrm{R}\nabla u + \mathcal{A}, \psi \right\rangle  + 	\int_{B_{R_{0}}^{+}} \left\langle  \mathrm{Q}u + \mathcal{B}, \nabla\psi\right\rangle \notag\\
				&\qquad +	\int_{B_{R_{0}}^{+}} \left\langle \mathrm{S}\nabla u, \nabla \psi\right\rangle, \qquad \text{ for all }  \psi \in W^{1,2}_{T, \text{ flat}}\left(B_{R_{0}}^{+}; \varLambda^{k}\right). 
			\end{align}
			
			Moreover, we have the following pointwise dependencies 
			\begin{align*}
				\left\lvert \mathrm{P} \right\rvert &\lesssim \left( \left|\nabla B\right|\left|D\Phi^{-1}\right|+|B|\left|D^2\Phi^{-1}\right|\right)^{2}
				\\
				\left\lvert \mathrm{Q}\right\rvert, 	\left\lvert \mathrm{R}\right\rvert 
				&\lesssim |B|^2\left|D\Phi^{-1}\right|\left|D^2\Phi^{-1}\right|+|B|\left|\nabla B\right|\left|D\Phi^{-1}\right|^2, \\
				\left\lvert \mathrm{S}\right\rvert &\lesssim  \left|D\Phi\right|^{2}|A|+|B|^2\left|D\Phi\right|^3, \\
				\left\lvert \nabla \mathrm{S}\right\rvert &\lesssim  \begin{aligned}[t]
					\left\lvert \nabla A \right\rvert \left\lvert D\Phi\right\rvert^{2} + \left\lvert A \right\rvert \left\lvert D\Phi \right\rvert\left\lvert D^{2}\Phi \right\rvert &+ \left\lvert B \right\rvert\left\lvert \nabla B \right\rvert\left\lvert D\Phi \right\rvert^{3} \\ &\qquad + \left\lvert B \right\rvert^{2}\left\lvert D\Phi \right\rvert^{2}\left\lvert D^{2}\Phi \right\rvert,
				\end{aligned} \\
				\left\lvert \tilde{f} \right\rvert &\lesssim \left\lvert D\Phi \right\rvert \cdot \left\lvert f \right\rvert \cdot \mathbbm{1}_{\operatorname{supp} \theta}, \\
				\left\lvert \mathcal{B} \right\rvert&\lesssim \left(\left\lvert B \right\rvert^{2}\left\lvert \nabla \theta \right\rvert \left\lvert D\Phi^{-1}\right\rvert + \left\lvert A \right\rvert\left\lvert \nabla \theta \right\rvert \left\lvert D\Phi \right\rvert^{2} 	 \right) \left\lvert \omega\right\rvert \cdot \mathbbm{1}_{\operatorname{supp} \theta} , \\
				\left\lvert \mathcal{A} \right\rvert&\lesssim \begin{aligned}[t]
					\left\lvert B\right\rvert & \left\lvert \nabla \theta \right\rvert \left[  \left\lvert B \right\rvert \left\lvert \nabla^{2} \Phi^{-1} \right\rvert +  \left\lvert \nabla B \right\rvert  \left\lvert D\Phi^{-1} \right\rvert    \right]  \left\lvert \omega \right\rvert +  \left\lvert A \right\rvert  \left\lvert \nabla \theta \right\rvert  \left\lvert D\Phi \right\rvert^{2}  \left\lvert \nabla \omega \right\rvert  \\&+  \left\lvert B \right\rvert  \left\lvert \nabla \theta \right\rvert  \left\lvert D\Phi \right\rvert^{2} \left[  \left\lvert \nabla B \right\rvert  \left\lvert \omega \right\rvert +  \left\lvert B \right\rvert \left\lvert \nabla \omega \right\rvert  \right] \\&\qquad\qquad \qquad +  \left\lvert B \right\rvert  \left\lvert D\Phi \right\rvert^{2} \left\lvert \omega \right\rvert   \cdot \mathbbm{1}_{\operatorname{supp} \theta}, 
				\end{aligned} 
			\end{align*}
			and consequently 
			\begin{align*}
				\left\lvert \nabla \mathcal{B} \right\rvert&\lesssim \begin{aligned}[t]
					&\left[ \left\lvert B\right\rvert\left\lvert \nabla B\right\rvert\left\lvert \nabla \theta\right\rvert\left\lvert D\Phi^{-1} \right\rvert + \left\lvert B\right\rvert^{2}\left\lvert \nabla^{2} \theta\right\rvert\left\lvert D\Phi^{-1}\right\rvert  \right]\left\lvert \omega\right\rvert  \\&\quad + \left[ \left\lvert B\right\rvert^{2}\left\lvert D^{2}\Phi^{-1}\right\rvert + \left\lvert A\right\rvert\left\lvert D\Phi\right\rvert\left\lvert D^{2}\Phi\right\rvert\right] \left\lvert \nabla \theta\right\rvert\left\lvert \omega\right\rvert \\&\qquad + \left[ \left\lvert \nabla A\right\rvert \left\lvert \nabla \theta\right\rvert\left\lvert D\Phi \right\rvert^{2} + \left\lvert A\right\rvert\left\lvert \nabla^{2}\theta\right\rvert\left\lvert D\Phi\right\rvert^{2} \right]\left\lvert \omega\right\rvert \\&\quad\qquad   + \left[\left\lvert B\right\rvert^{2}\left\lvert \nabla \theta\right\rvert\left\lvert D\Phi^{-1} \right\rvert + \left\lvert A\right\rvert\left\lvert \nabla \theta\right\rvert\left\lvert D\Phi\right\rvert^{2}\right]\left\lvert \nabla \omega\right\rvert
				\end{aligned} \\
				\left\lvert \nabla \mathrm{Q}\right\rvert &\lesssim \begin{multlined}[t]
					\left\lvert B\right\rvert\left\lvert \nabla B \right\rvert\left|D\Phi^{-1}\right|\left|D^2\Phi^{-1}\right| + |B|^2\left|D^2\Phi^{-1}\right|^{2} + \left\lvert \nabla B \right\rvert^{2}\left|D\Phi^{-1}\right|^{2} \\ + 	\left\lvert B\right\rvert^{2}\left|D\Phi^{-1}\right|\left|D^3\Phi^{-1}\right|  + \left\lvert B\right\rvert\left\lvert \nabla^2 B\right\rvert\left|D\Phi^{-1}\right|^{2},
				\end{multlined} 
			\end{align*}
			where the symbol $\lesssim$ implies that the bounds on the right hold up to constants that depend on $n, k, N.$ 
			
			\item \label{flattenning normal} If (b) holds, then $u = \Phi^{\ast}(\theta\omega) \in W_{N, flat}^{1,2}(B_{R_{0}}^{+} ; \varLambda^{k}) $  satisfies 
			\begin{align}\label{flattened equation normal}
				\int_{B_{R}^{+}} &\langle \bar{A}(d \left( \bar{B}^{-1}u \right)) ; d\left( \bar{B}^{-1}\psi \right)\rangle + \int_{B_{R}^{+}} \langle d^{\ast}  u ; d^{\ast} \psi   \rangle  \notag\\ 
				&=\int_{B_{R}^{+}} \langle \widetilde{f} + \mathrm{P}u + \mathrm{R}\nabla u + \mathcal{A}; \psi \rangle + \int_{B_{R}^{+}} \langle \mathrm{Q} u + \mathcal{B} ; \nabla\psi \rangle
				\notag \\ &\qquad \qquad    + \int_{B_{R}^{+}} \langle \mathrm{S} \nabla u , \nabla\psi \rangle \quad \text{ for all } \psi \in W_{N, flat}^{1,2}(B_{R}^{+} ; \varLambda^{k}). 
			\end{align} 
			for all $\psi \in W_{N, flat}^{1,2}(B_{R}^{+} ; \varLambda^{k}).$ 
		\end{enumerate}
		
		Moreover, we have the following pointwise dependencies 
		\begin{align*}
			| \mathrm{P}|
			&\lesssim |A||D\Phi|^2\left(|\nabla B^{-1}||D\Phi|^2+|B^{-1}||D^2\Phi|\right),\\
			|\mathrm{Q}|,|\mathrm{R}|
			&\lesssim |A||B^{-1}||D\Phi|^3\left(|\nabla B^{-1}||D\Phi|^2+|B^{-1}||D^2\Phi|\right),\\
			|\mathrm{S}|
			&\lesssim |A||B^{-1}||D\Phi|^4+|D \Phi^{-1}|^2, \\
			|\nabla\mathrm{S}|
			&\lesssim \begin{multlined}[t]
				\left[|\nabla A|\left|B^{-1}\right|+|A|\left|\nabla B^{-1}\right|\right]|D\Phi|^4+|A||B^{-1}||D\Phi|^3|D^2\Phi| \\ +\left|D\Phi^{-1}\right|\left|D^2\Phi^{-1}\right|,
			\end{multlined} \\
			\left\lvert \tilde{f} \right\rvert &\lesssim \left\lvert B^{-1} \right\rvert\left\lvert D\Phi \right\rvert \cdot \left\lvert f \right\rvert \cdot \mathbbm{1}_{\operatorname{supp} \theta}, \\
			|\mathcal{B}|
			&\lesssim |\nabla\theta||D\Phi^{-1}|^2|\omega| + |A||B^{-1}|^2|\nabla\theta||D\Phi|^2|\omega|,\\
			|\mathcal{A}|
			&\lesssim \begin{aligned}[t]
				&\left[|\nabla\theta|\left|D^2\Phi^{-1}\right||\omega|+|A||B^{-1}||\nabla\theta||D\Phi|^2\left(|\nabla B^{-1}||\omega|+|B^{-1}||\nabla\omega|\right)\right]\\
				&\quad+\left[|D\Phi|^2||\nabla\theta||\nabla\omega|+|B^{-1}||D\Phi|^2|\theta||\omega|\right]\\
				&\qquad +|A||B^{-1}|||\nabla\theta||D\Phi|\left(|\nabla B^{-1}||D\Phi|^2+|B^{-1}||D^2\Phi|\right)|\omega|,
			\end{aligned}
		\end{align*}
		and consequently 
		\begin{align*}
			|\nabla\mathrm{Q}|
			&\lesssim \begin{aligned}[t]
				&\left[|\nabla A||B^{-1}|+| A||\nabla B^{-1}|\right]|\nabla B^{-1}||D\Phi|^5+|A||B^{-1}||\nabla^2 B^{-1}||D\Phi|^5\\
				&\quad+|A||B^{-1}||\nabla  B^{-1}||D\Phi|^4|D^2\Phi| \\&\qquad +\left[|\nabla A||B^{-1}|^2+|A||B^{-1}||\nabla B^{-1}|\right]|D\Phi|^3|D^2\Phi|\\
				&\qquad\quad+|A||B^{-1}|^2\left[|D\Phi|^2|D^2\Phi|^2+|D\Phi|^3|D^3\Phi|\right], 
			\end{aligned} \\
			\left\lvert \nabla \mathcal{B}\right\rvert &\lesssim \begin{aligned}[t]
				&\left[ |\nabla\theta||D\Phi^{-1}|^2 + |A||B^{-1}|^2|\nabla\theta||D\Phi|^2 \right]|\nabla \omega| \\
				&\quad + \left[ |D\Phi^{-1}|^2 + |A||B^{-1}|^2|D\Phi|^2 \right] |\nabla^{2}\theta||\omega| \\
				&\qquad +\left[ |D\Phi^{-1}||D^{2}\Phi^{-1}| + |A||B^{-1}|^2|D\Phi||D^{2}\Phi|\right]|\nabla\theta||\omega| \\
				&\qquad \quad+\left[ |\nabla A||B^{-1}|^2 + |A||B^{-1}||\nabla B^{-1}|\right]|D\Phi|^2|\nabla\theta||\omega|.
			\end{aligned} 
		\end{align*}
		where the symbol $\lesssim$ implies that the bounds on the right holds up to constants that depend on $n, k, N.$
	\end{lemma}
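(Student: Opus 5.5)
The plan is to construct $\Phi$ from the local graph representation of $\partial\Omega$ near $x_{0}$. After a rigid motion we may assume $x_{0}=0$, $\nu(x_{0})=-e_{n}$, and that near $0$ the domain is $\{x_{n}>g(x')\}$ with $g\in C^{2,1}$, $g(0)=0$, $\nabla g(0)=0$. The map $\Phi(y',y_{n})=(y',y_{n}+g(y'))$ is then $C^{2,1}$, satisfies $D\Phi(0)=I$, and sends half balls to $\Omega\cap U$ and flat pieces to $\partial\Omega\cap U$.

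The basic facts about pullbacks are that $\Phi^{\ast}$ commutes with $d$ and preserves tangential traces. Hence for $\omega\in W^{1,2}_{T}$ we get $e_{n}\wedge\Phi^{\ast}(\theta\omega)=0$ on $\Gamma_{R_0}$, and $\psi\mapsto(\Phi^{-1})^{\ast}\psi$ maps $W^{1,2}_{T,\text{flat}}$ into $W^{1,2}_{T}$. The codifferential does not commute with pullback: $d^{\ast}$ pulls back to $\ast_{g}d\ast_{g}$ for the pullback metric $g=D\Phi^{\intercal}D\Phi$.

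For the normal case, $\nu\lrcorner$ is the Hodge dual of $\nu\wedge$, so I would use the Piola-type transform $\ast\Phi^{\ast}\ast$ (suitably weighted by $\det D\Phi$), or equivalently choose $\Phi$ with $D\Phi$ orthogonal along the normal direction on $\Gamma$. This makes $e_{n}\lrcorner u=0$ exactly. Any remaining metric discrepancy is absorbed into the coefficient fields.

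With $\Phi$ fixed, I would test \eqref{maineqn} with $\phi=\theta\,(\Phi^{-1})^{\ast}\psi$, pull everything back by $\Phi$, and substitute $\theta\,d\omega=d(\theta\omega)-d\theta\wedge\omega$, together with the analogous identity for $d^{\ast}$. The result is an identity with variable coefficients: the pulled-back $A$ composed with the metric factors, and $B\circ\Phi$ composed with the $\ast_{g}$ factors. I then freeze the coefficients at $0$, setting $\bar A:=A(x_0)$ and $\bar B:=B(x_0)$. Since $D\Phi(0)\in\mathbb{SO}(n)$ and we may take $D\Phi(0)=I$, these are Legendre elliptic with the same constants as $A$ and $B$. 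The principal part, minus its frozen version, is written as $\langle \mathrm{S}\nabla u,\nabla\psi\rangle$. This uses the algebraic identity that, for constant coefficients, $\langle\bar A du,d\psi\rangle+\langle d^{\ast}\bar Bu,d^{\ast}\bar B\psi\rangle$ is the bilinear form $\langle M_{T}\nabla u,\nabla\psi\rangle$ of Lemma \ref{ellipticity lemma}, defined with the same expression at each point $y$. Then $\mathrm{S}(y)=M_T(y)-M_T(0)$, which is $C^{0,1}$ because $A\in C^{0,1}$, $B\in C^{1,1}$ and $D\Phi\in C^{1,1}$; moreover $\mathrm{S}(0)=0$.

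The terms that remain come from derivatives falling on $B$, on $D\Phi^{-1}$ (through $\ast_g$), or on $\theta$. They are sorted as follows: zeroth order in $u$ against $\psi$ goes into $\mathrm{P}$; $u$ against $\nabla\psi$ into $\mathrm{Q}$; $\nabla u$ against $\psi$ into $\mathrm{R}$; the $\nabla\theta$ terms against $\nabla\psi$ into $\mathcal{B}$; the $\nabla\theta$ and $\lambda$ terms against $\psi$ into $\mathcal{A}$; and the source term into $\tilde f$. The pointwise bounds follow by inspection, since each coefficient is a product of the factors that produced it. The bounds on $\nabla\mathrm{Q}$ and $\nabla\mathcal{B}$ follow by differentiating once more, which uses $B\in C^{1,1}$, $\Phi\in C^{2,1}$, and $\omega\in W^{1,2}$.

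The main obstacle is bookkeeping rather than any single estimate. The delicate points are keeping the test space intact, so that the transformed $\psi$ still satisfies the boundary condition exactly, and writing the principal part in the frozen form with $\mathrm{S}(0)=0$. For the normal case I would first rewrite \eqref{maineq normal} in terms of $v=\omega$, with $B^{-1}$ moved inside $d$, and then repeat the same scheme. The one new algebraic step is commuting $B^{-1}$ with the pullback, which only produces additional $\nabla B^{-1}$ terms; these land in $\mathrm{P}$, $\mathrm{Q}$ and $\mathrm{R}$, consistent with the stated bounds.
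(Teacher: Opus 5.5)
Your scheme matches the paper's proof, which only says "localization, change of variables, Korn freezing, grouping terms" and defers the computation to \cite[Lemma 4]{Sil_linearregularity} and \cite{Mahato_Thesis}. You test with $\theta\,(\Phi^{-1})^{\ast}\psi$, move $\theta$ onto $\omega$ by the product rule, pull back, freeze the principal tensor at the origin so that $\mathrm{S}(y)=\mathcal{T}(y)-\mathcal{T}(0)$, and sort the rest into $\mathrm{P},\mathrm{Q},\mathrm{R},\mathcal{A},\mathcal{B},\tilde f$. With your graph chart this handles part (A). The gap is the chart for part (B). For $\Phi(y',y_n)=(y',y_n+g(y'))$ one has $\Phi^{\ast}\bigl(\nu\lrcorner(\theta\omega)\bigr)=\bigl((D\Phi)^{-1}(\nu\circ\Phi)\bigr)\lrcorner u$ and $D\Phi\,e_n=e_n$. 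So this vector is parallel to $e_n$ on $\Gamma_{R_0}$ only where $\nabla g=0$. Hence $u=\Phi^{\ast}(\theta\omega)$ is in general not in $W^{1,2}_{N,\text{flat}}$, and $\theta(\Phi^{-1})^{\ast}\psi$ is not an admissible test function in the normal weak formulation.

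Your two remedies are not equivalent, and neither closes the gap as written.

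\textbf{Piola-type map.} The map $\pm\ast\Phi^{\ast}\ast$ does give $e_n\lrcorner u=0$ for any boundary-preserving chart; now $d^{\ast}$ commutes with the transform and $d$ picks up the metric. But it proves (B) for $u=\pm\ast\Phi^{\ast}\ast(\theta\omega)$, not for $u=\Phi^{\ast}(\theta\omega)$. That would suffice for the later lemmas, but it is a different statement.

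\textbf{Normal-preserving chart.} A chart with $\partial_{n}\Phi(y',0)\parallel\nu(\Phi(y',0))$ is the right object; this is what ``admissible boundary coordinate system'' means. Such a chart handles (A) and (B) at once, since every boundary-preserving chart preserves tangential traces. However, you must produce it in $C^{2,1}$. The obvious normal-coordinate map $(y',g(y'))+y_nN(y')$, with $N$ the inward unit normal, is only $C^{1,1}$ when $\partial\Omega\in C^{2,1}$. This loss matters. The zeroth-order coefficients produced by $d^{\ast}_{g}$ and by $\nabla\tilde B$ contain $D^{2}\Phi$. The stated bound on $\nabla\mathrm{Q}$ involves $D^{3}\Phi^{-1}$, and it is used later to control $\operatorname{div}(\zeta\mathcal{Q})$; that bound needs $\Phi\in C^{2,1}$.

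A standard fix is to regularize the normal at scale $\lvert y_n\rvert$. With $\eta$ an even mollifier, set $N_{t}(y'):=\int\eta(z)N(y'-tz)\,\mathrm{d}z$ and $\Phi(y',y_n):=(y',g(y'))+y_nN_{y_n}(y')$. Since $D^{2}N\in L^{\infty}$, derivatives in $(y',t)$ satisfy $\lvert D^{2}N_t\rvert\lesssim 1$ and $\lvert D^{3}N_t\rvert\lesssim\lvert t\rvert^{-1}$, so $D^{3}\Phi\in L^{\infty}$. Moreover $\partial_{n}\Phi=N$ on $\Gamma$ and $D\Phi(0)=I$. Alternatively, quote the existence of $C^{2,1}$ admissible charts, as the paper does.
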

	\begin{proof}
		The proof is a straight-forward but lengthy calculation using the usual Korn's freezing trick, localization and then using the change of variable and grouping the terms. Details can be found in \cite{Mahato_Thesis} (see also \cite[Lemma 4]{Sil_linearregularity}) 
	\end{proof}

	\subsection{Boundary estimates}\label{boundary estimate section} 
	\subsubsection{Basic setup}
	For this section, we assume $\Omega \subset \mathbb{R}^{n}$ is open, bounded and $C^{2,1}$ and $1< p < \infty$ is a real number. We assume $A \in  C^{0,1}\left(\overline{\Omega},\operatorname{Hom}\left(\varLambda^{k+1}\right)\right)$  and $B\in C^{1,1}\left(\overline{\Omega},\operatorname{Hom}\left(\varLambda^k\right)\right)$ be both uniformly Legendre elliptic and $\lambda \in \mathbb{R}$. Fix a point $x_0\in\partial\Omega$. \smallskip 
	
	Our basic setup for the rest of this section would be any one of the following, depending on the boundary condition. \smallskip

	\begin{itemize}
		\item\textbf{Tangential boundary condition:} 
		Let $\omega \in W_{T}^{1,2}\left(\Omega; \varLambda^{k}\right) \cap W^{2,p}\left(\Omega; \varLambda^{k}\right)$ satisfy \eqref{maineqn}.  By Lemma \ref{flattening lemma}, there exists a neighborhood $U$ of $x_0$ in $\mathbb R^n,$ an admissible boundary coordinate system  $\Phi\in \operatorname{Diff}^{2,1}\left(\overline{B}_{R_0},\overline{U}\right)$ for some $R_0>0$ with $\Phi(0)=x_0,$ $\Phi^{-1}(U\cap\Omega)\subset B^+_{R_0}$ and $\Phi^{-1}(U\cap\partial\Omega)\subset \Gamma^+_{R_0},$ such that for any $\theta\in C^\infty_c(U)$, $u= \Phi^*(\theta\omega)\in W^{1,2}_{T,flat}\left(B^+_{R_0},\varLambda^k\right) \cap W^{2,p}\left(B^+_{R_0},\varLambda^k\right)$ satisfies 
		\begin{align}\label{eqr0}
			\int_{B_{R_{0}}^{+}} &\left\langle \bar{A}du, d\psi\right\rangle + 	\int_{B_{R_{0}}^{+}} \left\langle d^{\ast}\left( \bar{B} u\right), d^{\ast}\left( \bar{B}\psi\right)\right\rangle \notag\\
			&= 	\int_{B_{R_{0}}^{+}} \left\langle \tilde{f} + \mathrm{P}u + \mathrm{R}\nabla u + \mathcal{A}, \psi \right\rangle  + 	\int_{B_{R_{0}}^{+}} \left\langle  \mathrm{Q}u + \mathcal{B}, \nabla\psi\right\rangle \notag\\
			&\qquad+	\int_{B_{R_{0}}^{+}} \left\langle \mathrm{S}\nabla u, \nabla \psi\right\rangle  \qquad \text{ for all } \psi \in W^{1,2}_{T,flat}\left(B^+_{R_0},\varLambda^k\right),
		\end{align}
		where $\mathcal{A}$, $\mathcal{B}$, $\mathrm{P},$ $\mathrm{Q}$, $\mathrm{R},$ $\mathrm{S}$, $\bar{A}$, $\bar{B}$  and $\tilde{f}$ are given by Lemma  \ref{flattening lemma}, Part (\ref{flattenning tangential}). \smallskip
		
		\item\textbf{Normal boundary condition:} Let $\omega \in W_{N}^{1,2}\left(\Omega; \varLambda^{k}\right) \cap W^{2,p}\left(\Omega; \varLambda^{k}\right)$ satisfy \eqref{maineq normal}.	By Lemma \ref{flattening lemma}, there exists a neighborhood $U$ of $x_0$ in $\mathbb R^n,$ an admissible boundary coordinate system  $\Phi\in \operatorname{Diff}^{2,1}\left(\overline{B}_{R_0},\overline{U}\right)$ for some $R_0>0$ with $\Phi(0)=x_0,$ $\Phi^{-1}(U\cap\Omega)\subset B^+_{R_0}$ and $\Phi^{-1}(U\cap\partial\Omega)\subset \Gamma^+_{R_0},$ such that for any $\theta\in C^\infty_c(U)$, $u = \Phi^{\ast}(\theta\omega) \in W_{N, flat}^{1,2}(B_{R_{0}}^{+} ; \varLambda^{k})\cap W^{2,p}\left(B^+_{R_0},\varLambda^k\right) $  satisfies 
		\begin{align}\label{eqr0 normal}
			\int_{B_{R}^{+}} &\langle \bar{A}(d \left( \bar{B}^{-1}u \right)) ; d\left( \bar{B}^{-1}\psi \right)\rangle + \int_{B_{R}^{+}} \langle d^{\ast}  u ; d^{\ast} \psi   \rangle  \notag\\ 
			&=\int_{B_{R}^{+}} \langle \widetilde{f} + \mathrm{P}u + \mathrm{R}\nabla u + \mathcal{A}; \psi \rangle + \int_{B_{R}^{+}} \langle \mathrm{Q} u + \mathcal{B} ; \nabla\psi \rangle
			\notag \\ &\qquad \qquad    + \int_{B_{R}^{+}} \langle \mathrm{S} \nabla u , \nabla\psi \rangle \quad \text{ for all } \psi \in W_{N, flat}^{1,2}(B_{R}^{+} ; \varLambda^{k}),  
		\end{align}
		where $\mathcal{A}$, $\mathcal{B}$, $\mathrm{P},$ $\mathrm{Q}$, $\mathrm{R},$ $\mathrm{S}$, $\bar{A}$, $\bar{B}$  and $\tilde{f}$ are given by Lemma  \ref{flattening lemma}, Part (\ref{flattenning normal}).  
	\end{itemize} 
	In either case, let  $L \geq 1$ be any real number such that 
	$$   \left\lVert A \right\rVert_{L^{\infty}}, \left\lVert \bar{A} \right\rVert_{L^{\infty}},  \left\lVert B \right\rVert_{L^{\infty}}, \left\lVert \bar{B} \right\rVert_{L^{\infty}}\le L.$$ Our plan now to derive local estimates for the Hessian of $u$ near the boundary. For this, we want to derive pointwise estimates for the truncated sharp maximal function of $\left\lvert \nabla^{2}\tilde{u} \right\rvert,$ where $\tilde{u}$ is a suitable localization of $u$. 
	\subsubsection{Pointwise maximal inequalities}
	Let $k_0=k_0(n,p,M_0)>\sqrt{n}$ be the constant given by Lemma \ref{Feff-Stein}. As mentioned, we now localize $u$ further and derive a crucial decay estimate for the mean oscillation of $\left\lvert \nabla^{2}\tilde{u} \right\rvert,$ where $\zeta$ is a suitable cutoff function and $\tilde{u}=\zeta u$.  
	\begin{lemma}\label{poinwise decay lemma}
		Let $ 0 < \bar{R}< R_{0}$ be any real number. Let $h\ge 48$ and $d>0$ be real numbers such that $B^+(0,4hk_0d)\subset B^+(0,\bar{R}).$ Let $\zeta\in C^\infty_c\left(B(0,d)\right)$ be any cut-off function such that $\zeta\equiv 1$ in $B\left(0,d/2\right),$ $0\le \zeta\le 1,$ $|\nabla\zeta|\le 16/d$ and $|\nabla^2\zeta|\le 16/d^2.$ \smallskip 
		
		Assume one of the following holds. 
		\begin{enumerate}[(a)]
			\item $u \in  W^{1,2}_{T,flat}\left(B^+_{R_0},\varLambda^k\right) \cap W^{2,p}\left(B^+_{R_0},\varLambda^k\right) $ satisfy \eqref{eqr0}.
			\item $u \in  W^{1,2}_{N,flat}\left(B^+_{R_0},\varLambda^k\right) \cap W^{2,p}\left(B^+_{R_0},\varLambda^k\right) $ satisfy \eqref{eqr0 normal}.
		\end{enumerate} Set $\tilde{u}=\zeta u$. Then for any $1 < \bar{p} <  p ,$  there exists a constant $$C_{1} = C_{1}\left( \bar{p}, \gamma_{\bar{A}}, \gamma_{\bar{B}}, L, n, k, N \right)>0$$ such that for any $x\in B^+\left(0,k_0d\right)$, any $0 < R \le hk_{0}d$ and any $0<\rho\le R/48,$ we have 
		\begin{align}\label{localized hessian mean osc decay}
			\fint_{B^+_\rho(x)}&\left\lvert
			|\nabla^2\tilde{u}|-(|\nabla^2\tilde{u}|)_{x,\rho}\right\rvert \notag \\
			&\le C_{1}\left(\frac{\rho}{R}\right)\left(\fint_{B^+_R(x)}|\nabla^2 \tilde{u}|^{\bar{p}}\right)^\frac{1}{\bar{p}}+C_{1}\left(\frac{R}{\rho}\right)^\frac{n}{\bar{p}}\left( \fint_{B^+_{R}(x)}|\mathrm{S}\nabla^2\tilde{u}|^{\bar{p}}\right)^\frac{1}{\bar{p}} \notag \\
			&\qquad\quad+C_{1}\left(\frac{R}{\rho}\right)^\frac{n}{\bar{p}}\left[\left( \fint_{B^+_{R}(x)}|\mathcal{P}_1|^{\bar{p}}\right)^\frac{1}{p}+\left( \fint_{B^+_{R}(x)}|\mathcal{P}_2|^{\bar{p}}\right)^\frac{1}{\bar{p}}\right],
		\end{align}
		with \begin{align*}
			\mathcal{P}_1&=\mathcal{P}-\nabla\zeta\otimes\mathcal{Q}, \\
			\mathcal{P}_2&=\operatorname{div}\left(\zeta\mathcal{Q}\right)-\mathrm{S}\nabla^2\tilde{u}, \\
			\mathcal{Q}&=\mathcal{B}+\mathrm{Q}u+\mathrm{S}\nabla u, \\
			\mathcal{P} &= \mathcal{P}_{0} +\zeta\mathcal{A}+\zeta \mathrm{P}u+\zeta \mathrm{R}\nabla u +\zeta\tilde{f} 
		\end{align*}
		where 
		\begin{align*}
			\mathcal{P}_{0} = 
			\left\lbrace \begin{aligned}
				&\begin{multlined}[b]
					\bar{B}^{\intercal}\left[-d\left(d\zeta\lrcorner\bar{B}u\right)+d\zeta\wedge d^*\left(\bar{B}u\right)\right] +d\zeta\lrcorner\bar{A}du \\-d^*\left[\bar{A}\left(d\zeta\wedge u\right)\right], 
				\end{multlined} &&\text{ if }(a) \text{ holds,} \\
				& &&\\
				&\begin{multlined}[b]
					\left[\bar{B}^{-1}\right]^{\intercal}\left[ d^{\ast}\left(\bar{A}\left(d\zeta \wedge \bar{B}^{-1}u\right)\right) - d\zeta \lrcorner \bar{A}\left(d\left(\bar{B}^{-1}u\right)\right)\right] \\
					-d\zeta \wedge d^{\ast}u + d\left(d\zeta \lrcorner u \right),
				\end{multlined} &&\text{ if } (b) \text{ holds.}
			\end{aligned}\right. 
		\end{align*}
	\end{lemma}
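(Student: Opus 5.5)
The plan is the standard comparison (freezing) argument in the Campanato spirit. First I derive the equation satisfied by $\tilde u=\zeta u$. Testing \eqref{eqr0} (resp. \eqref{eqr0 normal}) with $\zeta\psi$ for $\psi\in W^{1,2}_{T,flat}$ (resp. $W^{1,2}_{N,flat}$), and commuting $\zeta$ through $d$, $d^{\ast}$, gives a constant-coefficient equation for $\tilde u$. Its left-hand side is the frozen operator with $\bar A,\bar B$. Its right-hand side is $\int\langle \mathcal{P}-\nabla\zeta\otimes\mathcal{Q},\psi\rangle+\int\langle\zeta\mathcal{Q},\nabla\psi\rangle$, where the commutator terms collect exactly into $\mathcal{P}_0$. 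Since $u\in W^{2,p}$, we have $\zeta\mathcal Q\in W^{1,p}$, so we may integrate by parts. Splitting $\operatorname{div}(\zeta\mathcal Q)=\mathcal P_2+\mathrm S\nabla^2\tilde u$, the equation reads, in strong form on $B^+_R(x)$,
\begin{align*}
\mathfrak{L}_0\tilde u=\mathcal P_1-\mathcal P_2-\mathrm S\nabla^2\tilde u ,
\end{align*}
with the flat tangential (resp. normal) boundary conditions on $\Gamma$. The boundary terms from integrating by parts the $\zeta\mathcal Q$ term against admissible $\psi$ are absorbed: the natural boundary condition is exactly the one encoded in the weak formulation. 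So $\tilde u$ is a strong solution with the first-order Hodge condition, and it has zero data on the curved part, since $\zeta$ is supported in $B_d$.

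Next I fix $x\in B^+(0,k_0 d)$ and $R\le hk_0 d$. If $x_n> R/4$ or so, an interior ball $B_{R/4}(x)\subset\mathbb R^n_+$ is available and I use Theorem~\ref{interior hessian decay}. Otherwise I recentre at the projection $x'\in\Gamma$; this is where the factor $48$ enters: $B^+_{R/48}(x)\subset B^+_{R/24}(x')\subset B^+_{R/2}(x')\subset B^+_R(x)$. On $\mathcal U_{R/2}(x')$ I split $\tilde u=v+w$. Here $v$ solves the homogeneous frozen system with $v-\tilde u$ vanishing on the curved part and with the same flat boundary condition. The function $w$ solves the frozen system with right-hand side $F:=\mathcal P_1-\mathcal P_2-\mathrm S\nabla^2\tilde u$ and homogeneous Hodge boundary data. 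For $w$ I need a global estimate $\|\nabla^2 w\|_{L^{\bar p}(\mathcal U_{R/2})}\le C\|F\|_{L^{\bar p}}$ on the smooth fixed domain $\mathcal U$, rescaled. I obtain it from the constant-coefficient $L^{\bar p}$ theory for the Hodge boundary problem on a smooth domain. This can be taken from \cite{Sil_linearregularity}, or derived via Lemma~\ref{ellipticity lemma} together with Proposition~\ref{Global Lp estimates LH system prop} type arguments. Since $\bar p<p$, the data are in $L^{\bar p}$. For $v$ I apply \eqref{half ball hessian mean osc decay} together with Corollary~\ref{rev. holder} (translated to centre $x'$), to get
\begin{align*}
\Big(\fint_{B^+_{\rho}(x)}|\nabla^2 v-(\nabla^2 v)_{B^+_\rho(x)}|^2\Big)^{1/2}\le C\frac{\rho}{R}\fint_{B^+_{R/2}(x')}|\nabla^2 v|\le C\frac{\rho}{R}\Big(\fint_{B^+_R(x)}|\nabla^2 v|^{\bar p}\Big)^{1/\bar p}.
\end{align*}

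Finally I combine the pieces. Using $\big||a|-|b|\big|\le|a-b|$ and the minimality of mean (Lemma~\ref{minimality of mean}), the left side of \eqref{localized hessian mean osc decay} is bounded by
\begin{align*}
2\fint_{B^+_\rho(x)}|\nabla^2\tilde u-(\nabla^2 v)_{B^+_\rho(x)}| \le 2\fint_{B^+_\rho}|\nabla^2 v-(\nabla^2 v)_{\rho}|+2\fint_{B^+_\rho}|\nabla^2 w|.
\end{align*}
For the second term, Hölder and enlarging the domain give $\big(\tfrac{R}{\rho}\big)^{n/\bar p}\big(\fint_{B^+_R}|\nabla^2 w|^{\bar p}\big)^{1/\bar p}$, which is controlled by $\|F\|_{L^{\bar p}}$. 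That produces the three error terms with factor $(R/\rho)^{n/\bar p}$. In the first term I replace $\nabla^2 v$ by $\nabla^2\tilde u-\nabla^2 w$, which creates another error of the same type.

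The main obstacle is the $w$-estimate: it requires a global $W^{2,\bar p}$ bound for the constant-coefficient Hodge system with mixed boundary conditions. These are the Hodge conditions on the flat part and Dirichlet conditions on the curved part of $\mathcal U$. The conditions must be compatible near the corner $\partial\Gamma\cap\mathcal C$. I would avoid the corner by noting that $F$ and $\tilde u$ vanish near $\mathcal C$ whenever the support of $\zeta$ stays away from it. Otherwise I would first try reflecting across $\Gamma$: odd or even reflection componentwise, according to whether $n\in I$. This turns the problem into a Dirichlet problem on a symmetric smooth domain for a Legendre–Hadamard system, to which Proposition~\ref{Global Lp estimates LH system prop} applies. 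The verification that the frozen operator is reflection-compatible is the delicate point.
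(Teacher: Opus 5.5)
Two steps in your plan do not go through as written. First, your $v$ does not satisfy the hypothesis of Theorem \ref{boundary hessian decay}, so neither \eqref{half ball hessian mean osc decay} nor Corollary \ref{rev. holder} applies to it.

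The boundary terms from integrating $\int\langle\zeta\mathcal Q,\nabla\psi\rangle$ by parts are not ``absorbed''. A test function $\psi\in W^{1,2}_{T,flat}$ only satisfies $e_n\wedge\psi=0$ on $\Gamma$ (resp. $e_n\lrcorner\psi=0$), so $\int_{\Gamma}\langle(\zeta\mathcal Q)e_n,\psi\rangle$ survives. For general $\bar B$, a boundary term also appears when you convert the commutator $\int\langle d\zeta\lrcorner\bar Bu,d^{\ast}(\bar B\psi)\rangle$ into $\int\langle\mathcal P_0,\psi\rangle$.

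This is structural. Since $\Phi^{\ast}$ does not commute with $d^{\ast}$ and $\theta$ is not constant, the flattened $u$ satisfies the frozen natural condition ($e_n\wedge d^{\ast}(\bar Bu)=0$ in the tangential case) only up to terms built from $\mathrm S\nabla u$, $\mathrm Qu$, $\mathcal B$. So if $w$ absorbs only the interior right-hand side $F$ and carries homogeneous Hodge data, then $v=\tilde u-w$ inherits an inhomogeneous natural condition, and \eqref{bndry hessian decay estimate constant} fails for general $\psi\in W^{1,2}_{T,flat}$.

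The consistent choice is to define $w$ by the weak formulation itself. Keep all first-order terms in divergence form and test against all Hodge-admissible $\psi$; then $v$ satisfies \eqref{bndry hessian decay estimate constant} exactly. The price is that $\nabla^2w$ must be controlled by the full gradient $\nabla(\zeta\mathcal Q)$ rather than its divergence. This is harmless, because $\nabla(\zeta\mathrm S\nabla u)=\mathrm S\nabla^2\tilde u$ plus lower-order terms.

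Second, the $w$-estimate you flag is not closed by either workaround, and with the choice just described it is a genuinely mixed problem.
\begin{itemize}
\item The support argument fails: the curved part of $\partial\mathcal U_{R/2}(x')$ can lie entirely in $\{\zeta\equiv1\}$ when $R\lesssim d$.
\item Even/odd reflection commutes with the frozen operator only if $\bar A$, $\bar B$ preserve the splitting of components according to whether $n\in I$. Otherwise the reflected form solves a transmission problem with conjugated coefficients below $\Gamma$. This non-decoupling is exactly the difficulty stressed in the introduction.
\item Even for $\bar A=\bar B=\mathrm{id}$, doubling $\mathcal U$ across $\Gamma$ produces a crack-tip-like cusp along the edge of the flat part, because $\partial\mathcal U$ leaves the hyperplane tangentially.
\end{itemize}
The last point is the Zaremba singularity of a condition switching from Hodge type to Dirichlet. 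Its model, the square root of the distance to the switching set, has Hessian outside $L^{\bar p}$ for $\bar p\ge 4/3$. So a global $W^{2,\bar p}(\mathcal U)$ bound is false in general. Only bounds for $\nabla^2 w$ on half balls away from the curved part can be used, and these have to be proved.

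The paper avoids the mixed problem by taking $w\in W^{1,2}_0(\mathcal U_R(y))$ with Dirichlet data on all of $\partial\mathcal U_R(y)$. Then Lemma \ref{ellipticity lemma} and Proposition \ref{Global Lp estimates LH system prop} bound $\nabla^2w$ at once. This does not cure the first point, though. The condition $e_n\wedge v=0$ on $\Gamma$ survives, but $v$ then solves the homogeneous frozen system only against $W^{1,2}_0$ test functions, and the conormal derivative of $w$ on $\Gamma$ is uncontrolled. The paper applies \eqref{half ball hessian mean osc decay} and \eqref{hessian2-1} to $v$ without verifying \eqref{bndry hessian decay estimate constant}.

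The two requirements pull in opposite directions: keeping the Hodge condition in $w$ is what makes $v$ admissible, and it is also what forces the mixed problem. The boundary bookkeeping above is needed in either version.

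A minor point: $B^+_{R/48}(x)\subset B^+_{R/24}(x')$ needs $x_n\le R/48$, not $x_n\lesssim R/4$. With that threshold your interior case is self-contained, since $\rho\le R/48$.
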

	\begin{proof}
		We prove only the tangential boundary condition case, the other case is similar. For any $ 0 < \bar{R}< R_{0}$ and any $\psi\in W^{1,2}_{T,flat}\left(B^+_{\bar{R}}, \varLambda^k\right)$, let $\tilde{\psi}$ denote its extension by zero to $B^+_{R_0}.$ Since $\tilde{\psi} \in W^{1,2}_{T,flat}(B^+_{R_0},\varLambda^k)$, plugging $\tilde{\psi}$  as the test function in the weak formulation \eqref{eqr0}, we see by a direct computation that $\tilde{u}\in W^{1,2}_{T,flat}\left(B^+_{\bar{R}},\varLambda^k\right)\cap W^{2,p}\left(B^+_{\bar{R}},\varLambda^k\right)$ weakly solves the following equation
		\begin{align}\label{tilde u}
			d^*\left(\bar{A}d\tilde{u}\right)+\bar{B}^{\intercal}dd^*\left(\bar{B}\tilde{u}\right)&=\mathcal{P}+\zeta\operatorname{div}(\mathcal{Q})\qquad\text{ in } B^+_{\bar{R}}.  
		\end{align}
		\noindent Let $r=\operatorname{dist(x,\partial\mathbb R^n_+)}$ and $y$ be the projection onto the $\partial\mathbb R^n_+.$  If $r>R/5,$ then  
		\begin{align*}
			B(x,\rho)\subset B\left(x, R/10\right)\subset B\left(x, R/5\right)\subset B(x,R)\cap\mathbb R^n_+.
		\end{align*}
		Thus, the estimate in this case easily follows from the interior estimates. We skip the details and focus on the `near the boundary' case, i.e., the case $r \leq R/5.$ 
		In this case, we have  
		\begin{align*}
			B_R(x)\cap\mathbb R^n_+\subset \mathcal{U}_R(y)\subset B_{3R}(x)\cap \mathbb R^n_+.
		\end{align*} 
		For any $\psi\in W^{1,2}_{T,flat}\left(\mathcal{U}_R(y), \varLambda^k\right)$, its extension by zero is in $W^{1,2}_{T,flat}(B^+_{\bar{R}},\varLambda^k)$ and thus from \eqref{tilde u}, we find that  $\tilde{u}$ weakly solves the equation
		\begin{equation}
			d^*\left(\bar{A}d\tilde{u}\right)+\bar{B}^{\intercal}dd^*\left(\bar{B}\tilde{u}\right)=\mathcal{P}+\zeta\operatorname{div}(\mathcal{Q})\qquad\text{ in } \mathcal{U}_R(y).
		\end{equation}
		Let $w\in W^{1,2}_0(\mathcal{U}_R(y), \varLambda^{k})$ be the unique weak solution of the equation 
		\begin{align}\label{eq for w}
			\left\lbrace\begin{aligned}
				d^*\left(\bar{A}dw\right)+\bar{B}^{\intercal}dd^*\left(\bar{B}w\right)&=\mathcal{P}_1+\operatorname{div}\left(\zeta\mathcal{Q}\right)&&\qquad\text{ in } \mathcal{U}_R(y),\\
				w&=0&&\qquad\text{ on }\partial \mathcal{U}_R(y).
			\end{aligned}
			\right.
		\end{align}
		By Lemma \ref{ellipticity lemma}, this linear system is Legendre-Hadamard elliptic. Using Proposition \ref{Global Lp estimates LH system prop}, we deduce that $w \in W^{2, p}\left(\mathcal{U}_R(y), \varLambda^{k} \right).$ Now set $\tilde{u}=v+w.$ Then $v\in W^{1,2}(\mathcal{U}_R(y), \varLambda^{k})\cap  W^{2, p}\left(\mathcal{U}_R(y), \varLambda^{k} \right)$ weakly solves the following equation 
		\begin{align*}
			\left\lbrace\begin{aligned}
				d^*\left(\bar{A}dv\right)+\bar{B}^{\intercal}dd^*\left(\bar{B}v\right)&=0&&\text{ in } \mathcal{U}_R(y),\\
				v&=\tilde{u}&&\text{ on } \partial \mathcal{U}_R(y).\\
			\end{aligned}\right.
		\end{align*}
		Now we claim 
		\begin{claim}\label{claim for sharp fn of hess v} We have the estimate
			\begin{multline}\label{claim estimate}
				\fint_{B^+_\rho(x)}\left\lvert\nabla^2 v-(\nabla^2 v)_{B^+_\rho(x)}\right\rvert \\
				\le C\left(\frac{\rho}{R}\right)\left(\fint_{B^+_{3R}(x)}|\nabla^2 \tilde{u}|^{\bar{p}}\right)^\frac{1}{\bar{p}}+C \left(\fint_{B^+_{5R/8}(y)}|\nabla^2 w|^{\bar{p}}\right)^\frac{1}{\bar{p}}.
			\end{multline}
		\end{claim}
		We divide the proof of the claim into two cases.\smallskip 
		
		\noindent\textbf{Case I: } When $\rho<r/8,$ we have 
		\begin{align*}
			B(x,\rho)\subset B\left(x, r/4\right)\subset B^+\left(y, 5r/4\right)
		\end{align*}
		and 
		\begin{align*}
			B^+\left(y, 5r/4\right)\subset B^+\left(y, R/2\right)\subset B^+\left(y, 3R/4\right)\subset B(x,R)\cap\mathbb R^n_+.
		\end{align*}
		First we apply the interior decay estimate for $\nabla^2v$. By Theorem \ref{interior hessian decay} and Lemma \ref{minimality of mean}, we deduce 
		\begin{align*}
			\fint_{B_\rho(x)}&\left|\nabla^2 v-(\nabla^2 v)_{B_\rho(x)}\right| \\
			&\le \left(\fint_{B_\rho(x)}\left|\nabla^2 v-(\nabla^2 v)_{B_\rho(x)}\right|^2\right)^\frac{1}{2}\\
			&\stackrel{\eqref{ball hessian mean osc decay}}{\le} C\left(\frac{\rho}{r}\right)\left(\fint_{B_{r/4}(x)}\left\lvert\nabla^2 v - \left(\nabla^2 v\right)_{B_{r/4}(x)}\right\rvert^2 \right)^\frac{1}{2}\\
			&\le C\left(\frac{\rho}{r}\right)\left(\frac{\left|B^+_{5r/4}(y)\right|}{\left|B_{r/4}(x)\right|}\right)^\frac{1}{2}\left(\fint_{B^+_{5r/4}(y)}\left|\nabla^2 v - \left(\nabla^2 v\right)_{B^{+}_{5r/4}(y)}\right|^2\right)^\frac{1}{2} 
			\\&\stackrel{\eqref{half ball hessian mean osc decay}}{\le}C\left(\frac{\rho}{R}\right)\left(\fint_{B^+_{R/2}(y)}|\nabla^2 v|^2\right)^\frac{1}{2}.
		\end{align*}
		Applying the reverse H\"older inequality  and Jensen's inequality, we get
		\begin{align*}
			\left(\fint_{B^+_{R/2}(y)}|\nabla^2 v|^2\right)^\frac{1}{2}
			&\stackrel{\eqref{hessian2-1}}{\le} C \fint_{B^+_{5R/8}(y)}|\nabla^2 v|\\
			&\le C \left(\fint_{B^+_{5R/8}(y)}|\nabla^2 \tilde{u}|^{\bar{p}}\right)^\frac{1}{\bar{p}}+C\left(\fint_{B^+_{5R/8}(y)}|\nabla^2 w|^{\bar{p}}\right)^\frac{1}{\bar{p}}\\
			&\le C \left(\fint_{B^+_{3R}(x)}|\nabla^2 \tilde{u}|^{\bar{p}}\right)^\frac{1}{\bar{p}}+C\left(\fint_{B^+_{5R/8}(y)}|\nabla^2 w|^{\bar{p}}\right)^\frac{1}{\bar{p}}.\\
		\end{align*}
		Combining the last two estimates, we get \eqref{claim estimate}.\smallskip   
		
		\noindent\textbf{Case II: }	When $\rho\ge \frac{r}{8},$ clearly we have  
		\begin{align*}
			B(x,\rho)\cap\mathbb R^n_+\subset B^+(y,9\rho)\subset B^+\left(y, R/2\right)
		\end{align*}
		and \begin{align*}
			B^+\left(y, R/2\right)\subset B^+\left(y, 3R/4\right)\subset B(x,R)\cap\mathbb R^n.
		\end{align*}
		
		\noindent Then by minimality of mean type inequality and the mean oscillation decay estimates for $\nabla^2v$ in half ball, we have 
		\begin{align*}
			\int_{B^+(x,\rho)}\left|\nabla^2 v-(\nabla^2v)_{B^+(x,\rho)}\right|
			&\le c\int_{B_{9\rho}^+(y)}\left|\nabla^2 v-(\nabla^2 v)_{B_{9\rho}^+(y)}\right| \\
			&\le  \left(\fint_{B_{9\rho}^+(y)}\left|\nabla^2 v-(\nabla^2 v)_{B_{9\rho}^+(y)}\right|^2\right)^\frac{1}{2} \\
			&\stackrel{\eqref{half ball hessian mean osc decay}}{\le} C\left(\frac{\rho}{R}\right)\left(\fint_{B_{R/2}^+\left(y\right)}|\nabla^2v|^2\right)^\frac{1}{2}.
		\end{align*}
		From this, \eqref{claim estimate} follows by the same arguments as in Case I. This establishes Claim \ref{claim for sharp fn of hess v}. 
		We now complete the proof. Applying minimality of mean and Jensen's inequality, we have 
		\begin{align*}
			\fint_{B^+_\rho(x)}&\left\lvert
			|\nabla^2\tilde{u}|-(|\nabla^2\tilde{u}|)_{B^+_\rho(x)}\right\rvert \\
			&\le c\fint_{B^+_\rho(x)}\left\lvert
			|\nabla^2\tilde{u}|-(|\nabla^2 v|)_{B^+_\rho(x)}\right\rvert\\
			&\le  c\fint_{B^+_\rho(x)}\left||\nabla^2\tilde{u}|-|\nabla^2 v|\right|+c \fint_{B^+_\rho(x)}\left||\nabla^2 v|-(|\nabla^2 v|)_{B^+_\rho(x)}\right|\\
			&\le c\fint_{B^+_\rho(x)}\left|\nabla^2w\right|+2c \fint_{B^+_\rho(x)}\left|\left|\nabla^2 v\right|-\left|(\nabla^2 v)_{B^+_\rho(x)}\right|\right|\\
			&\le c\left( \fint_{B^+_\rho(x)}|\nabla^2 w|^{\bar{p}}\right)^\frac{1}{\bar{p}}+2c \fint_{B^+_\rho(x)}\left|\nabla^2 v-(\nabla^2 v)_{B^+_\rho(x)}\right| \\
			&\stackrel{\eqref{claim estimate}}{\le}C\left(\frac{\rho}{R}\right)\left(\fint_{B^+_{3R}(x)}|\nabla^2 \tilde{u}|^{\bar{p}}\right)^\frac{1}{\bar{p}}+C\left(\frac{R}{\rho}\right)^\frac{n}{\bar{p}} \left(\fint_{B^+_{5R/8}(y)}|\nabla^2 w|^{\bar{p}}\right)^\frac{1}{\bar{p}} .
		\end{align*} Since $w$ weakly solves \eqref{eq for w}, by Proposition \ref{Global Lp estimates LH system prop}, we have 
		\begin{align*}
			\|\nabla^2w\|_{L^{\bar{p}}(\mathcal{U}_R(y))} &\le C\left[\|\mathcal{P}_1\|_{L^{\bar{p}}(\mathcal{U}_R(y))}+\|\operatorname{div}\left(\zeta\mathcal{Q}\right)\|_{L^{\bar{p}}(\mathcal{U}_R(y))}\right].
		\end{align*}
		A simple scaling argument proves that the constant in the estimate is independent of $R.$ Now observe that 
		\begin{align}\label{second order term isolated}
			\operatorname{div}\left(\zeta\mathcal{Q}\right) &=	\operatorname{div}\left(\zeta\mathcal{B}\right) + 	\operatorname{div}\left(\zeta\mathrm{Q}u\right) + 	\operatorname{div}\left(\zeta\mathrm{S}\nabla u\right) \notag \\
			&=\operatorname{div}\left(\zeta\mathcal{B}\right) + 	\operatorname{div}\left(\zeta\mathrm{Q}u\right) +\operatorname{div}\left(\mathrm{S}\nabla \tilde{u}\right) - \operatorname{div}\left(\mathrm{S}\left( \nabla \zeta \wedge u\right)\right).
		\end{align} Hence, we have 
		\begin{align*}
			& \fint_{B^+_\rho(x)}\left\lvert
			|\nabla^2\tilde{u}|-(|\nabla^2\tilde{u}|)_{x,\rho}\right\rvert\\
			&\le C\left(\frac{\rho}{R}\right)\left(\fint_{B^+_{3R}(x)}|\nabla^2 \tilde{u}|^{\bar{p}}\right)^\frac{1}{\bar{p}}+C\left(\frac{R}{\rho}\right)^\frac{n}{\bar{p}}\left( \fint_{B^+_{3R}(x)}|\mathrm{S}\nabla^2\tilde{u}|^{\bar{p}}\right)^\frac{1}{\bar{p}}\\
			&\qquad\quad+C\left(\frac{R}{\rho}\right)^\frac{n}{\bar{p}}\left[\left( \fint_{B^+_{3R}(x)}|\mathcal{P}_1|^{\bar{p}}\right)^\frac{1}{\bar{p}}+\left( \fint_{B^+_{3R}(x)}|\mathcal{P}_2|^{\bar{p}}\right)^\frac{1}{\bar{p}}\right],
		\end{align*}
		where 
		\begin{align*}
			\mathcal{P}_2&=	\operatorname{div}\left(\zeta\mathcal{Q}\right)-\mathrm{S}\nabla^2\tilde{u}.
		\end{align*}
		This completes the proof.  
	\end{proof}
	Observe that the first two terms right hand side of the estimate \eqref{localized hessian mean osc decay} contains second derivatives of $\tilde{u},$ whereas $\mathcal{P}_1$ and $\mathcal{P}_2$ can be estimated in terms of up to first order derivatives of $\omega.$ The crux of our entire work is the following lemma, which says that \emph{we can choose parameters to make the constants appearing in front of the first two terms to be as small as we wish}. This allows us to absorb these terms in the left hand side.  
	\begin{lemma}\label{maximal ineq lemma}
		For any $\delta>0,$ there exist constants 
		\begin{align*}
			\bar{R} &= \bar{R}\left(\delta, \bar{p}, R_{0}, \left\lVert A\right\rVert_{C^{0,1}}, \left\lVert B\right\rVert_{C^{1,1}},  \left\lVert \Phi\right\rVert_{C^{2,1}}, \left\lVert \Phi^{-1}\right\rVert_{C^{2,1}}  \right) \in (0, R_{0}), \\
			C_\delta&=C_\delta\left( \delta, \bar{p}, \mathrm{data},  R_{0},  \left\lVert A\right\rVert_{C^{0,1}}, \left\lVert B\right\rVert_{C^{1,1}},  \left\lVert \Phi\right\rVert_{C^{2,1}}, \left\lVert \Phi^{-1}\right\rVert_{C^{2,1}} \right)>0, \\
			d&=d\left( \delta, k_0, \bar{p}, \mathrm{data}, R_{0}, \left\lVert A\right\rVert_{C^{0,1}}, \left\lVert B\right\rVert_{C^{1,1}}, \left\lVert \Phi\right\rVert_{C^{2,1}}, \left\lVert \Phi^{-1}\right\rVert_{C^{2,1}} \right)>0, 
		\end{align*}
		where $$ \mathrm{data} = \left( \gamma_{\bar{A}}, \gamma_{\bar{B}}, L, n, k, N \right),$$ such that in the setting of Lemma \ref{poinwise decay lemma}, for any $x\in B^+(0,k_0d)$ and any $0 < \rho \le k_0d,$ we have 
		\begin{align}\label{pointwise maximal ineq estimate}
			\fint_{B^+_\rho(x)} &\left\lvert \left\lvert \nabla^2\tilde{u}\right\rvert-\left( \left\lvert \nabla^2\tilde{u}\right\rvert \right)_{B^+_\rho(x)}\right\rvert \notag \\
			&\le \delta\left[M(|\nabla^2\tilde{u}|^{\bar{p}} \mathbbm{1}_{\mathbb{R}^{n}_{+}})(x)\right]^\frac{1}{\bar{p}} \notag \\
			&\qquad\qquad+C_\delta\left(\left[M(|\mathcal{P}_1|^{\bar{p}}\mathbbm{1}_{\mathbb{R}^{n}_{+}})(x)\right]^\frac{1}{\bar{p}}+\left[M(|\mathcal{P}_2|^{\bar{p}}\mathbbm{1}_{\mathbb{R}^{n}_{+}})(x)\right]^\frac{1}{\bar{p}}\right).
	\end{align}\end{lemma}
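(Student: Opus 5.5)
We derive the estimate from Lemma \ref{poinwise decay lemma} by choosing the free radius $R$ as a large multiple of $\rho$, and by choosing $\bar{R}$ (hence $d$) small enough that the coefficient field $\mathrm{S}$ is small. Fix $x\in B^+(0,k_0d)$ and $0<\rho\le k_0 d$, and set $R=h\rho$ with $h\ge 48$ to be fixed. Then $\rho\le R/48$ and $R\le hk_0d$, so \eqref{localized hessian mean osc decay} applies.

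\textbf{Step 1: bound the averages by maximal functions.} Every average over $B^+_R(x)$ is bounded by the Hardy--Littlewood maximal function of the zero-extended integrand at $x$, since $|B^+_R(x)|\ge c_n|B_R(x)|$ for $x\in\mathbb{R}^n_+$. For instance,
\begin{align*}
\left(\fint_{B^+_R(x)}|\nabla^2\tilde u|^{\bar p}\right)^{1/\bar p}\le c_n\left[M(|\nabla^2\tilde u|^{\bar p}\mathbbm{1}_{\mathbb{R}^n_+})(x)\right]^{1/\bar p}.
\end{align*}
The $\mathcal{P}_1$ and $\mathcal{P}_2$ terms are handled the same way. This turns the right-hand side of \eqref{localized hessian mean osc decay} into
\begin{align*}
C_1c_n\left(h^{-1}+h^{n/\bar p}\|\mathrm{S}\|_{L^\infty(B^+_{R}(x))}\right)\left[M(|\nabla^2\tilde u|^{\bar p}\mathbbm{1}_{\mathbb{R}^n_+})(x)\right]^{1/\bar p}
\end{align*}
plus $C_1c_nh^{n/\bar p}$ times the maximal functions of $\mathcal{P}_1$ and $\mathcal{P}_2$.

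\textbf{Step 2: fix the parameters in order.} First choose $h\ge 48$ so large that $C_1c_nh^{-1}\le\delta/2$. Next, use \eqref{S vanishes at the origin}, $\mathrm{S}(0)=0$, together with the Lipschitz bound on $\mathrm{S}$ from Lemma \ref{flattening lemma}. The Lipschitz constant is controlled by $\|A\|_{C^{0,1}}$, $\|B\|_{C^{1,1}}$ and $\|\Phi\|_{C^{2,1}}$, which gives
\begin{align*}
|\mathrm{S}(z)|\le [\mathrm{S}]_{C^{0,1}}|z| \qquad \text{for all } z.
\end{align*}
Since $B^+_R(x)\subset B^+(0,2hk_0d)\subset B^+(0,\bar R)$, we get $\|\mathrm{S}\|_{L^\infty(B^+_R(x))}\le[\mathrm{S}]_{C^{0,1}}\bar R$. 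Now choose $\bar R$ so that $C_1c_nh^{n/\bar p}[\mathrm{S}]_{C^{0,1}}\bar R\le\delta/2$. Then choose $d$ so small that $4hk_0d<\bar R$, which makes the hypothesis of Lemma \ref{poinwise decay lemma} hold. Finally set $C_\delta=C_1c_nh^{n/\bar p}$. The dependencies match those listed in the statement: $h$ depends on $\delta$ and data, $\bar R$ additionally on the $C^{0,1}$, $C^{1,1}$ and $C^{2,1}$ norms, and $d$ additionally on $k_0$.

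\textbf{Main obstacle.} The dangerous term is the $\mathrm{S}\nabla^2\tilde u$ term. Its coefficient grows like $(R/\rho)^{n/\bar p}$, while the decay term needs $R/\rho$ large. The two are reconciled by the order of choices above: $h$ is fixed first and depends only on $\delta$ and data, and only afterwards is the domain shrunk to make $\sup|\mathrm{S}|$ small. This is exactly why the parameters are nested.

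A further point to check is that $h$ and $d$ must not depend on $\rho$ or $x$. This holds because the flattened equation \eqref{eqr0} is posed on the fixed ball $B^+_{R_0}$, and $\zeta$, hence $\tilde u$, depends on $d$ only through bounds already absorbed into $\mathcal{P}_1$ and $\mathcal{P}_2$.
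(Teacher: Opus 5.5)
Your proposal is correct and follows essentially the same route as the paper: set $R=h\rho$, dominate the half-ball averages by maximal functions, and fix $h$ first so that $C_1/h$ is small. Then use $\mathrm{S}(0)=0$ with the Lipschitz bound on $\mathrm{S}$ to choose $\bar R$ making $C_1h^{n/\bar p}\sup|\mathrm{S}|$ small, take $d$ with $4hk_0d<\bar R$, and set $C_\delta$ proportional to $C_1h^{n/\bar p}$; your explicit factor $c_n$ from $|B^+_R(x)|\ge \tfrac12|B_R(x)|$ is a harmless detail the paper leaves implicit.
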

	\begin{proof}
		From Lemma \ref{poinwise decay lemma}, for any $0 < \bar{R}< R_{0}$, any $h\ge 48$ and any $d>0$ with $B^+(0,4hk_0d)\subset B^+(0,\bar{R}),$ we have 
		\begin{align*}
			\fint_{B^+_\rho(x)}&\left\lvert
			|\nabla^2\tilde{u}|-(|\nabla^2\tilde{u}|)_{x,\rho}\right\rvert \notag \\
			&\le C_{1}\left(\frac{\rho}{R}\right)\left(\fint_{B^+_R(x)}|\nabla^2 \tilde{u}|^{\bar{p}}\right)^\frac{1}{\bar{p}}+C_{1}\left(\frac{R}{\rho}\right)^\frac{n}{\bar{p}}\left( \fint_{B^+_{R}(x)}|\mathrm{S}\nabla^2\tilde{u}|^{\bar{p}}\right)^\frac{1}{\bar{p}} \notag \\
			&\qquad\qquad\quad+C_{1}\left(\frac{R}{\rho}\right)^\frac{n}{\bar{p}}\left[\left( \fint_{B^+_{R}(x)}|\mathcal{P}_1|^{\bar{p}}\right)^\frac{1}{p}+\left( \fint_{B^+_{R}(x)}|\mathcal{P}_2|^{\bar{p}}\right)^\frac{1}{\bar{p}}\right],
		\end{align*}
		for any $0 < R \le hk_{0}d$ and any $0<\rho\le R/48.$ Now since $\mathrm{S}(0)=0,$ we have 
		\begin{align*}
			\left\lVert \mathrm{S} \right\rVert_{L^{\infty}\left(B^+_R(x)\right)} \le \left\lVert \mathrm{S} \right\rVert_{L^{\infty}\left(B^+_{\bar{R}}\right)} \leq \left\lVert \nabla \mathrm{S} \right\rVert_{L^{\infty}\left(B^+_{R_{0}}\right)}\bar{R} \leq C_{0}\bar{R}, 
		\end{align*}
		where $$C_{0} = C_{0} \left( n, k, N, x_{0}, \Omega, \left\lVert A\right\rVert_{C^{0,1}}, \left\lVert B\right\rVert_{C^{1,1}}, \left\lVert \Phi\right\rVert_{C^{2,1}}\right)>0$$ is a constant. 
		Combining the last two estimates and using the definition of maximal functions, we deduce 
		\begin{align*}
			\fint_{B^+_\rho(x)}&\left\lvert
			|\nabla^2\tilde{u}|-(|\nabla^2\tilde{u}|)_{x,\rho}\right\rvert \notag \\
			&\le \left[ C_{1}\left(\frac{\rho}{R}\right)+ C_{1}\left\lVert \mathrm{S} \right\rVert_{L^{\infty}\left(B^+_R(x)\right)}\left(\frac{R}{\rho}\right)^\frac{n}{\bar{p}} \right]\left[M(|\nabla^2\tilde{u}|^{\bar{p}} \mathbbm{1}_{\mathbb{R}^{n}_{+}})(x)\right]^\frac{1}{\bar{p}}\notag \\
			&\qquad\qquad\quad+C_{1}\left(\frac{R}{\rho}\right)^\frac{n}{\bar{p}}\left[\left( \fint_{B^+_{R}(x)}|\mathcal{P}_1|^{\bar{p}}\right)^\frac{1}{p}+\left( \fint_{B^+_{R}(x)}|\mathcal{P}_2|^{\bar{p}}\right)^\frac{1}{\bar{p}}\right] \\
			&\le \left[ C_{1}\left(\frac{\rho}{R}\right)+ C_{1}C_{0}\bar{R}\left(\frac{R}{\rho}\right)^\frac{n}{\bar{p}} \right]\left[M(|\nabla^2\tilde{u}|^{\bar{p}} \mathbbm{1}_{\mathbb{R}^{n}_{+}})(x)\right]^\frac{1}{\bar{p}}\notag \\
			&\qquad\qquad\quad+C_{1}\left(\frac{R}{\rho}\right)^\frac{n}{\bar{p}}\left(\left[M(|\mathcal{P}_1|^{\bar{p}}\mathbbm{1}_{\mathbb{R}^{n}_{+}})(x)\right]^\frac{1}{\bar{p}}+\left[M(|\mathcal{P}_2|^{\bar{p}}\mathbbm{1}_{\mathbb{R}^{n}_{+}})(x)\right]^\frac{1}{\bar{p}}\right).
		\end{align*}
		Now assume that $R=\rho h,$ for $\rho\in(0,k_0d].$ Then $$\frac{C_{1}}{h}<\frac{\delta}{2} \qquad \text{ whenever } \qquad h>\frac{2C_{1}}{\delta}.$$ We fix $h > \max \left\lbrace 48, 2C_{1}/\delta \right\rbrace$. Now choose $\bar{R} \in (0, R_{0})$ small enough such that $C_{1}C_{0}\bar{R}h^\frac{n}{\bar{p}} < \delta/2.$ Observe that since we need $B^+(0,4hk_0d)\subset B^+(0,\bar{R}),$ we have to choose $$d<\frac{\delta}{8C_{0}C_{1}k_0h^{\left(\frac{n}{\bar{p}}+1\right)}}.$$ Then from above estimates, we have 
		\begin{align*}
			\fint_{B^+_\rho(x)}&\left\lvert
			|\nabla^2\tilde{u}|-(|\nabla^2\tilde{u}|)_{x,\rho}\right\rvert	\notag \\		&\le \delta\left[M(|\nabla^2\tilde{u}|^{\bar{p}} \mathbbm{1}_{\mathbb{R}^{n}_{+}})(x)\right]^\frac{1}{\bar{p}}\\
			&\qquad \qquad  +C_{1}\left(\frac{R}{\rho}\right)^\frac{n}{\bar{p}}\left(\left[M(|\mathcal{P}_1|^{\bar{p}}\mathbbm{1}_{\mathbb{R}^{n}_{+}})(x)\right]^\frac{1}{\bar{p}}+\left[M(|\mathcal{P}_2|^{\bar{p}}\mathbbm{1}_{\mathbb{R}^{n}_{+}})(x)\right]^\frac{1}{\bar{p}}\right).
		\end{align*}
		Now choosing $C_\delta=C_{1}h^\frac{n}{\bar{p}},$ completes the proof. 		\end{proof}
	
	\subsection{Weighted estimate near the boundary}
	Now we show that Lemma \ref{maximal ineq lemma} allows us to get a local estimates near the boundary in weighted spaces for the Hessian.  
	\begin{lemma}\label{weighted boundary estimate lemma}
		Let $\Omega \subset \mathbb{R}^{n}$ be open, bounded and $C^{2,1}.$ Let $f \in C^{0,1}\left( \overline{\Omega}; \varLambda^{k} \right)$  and $\lambda \in \mathbb{R}$ be given. Let $A \in C^{0,1}\left( \overline{\Omega}; \operatorname{Hom}\left(\varLambda^{k+1}\right)\right)$ and $B \in C^{1,1}\left( \overline{\Omega}; \operatorname{Hom}\left(\varLambda^{k}\right)\right)$ both be uniformly Legendre elliptic. Assume one of the following holds. 
		\begin{enumerate}[(a)]
			\item $\omega \in W_{T}^{1,2}\left(\Omega; \varLambda^{k}\right) \cap W^{2,p}\left(\Omega; \varLambda^{k}\right)$ satisfy \eqref{maineqn}. 
			\item $\omega \in W_{N}^{1,2}\left(\Omega; \varLambda^{k}\right) \cap W^{2,p}\left(\Omega; \varLambda^{k}\right)$ satisfy \eqref{maineq normal}.
		\end{enumerate}
		Let $1 < p < \infty$ and $M_{0} >0$ be real numbers. Let $w \in A_{p}$ with $[w]_{A_p}\le M_0.$\smallskip

		Then for every $x_{0} \in \partial\Omega,$ there exist   
		\begin{enumerate}[(i)]
			\item a neighborhood $U_{x_{0}}$ of $x_{0}$ in $\mathbb{R}^{n}$, a positive number $0 < R_{0, x_{0}}< 1,$ such that there exists an admissible boundary coordinate system $\Phi_{x_{0}} \in \operatorname{Diff}^{2,1}\left( \overline{B_{R_{0, x_{0}}}}; \overline{U_{x_{0}}}\right)$ satisfying $\Phi_{x_{0}} \left( 0\right)= x_{0}$ and 
			\begin{align*}
				D\Phi_{x_{0}} \left( 0\right) \in \mathbb{SO}\left(n\right), \  \Phi_{x_{0}} \left( B_{R_{0}}^{+}\right) = \Omega \cap U_{x_{0}}, \  \Phi_{x_{0}} \left( \Gamma_{R_{0}}\right) = \partial\Omega \cap U_{x_{0}},
			\end{align*}
			all depending only on $x_{0}$ and $\Omega,$
			\item a real number 
			\begin{align*}
				d_{x_{0}} = d_{x_{0}}\left( x_{0}, \Omega, k_0, p, M_{0}, \gamma_{A}, \gamma_{B}, k, n, N,  \left\lVert A\right\rVert_{C^{0,1}}, \left\lVert B\right\rVert_{C^{1,1}}\right) >0
			\end{align*}
		\end{enumerate}
		such that  for any $\theta \in C_{c}^{\infty}\left( U_{x_{0}}\right)$ satisfying 
		\begin{align*}
			0\le \theta \le 1, \quad |\nabla\theta|\le 16/d_{x_{0}} \quad \text{ and } \quad |\nabla^2\theta|\le 16/d_{x_{0}}^2,
		\end{align*} 
		we have the estimate 
		\begin{align}\label{near the boundary weighted estimate}
			&\int_{\Phi_{x_{0}}\left(B^{+}_{d_{x_{0}}/2}(0)\right)} \left\lvert \nabla^{2} \left(\theta \omega\right) \right\rvert^{p}w(x)\mathrm{d}x \notag
			\\&\qquad  \le \frac{C}{d_{x_{0}}^{4p}} \left[ \int_{\Omega} \left\lvert \omega \right\rvert^{p}w(x)\mathrm{d}x + \int_{\Omega} \left\lvert \nabla \omega \right\rvert^{p}w(x)\mathrm{d}x + \int_{\Omega} \left\lvert f \right\rvert^{p}w(x)\mathrm{d}x\right],
		\end{align}
		where  
		\begin{align*}
			C_{x_{0}} = C_{x_{0}}\left( x_{0}, \Omega, k_0, p, M_{0}, \gamma_{A}, \gamma_{B}, k, n, N,  \left\lVert A\right\rVert_{C^{0,1}}, \left\lVert B\right\rVert_{C^{1,1}}\right) >0
		\end{align*}
		is a constant. 
	\end{lemma}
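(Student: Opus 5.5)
The plan is to combine the pointwise maximal inequality of Lemma \ref{maximal ineq lemma} with the localized weighted Fefferman--Stein inequality of Lemma \ref{Feff-Stein}, working in flattened coordinates, and to absorb the $\delta$-term on the left-hand side. Fix $x_0\in\partial\Omega$ and take $U_{x_0}$, $R_{0,x_0}$, $\Phi_{x_0}$ from Lemma \ref{flattening lemma}. The weight must also be transported. Set $w_\Phi:=(w\circ\Phi_{x_0})\,|\det D\Phi_{x_0}|$ on $B^+_{R_0}$ and extend it evenly across $\partial\mathbb{R}^n_+$ as in Lemma \ref{extension of weight lemma}. Since $\Phi_{x_0}$ is bi-Lipschitz with controlled constants, $w\circ\Phi_{x_0}$ is again an $A_p$ weight, with $[\,\cdot\,]_{A_p}$ bounded by a constant depending on $M_0$ and $\|\Phi_{x_0}\|_{C^{1}}$, $\|\Phi^{-1}_{x_0}\|_{C^1}$. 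This is standard, using balls contained in and containing images of balls. The reflected weight $\tilde w$ then has $A_p$ constant at most some $M_1(M_0,\Phi_{x_0})$. Let $k_0=k_0(n,p,M_1)$ be given by Lemma \ref{Feff-Stein}, and fix $\bar p\in(1,p)$ with $\tilde w\in A_{p/\bar p}$. Such $\bar p$ exists by Proposition \ref{higher power Ap}: take $p_0<p$ with $\tilde w\in A_{p_0}$ and put $\bar p=p/p_0$. Then $M(g^{\bar p})^{1/\bar p}$ is bounded on $L^p_{\tilde w}$ by Lemma \ref{Ap maximal fn boundedness}.

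Next, apply Lemma \ref{maximal ineq lemma} with a $\delta$ to be chosen, obtaining $\bar R$, $d=:d_{x_0}$ and $C_\delta$. Given $\theta$, put $u=\Phi^*(\theta\omega)$, take the cutoff $\zeta$ with $\zeta\equiv1$ on $B_{d/2}$, and set $\tilde u=\zeta u$. Let $F$ be the even reflection of $|\nabla^2\tilde u|$ (Definition \ref{extension by reflection}), so that $\operatorname{supp}F\subset B_d\subset B_{k_0d}$. Lemma \ref{extension estimate lemma} gives
\[
M^\sharp_{k_0 d}F(x)\le c\,\sup_{\rho\le k_0 d}\fint_{B^+_\rho(\cdot)}\bigl||\nabla^2\tilde u|-(|\nabla^2\tilde u|)\bigr|
\]
for $x\in B^+_{k_0d}$, and by symmetry the same bound holds at reflected points. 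There is a minor bookkeeping issue here: balls $B_\rho(y)$ containing $x$ versus balls centered at $x$. This is handled by enlarging the radius by a factor of $2$, which is why $\rho\le k_0 d$ is allowed in Lemma \ref{maximal ineq lemma}. Hence, pointwise on $B_{k_0d}$,
\[
M^\sharp_{k_0d}F\le c\delta\,[M(|\nabla^2\tilde u|^{\bar p}\mathbbm 1_{\mathbb{R}^n_+})]^{1/\bar p}+cC_\delta\bigl([M(|\mathcal P_1|^{\bar p}\mathbbm1_{\mathbb{R}^n_+})]^{1/\bar p}+[M(|\mathcal P_2|^{\bar p}\mathbbm1_{\mathbb{R}^n_+})]^{1/\bar p}\bigr).
\]
Lemma \ref{Feff-Stein} with $\rho=d$ gives $\int F^p\tilde w\le C_1\int_{B_{k_0d}}(M^\sharp_{k_0d}F)^p\tilde w$. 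Inserting the pointwise bound and using the maximal function bound in $L^{p/\bar p}_{\tilde w}$, we get
\[
\int_{B^+_d}|\nabla^2\tilde u|^p w_\Phi\le C\delta^p\int_{B^+_d}|\nabla^2\tilde u|^pw_\Phi+CC_\delta^p\int_{B^+_{d}}(|\mathcal P_1|^p+|\mathcal P_2|^p)w_\Phi .
\]
Here the supports of $\mathcal P_1,\mathcal P_2$ lie in $\operatorname{supp}\zeta$, and we use the reflection symmetry. Choose $\delta$ so that $C\delta^p\le 1/2$. The left-hand integral is finite because $\omega\in W^{2,p}$ and $\tilde w\in A_{p_0}$ combined with the embedding, or else by truncation. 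This finiteness is needed to absorb the term.

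It remains to bound $\mathcal P_1,\mathcal P_2$ pointwise by first-order quantities. $\mathcal P_0$, $\zeta\mathrm P u$, $\zeta\mathrm R\nabla u$ and $\nabla\zeta\otimes\mathcal Q$ are bounded by $Cd^{-1}(|u|+|\nabla u|)$ together with $|\mathcal A|,|\mathcal B|$. The term $\mathcal P_2=\operatorname{div}(\zeta\mathcal Q)-\mathrm S\nabla^2\tilde u$ is, by \eqref{second order term isolated}, a sum of $\nabla\zeta\,\mathcal Q$, $\operatorname{div}\mathcal B$, $\operatorname{div}(\mathrm Qu)$, $(\nabla\mathrm S)\nabla\tilde u$ and $\operatorname{div}(\mathrm S(\nabla\zeta\wedge u))$. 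The second-order part cancels exactly against $\mathrm S\nabla^2\tilde u$, leaving first-order terms controlled via the bounds on $\nabla\mathrm S,\nabla\mathrm Q,\nabla\mathcal B$ from Lemma \ref{flattening lemma}. Each factor of $\nabla\zeta,\nabla^2\zeta,\nabla\theta,\nabla^2\theta$ costs at most $d^{-1}$ or $d^{-2}$. Together with $u=\Phi^*(\theta\omega)$ and $|\nabla u|\lesssim|\omega|+|\nabla\omega|$, this yields $|\mathcal P_i|\lesssim d^{-4}(|\omega|+|\nabla\omega|+|f|)\circ\Phi$; the power $d^{-4}$ is generous enough to absorb all products. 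Finally, changing variables back through $\Phi_{x_0}$, using $\tilde u=u$ on $B^+_{d/2}$ and $\nabla^2(\theta\omega)\circ\Phi$ versus $\nabla^2u$, which differ by lower-order terms (also bounded by $|\omega|+|\nabla\omega|$), gives \eqref{near the boundary weighted estimate}. The main obstacle is the weight bookkeeping: ensuring a single $k_0$, $\bar p$ and $d$ are chosen consistently. Concretely, $k_0$ and $\bar p$ must be fixed from the transported weight before $\delta$ and $d$, so that the dependence in Lemma \ref{maximal ineq lemma} remains on data only.
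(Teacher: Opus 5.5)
Your proposal is correct and follows the paper's proof essentially step for step: the pointwise maximal inequality of Lemma \ref{maximal ineq lemma}, even reflection, the localized Fefferman--Stein inequality, Muckenhoupt's bound at exponent $p/\bar p=p_0$, absorption of the $\delta$-term, and first-order bounds on $\mathcal P_1,\mathcal P_2$ with the $d^{-4}$ loss. Your explicit transport of the weight, $w_\Phi=(w\circ\Phi_{x_0})\lvert\det D\Phi_{x_0}\rvert$, and your comparison of $\nabla^2(\theta\omega)\circ\Phi_{x_0}$ with $\nabla^2 u$ fill in steps the paper leaves implicit; to make $w_\Phi$ a global $A_p$ weight you still need to first extend $\Phi_{x_0}$ near $0$ to a global bi-Lipschitz map, which is possible since $D\Phi_{x_0}(0)\in\mathbb{SO}(n)$.

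One correction concerns finiteness of $\int\lvert\nabla^2\tilde u\rvert^p\tilde w$, which is needed both for the absorption and as a hypothesis of Lemma \ref{Feff-Stein}. This does not follow from $\omega\in W^{2,p}$, because an $L^p$ Hessian need not lie in $L^p_w$. Make your \emph{truncation} precise by running the argument with the weight $\min(w,K)$, whose $A_p$ constant is at most $2^{p-1}M_0$, and then letting $K\to\infty$ by monotone convergence. Alternatively, use $f\in C^{0,1}$ to get $\omega\in W^{2,q}$ for every $q<\infty$, and combine this with $w\in L^{1+\varepsilon}_{\mathrm{loc}}$.
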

	\begin{remark}
		The dependence of $d_{x_{0}}$ and $C_{x_{0}}$ on $x_{0}$ and $\Omega$ is only via the number $R_{0, x_{0}}$ and the bounds on $ \left\lVert \Phi_{x_{0}}\right\rVert_{C^{2,1}},  \left\lVert \Phi^{-1}_{x_{0}}\right\rVert_{C^{2,1}}.$
	\end{remark}
	\begin{proof}
		Given $x_{0} \in \partial\Omega,$ the existence of $U_{x_{0}}$, $R_{0, x_{0}}$ and $\Phi_{x_{0}}$ is given by Lemma \ref{flattening lemma}. By Lemma \ref{poinwise decay lemma} and Lemma \ref{maximal ineq lemma}, for any $\delta >0$ and any $1 < \bar{p} < p,$ there exist constants $d_{\delta}>0$ and $C_{\delta} >0$ such that we have the estimate 
		\begin{align}\label{pre FS estimate}
			\fint_{B^+_\rho(x)} &\left\lvert \left\lvert \nabla^2\tilde{u}\right\rvert-\left( \left\lvert \nabla^2\tilde{u}\right\rvert \right)_{B^+_\rho(x)}\right\rvert \notag\\
			&\le \delta\left[M(|\nabla^2\tilde{u}|^{\bar{p}} \mathbbm{1}_{\mathbb{R}^{n}_{+}})(x)\right]^\frac{1}{\bar{p}} \notag \\
			&\qquad\qquad+C_\delta\left(\left[M(|\mathcal{P}_1|^{\bar{p}}\mathbbm{1}_{\mathbb{R}^{n}_{+}})(x)\right]^\frac{1}{\bar{p}}+\left[M(|\mathcal{P}_2|^{\bar{p}}\mathbbm{1}_{\mathbb{R}^{n}_{+}})(x)\right]^\frac{1}{\bar{p}}\right),
		\end{align}
		for any cut-off function $\theta \in C_{c}^{\infty}\left( U_{x_{0}}\right)$ satisfying 
		\begin{align*}
			0\le \theta \le 1, \quad |\nabla\theta|\le 16/d_{\delta} \quad \text{ and } \quad |\nabla^2\theta|\le 16/d_{\delta}^2,
		\end{align*} and any cut-off function $\zeta\in C^\infty_c\left(B(0,d_{\delta})\right)$ with $\zeta\equiv 1$ in $B\left(0,d_{\delta}/2\right),$ $0\le \zeta\le 1,$ $|\nabla\zeta|\le 16/d_{\delta}$ and $|\nabla^2\zeta|\le 16/d_{\delta}^2,$ where $\tilde{u},\mathcal{P}_{1}$, $\mathcal{P}_{2}$ are as given in Lemma \ref{maximal ineq lemma}. Now for $x\in\mathbb R^n_+,$ set $\psi(x)=|\nabla^2\tilde{u}|.$ Let $\Psi$ be the extension of $\psi$ defined in Definition \ref{extension by reflection}. Then by Lemma \ref{extension estimate lemma}, we deduce 
		\begin{align*}
			M^\sharp_{k_0d_{\delta}}\Psi(x)
			&\le c\sup_{0<\rho\le k_0d_{\delta}}\fint_{B^+_\rho(x)}\left||\nabla^2\tilde{u}|-(|\nabla^2\tilde{u}|)_{x,\rho}\right|\\
			&\stackrel{\eqref{pre FS estimate}}{\le} c\delta\left[M(|\nabla^2\tilde{u}|^{\bar{p}} \mathbbm{1}_{\mathbb{R}^{n}_{+}})(x)\right]^\frac{1}{\bar{p}}\\
			&\qquad\qquad+c C_\delta\left(\left[M(|\mathcal{P}_1|^{\bar{p}}\mathbbm{1}_{\mathbb{R}^{n}_{+}})(x)\right]^\frac{1}{\bar{p}}+\left[M(|\mathcal{P}_2|^{\bar{p}}\mathbbm{1}_{\mathbb{R}^{n}_{+}})(x)\right]^\frac{1}{\bar{p}}\right).
		\end{align*}
		Next we raise the above inequality to the $p$-th power and multiply by the weight $\tilde{w},$ where $\tilde{w}$ is the extension of $w$ given by Lemma \ref{extension of weight lemma}, to get the following pointwise inequality
		\begin{align*}
			&\left [ M^\sharp_{k_0d_{\delta}}\Psi(x)\right]^p\tilde{w}(x) \\
			&\quad\le  (c\delta)^p \left[M(|\nabla^2\tilde{u}|^{\bar{p}} \mathbbm{1}_{\mathbb{R}^{n}_{+}})(x)\right]^\frac{p}{\bar{p}}\tilde{w}(x)\\
			&\quad\qquad +  (cC_\delta)^p\left(\left[M(|\mathcal{P}_1|^{\bar{p}}\mathbbm{1}_{\mathbb{R}^{n}_{+}})(x)\right]^\frac{p}{\bar{p}}\tilde{w}(x)+\left[M(|\mathcal{P}_2|^{\bar{p}}\mathbbm{1}_{\mathbb{R}^{n}_{+}})(x)\right]^\frac{p}{\bar{p}}\tilde{w}(x)\right).
		\end{align*}
		By applying Fefferman--Stein inequality in Lemma \ref{Feff-Stein}, we have 
		\begin{align*}
			&  \int_{B(0,d_{\delta})}|\Psi(x)|^p\tilde{w}(x)dx\\
			&\le C_1\int_{B(0,k_0d_{\delta})}\left [ M^\sharp_{k_0d_{\delta}}\Psi(x)\right]^p\tilde{w}(x)dx\\
			&= 2C_1\int_{B^+(0,k_0d_{\delta})}\left [ M^\sharp_{k_0d_{\delta}}\Psi(x)\right]^p\tilde{w}(x)dx\\
			&\le\begin{aligned}[t]
				2C_1&(c\delta)^p \int_{\mathbb R^n}\left[M(|\nabla^2\tilde{u}|^{\bar{p}} \mathbbm{1}_{\mathbb{R}^{n}_{+}})(x)\right]^\frac{p}{\bar{p}}\tilde{w}(x)\mathrm{d}x \\ &+  2C_1(cC_\delta)^p\int_{\mathbb R^n}\left[M(|\mathcal{P}_1|^{\bar{p}}\mathbbm{1}_{\mathbb{R}^{n}_{+}})(x)\right]^\frac{p}{\bar{p}}\tilde{w}(x)\mathrm{d}x \\&\qquad \qquad \qquad \qquad+2C_1(cC_\delta)^p\int_{\mathbb{R}^{n}}\left[M(|\mathcal{P}_2|^{\bar{p}}\mathbbm{1}_{\mathbb{R}^{n}_{+}})(x)\right]^\frac{p}{\bar{p}}\tilde{w}(x)\mathrm{d}x.
			\end{aligned}
		\end{align*}
		Now let $p_{0} = p_{0}\left(n, p, M_{0}\right) \in (1, p)$ be the exponent given by Proposition \ref{higher power Ap}. Choosing $\bar{p} = p/p_{0} \in (1,p),$ we see that $\tilde{w} \in A_{p/\bar{p}}.$ Hence, using Lemma \ref{Ap maximal fn boundedness} with the exponent $p/\bar{p}$,  we deduce  
		\begin{align*}
			&\int_{B(0,d_{\delta})}|\Psi(x)|^p\tilde{w}(x)dx \\
			&\le 2C_1C_{p/\bar{p}}(c\delta)^p \int_{\mathbb R^n}|\nabla^2\tilde{u}|^{p} \mathbbm{1}_{\mathbb{R}^{n}_{+}}(x)\tilde{w}(x)\mathrm{d}x \\
			&\qquad +2C_1C_{p/\bar{p}}(cC_\delta)^p\left[ \int_{\mathbb R^n}|\mathcal{P}_1|^{p}\mathbbm{1}_{\mathbb{R}^{n}_{+}}(x)\tilde{w}(x)\mathrm{d}x + \int_{\mathbb R^n}|\mathcal{P}_2|^{p}\mathbbm{1}_{\mathbb{R}^{n}_{+}}(x)\tilde{w}(x)\mathrm{d}x\right]. 
		\end{align*}
		Observe that since $\zeta$ is supported in $B(0,d_{\delta}),$ $\tilde{u}= \zeta u$, $\mathcal{P}_{1}, \mathcal{P}_{2}$ also have supports in $B(0,d_{\delta})$ and thus all the integrals on the right hand side above vanishes outside $B(0,d_{\delta}).$ This is really the reason for using the cut-off function $\zeta.$ Thus, we deduce 	
		\begin{align*}
			\int_{B(0,d_{\delta})}&|\Psi(x)|^p\tilde{w}(x)dx \\&\begin{aligned}[t]
				\le 2C_1&C_{p/\bar{p}}(c\delta)^p \int_{B(0,d_{\delta})}|\nabla^2\tilde{u}|^{p} \mathbbm{1}_{\mathbb{R}^{n}_{+}}(x)\tilde{w}(x)\mathrm{d}x \\
				&+2C_1C_{p/\bar{p}}(cC_\delta)^p \int_{B(0,d_{\delta})}|\mathcal{P}_1|^{p}\mathbbm{1}_{\mathbb{R}^{n}_{+}}(x)\tilde{w}(x)\mathrm{d}x \\ &\qquad \qquad +2C_1C_{p/\bar{p}}(cC_\delta)^p \int_{B(0,d_{\delta})}|\mathcal{P}_2|^{p}\mathbbm{1}_{\mathbb{R}^{n}_{+}}(x)\tilde{w}(x)\mathrm{d}x. 
			\end{aligned}
		\end{align*}
		Now we choose $\delta>0$ small enough such that $2C_1C_{p/\bar{p}}(c\delta)^p < 1/2,$ to arrive at the inequality 
		\begin{align*}
			\int_{B(0,d_{\delta})}|\Psi(x)|^p\tilde{w}(x)\mathrm{d}x \le C \left[ \int_{B^{+}(0,d_{\delta})}|\mathcal{P}_1|^{p}\tilde{w}(x)\mathrm{d}x + \int_{B^{+}(0,d_{\delta})}|\mathcal{P}_2|^{p}\tilde{w}(x)\mathrm{d}x\right].
		\end{align*}
		Since $\zeta \equiv 1$ in $B(0,d_{\delta}/2),$ we deduce  
		\begin{align}\label{pre final estimate boundary}
			\int_{B^{+}(0,d_{\delta}/2)}&|\nabla^2 u|^p\tilde{w}(x)\mathrm{d}x \notag \\&\le \int_{B^{+}(0,d_{\delta})}|\nabla^{2}\tilde{u}|^p\tilde{w}(x)\mathrm{d}x \notag \\
			&= \int_{B(0,d_{\delta})}|\nabla^2\tilde{u}|^{p} \mathbbm{1}_{\mathbb{R}^{n}_{+}}(x)\tilde{w}(x)\mathrm{d}x \notag\\
			&\le 	\int_{B(0,d_{\delta})}|\Psi(x)|^p\tilde{w}(x)\mathrm{d}x \notag\\
			&\le C \left[ \int_{B^{+}(0,d_{\delta})}|\mathcal{P}_1|^{p}\tilde{w}(x)\mathrm{d}x + \int_{B^{+}(0,d_{\delta})}|\mathcal{P}_2|^{p}\tilde{w}(x)\mathrm{d}x\right].
		\end{align}
		Now that we have chosen our $\delta,$ we just write $d_{\delta}= d_{x_{0}}.$ Now we establish \eqref{near the boundary weighted estimate}.  We only prove the case when (a) holds, the other is similar. To do this, we want to estimate the right hand side of \eqref{pre final estimate boundary}.  We recall that in this case, we have  
		\begin{align*}
			\mathcal{P}_1&=\mathcal{P}-\nabla\zeta\otimes\mathcal{Q}, \\
			\mathcal{P}_2&=\operatorname{div}\left(\zeta\mathcal{Q}\right)-\mathrm{S}\nabla^2\tilde{u}, \\
			\mathcal{Q}&=\mathcal{B}+\mathrm{Q}u+\mathrm{S}\nabla u, \\ 
			\mathcal{P}&= \bar{B}^{\intercal}\left[-d\left(d\zeta\lrcorner\bar{B}u\right)+d\zeta\wedge d^*\left(\bar{B}u\right)\right]+d\zeta\lrcorner\bar{A}du -d^*\left[\bar{A}\left(d\zeta\wedge u\right)\right]\\
			&\qquad \qquad \qquad \qquad \qquad \qquad \qquad \qquad \qquad +\zeta\mathcal{A}+\zeta \mathrm{P}u+\zeta \mathrm{R}\nabla u +\zeta\tilde{f}.
		\end{align*}
		Since $|\nabla\zeta|\le 16/d_{x_{0}}$, $|\nabla^2\zeta|\le 16/d_{x_{0}}^2$ and $d_{x_{0}} < 1,$ we have 
		\begin{multline}\label{P1 estimate}
			\left\lvert \mathcal{P}_1 \right\rvert \le \frac{C}{d_{x_{0}}^2}\left( \left\lvert u \right\rvert + \left\lvert \nabla u \right\rvert \right) + \left\lvert \tilde{f}\right\rvert  + \left\lvert \mathrm{P} \right\rvert  \left\lvert u \right\rvert + \left\lvert \mathrm{R} \right\rvert  \left\lvert \nabla u \right\rvert + \left\lvert \mathcal{A} \right\rvert
			\\ + \frac{1}{d_{x_{0}}^{2}}\left( \left\lvert \mathcal{B} \right\rvert + \left\lvert \mathrm{Q} \right\rvert  \left\lvert u \right\rvert + \left\lvert \mathrm{S} \right\rvert  \left\lvert \nabla u \right\rvert\right). 
		\end{multline}
		In view of \eqref{second order term isolated}, we have 
		\begin{multline}
			\left\lvert \mathcal{P}_2 \right\rvert \le \frac{1}{d_{x_{0}}^{2}}\left( \left\lvert \mathcal{B} \right\rvert + \left\lvert \nabla \mathcal{B} \right\rvert + \left\lvert \mathrm{S} \right\rvert\left\lvert u \right\rvert\right)  + \frac{1}{d_{x_{0}}}\left( \left\lvert \mathrm{Q} \right\rvert\left\lvert u \right\rvert + \left\lvert \nabla \mathrm{S} \right\rvert\left\lvert u \right\rvert + \left\lvert \mathrm{S} \right\rvert\left\lvert \nabla u \right\rvert\right) \\ + \left\lvert \nabla \mathrm{Q} \right\rvert\left\lvert u \right\rvert + \left( \left\lvert \mathrm{Q} \right\rvert + \left\lvert \nabla \mathrm{S} \right\rvert\right)\left\lvert \nabla u \right\rvert. 
		\end{multline}
		Since $u=\Phi_{x_{0}}^{\ast}\left(\theta \omega\right),$ in view of the pointwise dependencies in Lemma \ref{flattening lemma}, we get 
		\begin{align*}
			\left\lvert \mathcal{P}_1 \right\rvert \le \frac{C}{d_{x_{0}}^3}\left( \left\lvert \omega \right\rvert + \left\lvert \nabla \omega \right\rvert \right) +C \left\lvert f \right\rvert \qquad \text{ and } \qquad \left\lvert \mathcal{P}_2 \right\rvert \le \frac{C}{d_{x_{0}}^4}\left( \left\lvert \omega \right\rvert + \left\lvert \nabla \omega \right\rvert \right). 
		\end{align*}
		Combining these estimates with \eqref{pre final estimate boundary}, we have \eqref{near the boundary weighted estimate}. 
	\end{proof}

	\section{Main results}\label{main theorems}
	\subsection{Global apriori estimates for Hodge systems}
	\subsubsection{Main estimate}
	\begin{theorem}\label{mainthm}
		Let $r \ge 0$ be an integer and let $\Omega\subset\mathbb{R}^n$ be an open, bounded,  $C^{r+2,1}$ domain. Let $A\in C^{r,1}\left(\overline{\Omega}, \operatorname{Hom}\left(\varLambda^{k+1}\right)\right)$  and $B\in C^{r+1,1}\left(\overline{\Omega},\operatorname{Hom}\left(\varLambda^k\right)\right)$ be both uniformly Legendre elliptic with constants $\gamma_A$ and $\gamma_B$, respectively. Let $f \in C^{r,1}\left(\overline{\Omega},\varLambda^{k}\right),$ $\lambda \in \mathbb{R}$ and $\omega_{0} \in C^{r+2,1}\left(\overline{\Omega},\varLambda^{k}\right)$ be given. Let $\omega\in W^{1,2}(\Omega,\varLambda^k)$ weakly solves either of the following systems: 
		\begin{align*}
			\left\lbrace\begin{aligned}
				\mathfrak{L}\omega&=\lambda B\omega + f && \text{ in } \Omega,\\
				\mathfrak{b}_{\text{t}}\omega&=\mathfrak{b}_{\text{t}}\omega_{0} && \text{ on } \partial\Omega,		\end{aligned}\right. \text{ or } \left\lbrace\begin{aligned}
				\mathfrak{L}\omega&=\lambda B\omega + f&& \text{ in } \Omega,\\
				\mathfrak{b}_{\text{n}}\omega&=\mathfrak{b}_{\text{n}}\omega_{0} && \text{ on } \partial\Omega.		\end{aligned}\right.
		\end{align*}
		Let $M_0>0,$ $1 < p < \infty$ and let $w\in A_p$ and $[w]_{A_p}\le M_0.$ Then there exists a constant $C=C(r, n,N,\Omega,\gamma_A,\gamma_B, \lambda, \|A\|_{C^{r,1}},\|B\|_{C^{r+1,1}},M_0,p)>0$ such that 
		\begin{align}\label{main apriori estimate}
			\left\lVert \omega \right\rVert_{W^{r+2, p}_{w}\left(\Omega; \varLambda^{k}\right)}\le C\left[\left\lVert \omega \right\rVert_{L^{p}_{w}\left(\Omega; \varLambda^{k}\right)}+ \left\lVert f \right\rVert_{W^{r, p}_{w}\left(\Omega; \varLambda^{k}\right)} + \left\lVert \omega_{0} \right\rVert_{W^{r+2, p}_{w}\left(\Omega; \varLambda^{k}\right)}\right].
		\end{align}
	\end{theorem}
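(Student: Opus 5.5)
The plan is to first reduce to homogeneous boundary data and the case $r=0$, then patch together local estimates by a partition of unity, and finally remove the lower-order term $\|\nabla\omega\|_{L^p_w}$ by interpolation. An induction on $r$ then gives the higher-order estimate.

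\textbf{Step 1: reduction to zero boundary data.} Replace $\omega$ by $\omega-\omega_0$ in the tangential case. In the normal case work instead with $v=B\omega - B\omega_0$, using the weak formulation \eqref{weak_formulation_normal}. The new unknown solves the same system with zero boundary data and right-hand side $f+\lambda B\omega_0-\mathfrak{L}\omega_0$. For $r=0$ this new right-hand side is bounded in $L^p_w$ by $C(\|f\|_{L^p_w}+\|\omega_0\|_{W^{2,p}_w})$, since $A\in C^{0,1}$ and $B\in C^{1,1}$. We may also assume a priori that $\omega\in W^{2,p}$. Under the stated smoothness of the data this follows from the unweighted theory (via \cite{Sil_linearregularity}). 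This is exactly the hypothesis used in Lemma \ref{weighted boundary estimate lemma}, and it ensures all norms below are finite.

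\textbf{Step 2: the local estimates and their assembly.} By compactness, cover $\partial\Omega$ by finitely many sets $\Phi_{x_i}(B_{d_{x_i}/2})$ coming from Lemma \ref{weighted boundary estimate lemma}. Near each $x_i$, $D\Phi_{x_i}$ and $D\Phi_{x_i}^{-1}$ are bounded, so weighted norms on the two sides of the flattening are comparable, up to replacing $w$ by $w\circ\Phi$. This needs a remark that $A_p$ is stable under bi-Lipschitz changes of variables with controlled constant. Alternatively one runs Lemma \ref{weighted boundary estimate lemma} with $w\circ\Phi$ directly. For the interior, cover a compact subset by balls and use the same sharp-maximal argument with Theorem \ref{interior hessian decay}. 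This is simpler than the boundary case: no reflection is needed, and the coefficient-oscillation term is handled by freezing at the centre. Choose a partition of unity $\{\theta_i\}$ subordinate to this cover, with the derivative bounds required in Lemma \ref{weighted boundary estimate lemma}. Summing the local bounds \eqref{near the boundary weighted estimate} gives
\begin{align*}
\|\nabla^2\omega\|_{L^p_w(\Omega)} \le C\left(\|\omega\|_{L^p_w}+\|\nabla\omega\|_{L^p_w}+\|f\|_{L^p_w}\right).
\end{align*}

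\textbf{Step 3: absorbing $\|\nabla\omega\|_{L^p_w}$.} I expect this to be the main obstacle. The constant above is large, because of the factor $d^{-4p}$, so we need an interpolation inequality in the weighted space,
\begin{align*}
\|\nabla\omega\|_{L^p_w}\le \varepsilon\|\nabla^2\omega\|_{L^p_w}+C_\varepsilon\|\omega\|_{L^p_w},
\end{align*}
valid on a $C^{2,1}$ domain for $A_p$ weights. My first attempt will be a pointwise bound of $|\nabla\omega|$ by maximal functions: on balls of radius $\varepsilon$, Taylor expansion and Poincar\'e give $|\nabla\omega(x)|\lesssim \varepsilon M(|\nabla^2\omega|)(x)+\varepsilon^{-1}M(|\omega|)(x)$. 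Applying this after extending $\omega$ across $\partial\Omega$ by a reflection, which preserves $W^{2,p}_w$ by the argument of Lemma \ref{extension of weight lemma} applied in local charts, Lemma \ref{Ap maximal fn boundedness} then yields the interpolation inequality. With it, absorption gives \eqref{main apriori estimate} for $r=0$. Since $\lambda$ is fixed, it enters only through the constant.

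\textbf{Step 4: higher $r$, by induction.} Tangential derivatives in the flattened coordinates satisfy the same type of system with data in $W^{r-1,p}_w$. The commutator terms involve one extra derivative of $A$, $B$ and $\Phi$, which is why the regularity is $C^{r,1}$, $C^{r+1,1}$ and $C^{r+2,1}$. Applying the $r-1$ estimate controls all derivatives containing at least one tangential direction. The pure normal derivatives $\partial_n^{r+2}\omega$ are then recovered algebraically from the equation, using the Legendre--Hadamard ellipticity in Lemma \ref{ellipticity lemma}: the coefficient matrix of $\partial_n^2$ is invertible.
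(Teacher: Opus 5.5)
Your proposal takes essentially the same route as the paper's proof: reduce to $\omega_{0}=0$ and $r=0$, cover $\partial\Omega$ by the charts of Lemma \ref{weighted boundary estimate lemma}, add the (skipped) interior estimate, sum over a partition of unity, remove $\|\nabla\omega\|_{L^{p}_{w}}$ by interpolation, and iterate in $r$. Your remarks on a priori $W^{2,p}$ regularity (so the absorption is legitimate), on transporting the weight through $\Phi$, and on how to prove the interpolation inequality supply details the paper leaves implicit. One small correction to Step 3: the even reflection behind Lemma \ref{extension of weight lemma} does not preserve $W^{2,p}$, since it creates a jump in $\partial_{n}\omega$. Use instead a higher-order reflection such as $E\omega(x',x_n)=3\omega(x',-x_n)-2\omega(x',-2x_n)$ for $x_n<0$. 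Then for $x\in\mathbb{R}^{n}_{+}$ one has $M(|E\omega|)(x)+M(|\nabla^{2}E\omega|)(x)\lesssim M(|\omega|\mathbbm{1}_{\mathbb{R}^{n}_{+}})(x)+M(|\nabla^{2}\omega|\mathbbm{1}_{\mathbb{R}^{n}_{+}})(x)$, which is all your pointwise argument needs. Alternatively, run the Poincar\'e argument directly on $B_{\varepsilon}(x)\cap\Omega$.
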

	\begin{proof}
		Since the equations are linear, we can assume $\omega_{0}=0$ by replacing $\omega$ by $\omega - \omega_{0}.$ We only show the case $r=0,$ other cases can be obtained by iterating the argument in the usual manner. Using Lemma \ref{weighted boundary estimate lemma}, for every $x_{0} \in \partial\Omega,$ there exist $d_{x_{0}} >0$ and $C^{2,1}$ diffeomorphism $\Phi_{x_{0}}.$ Clearly, $\left\{\Phi_{x_{0}}\left(B^+\left(0,\frac{d_{x_{0}}}{2}\right)\right):x_{0}\in \partial\Omega\right\}$ is an open cover of $\partial\Omega.$ So by compactness of $\partial\Omega,$ there exists $m\in \mathbb N$ and points $x_{1}, \ldots, x_{m} \in \partial\Omega$ such that \begin{align*}
			\partial\Omega\subset\bigcup\limits_{i=1}^{m}\Phi_{x_{i}}\left(B\left(0,\frac{d_{x_i}}{2}\right)\right).
		\end{align*} Thus we can choose $\Omega_0\subset\subset\Omega$ such that 
		\begin{align*}
			\Omega \subset \Omega_0 \bigcup \left[ \bigcup\limits_{i=1}^{m}\Phi_{x_{i}}\left(B^{+}\left(0,\frac{d_{x_i}}{2}\right)\right)\right]. 
		\end{align*} Set $\Omega_{i}= \Phi_{x_{i}}\left(B^{+}\left(0,\frac{d_{x_i}}{2}\right)\right)$ for $1 \le i \le m.$ Set 
		\begin{align*}
			d = \frac{1}{2} \min \left\lbrace \operatorname{dist}\left(\Omega_0, \partial \Omega\right), \left\lbrace d_{x_{i}}\right\rbrace_{1\le i \le m} \right\rbrace, 
		\end{align*} Let $\left\lbrace \theta_{i} \right\rbrace_{i=0}^{m}$ be a partition of unity subordinate to the cover $\Omega \subset \bigcup\limits_{i=0}^{m} \Omega_{i}$ such that 
		\begin{align*}
			|\nabla\theta_{i}|\le 16/d \quad \text{ and } \quad |\nabla^2\theta_{i}|\le 16/d^2 \qquad \text{ for all } 0 \le i \le m.
		\end{align*} 
		The interior estimate 
		\begin{multline*}
			\int_{\Omega_{0}} \left\lvert \nabla^{2} \left(\theta_{0} \omega\right) \right\rvert^{p}w(x)\mathrm{d}x \notag
			\\  \le \frac{C_{0}}{d^{4p}} \left[ \int_{\Omega} \left\lvert \omega \right\rvert^{p}w(x)\mathrm{d}x + \int_{\Omega} \left\lvert \nabla \omega \right\rvert^{p}w(x)\mathrm{d}x + \int_{\Omega} \left\lvert f \right\rvert^{p}w(x)\mathrm{d}x\right],
		\end{multline*} is considerably simpler and hence we skip the details. From Lemma \ref{weighted boundary estimate lemma}, using $\theta = \theta_{i}$, $1 \le i \le m,$ we deduce  
		\begin{align*}
			&\int_{\Omega_{i}} \left\lvert \nabla^{2} \left(\theta_{i} \omega\right) \right\rvert^{p}w(x)\mathrm{d}x \notag
			\\&\qquad  \le \frac{C_{i}}{d^{4p}} \left[ \int_{\Omega} \left\lvert \omega \right\rvert^{p}w(x)\mathrm{d}x + \int_{\Omega} \left\lvert \nabla \omega \right\rvert^{p}w(x)\mathrm{d}x + \int_{\Omega} \left\lvert f \right\rvert^{p}w(x)\mathrm{d}x\right],
		\end{align*}
		for all $1 \le i \le m.$ Thus, we have 
		\begin{align*}
			\int_{\Omega} \left\lvert \nabla^{2} \omega \right\rvert^{p}w(x)\mathrm{d}x  &= \int_{\Omega} \left\lvert \nabla^{2} \left( \sum\limits_{i=0}^{m} \theta_{i}\omega \right)  \right\rvert^{p}w(x)\mathrm{d}x \\
			&\le C \sum\limits_{i=0}^{m} \int_{\Omega_{i}} \left\lvert \nabla^{2} \left( \theta_{i}\omega \right)  \right\rvert^{p}w(x)\mathrm{d}x \\
			&\le C \left[ \int_{\Omega} \left\lvert \omega \right\rvert^{p}w(x)\mathrm{d}x + \int_{\Omega} \left\lvert \nabla \omega \right\rvert^{p}w(x)\mathrm{d}x + \int_{\Omega} \left\lvert f \right\rvert^{p}w(x)\mathrm{d}x\right].
		\end{align*}
		Using standard interpolation techniques to get rid of the gradient term on the right, this implies \eqref{main apriori estimate} for $r=0$ and $\omega_{0}=0$ and completes the proof.   
	\end{proof}
\subsubsection{Uniqueness and apriori estimates}
	Now we show that the apriori estimates in Theorem \ref{mainthm} improves to a sharper one if uniqueness holds. 
	\begin{lemma}\label{uniqueness lemma}
		Assume the hypotheses of Theorem \ref{mainthm} holds. If $\lambda \in \mathbb{R}$ is not a tangential (resp. normal eigenvalue), then we can drop the $\omega$ term on the right hand side of the estimate 
		\eqref{main apriori estimate}. More precisely, in these cases the estimate \eqref{main apriori estimate} improves to 
		\begin{align*}
			\left\lVert \omega \right\rVert_{W^{r+2, p}_{w}\left(\Omega; \varLambda^{k}\right)}\le C\left[ \left\lVert f \right\rVert_{W^{r, p}_{w}\left(\Omega; \varLambda^{k}\right)} + \left\lVert \omega_{0} \right\rVert_{W^{r+2, p}_{w}\left(\Omega; \varLambda^{k}\right)}\right].
		\end{align*}
	\end{lemma}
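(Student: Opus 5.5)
The plan is the standard compactness--contradiction argument, combined with the weighted embedding into an unweighted Lebesgue space. By linearity I first reduce to $\omega_{0}=0$: apply the statement to $\omega-\omega_{0}$, whose right-hand side is $f$ plus $\mathfrak{L}\omega_{0}-\lambda B\omega_{0}$. That extra term is bounded in $W^{r,p}_{w}$ by $C\lVert \omega_{0}\rVert_{W^{r+2,p}_{w}}$, since $A\in C^{r,1}$ and $B\in C^{r+1,1}$. It then suffices to show
\begin{align*}
\left\lVert \omega \right\rVert_{L^{p}_{w}} \le C \left\lVert f \right\rVert_{W^{r,p}_{w}}
\end{align*}
for all solutions with homogeneous boundary data. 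Inserting this into \eqref{main apriori estimate} gives the claim.

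Suppose this fails. Then there are $\omega_{j}$ with $\lVert \omega_{j}\rVert_{L^{p}_{w}}=1$, solving the homogeneous-boundary problem with right-hand sides $f_{j}$ satisfying $\lVert f_{j}\rVert_{W^{r,p}_{w}}\to 0$. (Keeping $w$ fixed is enough, since the constant may depend on $w$ only through $M_0$ after a further contradiction over weights, or one simply allows dependence on $w$. I would first do the fixed-$w$ case and then remark on uniformity.) By Theorem \ref{mainthm}, the sequence $\omega_{j}$ is bounded in $W^{2,p}_{w}$.

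The next step is compactness. By Proposition \ref{higher power Ap} and the embedding $L^{p}_{w}(\Omega)\hookrightarrow L^{p_{0}}(\Omega)$ with $p_{0}>1$, applied to $\omega_j$, $\nabla\omega_j$ and $\nabla^2\omega_j$, the sequence is bounded in the unweighted space $W^{2,p_{0}}(\Omega)$. By Rellich, a subsequence converges strongly in $W^{1,p_{0}}$ and weakly in $W^{2,p_{0}}$ to some $\omega$. The boundary traces pass to the limit, and so does the weak formulation \eqref{weak_formulation_tangential}, respectively \eqref{weak_formulation_normal}, since $f_{j}\to 0$ in $L^{p_{0}}$. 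Hence $\omega$ weakly solves the homogeneous eigenvalue problem with parameter $\lambda$. By unweighted regularity, for instance the $w\equiv1$ case of Theorem \ref{mainthm} bootstrapped from $W^{1,2}$, we get $\omega\in W^{1,2}$. Because $\lambda$ is not an eigenvalue, $\omega=0$.

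The main obstacle is to contradict $\lVert\omega_{j}\rVert_{L^{p}_{w}}=1$, because strong convergence holds only in unweighted norms. I would handle this with weighted compactness obtained directly: $\omega_j$ is bounded in $W^{1,p}_{w}$ together with its gradient. Using the reverse Hölder property of $w$, choose $\varepsilon$ with $w\in L^{1+\varepsilon}(\Omega)$. Then
\begin{align*}
\int_\Omega |\omega_j|^p w \le \Big(\int_\Omega |\omega_j|^{p s'}\Big)^{1/s'} \Big(\int_\Omega w^{s}\Big)^{1/s}.
\end{align*}
To make this useful I need $\omega_{j}\to0$ in $L^{ps'}$ for some $s'<\infty$. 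This follows by interpolation: $\omega_j$ is bounded in $W^{2,p_0}\hookrightarrow L^{q}$ for some $q>p_0$, while $\omega_j\to0$ in $L^{p_0}$. If $ps'$ is not reached this way, I would iterate, upgrading integrability through the unweighted $W^{2,q}$ estimate applied to the equation with right-hand side $\lambda B\omega_j+f_j$, where $f_j$ is bounded in $L^{p_0}$. Alternatively, I would split $\Omega$ into $\{w\le K\}$ and its complement and use the uniform integrability of $|\omega_{j}|^{p}w$ coming from a higher weighted bound. Once $\lVert\omega_j\rVert_{L^p_w}\to0$, this contradicts the normalization and proves the improved estimate.
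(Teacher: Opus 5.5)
Your overall scheme (reduce to $\omega_{0}=0$, normalize a contradicting sequence, get a $W^{2,p}_{w}$ bound from Theorem \ref{mainthm}, pass to a limit that must vanish) is the same as the paper's. The unweighted compactness via $L^{p}_{w}(\Omega)\hookrightarrow L^{p_{0}}(\Omega)$ and Rellich is also fine. The gap is the step you yourself call the main obstacle: neither of your routes yields $\lVert\omega_{j}\rVert_{L^{p}_{w}}\to 0$.

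\textbf{The H\"older route.} It needs $\omega_{j}\to 0$ in unweighted $L^{ps'}$ with $s'=(1+\varepsilon)/\varepsilon$, hence at least in $L^{p}$. All you control is a $W^{2,p_{0}}$ bound with $p_{0}>1$ possibly close to $1$, which embeds only into $L^{np_{0}/(n-2p_{0})}$, possibly below $p$. The bootstrap cannot start: $f_{j}$ is bounded only in $L^{p_{0}}$, so the unweighted estimate with right-hand side $\lambda B\omega_{j}+f_{j}$ never gives more than $W^{2,p_{0}}$. Nor can any embedding help. Take $w=|x-x^{\ast}|^{a}$ with $x^{\ast}\in\Omega$ and $2p<a<n(p-1)$ (e.g.\ $p=2$, $n\ge 5$). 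A cut-off of $|x-x^{\ast}|^{\beta}$ with $2-\frac{n+a}{p}<\beta\le -\frac{n}{p}$ lies in $W^{2,p}_{w}$ but not in $L^{p}$. So a $W^{2,p}_{w}$ bound gives no unweighted $L^{p}$ control at all.

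\textbf{The splitting route.} It has the same defect on $\{w\le K\}$. On the complement, the uniform integrability of $|\omega_{j}|^{p}w$ rests on a ``higher weighted bound'' that nothing available provides: Theorem \ref{mainthm} controls $\omega_{j}$ only in $W^{2,p}_{w}$, not in any $L^{q}_{w}$ with $q>p$.

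\textbf{What is missing.} You need a genuinely weighted compactness statement: the weighted derivative bound has to be used in weighted form. The paper simply extracts a weak limit in $W^{2,p}_{w}$ and invokes compactness of $W^{2,p}_{w}(\Omega)\hookrightarrow L^{p}_{w}(\Omega)$. You can prove what you need with the paper's own tools.
\begin{itemize}
\item Localize and extend to $\mathbb{R}^{n}$ (reflection in boundary charts, cf.\ Lemma \ref{extension of weight lemma}).
\item The pointwise bound $|u(x)-(u)_{B_{\epsilon}(x)}|\le C\epsilon\, M(|\nabla u|)(x)$ together with Lemma \ref{Ap maximal fn boundedness} gives $\lVert u-(u)_{B_{\epsilon}(\cdot)}\rVert_{L^{p}_{w}}\le C\epsilon\lVert \nabla u\rVert_{L^{p}_{w}}$, uniformly in $j$.
\item For each fixed $\epsilon$, $\sup_{x}|(\omega_{j})_{B_{\epsilon}(x)}|\le C_{\epsilon}\lVert\omega_{j}\rVert_{L^{p_{0}}}\to 0$ by your unweighted convergence.
\item Hence $\limsup_{j}\lVert\omega_{j}\rVert_{L^{p}_{w}}\le C\epsilon$ for every $\epsilon>0$.
\end{itemize}
Alternatively, the Fabes--Kenig--Serapioni weighted Sobolev inequality supplies the higher weighted integrability you wanted for Vitali.

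\textbf{Two smaller points.} First, to use that $\lambda$ is not an eigenvalue you must know the limit lies in $W^{1,2}$. That requires unweighted regularity for $W^{2,q}$-solutions with $q<2$. Theorem \ref{mainthm} presupposes $W^{1,2}$, so it cannot be ``bootstrapped from $W^{1,2}$'' here; the paper is equally silent on this. Second, the ``further contradiction over weights'' is not routine, since you would have to pass to the limit in $\int|\omega_{j}|^{p}w_{j}$ with both factors varying. As it stands your constant depends on $w$ itself, as does the paper's.
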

	\begin{proof}
		We show only for $r=0.$ Without loss of generality we can assume $\omega_{0}=0.$ In view of \eqref{main apriori estimate}, we only need to prove the estimate 
		\begin{align*}
			\left\lVert \omega \right\rVert_{L^{p}_{w}\left(\Omega; \varLambda^{k}\right)}\le C \left\lVert f \right\rVert_{W^{r, p}_{w}\left(\Omega; \varLambda^{k}\right)}.
		\end{align*}
		If the estimate is false, then there exist sequences $\left\lbrace \omega_{s}\right\rbrace_{s \in \mathbb{N}} \subset W^{2, p}_{w}\left(\Omega; \varLambda^{k}\right)$ and $\left\lbrace f_{s}\right\rbrace_{s \in \mathbb{N}} \subset L^{p}_{w}\left(\Omega; \varLambda^{k}\right)$ such that 
		\begin{align*}
			\left\lVert \omega_{s} \right\rVert_{L^{p}_{w}\left(\Omega; \varLambda^{k}\right)} =1 \qquad \text{ and } \qquad 	\left\lVert f_s \right\rVert_{L^{p}_{w}\left(\Omega; \varLambda^{k}\right)} < \frac{1}{s} \qquad \text{ for all } s \in \mathbb{N}, 
		\end{align*}
		and $\omega_{s}$ solves 
		\begin{align*}
			\left\lbrace\begin{aligned}
				\mathfrak{L}\omega_{s}&=\lambda B\omega_{s} + f_{s} && \text{ in } \Omega,\\
				\mathfrak{b}_{\text{t}}\omega_{s}&=0 && \text{ on } \partial\Omega,		\end{aligned}\right. \text{ or } \left\lbrace\begin{aligned}
				\mathfrak{L}\omega_{s}&=\lambda B\omega_{s} + f_{s}&& \text{ in } \Omega,\\
				\mathfrak{b}_{\text{n}}\omega_{s}&=0 && \text{ on } \partial\Omega.		\end{aligned}\right.
		\end{align*}
		By \eqref{main apriori estimate}, $\left\lbrace \omega_{s}\right\rbrace_{s \in \mathbb{N}}$ is uniformly bounded in $W^{2, p}_{w}\left(\Omega; \varLambda^{k}\right)$ and up to the extraction of a subsequence that we do not relabel, there exists $\omega \in W^{2, p}_{w}\left(\Omega; \varLambda^{k}\right)$	such that $\omega_{s} \rightharpoonup \omega$ weakly in $W^{2, p}_{w}\left(\Omega; \varLambda^{k}\right).$ As $f_{s} \rightarrow 0$ in $L^{p}_{w}\left(\Omega; \varLambda^{k}\right)$, this implies $\omega$ is a tangential (resp. normal) eigenform with eigenvalue $\lambda$ for $\mathfrak{L}.$ By our hypothesis, we must have $\omega =0.$ But this contradicts the fact that we must have $$	\left\lVert \omega \right\rVert_{L^{p}_{w}\left(\Omega; \varLambda^{k}\right)} = \lim\limits_{s \rightarrow \infty}	\left\lVert \omega_{s} \right\rVert_{L^{p}_{w}\left(\Omega; \varLambda^{k}\right)} =1,$$
		as the embedding $W^{2, p}_{w} \hookrightarrow L^{p}_{w}$ is always compact. 
	\end{proof}
	\subsection{Hodge systems}
	\begin{theorem}\label{Hodge system}
		Let $r \geq 0$ be an integer and let $\Omega \subset \mathbb{R}^{n}$ be an open, bounded, $C^{r+2,1}$ subset. Let $A\in C^{r,1}\left(\overline{\Omega}; \operatorname{Hom}\left(\varLambda^{k+1}\right)\right)$, $B\in C^{r+1,1}\left(\overline{\Omega}; \operatorname{Hom}\left(\varLambda^{k}\right)\right)$ be both uniformly $\gamma$-Legendre elliptic. Then the following holds:
		\begin{enumerate}[(a)]
			\item   For any $1< p < \infty$ and $M_{0}>0,$ if $\lambda \in \mathbb{R}$ is not a tangential eigenvalue for $\mathfrak{L},$ and $w \in A_{p}$ with $[w]_{A_p}\le M_0,$ then for any given $f \in W^{r,p}_{w}\left(\Omega; \varLambda^{k}\right)$ and any $\omega_{0} \in W^{r+2,p}_{w}\left(\Omega; \varLambda^{k}\right),$ there exists a unique solution $\omega \in W^{r+2,p}_{w}\left(\Omega; \varLambda^{k}\right)$ satisfying  
			\begin{align*}
				\left\lbrace \begin{aligned}
					\mathfrak{L}\omega &= \lambda B(x)\omega + f &&\text{ in } \Omega, \\
					\mathfrak{b}_{\text{t}}\omega &= \mathfrak{b}_{\text{t}}\omega_{0} &&\text{ on } \partial\Omega. 
				\end{aligned}\right. 
			\end{align*} 
			Furthermore, there exists a constant $$C = C \left( r, p, n, k, N, M_{0}, \gamma_{A}, \gamma_{B},  \lambda, \Omega, \left\lVert A \right\rVert_{C^{r,1}}, \left\lVert B \right\rVert_{C^{r+1,1}}\right) >0$$ such that we have the estimate 
			\begin{align}\label{tangential existence estimate}
				\left\lVert \omega\right\rVert_{W^{r+2, p}_{w}\left(\Omega; \varLambda^{k}\right)} \leq C \left( \left\lVert \omega_{0}\right\rVert_{W^{r+2,p}_{w}\left(\Omega; \varLambda^{k}\right)} + \left\lVert f\right\rVert_{W^{r, p}_{w}\left(\Omega; \varLambda^{k+1}\right)}\right). 
			\end{align}
			Moreover, if $\lambda =0$ is a tangential eigenvalue, then for any $f \in W^{r,p}_{w}\left(\Omega; \varLambda^{k}\right)$ and any $\omega_{0} \in W^{r+2,p}_{w}\left(\Omega; \varLambda^{k}\right)$ satisfying $f - \mathfrak{L}\omega_{0} \in \mathcal{H}_{T}^{\perp}\left(\Omega; \varLambda^{k}\right)$, there exists a unique solution $\omega \in W^{r+2,p}_{w}\left(\Omega; \varLambda^{k}\right)\cap \mathcal{H}_{T}^{\perp}\left(\Omega; \varLambda^{k}\right)$
			such that the estimate \eqref{tangential existence estimate} holds.   
			\begin{align*}
				\left\lVert \omega\right\rVert_{W^{r+2, p}_{w}\left(\Omega; \varLambda^{k}\right)} \leq C \left( \left\lVert \omega_{0}\right\rVert_{W^{r+2,p}_{w}\left(\Omega; \varLambda^{k}\right)} + \left\lVert f\right\rVert_{W^{r, p}_{w}\left(\Omega; \varLambda^{k+1}\right)}\right). 
			\end{align*}
			
			\item For any $1< p < \infty$ and $M_{0}>0,$ if $\lambda \in \mathbb{R}$ is not a normal eigenvalue for $\mathfrak{L},$ and $w \in A_{p}$ with $[w]_{A_p}\le M_0,$ then for any given $f \in W^{r,p}_{w}\left(\Omega; \varLambda^{k}\right)$ and any $\omega_{0} \in W^{r+2,p}_{w}\left(\Omega; \varLambda^{k}\right),$ there exists a solution $\omega \in W^{r+2,p}_{w}\left(\Omega; \varLambda^{k}\right)$ satisfying  
			\begin{align*}
				\left\lbrace \begin{aligned}
					\mathfrak{L}\omega &= \lambda B(x)\omega + f &&\text{ in } \Omega, \\
					\mathfrak{b}_{\text{n}}\omega &= \mathfrak{b}_{\text{n}}\omega_{0} &&\text{ on } \partial\Omega. 
				\end{aligned}\right. 
			\end{align*} 
			Furthermore, there exists a constant $$C = C \left( r, p, n, k, N, M_{0}, \gamma_{A}, \gamma_{B}, \lambda, \Omega, \left\lVert A \right\rVert_{C^{r,1}}, \left\lVert B \right\rVert_{C^{r+1,1}}\right) >0$$ such that we have the estimate 
			\begin{align}\label{normal existence estimate}
				\left\lVert \omega\right\rVert_{W^{r+2, p}_{w}\left(\Omega; \varLambda^{k}\right)} \leq C \left( \left\lVert \omega_{0}\right\rVert_{W^{r+2,p}_{w}\left(\Omega; \varLambda^{k}\right)} + \left\lVert f\right\rVert_{W^{r, p}_{w}\left(\Omega; \varLambda^{k+1}\right)}\right). 
			\end{align}
			Moreover, if $\lambda =0$ is a normal eigenvalue, then for any $f \in W^{r,p}_{w}\left(\Omega; \varLambda^{k}\right)$ and any $\omega_{0} \in W^{r+2,p}_{w}\left(\Omega; \varLambda^{k}\right)$ satisfying $f - \mathfrak{L}\omega_{0} \in \mathcal{H}_{N}^{\perp}\left(\Omega; \varLambda^{k}\right)$, there exists a unique solution $\omega \in W^{r+2,p}_{w}\left(\Omega; \varLambda^{k}\right)\cap \mathcal{H}_{N}^{\perp}\left(\Omega; \varLambda^{k}\right)$
			such that the estimate \eqref{normal existence estimate} holds. 
		\end{enumerate}
	\end{theorem}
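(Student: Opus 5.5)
The plan is to prove existence by approximation and to obtain uniqueness and the estimate from Lemma \ref{uniqueness lemma}. I describe the tangential case (a) with $r=0$. Higher $r$ follows by the same iteration used in Theorem \ref{mainthm}. The normal case (b) is identical after passing to $v=B\omega$ and using \eqref{weak_formulation_normal}.

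\textbf{Reduction and smooth approximation.} Replacing $\omega$ by $\omega-\omega_{0}$ and $f$ by $f-\mathfrak{L}\omega_{0}+\lambda B\omega_{0}$, I may assume $\omega_{0}=0$. The new right-hand side lies in $L^{p}_{w}$ and is bounded by $\|f\|_{L^p_w}+C\|\omega_0\|_{W^{2,p}_w}$.

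Since $w\in A_{p}$, smooth functions are dense in $L^{p}_{w}(\Omega)$; this follows by mollification together with Lemma \ref{Ap maximal fn boundedness}. I therefore pick $f_{s}\in C^{\infty}(\overline{\Omega};\varLambda^{k})$ with $f_{s}\to f$ in $L^{p}_{w}$.

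For each $s$, I solve the problem weakly in $W^{1,2}_{T}$. Since $\lambda$ is not a tangential eigenvalue, the variational/Fredholm theory from the Remark on the spectrum applies. The Gaffney-type coercivity of the bilinear form, after adding a large multiple of the $L^2$ term, together with the Fredholm alternative, gives a unique weak solution $\omega_{s}$.

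By the unweighted regularity theory of \cite{Sil_linearregularity}, applied with smooth data and any exponent $q<\infty$, we get $\omega_{s}\in W^{2,q}$ for all $q$. In particular $\omega_{s}\in W^{2,p_{1}}$ for a large $p_{1}$. Since $w\in A_{p}$ implies $w\in L^{1+\varepsilon}_{\mathrm{loc}}$ by reverse H\"older, choosing $p_1$ large enough gives $\omega_{s}\in W^{2,p}_{w}$. This is exactly the membership needed to invoke Theorem \ref{mainthm} and Lemma \ref{uniqueness lemma}, which are a priori estimates.

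\textbf{Uniform bounds and passage to the limit.} Lemma \ref{uniqueness lemma} gives
\begin{align*}
\|\omega_{s}\|_{W^{2,p}_{w}}\le C\|f_{s}\|_{L^{p}_{w}},\qquad \|\omega_{s}-\omega_{t}\|_{W^{2,p}_{w}}\le C\|f_{s}-f_{t}\|_{L^{p}_{w}},
\end{align*}
where the second bound uses linearity. Hence $(\omega_{s})$ is Cauchy in $W^{2,p}_{w}$ and converges to some $\omega$.

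By the embedding $L^{p}_{w}\hookrightarrow L^{p_{0}}$, convergence also holds in $W^{2,p_{0}}$ with $p_{0}>1$. This lets me pass to the limit in the weak formulation, tested against smooth test forms, and in the traces. Thus $\omega$ solves the problem with $\nu\wedge\omega=0$ and $\nu\wedge d^{\ast}(B\omega)=0$, and the estimate \eqref{tangential existence estimate} follows.

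Uniqueness is immediate: the difference of two solutions is a $W^{2,p}_{w}$ solution with zero data, so Lemma \ref{uniqueness lemma} forces it to vanish. There is one subtlety here. The a priori estimate was stated for $W^{1,2}$ weak solutions, while a $W^{2,p}_w$ solution may lie only in $W^{2,p_0}$. I would handle this by first bootstrapping it into $W^{1,2}$ through unweighted $L^{q}$ regularity, or by duality with the adjoint problem, which has the same form.

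\textbf{The case $\lambda=0$ an eigenvalue.} Here I restrict to $\mathcal{H}_{T}^{\perp}$. The compatibility condition $f\in\mathcal{H}_{T}^{\perp}$ is preserved by approximation if I replace $f_{s}$ by $f_{s}-\pi f_{s}$, where $\pi$ is the $L^{2}$-projection onto the finite-dimensional space $\mathcal{H}_{T}$ of smooth harmonic fields. The map $\pi$ is bounded on $L^{p}_{w}$ because $\mathcal{H}_T$ consists of smooth fields and $w^{-1/(p-1)}\in L^{1}$.

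The Fredholm alternative gives a unique solution in $\mathcal{H}_{T}^{\perp}$. The contradiction argument of Lemma \ref{uniqueness lemma}, run inside $\mathcal{H}_{T}^{\perp}$, gives the estimate without the $\omega$ term: the limit eigenform is both harmonic and orthogonal to $\mathcal{H}_{T}$, hence zero.

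I expect the main obstacle to be the regularity step that places the approximating solutions in $W^{2,p}_{w}$ (equivalently, the uniqueness of merely $W^{2,p_0}$ solutions). Everything else is soft functional analysis resting on the a priori estimates.
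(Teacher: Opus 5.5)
For $\lambda$ not an eigenvalue, your argument follows the paper's proof. You reduce to $\omega_0=0$, approximate $f$, solve by $L^2$ Fredholm theory plus the unweighted $L^q$ theory of \cite{Sil_linearregularity}, get uniform bounds from Lemma \ref{uniqueness lemma}, and pass to the limit. Several of your additions fill in points the paper skips, and they are sound: the Cauchy-sequence argument in place of weak compactness, the reverse H\"older check that $\omega_s\in W^{2,p}_w$, and the duality argument for uniqueness of solutions lying only in $W^{2,p_0}$.

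The step that fails is the case where $\lambda=0$ is a tangential eigenvalue. You treat the kernel as $\mathcal{H}_T$, but the kernel of the problem, and of its adjoint (which only replaces $A$ by $A^{\intercal}$), is
\[
E_0=\left\{\alpha\in W^{1,2}_T(\Omega;\varLambda^k):\ d\alpha=0,\ d^{\ast}(B\alpha)=0\right\}.
\]
Its elements are $\alpha=d\theta+h$ with $h\in\mathcal{H}_T$, $\nu\wedge\theta=0$ and $d^{\ast}(Bd\theta)=-d^{\ast}(Bh)$. They are harmonic fields only if $d^{\ast}(Bh)=0$ for every $h\in\mathcal{H}_T$. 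The Fredholm alternative needs $f\perp E_0$, and $f\in\mathcal{H}_T^{\perp}$ does not give this.

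Concretely, take $B=b(x)\,\mathrm{Id}$ and $h\in\mathcal{H}_T$ with $d^{\ast}(bh)\neq 0$, so that $d\theta\neq 0$, and set $f=d\theta$. Then $f\in\mathcal{H}_T^{\perp}$ because $\nu\wedge\theta=0$. However, testing $\mathfrak{L}\omega=f$, $\mathfrak{b}_{\text{t}}\omega=0$ against $\alpha=d\theta+h$ gives $0=\int_\Omega\langle f,\alpha\rangle=\|d\theta\|_{L^2}^2$, so no solution exists.

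Hence ``the Fredholm alternative gives a unique solution in $\mathcal{H}_T^{\perp}$'' is false as stated, and projecting $f_s$ off $\mathcal{H}_T$ does not repair it. The paper's one-line treatment of this case (``the same argument in $W^{r+2,p}_w\cap\mathcal{H}_T^{\perp}$'') has exactly the same gap. The statement needs either the compatibility condition $f-\mathfrak{L}\omega_0\perp E_0$, or the extra assumption $d^{\ast}(f-\mathfrak{L}\omega_0)=0$. Under the latter, the two conditions coincide by part (a) of the Proposition in the subsection on the spectrum, and this is how the result is used in the proof of Theorem \ref{Hodge-Maxwell system}.

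With the corrected hypothesis your scheme works if you project $f_s$ onto $E_0^{\perp}$; $E_0$ is again finite-dimensional and regular, so this projection is bounded on $L^p_w$. The limit in the compactness argument then lies in $E_0\cap\mathcal{H}_T^{\perp}$. This set is $\{0\}$, but not because the limit is harmonic. Since $d\theta\perp\mathcal{H}_T$, orthogonality forces $h=0$, and then $\int_\Omega\langle Bd\theta,d\theta\rangle=0$ gives $\alpha=0$.
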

	\begin{proof}
		We only show (a). Part (b) is similar. Also, we prove only for $r=0.$ Higher regularity can be inferred from this in the standard way. Since the problem is linear, it is enough to show the result for $\omega_{0}=0.$ Indeed, $\psi$ solves 
		\begin{align*}
			\left\lbrace \begin{aligned}
				\mathfrak{L}\psi &= \lambda B(x)\psi + f - \mathfrak{L}\omega_{0} + \lambda B(x)\omega_{0} &&\text{ in } \Omega, \\
				\mathfrak{b}_{\text{t}}\psi &= 0  &&\text{ on } \partial\Omega 
			\end{aligned}\right. 
		\end{align*} 
		if and only if $\omega = \psi + \omega_{0}$ solves our problem. So we assume $\omega_{0}=0.$ We approximate $f$ by a sequence $\left\lbrace f_{s}\right\rbrace_{s \in \mathbb{N}} \subset C^{0,1}\left( \overline{\Omega};\varLambda^{k}\right)$ in $W^{r, p}_{w}\left(\Omega; \varLambda^{k}\right)$ By usual Fredholm theory in $L^{2}$ and standard $L^{p}$ estimates established in \cite{Sil_linearregularity} implies that there exists a sequence $\left\lbrace u_{s}\right\rbrace_{s \in \mathbb{N}} \subset W^{2,2}\left(\Omega; \varLambda^{k}\right)$  such that for every $s \in \mathbb{N},$ $u_{s} \in W^{2,q}\left(\Omega; \varLambda^{k}\right)$ for all $1 \leq q < \infty$ and solves 
		\begin{align*}
			\left\lbrace \begin{aligned}
				\mathfrak{L}u_{s} &= \lambda B(x)u_{s} + f_{s} &&\text{ in } \Omega, \\
				\mathfrak{b}_{\text{t}}u_{s} &= 0  &&\text{ on } \partial\Omega.  
			\end{aligned}\right. 
		\end{align*} 
		By Theorem \ref{mainthm}, we have 
		\begin{align*}
			\left\lVert u_{s}\right\rVert_{W^{2, p}_{w}\left(\Omega; \varLambda^{k}\right)} \leq C \left( \left\lVert u_{s}\right\rVert_{L^{p}_{w}\left(\Omega; \varLambda^{k}\right)} + \left\lVert f_{s}\right\rVert_{L^{p}_{w}\left(\Omega; \varLambda^{k}\right)}\right). 
		\end{align*}
		If $\lambda$ is not a tangential eigenvalue for $\mathfrak{L},$ by Lemma \ref{uniqueness lemma}, we have 
		\begin{align*}
			\left\lVert u_{s}\right\rVert_{W^{2, p}_{w}\left(\Omega; \varLambda^{k}\right)} \leq C \left\lVert f_{s}\right\rVert_{L^{p}_{w}\left(\Omega; \varLambda^{k}\right)}. 
		\end{align*}
		Since $f_{s} \rightarrow f$ in $L^{p}_{w}\left(\Omega; \varLambda^{k}\right)$, the sequence $\left\lbrace u_{s}\right\rbrace_{s \in \mathbb{N}}$ is uniformly bounded in $W^{2, p}_{w}\left(\Omega; \varLambda^{k}\right)$. Since $W^{2, p}_{w}\left(\Omega; \varLambda^{k}\right)$ is a reflexive Banach space, there exists $u \in W^{2, p}_{w}\left(\Omega; \varLambda^{k}\right)$ such that $u_{s} \rightharpoonup u$ weakly in $W^{2, p}_{w}\left(\Omega; \varLambda^{k}\right).$ Now it is easy to see that this implies $u$ solves our problem with the desired estimates. If $\lambda=0$ is a tangential eigenvalue, then we have the result by applying the same argument in the space $W^{r+2,p}_{w}\left(\Omega; \varLambda^{k}\right)\cap \mathcal{H}_{T}^{\perp}\left(\Omega; \varLambda^{k}\right)$. This completes the proof. 
	\end{proof}
	\subsection{Hodge-Maxwell systems}
	\begin{theorem}\label{Hodge-Maxwell system}
		Let $r \geq 0$ be an integer and let $\Omega \subset \mathbb{R}^{n}$ be an open, bounded, $C^{r+2,1}$ subset. Let $A\in C^{r,1}\left(\overline{\Omega}; \operatorname{Hom}\left(\varLambda^{k+1}\right)\right)$, $B\in C^{r+1,1}\left(\overline{\Omega}; \operatorname{Hom}\left(\varLambda^{k}\right)\right)$ be both uniformly $\gamma$-Legendre elliptic. Let $M_0>0,$ $ 1 < p < \infty$ be such that $w\in A_p$ and $[w]_{A_p}\le M_0.$ Let $f\in W^{r,p}_w(\Omega; \varLambda^{k}),$ $g\in W^{r+1,p}_w(\Omega; \varLambda^{k-1})$ and $\lambda \ge 0$ be such that 
		\begin{align*}
			d^{\ast}f+\lambda  g=0\quad\text{ and } d^{\ast}g=0 \text{ in }\Omega.
		\end{align*}
		\begin{enumerate}[(a)]
			\item Suppose that $g\in \mathcal{H}^\perp_T(\Omega,\varLambda^{k-1})$. Then the following BVP 
			\begin{align} \label{problemMaxwellgeneraltangential} 
				\begin{cases}
					\begin{aligned}
						d^{\ast}(Ad\omega)&= \lambda B\omega+f&&\qquad\text{ in }\Omega,\\
						d^{\ast}(B\omega)&=g&&\qquad\text{ in }\Omega,\\
						\nu\wedge\omega&=\nu\wedge \omega_{0}&&\qquad\text{ on }\partial\Omega,
					\end{aligned} 
				\end{cases}
				\tag{$PM_T$}
			\end{align}
			admits a solution $\omega\in W^{r+2,p}_w(\Omega)$ satisfying the estimates
			\begin{align*}
				\|\omega\|_{W^{r+2,p}_w(\Omega)}
				&\le C\left[\|\omega\|_{L^p_w(\Omega)}+\|f\|_{W^{r, p}_w(\Omega)}+\|g\|_{W^{r+1,p}_w(\Omega)}\right], 
			\end{align*}
			where  $C = C\left(r, p, n, k, N, M_{0}, \gamma_{A}, \gamma_{B},  \lambda, \Omega, \left\lVert A \right\rVert_{C^{r,1}}, \left\lVert B \right\rVert_{C^{r+1,1}}\right) >0$.

			\item[(ii)]  Suppose \begin{align*}
				\nu\lrcorner g = \nu \lrcorner d^{\ast} \left(  B(x)\omega_{0} \right) \text{ and } \nu\lrcorner f 
				= \nu \lrcorner \left[ d^{\ast} \left( A(x)d\omega_{0} \right) - \lambda 
				B(x)\omega_{0} \right] \quad\text{ on } \partial\Omega
			\end{align*}
			and 
			$$ \int_{\Omega} \left\langle g ; \psi \right\rangle  - \int_{\partial\Omega} \left\langle \nu\lrcorner \left( B(x)\omega_{0}\right); \psi \right\rangle = 0 \qquad \text{ for all } \psi 
			\in \mathcal{H}_N(\Omega;\varLambda^{k-1}).$$
			If $\lambda =0,$ assume in addition that  
			$$\int_{\Omega} \left\langle f ; \phi \right\rangle  - \int_{\partial\Omega} \left\langle \nu\lrcorner \left( A(x)d\omega_{0} \right); \phi \right\rangle = 0 \qquad \text{ for all } \phi 
			\in \mathcal{H}_N(\Omega;\varLambda^{k}). $$ Then the following boundary value problem, 
			\begin{equation} \label{problemMaxwellgeneralnormal}
				\begin{cases}
					\begin{aligned}
						d^{\ast} ( Ad\omega )   &= \lambda B\omega + f  \text{ in } \Omega, \\
						d^{\ast} \left( B\omega \right) &= g \text{ in } \Omega, \\
						\nu\lrcorner \left( B \omega \right) &= \nu\lrcorner \left( B \omega_{0} \right)  \text{  on } \partial\Omega, \\
						\nu\lrcorner \left( A d\omega \right) &= \nu\lrcorner \left( A d\omega_{0} \right)  \text{  on } \partial\Omega. 
					\end{aligned} 
				\end{cases} 
				\tag{$PM_{N}$}
			\end{equation}
			admits a solution $\omega\in W^{r+2,p}_w(\Omega)$ satisfying the estimates
			\begin{align*}
				\|\omega\|_{W^{r+2,p}_w(\Omega)}
				&\le C\left[\|\omega\|_{L^p_w(\Omega)}+\|f\|_{W^{r, p}_w(\Omega)}+\|g\|_{W^{r+1,p}_w(\Omega)}\right], 
			\end{align*}
			where  $C = C\left(r, p, n, k, N, M_{0}, \gamma_{A}, \gamma_{B},  \lambda, \Omega, \left\lVert A \right\rVert_{C^{r,1}}, \left\lVert B \right\rVert_{C^{r+1,1}}\right) >0$. 
		\end{enumerate}
		
	\end{theorem}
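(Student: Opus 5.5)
The idea is to reduce the first-order Maxwell-type problem \eqref{problemMaxwellgeneraltangential} (resp. \eqref{problemMaxwellgeneralnormal}) to the second-order Hodge system of Theorem \ref{Hodge system}. Then we show that the solution of that system automatically satisfies the divergence constraint $d^{\ast}(B\omega)=g$. As usual, we first reduce to $\omega_{0}=0$. For the tangential case we replace $\omega$ by $\omega-\omega_{0}$ and modify $f$ and $g$ accordingly. The compatibility conditions $d^{\ast}f+\lambda g=0$ and $d^{\ast}g=0$ are then preserved, because $d^{\ast}d^{\ast}=0$ and $d^{\ast}(d^{\ast}(Ad\omega_0))=0$. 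In the normal case the boundary compatibility hypotheses are exactly what make this reduction consistent.

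In the tangential case we then consider the Hodge system
\begin{align*}
\mathfrak{L}\omega = d^{\ast}(Ad\omega)+B^{\intercal}dd^{\ast}(B\omega) = \lambda B\omega + f + B^{\intercal}dg \quad\text{in }\Omega,
\end{align*}
with boundary data $\nu\wedge\omega=0$ and $\nu\wedge d^{\ast}(B\omega)=\nu\wedge g$. The right-hand side $F:=f+B^{\intercal}dg$ lies in $W^{r,p}_{w}$ because $g\in W^{r+1,p}_{w}$ and $B\in C^{r+1,1}$. The boundary datum $\nu\wedge g$ can be realized as $\mathfrak{b}_{t}$ of some $\tilde\omega_0\in W^{r+2,p}_{w}$. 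We would rather avoid that construction, so instead we solve for $\eta:=d^{\ast}(B\omega)-g$ directly. Take a solution $\omega$ of the Hodge system provided by Theorem \ref{Hodge system}(a); if $\lambda=0$ is an eigenvalue, use the orthogonality hypothesis $g\in\mathcal{H}_T^{\perp}$ together with part (a) of the spectrum Proposition to check that $F\in\mathcal{H}_T^\perp$, noting $d^{\ast}f=0$ when $\lambda=0$. Applying $d^{\ast}$ to the equation and using $d^{\ast}d^{\ast}=0$ gives
\begin{align*}
d^{\ast}(B^{\intercal}d\eta) = \lambda d^{\ast}(B\omega) + d^{\ast}f = \lambda(\eta+g) - \lambda g = \lambda \eta .
\end{align*}
The boundary conditions give $\nu\wedge\eta=0$, and $d^{\ast}\eta=0$ holds as well. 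Testing with $\eta$ yields $\int\langle B^{\intercal}d\eta,d\eta\rangle=-\lambda\int|\eta|^2\le0$. Hence $d\eta=0$, and $\lambda\eta=0$. If $\lambda>0$ this gives $\eta=0$. If $\lambda=0$, then $\eta\in\mathcal{H}_T$, and $\eta=d^{\ast}(B\omega)-g$ is orthogonal to $\mathcal{H}_T$: the term $d^{\ast}(B\omega)$ is orthogonal by integration by parts with $\nu\wedge h=0$ and $dh=0$, and $g$ is orthogonal by hypothesis. So again $\eta=0$. Thus $\omega$ solves \eqref{problemMaxwellgeneraltangential}.

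For the estimate, we apply Theorem \ref{mainthm} to $\omega$ as a solution of the Hodge system. This gives $\|\omega\|_{W^{r+2,p}_{w}}\le C[\|\omega\|_{L^p_w}+\|f\|_{W^{r,p}_w}+\|g\|_{W^{r+1,p}_w}]$, since the lifted boundary datum is controlled by $\|g\|_{W^{r+1,p}_w}$.

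The normal case is Hodge-dual. We use the variable $v=B\omega$ and the weak formulation \eqref{weak_formulation_normal}, with boundary data $\nu\lrcorner B\omega$ and $\nu\lrcorner Ad\omega$. The conditions on $\nu\lrcorner g$ and $\nu\lrcorner f$ give $\nu\lrcorner\eta=0$. The integral conditions against $\mathcal{H}_N$ provide the orthogonality needed when $\lambda=0$ in part (b) of the spectrum Proposition and Theorem \ref{Hodge system}(b), and also the orthogonality that kills the harmonic component of $\eta$.

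The main obstacle is the identification step $\eta=0$ at the weak level with $W^{2,p}_w$ regularity only. Here $\eta$ is merely in $W^{1,p}_w$, so the energy identity needs justification. I would first do everything for smooth data, where $\omega\in W^{2,q}$ for all $q$ by \cite{Sil_linearregularity}, and then pass to the limit using the a priori estimate, as in the proof of Theorem \ref{Hodge system}.
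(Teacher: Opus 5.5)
\textbf{The tangential case has a genuine gap.} It sits at the step where you take a solution of the Hodge system with $\nu\wedge\omega=0$ and $\nu\wedge d^{\ast}(B\omega)=\nu\wedge g$.

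\begin{itemize}
\item Theorem~\ref{Hodge system}(a) only produces solutions with $\mathfrak{b}_{\text{t}}\omega=\mathfrak{b}_{\text{t}}\tilde\omega_0$ for a \emph{given} $\tilde\omega_0\in W^{r+2,p}_w$.
\item Theorem~\ref{mainthm} and Lemma~\ref{uniqueness lemma} control $\omega$ only through $\lVert\tilde\omega_0\rVert_{W^{r+2,p}_w}$.
\item So you need $\tilde\omega_0\in W^{r+2,p}_w$ with $\nu\wedge\tilde\omega_0=0$ and $\nu\wedge d^{\ast}(B\tilde\omega_0)=\nu\wedge g$ on $\partial\Omega$, together with $\lVert\tilde\omega_0\rVert_{W^{r+2,p}_w}\le C\lVert g\rVert_{W^{r+1,p}_w}$.
\end{itemize}

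Saying you will ``solve for $\eta$ directly'' does not remove this need. $\eta$ is defined from $\omega$, and the existence of $\omega$ is exactly what requires the lift. Your estimate paragraph uses the lift bound anyway. Approximating $g$ by smooth data does not help either, since passing to the limit needs precisely that bound. Nothing in the paper provides such a weighted lift; it would be a separate extension construction, with vanishing trace, prescribed normal derivative and a $W^{r+2,p}_w$ bound.

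Your normal case, by contrast, is fine in outline. The boundary conditions of \eqref{problemMaxwellgeneralnormal} are exactly $\mathfrak{b}_{\text{n}}\omega=\mathfrak{b}_{\text{n}}\omega_0$, so no new lift is needed. The two boundary hypotheses play different roles there: the one on $\nu\lrcorner g$ gives $\nu\lrcorner\eta=0$, while the one on $\nu\lrcorner f$ gives $\nu\lrcorner(B^{\intercal}d\eta)=0$, which is what kills the boundary term when you test with $\eta$.

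\textbf{How the paper supplies the lift.} It gets it from a PDE rather than from a trace extension. After reducing to $\omega_0=0$, it applies Theorem~\ref{Hodge system}(a) on $(k-1)$-forms with coefficients $B$ and the identity; this is where $g\in\mathcal{H}_T^{\perp}(\Omega;\varLambda^{k-1})$ is used. That gives $\alpha$ with $d^{\ast}(Bd\alpha)+dd^{\ast}\alpha=g$, $\nu\wedge\alpha=0$ and $\nu\wedge d^{\ast}\alpha=0$. Uniqueness for the Hodge Laplacian then forces $d^{\ast}\alpha=0$.

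Now $d\alpha$ is exactly your $\tilde\omega_0$: $\nu\wedge d\alpha=0$ and $d^{\ast}(Bd\alpha)=g$ in all of $\Omega$, so $\mathfrak{L}(d\alpha)=B^{\intercal}dg$. One solves the homogeneous tangential problem $\mathfrak{L}\beta=\lambda B\beta+f+\lambda Bd\alpha$ and sets $\omega=\beta+d\alpha$. Your $\eta$ is then the paper's $\theta=d^{\ast}(B\beta)$, so with this insertion your argument becomes the paper's.

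\textbf{Your $\lambda=0$ compatibility check is also wrong.} Orthogonality of $g$ to harmonic $(k-1)$-fields says nothing about pairing $f+B^{\intercal}dg$ with $\mathcal{H}_T(\Omega;\varLambda^{k})$. With the lift $d\alpha$, the datum to check is $F-\mathfrak{L}(d\alpha)=f$. So what you need is $f\perp\mathcal{H}_T(\Omega;\varLambda^{k})$.

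This condition is in fact necessary: $\int_\Omega\langle d^{\ast}(Ad\omega),h\rangle=0$ whenever $dh=0$ in $\Omega$ and $\nu\wedge h=0$ on $\partial\Omega$. It does not follow from the stated hypotheses; take $g=0$ and $f=h\in\mathcal{H}_T(\Omega;\varLambda^k)\setminus\{0\}$. The paper's proof asserts $\tilde f\in\mathcal{H}_T^{\perp}$ at the same point without justification. So this orthogonality must be added as a hypothesis when $\lambda=0$, as part (ii) does in the normal case.
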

	\begin{proof}
		We only show (i). Since $g\in \mathcal{H}^\perp_T(\Omega,\varLambda^{k-1})$, using Theorem \eqref{Hodge system},(a), we get the existence of $\alpha\in W^{3,q}_w(\Omega)\cap \mathcal{H}^\perp_T(\Omega,\varLambda^{k-1})$ solving the equation
		\begin{align}\label{alphag1}
			\left\lbrace
			\begin{aligned}
				d^{\ast}(Bd\alpha)+dd^{\ast}\alpha&=g - d^{\ast}\left(B\omega_{0}\right)&&\qquad\text{ in }\Omega,\\
				\nu\wedge\alpha&=0&&\qquad\text{ on }\partial\Omega,\\
				\nu\wedge d^{\ast}\alpha&=0&&\qquad\text{ on }\partial\Omega.
			\end{aligned}\right.
		\end{align}
		\noindent   Taking $d^{\ast}$ of the equation, we get that 
		\begin{equation*}
			\left\lbrace
			\begin{aligned}
				( d^{\ast}d+dd^{\ast})(d^{\ast}\alpha)&=0&&\qquad\text{ in }\Omega,\\
				\nu\wedge d^{\ast}\alpha&=0&&\qquad\text{ on }\partial\Omega,\\
				\nu\wedge d^{\ast}(d^{\ast}\alpha)&=0&&\qquad\text{ on }\partial\Omega.
			\end{aligned}
			\right.
		\end{equation*}
		\noindent Thus by uniqueness of solution, we have $d^{\ast}\alpha=0$ in $\Omega.$ Consequently the system \eqref{alphag1} reduced to 
		\begin{equation*}
			\left\lbrace
			\begin{aligned}
				d^{\ast}(Bd\alpha)&=g- d^{\ast}\left(B\omega_{0}\right)&&\qquad\text{ in }\Omega,\\
				d^\ast\alpha&=0&&\qquad\text{ in }\Omega,\\
				\nu\wedge\alpha&=0&&\qquad\text{ on }\partial\Omega.
			\end{aligned}\right.
		\end{equation*}
		\noindent Set 
		\begin{align*}
			\tilde{f}&=f+\lambda Bd\alpha +\lambda B\omega_{0} - d^{\ast}\left(Ad\omega_{0}\right).
		\end{align*}
		We see that if $\lambda=0,$ then $\tilde{f}=f - d^{\ast}\left(Ad\omega_{0}\right)\in\mathcal{H}^\perp_T(\omega,\varLambda^k)$ along with $d^*\tilde{f}=0.$ Thus for any $\lambda\ge 0,$ using Theorem \eqref{Hodge system},(i), we can find a solution $\beta\in W^{2,q}_w(\Omega)\cap \mathcal{H}^\perp_T(\Omega,\varLambda^{k})$ solving the equation
		\begin{equation*}
			\left\lbrace
			\begin{aligned}
				d^{\ast}(Ad\beta)+B^{\intercal}dd^{\ast}(B\beta)&=\lambda B\beta+\tilde{f}&&\qquad\text{ in }\Omega,\\
				\nu\wedge\beta&=0&&\qquad\text{ on }\partial\Omega,\\
				\nu\wedge d^*(B\beta)&=0&&\qquad\text{  on }\partial\Omega.
			\end{aligned}
			\right.
		\end{equation*}
		\noindent Taking $d^*$ to the above equation and by setting $\theta=d^*(B\beta),$ we obtain
		\begin{equation*}
			\left\lbrace
			\begin{aligned}
				d^*(B^Td\theta)&=\lambda\theta&&\qquad\text{ in }\Omega,\\
				\nu\wedge\theta&=0&&\qquad\text{ on }\partial\Omega,\\
				\nu\wedge d^*\theta&=0&&\qquad\text{ on }\partial\Omega.
			\end{aligned}\right.
		\end{equation*}
		\noindent Thus by considering $\theta$ as test function, we get 
		\begin{align*}
			\lambda\int_{\Omega}|\theta|^2&\le -\gamma_{B}\int_{\Omega}|d\theta|^2\le 0.
		\end{align*}
		\noindent If $\lambda =0$ then $\theta\in\mathcal{H}_T(\Omega),$ but no non-trivial harmonic field can be co-exact, so $\theta=0$ and for non-trivial $\theta$, $\lambda >0$ is impossible. Thus in either cases $\theta=0.$ Now take $\omega=\beta+d\alpha + \omega_{0}$ which solves the equation $(PM_T)$.
	\end{proof}
	
	\subsection{Hodge-Morrey decomposition}
	
	\begin{theorem}\label{Hodge decomposition}
		Let $r \geq 0$ be an integer. Let $\Omega \subset \mathbb{R}^{n}$ be an open, bounded, $C^{r+2,1}$ subset and let $A\in C^{r,1}\left(\overline{\Omega}; \operatorname{Hom}\left(\varLambda^{k+1}\right)\right)$, $B\in C^{r+1,1}\left(\overline{\Omega}; \operatorname{Hom}\left(\varLambda^{k}\right)\right)$ be both uniformly $\gamma$-Legendre elliptic.	Let $M_0>0,$ $1<p<\infty$ be real numbers such that $w\in A_{p}$ with $[w]_{A_{p}}\le M_0.$ Then given any $f \in W_{w}^{r,p}\left(\Omega; \varLambda^{k}\right),$ we have the following:
		\begin{enumerate}[(a)]
			\item  There exist $\alpha\in W^{r+1,p}_w(\Omega,\varLambda^{k-1}),$  $\beta\in W^{r+1,p}_w(\Omega,\varLambda^{k+1})$ and $h\in\mathcal{H}_T(\Omega,\varLambda^k)$ such that $f=Bd\alpha+d^{\ast}\left(A\beta\right)+h $ in $\Omega$, with 
			\begin{align*}
				\begin{aligned}
					d^{\ast}\alpha=0 \quad &\text{ and } \quad d\beta=0\qquad\text{ in }\Omega,\\
					\nu\wedge\alpha=0 \quad &\text{ and } \quad \nu\wedge \beta=0 \qquad\text{ on }\partial\Omega. 
				\end{aligned}
			\end{align*}
			\item  There exist $\alpha\in W^{r+1,p}_w(\Omega,\varLambda^{k-1}),$  $\beta\in W^{r+1,p}_w(\Omega,\varLambda^{k+1})$ and $h\in\mathcal{H}_N(\Omega,\varLambda^k)$ such that $f=Bd\alpha+d^*\beta+h$ in $\Omega$, with  
			\begin{align*}
				\begin{aligned}
					d^{\ast}\alpha=0 \quad &\text{ and } \quad d\left(A^{-1}\beta\right)=0\qquad\text{ in }\Omega,\\
					\nu\lrcorner\alpha=0 \quad &\text{ and } \quad \nu\lrcorner \beta=0 \qquad\text{ on }\partial\Omega. 
				\end{aligned}
			\end{align*}
		\end{enumerate}
		\noindent    Moreover, in either case we have the estimates
		\begin{align*}
			\|\alpha\|_{W^{r+1,p}_w(\Omega)},\|\beta\|_{W^{r+1,p}_w(\Omega)}, \|h\|_{W^{r+1,p}_w(\Omega)}
			&\le C\|f\|_{W^{r,p}_w(\Omega)}, 
		\end{align*}
		where  $C = C\left(r, p, n, k, N, M_{0}, \gamma_{A}, \gamma_{B},  \lambda, \Omega, \left\lVert A \right\rVert_{C^{r,1}}, \left\lVert B \right\rVert_{C^{r+1,1}}\right) >0$. 
	\end{theorem}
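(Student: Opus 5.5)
We prove (a) by solving two auxiliary problems with Theorem \ref{Hodge system}, in the spirit of the proof of Theorem \ref{Hodge-Maxwell system}. Part (b) follows the same scheme with normal boundary conditions and with the roles of $A$ and $B$ adapted; we indicate the changes at the end. Throughout take $r=0$; higher $r$ follows by the same argument using the $W^{r+2,p}_w$ version of Theorem \ref{Hodge system}.

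\textbf{Harmonic part.} Let $h$ be the $L^{2}$-orthogonal projection of $f$ onto the finite dimensional space $\mathcal{H}_T(\Omega;\varLambda^k)$. Since $f\in L^p_w\hookrightarrow L^{p_0}$, the coefficients $\int_\Omega\langle f,h_j\rangle$ are well defined. Here $\{h_j\}$ is an $L^2$-orthonormal basis of harmonic fields, which are $C^{1,1}$ up to the boundary and hence lie in $L^{p'}_{w^{1-p'}}$. Hölder's inequality with the dual weight, which is legitimate because $w^{1-p'}\in A_{p'}$, gives $\|h\|_{W^{1,p}_w}\le C\|f\|_{L^p_w}$. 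Set $g=f-h$.

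\textbf{Coexact part.} Solve
\[
d^{\ast}(A\,d\gamma)+dd^{\ast}\gamma=g,\qquad \nu\wedge\gamma=0,\ \nu\wedge d^{\ast}\gamma=0,
\]
for a $k$-form $\gamma$ with identity in the $B$-slot and $\lambda=0$. If $0$ is an eigenvalue, the solvability condition is exactly $g\in\mathcal{H}_T^{\perp}$, which holds by construction. Theorem \ref{Hodge system}(a) gives $\gamma\in W^{2,p}_w$ with $\|\gamma\|_{W^{2,p}_w}\le C\|g\|_{L^p_w}$. Put $\beta=d\gamma$; then $d\beta=0$ and $\nu\wedge\beta=0$, since $\nu\wedge\gamma=0$ implies $\nu\wedge d\gamma=0$. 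Also $\beta\in W^{1,p}_w$.

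\textbf{Exact part.} The remainder $\eta:=g-d^{\ast}(A\beta)=dd^{\ast}\gamma$ is exact, with $\nu\wedge\eta=0$ inherited from $\nu\wedge d^{\ast}\gamma=0$. To write $\eta=Bd\alpha$ with $d^{\ast}\alpha=0$ and $\nu\wedge\alpha=0$, solve for the $(k-1)$-form $\alpha$
\[
d^{\ast}(B^{-1}d\alpha)\ \text{-type system: }\quad d^{\ast}\!\left(B^{\intercal}B\,d\alpha\right)+dd^{\ast}\alpha=d^{\ast}(B^{\intercal}\eta),
\]
with tangential conditions. The coefficient $B^{\intercal}B$ is Legendre elliptic and $C^{1,1}$, and the right side is in $L^p_w$ only in divergence form. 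Applying $d^{\ast}$ and using uniqueness, as in the proof of Theorem \ref{Hodge-Maxwell system}, gives $d^{\ast}\alpha=0$.

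The main obstacle is to upgrade the resulting identity $d^{\ast}(B^{\intercal}(Bd\alpha-\eta))=0$ to $Bd\alpha=\eta$. I would first try the following. The form $\xi:=Bd\alpha-\eta$ satisfies $d(B^{-1}\xi)=-d(B^{-1}\eta)$, which suggests instead choosing $\alpha$ via the equation $d^{\ast}(B^{-1}\cdot)$ and interpreting the decomposition with $B^{-1}\eta$ exact. To avoid this mismatch, it is cleaner to reorder the steps: first extract $Bd\alpha$. Define $\alpha$ as the solution of
\[
d^{\ast}(B\,d\alpha)=d^{\ast}f,\qquad d^{\ast}\alpha=0,\qquad \nu\wedge\alpha=0,
\]
obtained from Theorem \ref{Hodge system}(a) applied to $d^{\ast}(Bd\cdot)+dd^{\ast}$ exactly as $\alpha$ was constructed in Theorem \ref{Hodge-Maxwell system}. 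The right side $d^{\ast}f$ lies only in $W^{-1,p}_w$, so we need the gradient-level (divergence-form) version of the weighted estimate, which the introduction says follows from the same method with less work. Then $f-Bd\alpha$ is $d^{\ast}$-closed. Decompose it as $d^{\ast}(A\beta)+h$ by solving, for $\gamma$,
\[
d^{\ast}(A d\gamma)+dd^{\ast}\gamma=f-Bd\alpha-h,
\]
and setting $\beta=d\gamma$. Applying $d^{\ast}$ shows $dd^{\ast}\gamma$ is harmonic with vanishing tangential data, hence zero.

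Estimates for $\alpha,\beta,h$ in $W^{1,p}_w$ follow from the divergence-form weighted estimate and Theorem \ref{Hodge system}. For part (b), replace tangential by normal conditions and use Theorem \ref{Hodge system}(b) with $A^{-1}$ in place of $A$, so that $d(A^{-1}\beta)=0$.
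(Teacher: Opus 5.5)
The gap is the construction of $\alpha$ in your reordered scheme. You need a solution of the tangential problem for $d^{\ast}(B\,d\alpha)+dd^{\ast}\alpha=d^{\ast}f$, together with the bound $\|\alpha\|_{W^{1,p}_w}\le C\|f\|_{L^p_w}$, for $f$ only in $L^p_w$. That is a first-order weighted estimate with divergence-form data, and nothing in the paper supplies it. Lemma~\ref{weighted boundary estimate lemma}, Theorem~\ref{mainthm} and Theorem~\ref{Hodge system} are all Hessian estimates with data in $L^p_w$; the divergence-form gradient estimate is only asserted in the introduction. Approximating $f$ by smooth $f_s$ does not help, because Theorem~\ref{Hodge system} then controls $\alpha_s$ only by $\|d^{\ast}f_s\|_{L^p_w}$, which is not bounded by $\|f\|_{L^p_w}$. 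Your bounds on $h$ and $\beta$ are fed by $\|Bd\alpha\|_{L^p_w}$, so the whole estimate rests on this unproved ingredient.

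There are two smaller slips. First, in the reordered scheme $h$ must be the projection of $f-Bd\alpha$, not of $f$ as in your first paragraph. The reason is that $\int_\Omega\langle Bd\alpha,h'\rangle=\int_\Omega\langle d\alpha,B^{\intercal}h'\rangle$ need not vanish for $h'\in\mathcal{H}_T$ when $B\neq I$, and then the compatibility condition for the $\gamma$-problem fails. Second, to conclude $dd^{\ast}\gamma=0$ you need more than $dd^{\ast}\gamma\in\mathcal{H}_T$. You also need it to be $L^2$-orthogonal to $\mathcal{H}_T$, which holds because $\nu\wedge d^{\ast}\gamma=0$.

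The missing idea, which is how the paper argues, is to avoid the first-order problem entirely. Solve one second-order system, with the full Hodge operator and tangential conditions, for a $k$-form potential $\theta$, and read off $\alpha$ and $\beta$ from it. Concretely, suppose $d^{\ast}(Ad\theta)+B\,dd^{\ast}(B^{\intercal}\theta)=f-h$ with $\nu\wedge\theta=0$ and $\nu\wedge d^{\ast}(B^{\intercal}\theta)=0$. Then $\alpha=d^{\ast}(B^{\intercal}\theta)$ and $\beta=d\theta$ give $f=Bd\alpha+d^{\ast}(A\beta)+h$, and all the side conditions hold automatically. Moreover $\alpha,\beta\in W^{1,p}_w$ follow from $\theta\in W^{2,p}_w$, so only Theorem~\ref{Hodge system} is used. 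This $\alpha$ is exactly yours, since $d^{\ast}(Bd\alpha)=d^{\ast}f$. The paper actually writes $B^{\intercal}dd^{\ast}(B\theta)$, $\alpha=d^{\ast}(B\theta)$, $\beta=Ad\theta$; that produces $B^{\intercal}d\alpha$ and does not match $d^{\ast}(A\beta)$, whereas the version above matches (a) exactly.

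What your two-step scheme does buy is a transparent compatibility check. With the identity in the $B$-slot, both auxiliary kernels are exactly the harmonic fields. For the full operator at $\lambda=0$, the kernel instead consists of forms $d\vartheta+h'$ with $h'\in\mathcal{H}_T$ and $\nu\wedge\vartheta=0$. So in the one-shot route $h$ must be chosen so that $f-h$ annihilates that kernel. In general the $L^2$-projection of $f$ does not achieve this; the correct $h$ is the projection of the coclosed part $f-Bd\alpha$, as in your scheme.
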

	\begin{proof}
		We only prove $(a)$ for $r=0.$ Let $s>1$ such that $L^p_w(\Omega)\subset L^s\left( \Omega \right).$ Then we decompose $f$ as 
		\begin{align*}
			f=g+h
		\end{align*}
		where $h\in\mathcal{H}_T(\Omega;\varLambda^k)$ and $g\in L^s(\Omega,\varLambda^k) \cap \mathcal{H}^\perp_T(\Omega;\varLambda^k).$ By Theorem \ref{Hodge system} we find a solution $\theta\in W^{2,p}_w(\Omega;\varLambda^k)$ to the following equation
		\begin{align*}
			\begin{cases}
				\begin{aligned}
					d^*(Ad\theta)+B^Tdd^*(B\theta)&=g&&\qquad\text{ in }\Omega,\\
					\nu\wedge\theta&=0&&\qquad\text{ on }\partial\Omega,\\
					\nu\wedge d^*(B\theta)&=0&&\qquad\text{ on }\partial\Omega.
				\end{aligned}
			\end{cases}
		\end{align*}
		\noindent Now by setting $\alpha=d^*(B\theta),$ $\beta=Ad\theta,$ we complete the proof of the theorem.
	\end{proof}

	\subsection{Div-curl system and Gaffney inequality}
	\begin{theorem}{(div-curl system)}\label{div-curl}
		Let $r \geq 0$ be an integer. Let $\Omega \subset \mathbb{R}^{n}$ be an open, bounded, $C^{r+2,1}$ subset. Let $M_0>0,$ $1<p<\infty$ be real numbers such that $w\in A_p$ with $[w]_{A_p}\le M_0.$ Let $A,B\in C^{r,1}(\overline{\Omega},\operatorname{Hom}(\varLambda^k))$ satisfy the Legendre condition. Let $f\in W^{r,p}_w(\Omega,\varLambda^{k+1}),$ $g\in W^{r,p}_w(\Omega,\varLambda^{k-1}).$ Then the following hold true:
		\begin{enumerate}[(a)]
			\item  Suppose $df=0,$ $d^*g=0$ in the sense of distribution, $\nu\wedge f=0$ on $\partial\Omega$ and $f\in \mathcal{H}^\perp_T(\Omega,\varLambda^{k+1}),$ $g\in\mathcal{H}^\perp_T(\Omega,\varLambda^{k-1}).$
			Then there exists a solution $\omega\in W^{r+1,p}_w(\Omega,\varLambda^k)$ to the following boundary value problem
			\begin{align*}
				\begin{cases}
					\begin{aligned}
						d(A\omega)&=f&&\qquad\text{ in }\Omega,\\
						d^{\ast}(B\omega)&=g&&\qquad\text{ in }\Omega,\\
						\nu\wedge A\omega&=0&&\qquad\text{ on }\partial\Omega.
					\end{aligned}
				\end{cases}
			\end{align*}

			\item Suppose $df=0,$ $d^*g=0$ in $\Omega$ in the distribution sense, $\nu\lrcorner g=0$ on $\partial\Omega$ and $f\in \mathcal{H}^\perp_N(\Omega,\varLambda^{k+1}),$ $g\in \mathcal{H}^\perp_N(\Omega,\varLambda^{k-1}).$ Then there exists a solution $\omega\in W^{r+1,p}_w(\Omega,\varLambda^k)$ to the following boundary value problem
			\begin{align*}
				\begin{cases}
					\begin{aligned}
						d(Ad\omega)&=f&&\qquad\text{ in }\Omega,\\
						d^*(B\omega)&=g&&\qquad\text{ in }\Omega,\\
						\nu\lrcorner B\omega&=0&&\qquad\text{ on }\partial\Omega.
					\end{aligned}
				\end{cases}
			\end{align*}
		\end{enumerate}
		Moreover, in either case we have the following estimate
		\begin{align*}
			\|\omega\|_{W^{r+1,p}_w(\Omega)}
			&\le C\left[\|\omega\|_{W^{r,p}_w(\Omega)}+\|f\|_{W^{r,p}_w(\Omega)}+\|g\|_{W^{r,p}_w(\Omega)}\right],
		\end{align*}
		where  $C = C\left(r, p, n, k, N, M_{0}, \gamma_{A}, \gamma_{B},  \lambda, \Omega, \left\lVert A \right\rVert_{C^{r,1}}, \left\lVert B \right\rVert_{C^{r,1}}\right) >0$. 
	\end{theorem}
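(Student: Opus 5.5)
Part (a) will be reduced to the Hodge system of Theorem \ref{Hodge system}, in the same spirit as the proof of Theorem \ref{Hodge-Maxwell system}; the main work is recovering $d(A\omega)=f$ from a second order equation. Choose the ansatz $A\omega = \beta$, and look for $\beta$ of the form $\beta = d\theta + d^{\ast}\eta$-type pieces. Concretely, first solve, by Theorem \ref{Hodge system}(a) with $\lambda=0$ and zero boundary data, for $\eta\in W^{r+2,p}_w(\Omega;\varLambda^{k+1})\cap\mathcal{H}^\perp_T$ with
\begin{align*}
d^{\ast}d\eta + dd^{\ast}\eta = f \ \text{ in }\Omega,\qquad \nu\wedge\eta=0,\ \nu\wedge d^{\ast}\eta = 0\ \text{ on }\partial\Omega .
\end{align*}
This is solvable since $f\in\mathcal{H}^\perp_T$. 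Applying $d$ and using $df=0$, the form $d\eta$ solves the tangential Hodge Laplacian problem with zero data: $\nu\wedge d\eta=0$ follows from $\nu\wedge\eta=0$, and $\nu\wedge d^{\ast}d\eta = \nu\wedge f - \nu\wedge dd^{\ast}\eta = 0$ follows from $\nu\wedge f=0$ and $\nu\wedge d^{\ast}\eta=0$. Moreover $d\eta$ is orthogonal to harmonic fields, so $d\eta=0$. Hence $d(d^{\ast}\eta)=f$, and $\gamma_1:=d^{\ast}\eta\in W^{r+1,p}_w$ has $\nu\wedge\gamma_1=0$.

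The next step is to correct $\gamma_1$ so that $\omega:=A^{-1}(\gamma_1 + d\phi)$ also satisfies $d^{\ast}(B\omega)=g$, for a $(k-1)$-form $\phi$ with $\nu\wedge\phi=0$. Then $d(A\omega)=f$ and $\nu\wedge A\omega=0$ hold automatically, and the remaining requirement is
\begin{align*}
d^{\ast}\big(BA^{-1}d\phi\big) = g - d^{\ast}(BA^{-1}\gamma_1)\ \text{ in }\Omega,\qquad \nu\wedge\phi = 0\ \text{ on }\partial\Omega.
\end{align*}
Following the argument for $\alpha$ in Theorem \ref{Hodge-Maxwell system}, this is solved by adding $dd^{\ast}\phi$ and imposing $\nu\wedge d^{\ast}\phi=0$. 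Theorem \ref{Hodge system}(a) then applies with coefficient $BA^{-1}$ in place of $A$ and identity in place of $B$. One caveat is that $BA^{-1}$ must be Legendre elliptic, or the system must be rephrased through the $A$-weighted inner product; this needs checking. The right-hand side lies in $W^{r,p}_w$ only in divergence form, so a divergence-form right-hand side variant of the estimate is needed, which the paper asserts is available with less work. The solvability condition is orthogonality to $\mathcal{H}_T$. It holds because $g\in\mathcal{H}^\perp_T$, and because $d^{\ast}$ of anything is orthogonal to $\mathcal{H}_T$ after integrating by parts using $\nu\wedge h=0$. Applying $d^{\ast}$ and using $d^{\ast}g=0$ gives uniqueness, which forces $d^{\ast}\phi=0$, exactly as in Theorem \ref{Hodge-Maxwell system}.

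Part (b) is the Hodge dual argument. Use Theorem \ref{Hodge system}(b): solve the normal problem for $\eta$, and use $\nu\lrcorner g=0$ and $g\in\mathcal{H}^\perp_N$ to build $B\omega$ as $d\alpha$ plus a coexact correction. The estimate then follows from the estimates \eqref{tangential existence estimate} and \eqref{normal existence estimate} for $\eta$ and $\phi$, lowered by one derivative, together with boundedness of $A^{-1}$ and $B$ in $C^{r,1}$ acting on $W^{r+1,p}_w$. Adding the $\|\omega\|_{W^{r,p}_w}$ term only weakens it.

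The main obstacle is the derivative count. Obtaining $\omega\in W^{r+1,p}_w$ when $f,g\in W^{r,p}_w$ requires the Hessian estimates at regularity one lower, that is, $W^{r+1,p}_w$ bounds for data in $W^{r-1,p}_w$, in divergence form when $r=0$. The most direct plan is to rerun Lemma \ref{maximal ineq lemma} at the gradient level, with decay of $\nabla v$ in place of $\nabla^2 v$. If that fails, fall back on duality in $L^{p'}_{w^{1-p'}}$, using that $w^{1-p'}\in A_{p'}$.
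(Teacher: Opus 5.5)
Your construction follows the paper's proof. There, $f=d\alpha$ with $\nu\wedge\alpha=0$ is taken from the Hodge--Morrey decomposition; your $\gamma_1=d^{\ast}\eta$, together with your proof that $d\eta=0$, is that step unpacked. The paper then solves $d^{\ast}(Cd\xi)=g-d^{\ast}(C\alpha)$, $d^{\ast}\xi=0$, $\nu\wedge\xi=0$ with $C=BA^{-1}$ and sets $\omega=A^{-1}(\alpha+d\xi)$; this is exactly your $\phi$.

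What you call the main obstacle is not one. Since $\eta\in W^{r+2,p}_w$, you have $\gamma_1\in W^{r+1,p}_w$. With $BA^{-1}\in C^{r,1}$, the term $d^{\ast}(BA^{-1}\gamma_1)$ is therefore an honest element of $W^{r,p}_w$, not merely a divergence. Theorem \ref{Hodge system} at the \emph{same} level $r$ gives $\phi\in W^{r+2,p}_w$, hence $d\phi\in W^{r+1,p}_w$, $\omega\in W^{r+1,p}_w$, and $\|\omega\|_{W^{r+1,p}_w}\le C(\|f\|_{W^{r,p}_w}+\|g\|_{W^{r,p}_w})$. No divergence-form variant, gradient-level maximal inequality or duality argument is needed, and the paper uses none. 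The div-curl problem sits one derivative below Hodge--Maxwell, and your potentials already supply that derivative.

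The genuine gap is the one you flagged as a caveat, and it cannot be closed by checking. The paper simply asserts that $C=BA^{-1}$ satisfies the Legendre condition, which is false when $A$ and $B$ do not commute. Take $n=2$, $k=N=1$, $A=\operatorname{diag}(1,\tfrac16)$ and $B=\begin{pmatrix}2&1\\1&1\end{pmatrix}$, both symmetric positive definite. For $a=(-2,1)$ one gets $Ca=(2,4)$, so $\langle Ca,a\rangle=0$.

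No reformulation, whether through an $A$-weighted inner product or otherwise, can help, because the div-curl system itself loses ellipticity in the direction $a$. For $\chi\in C^{\infty}_{c}(\Omega)$, let $\omega_\lambda=\chi\,A^{-1}a\,\sin(\lambda\,a\cdot x)$. Then:
\begin{itemize}
\item $\nu\wedge A\omega_\lambda=0$;
\item $d(A\omega_\lambda)$ stays bounded in $L^{p}$ as $\lambda\to\infty$, since $a\sin(\lambda\,a\cdot x)$ is a gradient;
\item $d^{\ast}(B\omega_\lambda)$ stays bounded in $L^{p}$, since $\langle Ca,a\rangle=0$;
\item $\|\nabla\omega_\lambda\|_{L^{p}}$ grows linearly in $\lambda$.
\end{itemize}
This violates Theorem \ref{Gaffney ineq} with $w\equiv 1$.

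So both your argument and the paper's need an extra hypothesis, for instance that $BA^{-1}$ be uniformly Legendre elliptic. This is automatic, for example, when $A$ is a scalar multiple of the identity. Under that hypothesis your proof goes through as written, with the derivative count above.
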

	\begin{proof}
		We prove only case (a). Applying the Hodge-Morrey decomposition to $f$, we get the existence of $\alpha\in W^{r+1,p}_w(\Omega,\varLambda^{k-1}),$ $\beta\in W^{r+1,p}_w(\Omega,\varLambda^{k+1}$ and $h\in\mathcal{H}_T(\Omega,\varLambda^k)$ such that $f=d\alpha+d^{\ast}\beta+h$ in $\Omega$, where $d^{\ast}\alpha=0, d\beta=0$ in $\Omega$ and $\nu\wedge\alpha=0, \nu\wedge\beta=0$ on $\partial\Omega$. Taking $d$ to the both side of above expression, we get
		\begin{align*}
			0=df=dd\alpha+dd^*\beta+dh=dd^*\beta = (dd^* + d^{\ast}d)\beta\qquad\text{ in }\Omega.
		\end{align*}
		Since $\nu \wedge \beta=0$ and $\nu \wedge d^{\ast}\beta = \nu \wedge f =0$ on $\partial\Omega,$ we see that $d^*\beta$ must vanish. Integrating by parts, we get 
		\begin{align*}
			\int_{\Omega}|h|^2
			&=\int_{\Omega}\langle h, f - d\alpha - d^*\beta\rangle =0.
		\end{align*}
		Thus $h=0$ in $\Omega$ and hence $f=d\alpha.$ Now we set $C=BA^{-1},$ then $C$ is $C^{r,1}$ and satisfy the Legendre condition. Now we  find $\xi\in W^{r+2,p}_w(\Omega,\varLambda^k)$ which solves the following Maxwell type system
		\begin{align*}
			\begin{cases}
				\begin{aligned}
					d^{\ast}(Cd\xi)&=g-d^{\ast}(C\alpha)&&\qquad\text{ in }\Omega,\\
					d^{\ast}\xi&=0&&\qquad\text{ in }\Omega,\\
					\nu\wedge\xi&=0&&\qquad\text{ on }\partial\Omega.
				\end{aligned}
			\end{cases}
		\end{align*}
		\noindent Now we set $\omega=A^{-1}(\alpha+d\xi)$ in $\Omega.$ Then $\omega$ solves our div-curl system, since
		\begin{align*}
			& d^{\ast}(B\omega)=d^{\ast}(C(\alpha+d\xi))=g\qquad\text{ in }\Omega,\\
			&d(A\omega)=d(\alpha+d\xi)=d\alpha=f\qquad\text{ in }\Omega,\\
			&\nu\wedge A\omega=\nu\wedge\alpha+\nu\wedge d\xi=0\qquad\text{  on }\partial\Omega.
		\end{align*}
		This completes the proof of the theorem.
	\end{proof}
	\noindent As an immediate consequence of Theorem \eqref{div-curl}, we get the following weighted Gaffney type inequality.
	\begin{theorem}{(Gaffney inequality)}\label{Gaffney ineq}
		Let $\Omega\subset\mathbb R^n$ be an open bounded $C^{r+2,1}$ domain. Let $M_0>0,$ $1<p<\infty$ be real numbers such that $w\in A_p$ with $[w]_{A_p}\le M_0.$ Let $A, B\in C^{r,1}(\overline{\Omega},\operatorname{Hom}\left(\varLambda^{k}\right))$  be uniformly Legendre elliptic. Let $\omega\in W^{r,p}_w(\Omega;\varLambda^k),$ $d\left( A\omega\right)\in W^{r,p}_w(\Omega;\varLambda^{k+1})$ and $d^*(B\omega)\in W^{r,p}_w(\Omega;\varLambda^{k-1}).$ In additionally suppose either $\nu\wedge A \omega=0$ or $\nu\lrcorner B\omega=0$ on $\partial\Omega.$ Then $\omega\in W^{r+1,p}_w(\Omega,\varLambda^k)$ and there exists a constant $$C = C\left(r, p, n, k, N, M_{0}, \gamma_{A}, \gamma_{B},  \Omega, \left\lVert A \right\rVert_{C^{r,1}}, \left\lVert B \right\rVert_{C^{r,1}}\right) >0$$ such that 
		\begin{align*}
			\|\omega\|_{W^{r+1,p}_w(\Omega)}
			&\le C\left(\|d\left( A\omega\right)\|_{W^{r,p}_w(\Omega)}+\|d^*(B\omega)\|_{W^{r,p}_w(\Omega)}+\|\omega\|_{W^{r,p}_w(\Omega)}\right).
		\end{align*}
	\end{theorem}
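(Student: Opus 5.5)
The plan is to realize $\omega$ itself as the solution of a div-curl system whose data are $f:=d(A\omega)$ and $g:=d^{\ast}(B\omega)$, and then read the estimate off Theorem \ref{div-curl}. I treat the case $\nu\wedge A\omega=0$ and $r=0$; the normal case is the same argument with Theorem \ref{div-curl}(b) in place of (a), and higher $r$ is handled by iteration in the standard way.

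\textbf{Step 1: check the hypotheses of Theorem \ref{div-curl}(a).} Since $f=d(A\omega)$ and $g=d^{\ast}(B\omega)$, the compatibility conditions are automatic: $df=0$ and $d^{\ast}g=0$ hold distributionally. From $\nu\wedge A\omega=0$ we get $\nu\wedge f=\nu\wedge d(A\omega)=0$ in the weak trace sense. For $h\in\mathcal{H}_T(\Omega;\varLambda^{k+1})$, integration by parts gives $\int\langle d(A\omega),h\rangle=-\int\langle A\omega,d^{\ast}h\rangle+\int_{\partial\Omega}\langle\nu\wedge A\omega,h\rangle=0$, so $f\in\mathcal{H}_T^\perp$. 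For $h\in\mathcal{H}_T(\Omega;\varLambda^{k-1})$, $\int\langle d^{\ast}(B\omega),h\rangle=-\int\langle B\omega,dh\rangle+\int_{\partial\Omega}\langle B\omega,\nu\wedge h\rangle=0$, because $dh=0$ and $\nu\wedge h=0$. Hence $g\in\mathcal{H}_T^\perp$. These integrations by parts require $\omega$ to be regular enough, so I would first carry them out for smooth approximants and then pass to the limit, using $L^p_w\hookrightarrow L^{p_0}$.

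\textbf{Step 2: produce a regular solution and compare.} Theorem \ref{div-curl}(a) gives $\tilde\omega\in W^{1,p}_w$ with $d(A\tilde\omega)=f$, $d^{\ast}(B\tilde\omega)=g$ and $\nu\wedge A\tilde\omega=0$. The difference $\eta:=\omega-\tilde\omega$ then satisfies $d(A\eta)=0$, $d^{\ast}(B\eta)=0$ and $\nu\wedge A\eta=0$, and it lies in $L^{s}$ for some $s>1$.

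\textbf{Step 3: regularity of $\eta$ (the main obstacle).} I need to show that $\eta$ is as smooth as the data permit, so that $\eta\in W^{1,p}_w$ with $\|\eta\|_{W^{1,p}_w}\le C\|\eta\|_{L^p_w}$. With $\zeta:=A\eta$, the conditions become $d\zeta=0$, $d^{\ast}(BA^{-1}\zeta)=0$ and $\nu\wedge\zeta=0$, which is a homogeneous div-curl system at the $L^s$ level. My approach is to apply the decomposition of Theorem \ref{Hodge decomposition}(a) with $r=0$ to $\zeta$. Closedness together with the tangential condition should kill the $d^{\ast}$-component, as in the proof of Theorem \ref{div-curl}. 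That leaves $\zeta=d\alpha+h$, where $\alpha$ solves a Maxwell-type problem of the form \eqref{problemMaxwellgeneraltangential} with $\lambda=0$ and data $-d^{\ast}(BA^{-1}h)$. Since harmonic fields are smooth, Theorem \ref{Hodge-Maxwell system} gives $\alpha\in W^{2,p}_w$. A uniqueness argument in $L^s$, using the unweighted theory of \cite{Sil_linearregularity}, identifies this $\alpha$ with the one coming from the decomposition.

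\textbf{Step 4: assemble the estimate.} The space of such $\eta$ is finite dimensional, of dimension $\dim\mathcal{H}_T$, and all norms on it are equivalent. Therefore $\|\eta\|_{W^{1,p}_w}\le C\|\eta\|_{L^p_w}\le C(\|\omega\|_{L^p_w}+\|\tilde\omega\|_{L^p_w})$. Combining this with the estimate of Theorem \ref{div-curl} for $\tilde\omega$ gives
\begin{align*}
\|\omega\|_{W^{1,p}_w}\le C\left(\|d(A\omega)\|_{L^p_w}+\|d^{\ast}(B\omega)\|_{L^p_w}+\|\omega\|_{L^p_w}\right).
\end{align*}
Two points need care. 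First, the right-hand side of Theorem \ref{div-curl} contains $\|\tilde\omega\|_{L^p_w}$, so I would either choose $\tilde\omega$ orthogonal to the kernel (then it is bounded by the data via a compactness argument like Lemma \ref{uniqueness lemma}), or accept the $\|\omega\|$ term, which the statement allows. Second, the constant must not depend on $w$ beyond $M_0$; this follows from the constants in the results invoked.
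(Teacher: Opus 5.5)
Your proposal is correct and follows the paper's route. The paper only asserts that the Gaffney inequality is an immediate consequence of Theorem~\ref{div-curl}, and your steps are the details that ``immediate'' leaves out: checking the compatibility conditions for $f=d(A\omega)$, $g=d^{\ast}(B\omega)$, producing $\tilde\omega$, and showing that $\omega-\tilde\omega$ lies in a finite-dimensional space of regular forms.

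The one slip is your fallback of ``accepting the $\|\omega\|$ term''. Theorem~\ref{div-curl} produces $\|\tilde\omega\|_{L^p_w}$, not $\|\omega\|_{L^p_w}$, so you do need to normalize $\tilde\omega$ against the kernel. Separately, bare finite-dimensionality gives a norm-equivalence constant on the kernel that depends on $w$ itself. To make it depend on $w$ only through $M_0$, pass through unweighted norms using the $A_p$ and reverse H\"older bounds on a ball containing $\Omega$.
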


	\section*{Acknowledgment} S. Sil's research is supported by the ANRF-SERB MATRICS Project grant MTR/2023/000885. Both the authors warmly thank Karthik Adimurthi for helpful and stimulating discussions on weighted estimates.


\begin{thebibliography}{10}
	\raggedright
	
	\bibitem{Karthik_Phuc2}
	{\sc Adimurthi, K., Mengesha, T., and Phuc, N.~C.}
	\newblock Gradient weighted norm inequalities for linear elliptic equations
	with discontinuous coefficients.
	\newblock {\em Appl. Math. Optim. 83}, 1 (2021), 327--371.
	
	\bibitem{ADN1}
	{\sc Agmon, S., Douglis, A., and Nirenberg, L.}
	\newblock Estimates near the boundary for solutions of elliptic partial
	differential equations satisfying general boundary conditions. {I}.
	\newblock {\em Comm. Pure Appl. Math. 12\/} (1959), 623--727.
	
	\bibitem{ADN2}
	{\sc Agmon, S., Douglis, A., and Nirenberg, L.}
	\newblock Estimates near the boundary for solutions of elliptic partial
	differential equations satisfying general boundary conditions. {II}.
	\newblock {\em Comm. Pure Appl. Math. 17\/} (1964), 35--92.
	
	\bibitem{Agranovich_ADNconditions}
	{\sc Agranovich, M.~S.}
	\newblock Elliptic boundary problems.
	\newblock In {\em Partial differential equations, {IX}}, vol.~79 of {\em
		Encyclopaedia Math. Sci.} Springer, Berlin, 1997, pp.~1--144, 275--281.
	\newblock Translated from the Russian by the author.
	
	\bibitem{Balci_Sil_Surnachev_HodgeVariableexponent}
	{\sc Balci, A., Sil, S., and Surnachev, M.}
	\newblock Hodge decomposition and potentials in variable exponent lebesgue and
	sobolev spaces.
	\newblock {\em arXiv e-prints, arXiv:2504.20772\/} (2025).
	
	\bibitem{csatothesis}
	{\sc Csat{\'o}, G.}
	\newblock {Some Boundary Value Problems Involving Differential Forms, PhD
		Thesis}.
	\newblock {\em EPFL}, Thesis No. 5414 (2012).
	
	\bibitem{CsatoDacKneuss}
	{\sc Csat{\'o}, G., Dacorogna, B., and Kneuss, O.}
	\newblock {\em {The pullback equation for differential forms}}.
	\newblock {Progress in Nonlinear Differential Equations and their Applications,
		83}. Birkh{\"a}user/Springer, New York, 2012.
	
	\bibitem{DiBenedettoManfredi}
	{\sc DiBenedetto, E., and Manfredi, J.}
	\newblock On the higher integrability of the gradient of weak solutions of
	certain degenerate elliptic systems.
	\newblock {\em Amer. J. Math. 115}, 5 (1993), 1107--1134.
	
	\bibitem{FriedrichsGaffney}
	{\sc Friedrichs, K.~O.}
	\newblock {Differential forms on {R}iemannian manifolds}.
	\newblock {\em Comm. Pure Appl. Math. 8\/} (1955), 551--590.
	
	\bibitem{GaffneyHarmonicoperator}
	{\sc Gaffney, M.~P.}
	\newblock {The harmonic operator for exterior differential forms}.
	\newblock {\em Proc. Nat. Acad. Sci. U. S. A. 37\/} (1951), 48--50.
	
	\bibitem{Gaffney1954}
	{\sc Gaffney, M.~P.}
	\newblock The heat equation method of {M}ilgram and {R}osenbloom for open
	{R}iemannian manifolds.
	\newblock {\em Ann. of Math. (2) 60\/} (1954), 458--466.
	
	\bibitem{Gaffney1954a}
	{\sc Gaffney, M.~P.}
	\newblock A special {S}tokes's theorem for complete {R}iemannian manifolds.
	\newblock {\em Ann. of Math. (2) 60\/} (1954), 140--145.
	
	\bibitem{GaffneyHarmonicintegrals}
	{\sc Gaffney, M.~P.}
	\newblock {Hilbert space methods in the theory of harmonic integrals}.
	\newblock {\em Trans. Amer. Math. Soc. 78\/} (1955), 426--444.
	
	\bibitem{giaquinta-martinazzi-regularity}
	{\sc Giaquinta, M., and Martinazzi, L.}
	\newblock {\em {An introduction to the regularity theory for elliptic systems,
			harmonic maps and minimal graphs}}, second~ed., vol.~11 of {\em {Appunti.
			Scuola Normale Superiore di Pisa (Nuova Serie) [Lecture Notes. Scuola Normale
			Superiore di Pisa (New Series)]}}.
	\newblock Edizioni della Normale, Pisa, 2012.
	
	\bibitem{Giusti_DCV}
	{\sc Giusti, E.}
	\newblock {\em Direct methods in the calculus of variations}.
	\newblock World Scientific Publishing Co. Inc. River Edge, NJ, 2003.
	
	\bibitem{Grafakos_modernFourier}
	{\sc Grafakos, L.}
	\newblock {\em Modern {F}ourier analysis}, second~ed., vol.~250 of {\em
		Graduate Texts in Mathematics}.
	\newblock Springer, New York, 2009.
	
	\bibitem{Hodge1934}
	{\sc Hodge, W. V.~D.}
	\newblock A {D}irichlet {P}roblem for {H}armonic {F}unctionals, with
	{A}pplications to {A}nalytic {V}arities.
	\newblock {\em Proc. London Math. Soc. (2) 36\/} (1934), 257--303.
	
	\bibitem{Iwaniec_Lpprojection}
	{\sc Iwaniec, T.}
	\newblock Projections onto gradient fields and {$L\sp{p}$}-estimates for
	degenerated elliptic operators.
	\newblock {\em Studia Math. 75}, 3 (1983), 293--312.
	
	\bibitem{Kinnunen_Zhou_localestiNonlinear}
	{\sc Kinnunen, J., and Zhou, S.}
	\newblock A local estimate for nonlinear equations with discontinuous
	coefficients.
	\newblock {\em Comm. Partial Differential Equations 24}, 11-12 (1999),
	2043--2068.
	
	\bibitem{Kinnunen_Zhou_boundaryestiNonlinear}
	{\sc Kinnunen, J., and Zhou, S.}
	\newblock A boundary estimate for nonlinear equations with discontinuous
	coefficients.
	\newblock {\em Differential Integral Equations 14}, 4 (2001), 475--492.
	
	\bibitem{Lieberman_morrey_from_Lp}
	{\sc Lieberman, G.~M.}
	\newblock A mostly elementary proof of {M}orrey space estimates for elliptic
	and parabolic equations with {VMO} coefficients.
	\newblock {\em J. Funct. Anal. 201}, 2 (2003), 457--479.
	
	\bibitem{Lopatinskii}
	{\sc Lopatinski\u~i, Y.~B.}
	\newblock On a method of reducing boundary problems for a system of
	differential equations of elliptic type to regular integral equations.
	\newblock {\em Ukrain. Mat. \v Z. 5\/} (1953), 123--151.
	
	\bibitem{Mahato_Thesis}
	{\sc Mahato, R.}
	\newblock {P}h{D} {T}hesis, {I}ndian {I}nstitute of {S}cience.
	\newblock {\em In Preparation\/}.
	
	\bibitem{Mahato_Sil_discontinuousWeightedLp_InPrep}
	{\sc Mahato, R., and Sil, S.}
	\newblock Estimates for {H}odge-{M}axwell systems with discontinuous
	anisotropic coefficients.
	\newblock {\em In Preparation\/} (2025).
	
	\bibitem{Mahato_Sil_OrliczMusielak_InPrep}
	{\sc Mahato, R., and Sil, S.}
	\newblock On the {H}odge systems in {O}rlicz-{M}usielak spaces.
	\newblock {\em In Preparation\/} (2025).
	
	\bibitem{Mitrea2016}
	{\sc Mitrea, D., Mitrea, I., Mitrea, M., and Taylor, M.}
	\newblock {\em The {H}odge-{L}aplacian}, vol.~64 of {\em De Gruyter Studies in
		Mathematics}.
	\newblock De Gruyter, Berlin, 2016.
	\newblock Boundary value problems on Riemannian manifolds.
	
	\bibitem{Mitrea_Mitrea_Taylor_LayerPotentials_HodgeLplacian}
	{\sc Mitrea, D., Mitrea, M., and Taylor, M.}
	\newblock Layer potentials, the {H}odge {L}aplacian, and global boundary
	problems in nonsmooth {R}iemannian manifolds.
	\newblock {\em Mem. Amer. Math. Soc. 150}, 713 (2001), x+120.
	
	\bibitem{Mitrea_Gaffneyineq}
	{\sc Mitrea, M.}
	\newblock Dirichlet integrals and {G}affney-{F}riedrichs inequalities in convex
	domains.
	\newblock {\em Forum Math. 13}, 4 (2001), 531--567.
	
	\bibitem{Morrey1955}
	{\sc Morrey, Jr., C.~B., and Eells, Jr., J.}
	\newblock A variational method in the theory of harmonic integrals.
	\newblock {\em Proc. Nat. Acad. Sci. U.S.A. 41\/} (1955), 391--395.
	
	\bibitem{MorreyHarmonic2}
	{\sc Morrey, J. C.~B.}
	\newblock {A variational method in the theory of harmonic integrals. {II}}.
	\newblock {\em Amer. J. Math. 78\/} (1956), 137--170.
	
	\bibitem{Morrey1966}
	{\sc Morrey, J. C.~B.}
	\newblock {\em {Multiple integrals in the calculus of variations}}.
	\newblock {Die Grundlehren der mathematischen Wissenschaften, Band 130}.
	Springer-Verlag New York, Inc., New York, 1966.
	
	\bibitem{Muckenhoupt_Apweights}
	{\sc Muckenhoupt, B.}
	\newblock Weighted norm inequalities for the {H}ardy maximal function.
	\newblock {\em Trans. Amer. Math. Soc. 165\/} (1972), 207--226.
	
	\bibitem{Phuc_Nguyen_WeightedFeffermanStein}
	{\sc Phuc, N.~C.}
	\newblock Weighted estimates for nonhomogeneous quasilinear equations with
	discontinuous coefficients.
	\newblock {\em Ann. Sc. Norm. Super. Pisa Cl. Sci. (5) 10}, 1 (2011), 1--17.
	
	\bibitem{SchwarzHodge}
	{\sc Schwarz, G.}
	\newblock {\em {Hodge decomposition---a method for solving boundary value
			problems}}, vol.~1607 of {\em {Lecture Notes in Mathematics}}.
	\newblock Springer-Verlag, Berlin, 1995.
	
	\bibitem{SenguptaSil_MorreyLorentz_Hodge}
	{\sc Sengupta, B., and Sil, S.}
	\newblock Morrey-lorentz estimates for hodge-type systems.
	\newblock {\em Discrete Contin. Dyn. Syst. 45}, 1 (2025), 334--360.
	
	\bibitem{silthesis}
	{\sc Sil, S.}
	\newblock {Calculus of {V}ariations for {D}ifferential {F}orms, PhD Thesis}.
	\newblock {\em EPFL}, Thesis No. 7060 (2016).
	
	\bibitem{Sil_linearregularity}
	{\sc Sil, S.}
	\newblock Regularity for elliptic systems of differential forms and
	applications.
	\newblock {\em Calc. Var. Partial Differential Equations 56}, 6 (2017), 56:172.
	
	\bibitem{Stein_HarmonicAnalysis}
	{\sc Stein, E.~M.}
	\newblock {\em Harmonic analysis: real-variable methods, orthogonality, and
		oscillatory integrals}, vol.~43 of {\em Princeton Mathematical Series}.
	\newblock Princeton University Press, Princeton, NJ, 1993.
	\newblock With the assistance of Timothy S. Murphy, Monographs in Harmonic
	Analysis, III.
	
	\bibitem{Visik_LScondition}
	{\sc Vi\v{s}ik, M.~I.}
	\newblock On general boundary problems for elliptic differential equations.
	\newblock {\em Trudy Moskov. Mat. Ob\v s\v c. 1\/} (1952), 187--246.
	
	\bibitem{Shapiro}
	{\sc \v{S}apiro, Z.~Y.}
	\newblock On general boundary problems for equations of elliptic type.
	\newblock {\em Izv. Akad. Nauk SSSR Ser. Mat. 17\/} (1953), 539--562.
	
	\bibitem{Weyl1940}
	{\sc Weyl, H.}
	\newblock The method of orthogonal projection in potential theory.
	\newblock {\em Duke Math. J. 7\/} (1940), 411--444.
	
\end{thebibliography}
\end{document}